\documentclass[english,reqno]{amsart}

\usepackage{amsmath, appendix, ulem}
\usepackage[nobysame]{amsrefs}
\usepackage{amssymb, color}
\usepackage[margin=1in]{geometry}

\usepackage{mathrsfs}
\usepackage{graphicx}
\usepackage{subfig}
\usepackage{float}
\usepackage{epsf}
\usepackage{hyperref}
\usepackage{titletoc}

\graphicspath{{../Figures/}}

\numberwithin{equation}{section}
\newtheorem{theorem}{Theorem}[section]
\newtheorem{proposition}{Proposition}[section]
\newtheorem{lemma}{Lemma}[section]
\newtheorem{corollary}{Corollary}[section]
\newtheorem{remark}{Remark}[section]
\newtheorem{lem}{Lemma}[section]
\newtheorem{thm}{Theorem}[section]
\newtheorem{prop}[thm]{Proposition}
\newtheorem{cor}[thm]{Corollary}
\theoremstyle{remark}
\newtheorem{rmk}{Remark}[section]

\renewcommand{\tilde}{\widetilde}
\renewcommand{\hat}{\widehat}
\renewcommand{\bar}{\overline}

\newcommand{\nn}{\nonumber}
\newcommand{\R}{{\mathbb R}}

\newcommand{\N}{{\mathbb N}}

\newcommand{\del}{\partial}
\newcommand{\Denote}{\stackrel{\Delta}{=}}
\newcommand{\dt}{ \, {\rm d} t}
\newcommand{\dx}{ \, {\rm d} x}

\newcommand{\dv}{ \, {\rm d} v}
\newcommand{\ds}{\, {\rm d} s}

\newcommand{\dbmu}{\, {\rm d} \bar\mu}

\newcommand{\dtheta}{\, {\rm d} \theta}
\newcommand{\dtau}{\, {\rm d} \tau}

\newcommand{\dxi}{\, {\rm d} \xi}

\newcommand{\One}{\boldsymbol{1}}
\newcommand{\La}{\left\langle}
\newcommand{\Ra}{\right\rangle}
\newcommand{\la}{\langle}
\newcommand{\ra}{\rangle}

\newcommand{\Id}{{\mathcal{I}}}
\newcommand{\Ss}{{\mathbb{S}}}

\newcommand{\Eps}{\epsilon}
\newcommand{\Ni}{\noindent}

\newcommand{\CalA}{{\mathcal{A}}}

\newcommand{\CalC}{{\mathcal{C}}}
\newcommand{\CalD}{{\mathcal{D}}}

\newcommand{\CalI}{{\mathcal{I}}}

\newcommand{\CalL}{{\mathcal{L}}}
\newcommand{\CalM}{{\mathcal{M}}}

\newcommand{\CalQ}{{\mathcal{Q}}}

\newcommand{\CalS}{{\mathcal{S}}}

\newcommand{\Null} {{\rm Null} \, }
\newcommand{\Span} {{\rm Span} \, }

\newcommand{\dsigma}{\, {\rm d}\sigma}

\newcommand{\IntTRRS}{\int_{\T^3} \!\! \int_{\R^3} \!\! \int_{\R^3} \!\! \int_{\Ss^2_+}}
\newcommand{\IntRRS}{\int_{\R^3} \!\! \int_{\R^3} \!\! \int_{\Ss^2_+}}
\newcommand{\IntRS}{\int_{\R^3} \!\! \int_{\Ss^2_+}}

\newcommand{\RR}{\mathbb{R}}
\newcommand{\NN}{\mathbb{N}}
\newcommand{\ZZ}{\mathbb{Z}}
\newcommand{\T}{\mathbb{T}}

\newcommand{\abs}[1]{\left\lvert#1\right\rvert}
\newcommand{\norm}[1]{\left\lVert#1 \, \right\rVert}
\newcommand{\vint}[1]{\left\langle#1\right\rangle}

\newcommand{\vpran}[1]{\left(#1\right)}

\newcommand{\Tnorm}[1]{{\left\vert\kern-0.25ex\left\vert\kern-0.25ex\left\vert #1 
    \right\vert\kern-0.25ex\right\vert\kern-0.25ex\right\vert}}

\newcommand{\Semi}{\CalS_{\CalL}}



\setcounter{tocdepth}{1}

\begin{document}

\title[non-cutoff Boltzmann]{Non-cutoff Boltzmann Equation with Polynomial Decay Perturbation}

\author{Ricardo Alonso}
\address{Departamento de Matematica, PUC-Rio, Rua Marquex de Sao Vicente 225, Rio de Janeiro, CEP 22451-900, Brazil}
\email{ralonso@mat.puc-rio.br}

\author{Yoshinori Morimoto}
\address{2-9-16 Asahi, Otsu, 520-0533, Japan}
\email{morimoto.yoshinori.74r@st.kyoto-u.ac.jp}

\author{Weiran Sun}
\address{Department of Mathematics, Simon Fraser University, 8888 University Dr., Burnaby, BC V5A 1S6, Canada}
\email{weirans@sfu.ca}

\author{Tong Yang}
\address{Department of Mathematcs, City University of Hong Kong, 81 Tat Chee Avenue, Kowloon, Hong Kong}
\email{matyang@cityu.edu.hk}

\date{\today}
%
%
%
\begin{abstract}
The Boltzmann equation without an angular cutoff is considered when the initial data is a small perturbation of a global Maxwellian with an algebraic decay in the velocity variable. A well-posedness theory in the perturbative framework is obtained for both mild and strong angular singularities by combining three ingredients: the moment propagation, the spectral gap of the linearized operator, and the regularizing effect of the linearized operator when the initial data is in a Sobolev space with a negative index. A carefully designed pseudo-differential operator plays an central role in capturing the regularizing effect. Moreover, some intrinsic symmetry with respect to the collision operator and an intrinsic functional in the coercivity estimate are essentially used in the commutator estimates for the collision operator with velocity weights.

{\bf key words: } moment propagation, coercivity, spectral gap, commutator estimates, regularizing effect.
\end{abstract}

\subjclass[2010]{35Q35, 35B65, 76N10}
\maketitle
%

\tableofcontents


\section{Introduction} \label{sec:setting}

This paper aims to present a complete well-posedness theory to the Boltzmann equation without an angular cutoff when the initial perturbation of a global equilibrium state is small and decays only algebraically in the velocity variable. 
Precisely, we consider the Cauchy problem for  the non-cutoff Boltzmann equation 
\begin{equation}
\begin{gathered} \label{eq:Boltzmann}
   \del_t F + v \cdot \nabla_x F  = Q(F, F) \,,
\\
   F|_{t = 0} = F_0 \ge0\,,
\end{gathered}
\end{equation}
where $(x, v) \in \T^3 \times \R^3$ and the collision operator is given by
\begin{align} \label{def:Q}
   Q(F, F) 
= \int_{\R^3} \int_{\Ss^2} 
    b(\cos\theta) |v - v_\ast|^\gamma 
    \vpran{F_\ast' F' - F_\ast F} \dsigma \dv_\ast \,.
\end{align}
Our analysis applies to the non-angular cutoff cross-section for hard potential, that is, when $b$ and $\gamma$ satisfy
\begin{align} \label{assump:b-gamma}
   0 < \gamma \leq 1 \,,
\qquad
   b(\cos\theta) \sin\theta \sim \frac{1}{\theta^{1 + 2s}} \,,
\qquad
   0 < s < 1 \,.
\end{align}
In the perturbative framework, let $\mu= (2\pi)^{-3/2}e^{-|v|^2/2}$ be the normalized equilibrium and $f$ be the perturbation by
writing
\begin{align*}
    F = \mu + f \,.
\end{align*}
Equation \eqref{eq:Boltzmann} becomes
\begin{equation}
\begin{gathered} \label{eq:perturbation}
   \del_t f + v \cdot \nabla_x f 
= Q(\mu, f) + Q(f, \mu) +Q(f,f)=Lf+ Q(f,f)\,,
\\
   f |_{t=0} = f_0(x, v) \,.
\end{gathered}
\end{equation}
To study equation \eqref{eq:perturbation} when the initial data only has an algebraic decay in the velocity variable, we first point out its main difference from the classical decomposition $F=\mu +\sqrt{\mu}f $ that implies a Gaussian tail in the perturbation. First of all, with the Gaussian tail decomposition, the corresponding linearized operator given by
\begin{align*}
   L^{(\mu)}f 
&= \frac{1}{\sqrt{\mu}} \vpran{Q(\mu, \sqrt{\mu} f) 
      + Q(\sqrt{\mu} f, \mu)}
\\
& = \int_{\R^3} \int_{\Ss^2}
	b(\cos\theta) |v - v_\ast|^\gamma
	\vpran{\sqrt{\mu'_\ast} f' + \sqrt{\mu'} f'_\ast
          - \sqrt{\mu} f_\ast - \sqrt{\mu_\ast} f} \dsigma\dv_\ast
\end{align*}
is self-adjoint and has the null space 
\begin{align*}
  \Null (L^{(\mu)}) 
 = \Span \left\{\sqrt{\mu}, \,\, \sqrt{\mu} \, v, \,\, \sqrt{\mu} \, |v|^2 \right\} \,.
\end{align*}
For this self-adjoint linear operator, one has the following strong coercivity estimate that implies the gain of both regularity and moment of order $s$ in the velocity variable (cf. \cite{AMUXY2011,AMUXY2012JFA,GS2011}): 
\begin{align*}
   \vint{f, \,\, L^{(\mu)} f}_{L^2_v}
\leq
  - c_0 \vpran{\norm{f}^2_{H^{s}_{\gamma/2}} 
  +  \norm{f}^2_{L^2_{s+ \frac{\gamma}{2}}}} \,,
\qquad
   f \in \vpran{\text{Null} (L^\mu)}^\perp \,.
\end{align*}
This coercivity property is essentially used in the well-posedness theory for the non-cutoff Boltzmann equation with Gaussian tail, cf. \cite{AMUXY2011, AMUXY2012JFA,GS2011}. However, if we only assume an algebraic decay in the perturbation by writing
$F = \mu + f$, then the corresponding linearized operator given by
\begin{align*}
   L f &= Q(\mu, f) + Q(f, \mu)
 \\
   & = \int_{\R^3} \int_{\Ss^2}
	\vpran{\mu'_\ast  f' + \mu' \, f'_\ast - \mu \, f_\ast - \mu_\ast f}
	b(\cos\theta) |v -v_\ast|^\gamma \dsigma \dv_\ast,
\end{align*}
is no longer self-adjoint. In addition, the coercivity only gains regularity rather than moments. Precisely, cf. \cite{AMUXY-Kyoto-2012}, one has
\begin{align*}
   \vint{Q(\mu, f), \,\, f}
 = -c_1 J^\gamma_1(f)+ \mbox{mod} \{\norm{f}^2_{L^2_{\gamma/2}}\}
\leq
  -c_0 \norm{f}^2_{H^{s}_{\gamma/2}}
  + C \norm{f}^2_{L^2_{\gamma/2}},
\end{align*}
where 
\begin{align*}			
  J^\gamma_1(f)
 =\int_{\R^6}\int_{\Ss^2} b(\cos\theta)|v-v_*|^\gamma \mu_* (f(v')-f(v))^2 \dsigma\dv_*\dv \,.
\end{align*}
Apparently, the linearized operator $L$ can no longer be used to control any moment gain. 
Therefore, in the commutator estimates for the collision operator with either weights or some pseudo-differential operators, the estimation becomes more subtle, especially in the strong singularity setting. For this, we will show that the functional $J_1^\gamma$ plays an important role. In fact, this function corresponds to the first component in the isotropic norm defined in \cite{AMUXY2011-CMP} in the setting with a Gaussian tail. Note that even though $J_1^\gamma(f)$ has a lower bound as $\|f\|^2_{H^2_{\gamma/2}}$, its upper bound in the Sobolev norm can only be shown as $\|f\|^2_{H^2_{\gamma/2+s}}$ because of the Laplacian operator on a sphere. Therefore, in the commutator estimates for the collision operator and some pseudo-differential operators given in Section 5, we will keep the precise form of $J_1^\gamma$ rather than using the usual weighted Sobolev norms. On the other hand, we would like to mention that for mild singularity, that is,
when $0<s<1/2$, using the lower bound in weighted
Sobolev norm as $\|f\|^2_{H^2_{\gamma/2}}$ is sufficient.

Now let us review some works related to our paper. First of all, many of the well-posedness theories on the Boltzmann equation
established so far are based on Grad's angular cutoff 
assumption.  For this, there is the classical work on the renormalized solutions developed by DiPerna-Lions \cite{DL} for large initial data with finite mass, energy and entropy. In the perturbative framework,
the pioneering work was obtained by Ukai \cite{Ukai} for  $L^\infty$-solutions by using the spectrum of the linearized operator and a bootstrapping argument following the local existence result by Grad, \cite{Grad}. And an $L^2$-framework by using the energy method and micro-macro decompositions was established in \cite{Guo,LY,LYY}.

Without Grad's angular cutoff assumption,  the spectrum of the linearized operator around a global
Maxwellian was studied by Pao \cite{Pao}
in 1970s.  Later, the existences of weak and analytic
(Gevrey)
solutions  were obtained by Arkeryd and Ukai
in 1980s repectively, cf. \cite{Arkeryd,Ukai-2}.

In 1990s,  P.-L. Lions used the entropy dissipation 
to show the gain of regularity:
$$
\|\sqrt{F}\|^2_{\dot{H}^\delta(|v|<M)} \le C_M\|F\|^\theta_{L^1}(
\|F\|_{L^1} +D(F))^{1-\theta},
$$
where
$$
D(F)=-\int_{\R^3} Q(F,F)\log F dv,
$$
for some  constants $0<\delta<\frac{s}{1+s}$ and $M>0$ and $0<\theta<1$.  
Around the same time, Desvillettes firstly proved the regularization of solutions to some simplified kinetic models.

In early 2000s, the regularization induced by the grazing collisions was analyzed by using the entropy production and it was developed by many people, including Alexandre, Bouchut, Desvillettes,
Golse, P.-L. Lions, Mouhot, Villani, Wennberg, cf. \cite{Villani2002} and the references therein. In particular, some elegant formula were obtained in the work by Alexandre-Desvillettes-Villani-Wennberg \cite{ADVW2000} such as the cancellation lemma. In addition,  it was proved that
$$
\|\sqrt{F}\|^2_{{H}^{s}(|v|<M)} \le C_{F,M}(
\|F\|^2_{L^1_2} +D(F)),
$$
which was later finalized in \cite[Corollary 2.4]{AMUXY-Kyoto-2012} in the precise form as
$$
\|\sqrt{F}\|^2_{{H}^{s}(\R^3)} \le C_F(
\|F\|^2_{L^1_{\gamma}} +D(F)).
$$

For the  well-posedness theories of the Boltzmann equation without an angular cutoff, the existence of renormalized solutions was
obtained by Alexandre-Villani in \cite{AV2002}. In 2011-12,  two different approaches were introduced by Gressman-Strain \cite{GS2011} and Alexandre-Morimoto-Ukai-Xu-Yang \cite{AMUXY2012JFA, AMUXY2011-AA} independently to obtain  the well-posedness theory for small perturbations of a global equilibrium state with Gaussian tails. The regularizing effect was also obtained in our previous works, cf. \cite{HMUY-2008,AMUXY2011}. Note that in the setting with a Gaussian tail decay, the well-posedness theories hold for both cases when the space variable is in torus and the whole space, because the self-adjoint linearized operator yields both gain of regularity and moments. However,  it remains open to establish $L^\infty$-solutions to the Boltzmann equation without an angular cutoff in an analog to Ukai's result on the angular cutoff Boltzmann equation. 

When the perturbation has only an algebraic decay in the velocity variable, there is a recent important progress made by Gualdani-Mischler-Mouhot in \cite{GMM} on the spectral gap of the linearized operator around a global Maxwellian. Their result leads to the well-posedness theory on various kinetic equations with algebraic-decay perturbations when the space variable is in a torus, an example of which is the cutoff Boltzmann equation. In fact, the spectral gap in both the velocity variable in $\R^3$ and the space variable in a torus was
obtained in \cite{MN} under the cutoff assumption by analyzing  the mixing between the convection and the coercivity in the velocity variable of the linearized operator. 

Without an angular cutoff, a well-posedness theory was recently obtained in \cite{HTT} for the case of the mild angular singularity where $0 < s < 1/2$. The main result of our paper gives a different approach to establish well-posedness that applies for both mild and strong angular singularity. There is also a recent work \cite{He} that gives a well-posedness theory using yet a third method. We would like to mention that the spectral gap of the linearized operator in both the velocity and space variables is essential in the analysis, so at this moment it is not known how to show the well-posedness in the whole space if only algebraic decay in the velocity variable is assumed. Furthermore, for the angular cutoff case with a Gaussian tail, the gain of moment implies that the case $\gamma+2s\ge 0$  corresponds to the hard potential. With an algebraic decay, we can only show the existence of a spectral gap with the condition $\gamma\ge 0$ rather than $\gamma+2s\ge 0$.

There are  three main components in our proof of the main well-posedness theorem. The first one is the gain of the moment due to the hard potential, with an error term of the same order of the moment as in the energy function. In this step, the term with a good sign leading to the gain of regularity due to the non-angular cutoff is simply neglected. Second, the gain of regularity is obtained by the standard coercivity estimate. It also produces an error which can be bounded by the moment estimate in the first step.  Finally, to control all the error generated in the first two steps, we apply the spectral gap of the linearized collision operator for solutions with an algebraic decay and study the semi-group operator as used in \cite{GMM}. In this last step, in order to deal with the strong singularity in the collision operator, we establish an estimate of the linearized equation with initial data in a Sobolev space with a negative index in the velocity variable. 

\subsection*{Function spaces} To define the function spaces considered in this paper, we introduce the linear operator
\begin{align} \label{def:L}
   \CalL = -v \cdot \nabla_x  + L \,.
\end{align}
Then the linearized equation for \eqref{eq:perturbation} is
\begin{align*}
    \del_t h = \CalL h \,,
\quad
   h|_{t=0} = h_0(x, v) \,.
\end{align*}
Let $\Semi$ be the associated semigroup on $L^2(\!\dv; H^2(\!\dx))$. Denote $W$ as the weight function such that
\begin{align} \label{def:W}
    W(v) 
   =\vint{v}^{m_0}, \quad m_0 > \max\{4s,1\} \,. 
\end{align}
Define a  function space
\begin{align*}
   Y_l
= \{f \in L^2(\!\dx \dv) \big| \, W^{l - |\alpha|} \del^\alpha_x f \in L^2(\!\dx\dv), \,\, \abs{\alpha} = 0, 1, 2\} \,,
\quad
   l >  2 \,.
\end{align*}
For some  $l_0 \in \NN$ to be specified later, as in
\cite{GMM}, define  a
 norm to cope with the spectral gap by 
\begin{align*}
    \Tnorm{h} = \vpran{\norm{h}_{Y_l}^2 + A \int_0^\infty \norm{\Semi(\tau)h}^2_{L^2(W^{l_0}\dv;H^2(\dx))} \dtau}^{1/2} \,,
\end{align*}
with $A$ being a large constant to be determined later. Here $h$ satisfies
\begin{align} \label{cond:orthog-1}
  \int_{\T^3} \int_{\R^3} h \mu \dv \dx
= \int_{\T^3} \int_{\R^3} v_i h \ mu\dv \dx
= \int_{\T^3} \int_{\R^3} |v|^2 h \mu \dv \dx
= 0 \,,
\qquad i = 1, 2, 3 \,.
\end{align}
Note that the integral in the definition of the  norm $\Tnorm{\cdot}$ is well-defined and
equivalent to the $\|\cdot\|_{Y_l}$-norm 
if  $\CalL$ has a spectral gap.

With these notations, we state the main theorems of this paper. The first one is the local existence result.
\begin{thm}
  Suppose $0 < s < 1$ and $0<\gamma\le 1$. Then there exists a sufficiently small constant $\epsilon_0>0$  such that  if $ f_0 \in Y_l$ with
\begin{equation*}
   \norm{f_0}_{Y_l} \le \epsilon_0,
\end{equation*}
then there exist constants $T,\,\epsilon_1>0$ such that the Cauchy problem \eqref{eq:perturbation} admits a  unique solution 
\begin{align} \label{bound:f-Y-l}
   &f \in  L^\infty([0,T];Y_l) \enskip \mbox{satisfying} \enskip \norm{f}_{L^\infty([0,T];Y_l)} \le \epsilon_1,
\end{align}
and
\begin{align} \label{def:A-l}
    A_l(f)  
\Denote 
   \int_0^T\sum_{\alpha} \norm{W^{l - |\alpha|}\del^{\alpha}_x f(\tau)}^2_{L^2(\dx; H^s_{\gamma/2}(\dv))}d\tau < \epsilon_1^2
	\,.
\end{align}
\end{thm}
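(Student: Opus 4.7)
The plan is a standard linear-iteration plus a priori energy estimate plus contraction scheme, with the non-cutoff and polynomial-decay features handled via the coercivity quoted in the introduction and the weighted commutator bounds of Section~5. First, I would construct a sequence $\{f^n\}_{n \ge 0}$ by setting $f^0 \equiv 0$ and defining $f^{n+1}$ as the solution of the linear Cauchy problem
\begin{equation*}
   \del_t f^{n+1} + v \cdot \Grad f^{n+1} = L f^{n+1} + Q(f^n, f^{n+1}),
\qquad
   f^{n+1}\big|_{t=0} = f_0.
\end{equation*}
For each fixed $f^n \in L^\infty([0,T]; Y_l)$ this is a linear kinetic equation; its solvability in $L^\infty([0,T]; Y_l)$ follows from a Galerkin or mollification scheme combined with the coercivity of $L$ and Cauchy--Schwarz control of the term $Q(f^n, \cdot)$ in terms of the uniform bound on $f^n$.

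Next, I would derive uniform-in-$n$ a priori bounds. Apply $W^{l-|\alpha|} \del_x^\alpha$ to the linear equation, pair with $W^{l-|\alpha|} \del_x^\alpha f^{n+1}$ in $L^2(\dx\dv)$, and sum over $|\alpha| \le 2$. The transport term vanishes on $\T^3$. The coercivity estimate quoted in the introduction gives, modulo weighted commutators,
\begin{equation*}
   \vint{L f^{n+1}, \, W^{2(l-|\alpha|)} \del_x^\alpha f^{n+1}}
\leq -c_0 \norm{W^{l-|\alpha|} \del_x^\alpha f^{n+1}}^2_{H^s_{\gamma/2}} + C \norm{W^{l-|\alpha|} \del_x^\alpha f^{n+1}}^2_{L^2_{\gamma/2}}.
\end{equation*}
The nonlinear contribution $\vint{Q(f^n, f^{n+1}), \, W^{2(l-|\alpha|)} \del_x^\alpha f^{n+1}}$ is controlled by the Section~5 commutator bounds together with Leibniz in $x$, yielding an estimate of the form $C \norm{f^n}_{Y_l} \vpran{\norm{f^{n+1}}^2_{Y_l} + \sum_\alpha \norm{W^{l-|\alpha|} \del_x^\alpha f^{n+1}}^2_{H^s_{\gamma/2}}}$. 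Under the induction hypothesis $\norm{f^n}_{L^\infty_t Y_l} \le \epsilon_1 \ll 1$, the regularity piece is absorbed by the dissipation, leaving
\begin{equation*}
    \frac{d}{dt} \norm{f^{n+1}}^2_{Y_l} + c_0' \sum_{|\alpha|\le 2} \norm{W^{l-|\alpha|} \del_x^\alpha f^{n+1}}^2_{H^s_{\gamma/2}} \leq C \norm{f^{n+1}}^2_{Y_l}.
\end{equation*}
Gronwall on $[0,T]$ with $T$ small and $\epsilon_1$ of order $\epsilon_0$ closes the induction and yields both \eqref{bound:f-Y-l} and \eqref{def:A-l} uniformly in $n$.

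Third, I would show that $\{f^n\}$ is Cauchy in a weaker norm. The difference $g^n := f^{n+1} - f^n$ satisfies
\begin{equation*}
    \del_t g^n + v \cdot \Grad g^n = L g^n + Q(f^n, g^n) + Q(g^{n-1}, f^n),
\qquad
    g^n|_{t=0} = 0.
\end{equation*}
Running the analogous energy estimate at a slightly lower weight, the smallness of $f^n$ controls the diagonal term while the off-diagonal contribution $Q(g^{n-1}, f^n)$ contributes at most $C T^{1/2} \epsilon_1$ times the corresponding norm of $g^{n-1}$. A further shrinkage of $T$ yields contraction with factor $1/2$. The limit $f$ inherits \eqref{bound:f-Y-l} by weak-$\ast$ lower semicontinuity and \eqref{def:A-l} by Fatou, and uniqueness follows from the same difference estimate applied to two candidate solutions.

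The principal obstacle is the weighted commutator step. Because the coercivity of $L$ gains only regularity and not moments, the commutator of $Q$ with the polynomial velocity weight $W^{l-|\alpha|}$ cannot be absorbed by the regularity dissipation alone. In the strong singularity regime $s \ge 1/2$ one must retain the anisotropic functional $J_1^\gamma$ on the dissipation side, rather than using its coarser Sobolev upper bound $\norm{\cdot}^2_{H^s_{\gamma/2+s}}$, to obtain a commutator bound that is compatible with the available moment level $m_0 > \max\{4s,1\}$. These are exactly the estimates singled out in the introduction as the content of Section~5; once granted, the local existence statement reduces to the iteration above.
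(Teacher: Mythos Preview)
Your iteration scheme and overall architecture match the paper's, but the energy inequality you write down cannot be obtained from the ingredients you cite, and this is a genuine gap rather than a cosmetic one.

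The coercivity you quote from the introduction gives only
\[
\vint{Q(\mu,f),\, W^{2(l-|\alpha|)}\del_x^\alpha f}
\le -c_0 \norm{W^{l-|\alpha|}\del_x^\alpha f}^2_{H^s_{\gamma/2}}
+ C\norm{W^{l-|\alpha|}\del_x^\alpha f}^2_{L^2_{\gamma/2}} .
\]
The error carries an extra $\gamma/2$ moment that is \emph{not} bounded by $\norm{f}_{Y_l}^2$, and the constant $C$ is not small, so it cannot be absorbed into the $H^s_{\gamma/2}$ dissipation either. The same $L^2_{\gamma/2}$ weight appears, with an $O(1)$ coefficient, in the weighted estimate for $Q(\cdot,\mu)$ (Proposition~3.3) and in the weight commutator (Proposition~3.1). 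With only the regularity dissipation, you therefore cannot pass from your displayed coercivity to the clean inequality
$\tfrac{d}{dt}\norm{f^{n+1}}_{Y_l}^2 + c_0'\sum\norm{\cdot}^2_{H^s_{\gamma/2}}\le C\norm{f^{n+1}}_{Y_l}^2$.
What the paper does (Proposition~3.2) is to derive a \emph{second} estimate directly from the hard-potential structure: writing $|f||f'|\vint{v'}^k\vint{v}^k\cos^k\tfrac\theta2 - |f|^2\vint{v}^{2k}\le\tfrac12((|f'|\vint{v'}^k)^2\cos^{2k}\tfrac\theta2 - |f|^2\vint{v}^{2k})$ and applying a regular change of variables produces the moment dissipation
$-\gamma_0\norm{W^{l-|\alpha|}\del_x^\alpha f}^2_{L^2_{\gamma/2}}$
with an error that is merely $C\norm{W^{l-|\alpha|}\del_x^\alpha f}^2_{L^2}$ (no extra weight). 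The two estimates are then combined with a small parameter $\delta_2$ so that both dissipations survive. This moment gain is indispensable here precisely because, in the polynomial-tail setting, $L$ does not by itself control moments.

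Two smaller points. First, the commutator bounds you need for this step are those of Section~3 (the $[\vint{v}^k,Q]$ estimates of Proposition~3.1 and the $x$-derivative commutators of Proposition~3.4); Section~5 treats commutators with the pseudo-differential operator $\CalM$ and is used only for the semigroup regularization entering the \emph{global} argument. Second, the contraction in the difference estimate is not driven by $T^{1/2}$: the leading cross term from $Q(g^{n-1},\mu+f^n)$ carries the explicit small coefficient
$\delta_4 = \tfrac{4}{\gamma_0}\int_{\Ss^2} b(\cos\theta)\sin^{m_0 l - 3 - \gamma/2}\tfrac\theta2\,d\sigma$,
which is small because $l$ is large, and the paper's contraction condition is $4\delta_4 + 2C_l\epsilon_1^2<1$, not a time-smallness condition.
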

Note that in the setting of this paper, 
the smallness assumption on the perturbation is needed even for local existence. The next  result is about the global existence and large time behaviour of the solution.
\begin{thm}
	Suppose $0 < s < 1$ and $0<\gamma\le 1$.  For some $l$ being suitably large and $\Eps_0 > 0$ small enough,  if 
	$F^{in} = \mu + f^{in} \geq 0$ satisfies
	\begin{align*}
	\norm{f^{in}}_{Y_l} < \Eps_0 \,,
	\qquad
	\int_{\T^3} \int_{\R^3} f^{in} \phi(v) \dv \dx = 0 
	\end{align*}
	for any $\phi \in \Null (L) = \Span \left\{\mu, \,\, \mu \, v, \,\, \mu \, |v|^2 \right\}$, then the non-cutoff Boltzmann equation has a unique non-negative solution $F\in L^\infty([0, \infty), Y_l)$ such that
\begin{align*}	
   \norm{F-\mu}_{Y_l}\le c e^{-\lambda t} \norm{f^{in}}_{Y_l}, 
\end{align*}
holds for some constants $\lambda>0$ and $c>0$.
\end{thm}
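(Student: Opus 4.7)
The plan is to establish an \emph{a priori} energy estimate for $\Tnorm{f}$ along solutions of \eqref{eq:perturbation}, close it by smallness, and extend the local solution provided by the local existence theorem to $[0,\infty)$ by continuation. Three ingredients are combined: moment propagation from the hard-potential factor $|v-v_\ast|^\gamma$, the regularity gain from the coercivity of $L$ recalled in the introduction, and the spectral gap of $\CalL$ on the orthogonal complement of $\Null(L)$, which provides the exponential decay rate.

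First I would differentiate $\|f\|_{Y_l}^2$ in time. For each $|\alpha|\le 2$, apply $\partial^\alpha_x$ to \eqref{eq:perturbation} and test with $W^{2(l-|\alpha|)}\partial^\alpha_x f$; the transport term integrates to zero on $\T^3$. The linear contribution $\vint{L\partial^\alpha_x f,\,W^{2(l-|\alpha|)}\partial^\alpha_x f}$ is analyzed by pushing the weight inside $Q(\mu,\cdot)$, producing the coercive piece together with a commutator $[W^{l-|\alpha|},\,Q(\mu,\cdot)]$. This commutator is the delicate point: in the strong singularity regime it loses $2s$ velocity derivatives, and must be controlled by the precise form of $J_1^\gamma(f)$ rather than by its crude upper bound $\|f\|^2_{H^s_{\gamma/2+s}}$, together with the pseudo-differential operator announced in Section~5. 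The trilinear term $\vint{Q(f,f),\,W^{2(l-|\alpha|)}\partial^\alpha_x f}$ and the $Q(\cdot,\mu)$ contribution are bounded by a product of $\|f\|_{Y_l}$ and the integrand $\CalD(f(t))$ of \eqref{def:A-l}, hence absorbed under smallness. Summing over $|\alpha|\le 2$ delivers, schematically,
\begin{align*}
\frac{d}{dt}\|f\|_{Y_l}^2 + c_1\,\CalD(f(t))
\,\le\, C\,\|f\|^2_{L^2(W^{l_0}\dv;\,H^2(\dx))}
 + C\,\|f\|_{Y_l}\,\CalD(f(t)),
\end{align*}
leaving a low-weight remainder on the right that dissipation alone cannot absorb.

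This is where the semigroup contribution in $\Tnorm{\cdot}$ enters, following the enlargement strategy of \cite{GMM}. Since the orthogonality relations \eqref{cond:orthog-1} are preserved by the flow, $f(t)$ remains in $\Null(L)^\perp$, on which $\CalL$ has a spectral gap on $L^2(W^{l_0}\dv;\,H^2(\dx))$ by a non-cutoff adaptation of the GMM enlargement theorem. Combining this with the Duhamel identity for $\partial_t f = \CalL f + Q(f,f)$ yields
\begin{align*}
\frac{d}{dt}\int_0^\infty \|\Semi(\tau) f(t)\|^2_{L^2(W^{l_0}\dv;\,H^2(\dx))}\dtau
\,\le\, -\|f(t)\|^2_{L^2(W^{l_0}\dv;\,H^2(\dx))} + C\,\|f\|_{Y_l}\,\CalD(f(t)).
\end{align*}
Choosing $A$ large in the definition of $\Tnorm{\cdot}$, the first term on the right absorbs the bad low-weight remainder in the $Y_l$ estimate, producing
\begin{align*}
\frac{d}{dt}\Tnorm{f}^2 + 2\lambda\,\Tnorm{f}^2
\,\le\, C\bigl(\Tnorm{f}+\Tnorm{f}^2\bigr)\,\CalD(f(t)).
\end{align*}
Taking $l$ large enough to guarantee the spectral gap and the moment gain, a standard bootstrap closes this inequality for $\Tnorm{f^{in}}$ small, yielding $\Tnorm{f(t)} \le c\,e^{-\lambda t}\,\Tnorm{f^{in}}$. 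Equivalence of $\Tnorm{\cdot}$ and $\|\cdot\|_{Y_l}$ on $\Null(L)^\perp$ then gives the stated decay for $F-\mu$. Nonnegativity of $F$ is inherited from $F^{in}\ge 0$ by uniqueness in the class furnished by the local theorem.

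The main obstacle is the weighted commutator estimate for $Q(\mu,\cdot)$ in the strong singularity case $s\ge 1/2$, where the naive upper bound on $J_1^\gamma(f)$ would cost a factor $\vint{v}^s$ unrecoverable from $Y_l$. This forces the introduction of the pseudo-differential operator, together with a regularizing estimate for $\Semi$ acting on data in a negative-index velocity Sobolev space, as flagged in the last paragraph of the introduction. Once these estimates are in hand, verifying the spectral gap of $\CalL$ on $L^2(W^{l_0}\dv;\,H^2(\dx))$ and the equivalence $\Tnorm{\cdot}\sim\|\cdot\|_{Y_l}$ on $\Null(L)^\perp$ becomes a routine adaptation of \cite{GMM}.
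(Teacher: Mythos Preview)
Your overall architecture matches the paper's exactly: differentiate $\|f\|_{Y_l}^2$, combine moment damping and $H^s_{\gamma/2}$ coercivity to produce dissipation $\CalD(f)$ plus a low-weight remainder, then kill that remainder with the semigroup functional $\int_0^\infty\|\Semi(\tau)f\|^2\,d\tau$, whose time derivative yields exactly $-\|W^{l_0}f\|^2_{L^2(\dv;H^2(\dx))}$ from the linear part and a term $\vint{\Semi(\tau)f,\Semi(\tau)Q(f,f)}$ from the nonlinear part. After choosing $A$ large and using smallness, the paper obtains the closed estimate~\eqref{est:full-energy}, and the decay follows since the surviving dissipation dominates $\Tnorm{f}^2$.

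One misattribution should be corrected. The weighted commutator $[W^{l-|\alpha|},Q(\mu,\cdot)]$ is \emph{not} where the pseudo-differential operator or the functional $J_1^\gamma$ enter. That commutator (Proposition~\ref{prop:trilinear-weight}) is handled by the expansion of $\vint{v'}^k-\vint{v}^k$ in Lemma~\ref{lem:diff-v-k}, a symmetry cancellation on the sphere (the observation $\Gamma_{1,1}^{(1)}=0$), and an interpolation that lands in $H^{s'}_{\gamma'/2}$ with $s'<s$, $\gamma'<\gamma$; there is no $2s$-derivative loss to repair. The operator $\CalM$ and the precise form of $J_1^\gamma$ are used instead in Section~\ref{sec:regularization} to prove the smoothing Theorem~\ref{thm:weighted-H-s-L-2}, namely that $\Semi(\tau)$ maps negative-index data $H^{-s}(W^{l_0}\!\dv)$ into $L^2(W^{l_0}\!\dv)$. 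This is exactly what allows you to bound $\int_0^\infty\vint{\Semi(\tau)f,\Semi(\tau)Q(f,f)}\,d\tau$ in the semigroup step, since the trilinear estimate only places $Q(f,f)$ in $H^{-s}$. So the tools you flagged are the right ones, but they plug into Step~2, not Step~1.

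A minor point on closing: your displayed inequality $\frac{d}{dt}\Tnorm{f}^2+2\lambda\Tnorm{f}^2\le C(\Tnorm{f}+\Tnorm{f}^2)\CalD(f)$ does not close as written, because $\CalD(f)$ is not controlled by $\Tnorm{f}^2$. You should retain a portion of $c_1\CalD(f)$ on the left (it is there from your first displayed inequality); then smallness of $\Tnorm{f}$ absorbs the right-hand side, and the remaining $\frac{c_1}{2}\CalD(f)\ge c\Tnorm{f}^2$ gives the decay.
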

 
\begin{rmk}
	The decay rate $\lambda$ of the perturbation can be made more precise as in \cite{GMM,HTT}. In particular, it can be chosen as any constant less thanthe spectral gap of the linearized operator $\mathcal{L}$. Since the idea is similar as in~\cite{GMM,HTT}, we will not elaborate on it in this paper.
\end{rmk}

The rest of paper is arranged as follows. In the next section, we will give
some preliminary estimates for later use. Bounds related to the collision operator will be given in the Section 3. The spectral gap without an angular cutoff in the algebraic decay function
space is given in Section 4. In Section 5, we prove a precise
regularization estimate of the linearized collision operator with
initial data in a Sobolev space with a negative index. The closed-form energy estimate will then be given in Section 6 and the proof of local and global existence with uniqueness and non-negativity will be given in the last section. Finally, in the Appendix we give some basic  estimates
about some differential operators and estimates related to the functional $J^{\gamma}_1(f)$.


\section{Some useful estimates} \label{sec:propositions}
In this section, we list some useful estimates that are needed for later estimation. 
For this, we introduce the notation
\begin{align*}
   \norm{g}_{H^\beta_k(\!\dv)}
= \norm{\vint{v}^k g}_{H^\beta(\!\dv)} \,,
\qquad
   \beta \in \R \,.
\end{align*}

The first proposition is about the equivalence of weight and differential operators up to commutation.

\begin{prop} [\cite{HMUY-2008}] \label{prop:equivalence}
Suppose $\alpha, \theta > 0$. Then there exists a generic constant $C$ independent of $f$ such that
\begin{align*}
   \frac{1}{C} \norm{\vint{D_v}^\theta \vint{v}^\alpha f}_{L^2(\!\dv)}
\leq
    \norm{\vint{v}^\alpha \vint{D_v}^\theta f}_{L^2(\!\dv)}
\leq
   C \norm{\vint{D_v}^\theta \vint{v}^\alpha f}_{L^2(\!\dv)} \,,
\end{align*}
that is, the above two norms are equivalent.
\end{prop}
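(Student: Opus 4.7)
The plan is to reduce this norm equivalence to a commutator estimate, exploiting the fact that $[\langle v\rangle^\alpha, \langle D_v\rangle^\theta]$ is of strictly lower order than either composition factor in both the $v$ and $D_v$ variables. First, I would observe that the Fourier transform is an $L^2$-isometry that conjugates multiplication by $\langle v\rangle$ into the Fourier multiplier $\langle D_v\rangle$ and vice versa. Hence the two inequalities are equivalent, and it suffices to prove only one direction, say
\begin{equation*}
\|\langle v\rangle^\alpha \langle D_v\rangle^\theta f\|_{L^2} \leq C \|\langle D_v\rangle^\theta \langle v\rangle^\alpha f\|_{L^2}.
\end{equation*}
Writing $\langle v\rangle^\alpha \langle D_v\rangle^\theta = \langle D_v\rangle^\theta \langle v\rangle^\alpha + [\langle v\rangle^\alpha, \langle D_v\rangle^\theta]$ and applying the triangle inequality then reduces matters to an $L^2$-estimate on the commutator applied to $f$.

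The key intermediate step is to show
\begin{equation*}
\|[\langle v\rangle^\alpha, \langle D_v\rangle^\theta] f\|_{L^2} \leq C \|\langle v\rangle^{\alpha-1} \langle D_v\rangle^{\theta-1} f\|_{L^2}.
\end{equation*}
This follows from pseudodifferential calculus applied to the symbols $a(v)=\langle v\rangle^\alpha$ and $b(\xi)=\langle\xi\rangle^\theta$, both of which belong to the Weyl--H\"{o}rmander class associated with the slowly-varying admissible metric $g=|dv|^2/\langle v\rangle^2+|d\xi|^2/\langle\xi\rangle^2$. In this calculus the principal symbol of the commutator is proportional to the Poisson bracket $\{a,b\}=-\nabla_v a\cdot \nabla_\xi b$, which carries weight $\langle v\rangle^{\alpha-1}\langle\xi\rangle^{\theta-1}$, and the full asymptotic expansion lies in strictly lower classes. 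The $L^2$-boundedness assertion then follows from the Calder\'{o}n--Vaillancourt theorem after factoring out the weights $\langle v\rangle^{\alpha-1}$ and $\langle D_v\rangle^{\theta-1}$.

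To conclude, I would iterate: since $\alpha,\theta>0$, applying the commutator bound finitely many times (roughly $\lceil\min(\alpha,\theta)\rceil$ iterations) reduces the right-hand side to $\|f\|_{L^2}$, which is trivially dominated by $\|\langle D_v\rangle^\theta\langle v\rangle^\alpha f\|_{L^2}$. Alternatively, one can use small-parameter Young's inequalities at each step and absorb the lower-order correction into the leading norm on the left, closing the estimate without induction on the exponents.

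The principal difficulty is giving a rigorous derivation of the commutator bound for non-integer $\alpha,\theta$. The Weyl--H\"{o}rmander route above is conceptually clean but invokes substantial machinery; a more elementary alternative is to represent $\langle D_v\rangle^\theta=(I-\Delta_v)^{\theta/2}$ as a Bessel potential convolution with an explicit radial kernel $G_\theta(v-v')$, use a first-order Taylor expansion $\langle v\rangle^\alpha-\langle v'\rangle^\alpha = (v-v')\cdot\int_0^1 \nabla\langle v'+t(v-v')\rangle^\alpha\, dt$, and bound the resulting weighted convolution by Young's inequality combined with pointwise decay estimates on $G_\theta$ and its gradients. This second route avoids the symbolic calculus but demands careful kernel bookkeeping to extract the gain of one power in each of $\langle v\rangle$ and $\langle D_v\rangle$.
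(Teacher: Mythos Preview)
The paper does not supply its own proof of this proposition; it is simply quoted from \cite{HMUY-2008} and used as a tool. So there is no in-paper argument to compare against. Your strategy is a standard and correct one: the Fourier-transform duality reducing the two inequalities to a single one is a nice observation (it swaps the roles of $\alpha$ and $\theta$, so proving one direction for all $\alpha,\theta>0$ yields the other), and the commutator gain of one order in each of $\langle v\rangle$ and $\langle D_v\rangle$ is exactly the right mechanism. The Weyl--H\"ormander calculus with metric $g=\langle v\rangle^{-2}\,dv^2+\langle\xi\rangle^{-2}\,d\xi^2$ handles the non-integer exponents cleanly; this is in the same spirit as the pseudodifferential commutator manipulations the paper itself performs later (e.g.\ Lemma~\ref{Lemma 2.1} and Corollary~\ref{cor-5-7}).

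One small point to tighten: in your iteration step, after bounding the commutator by $C\|\langle v\rangle^{\alpha-1}\langle D_v\rangle^{\theta-1}f\|_{L^2}$, you still have the weights in the ``wrong'' order, so to close you must either (i) iterate again on this term, or (ii) argue directly that $\|\langle D_v\rangle^{\theta'}\langle v\rangle^{\alpha'}f\|_{L^2}\le C\|\langle D_v\rangle^{\theta}\langle v\rangle^{\alpha}f\|_{L^2}$ whenever $\theta'\le\theta$ and $\alpha'\le\alpha$. The latter is itself a commutator statement of the same type (that $\langle D_v\rangle^{\theta}\langle v\rangle^{-1}\langle D_v\rangle^{-\theta}$ is $L^2$-bounded), so the cleanest way to avoid circularity is to phrase the whole thing as: both compositions are elliptic pseudodifferential operators with the same principal symbol $\langle v\rangle^{\alpha}\langle\xi\rangle^{\theta}$ in the $S(\langle v\rangle^{\alpha}\langle\xi\rangle^{\theta},g)$ class, hence each admits a parametrix, and composing one with the parametrix of the other gives an $L^2$-bounded operator by Calder\'on--Vaillancourt. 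This avoids the iteration bookkeeping entirely.
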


The second proposition is the trilinear estimate for hard potential with non-cutoff cross section.

\begin{prop} [\cite{AMUXY2011, MS2016}] \label{prop:trilinear}
Denote $a^+ = \max\{a, 0\}$. Then the bilinear operator $Q$ satisfies
\begin{align*}
   \abs{\int_{\R^3} Q(f, g) h \dv}
\leq
  C \vpran{\norm{f}_{L^1_{(m - \gamma/2)^++\gamma + 2s}}
  + \norm{f}_{L^2}}
     \norm{g}_{H^{s+\sigma}_{\gamma/2 + 2s + m}}
     \norm{h}_{H^{s-\sigma}_{\gamma/2 - m}} 
\end{align*}
for any $\sigma \in [\min\{s-1, -s\}, s]$, $m, \gamma, s \geq 0$.
Here,  $f, g, h$  are any functions so that the corresponding
norms are well-defined.  The constant $C$ is independent of $f, g, h$. 
\end{prop}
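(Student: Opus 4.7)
The plan is to prove the estimate by a dyadic decomposition of the angular kernel $b$ combined with the standard ``derivative transfer/cancellation'' identity, following the strategy of Alexandre--Morimoto--Ukai--Xu--Yang. First I would decompose $b(\cos\theta) = \sum_{k\ge 0} b_k(\cos\theta)$, with each $b_k$ smoothly supported in $\theta \sim 2^{-k}$, so that $\int_{\Ss^2} b_k(\cos\theta) \theta^\alpha \dsigma \lesssim 2^{(2s - \alpha)k}$ for $\alpha \ge 0$. Writing $Q = \sum_k Q_k$, it suffices to bound each piece with a factor summable in $k$, which corresponds to distributing a total of $2s$ angular derivatives between $g$ and $h$.

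For each $k$, I would use the splitting
\begin{align*}
   f_*' g' - f_* g = f_*(g' - g) + (f_*' - f_*) g',
\end{align*}
producing two pieces. The first piece carries the main singular integral: after the pre-post change of variables and an application of Bobylev--Plancherel in the $g, h$ variables, one can move $s+\sigma$ derivatives onto $g$ (the $H^{s+\sigma}$ norm) and $s-\sigma$ derivatives onto $h$ (the $H^{s-\sigma}$ norm), with $f_*$ playing the role of a convolution kernel estimated in $L^1$ against a velocity weight. The constraint $\sigma \in [\min\{s-1,-s\}, s]$ is exactly the range in which this derivative distribution is compatible with the $\theta^{-1-2s}$ singularity and yields a dyadically summable bound. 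On the second piece, the cancellation lemma of Alexandre--Desvillettes--Villani--Wennberg identifies $\int b_k |v-v_*|^\gamma (f_*'-f_*)\dsigma$ as a smooth convolution of $f$, reducing the contribution to a bilinear form $\int |v-v_*|^\gamma f_* g h \dv_* \dv$ that is estimated directly by the $L^1$ or $L^2$ norm of $f$ against $\norm{g}_{L^2} \norm{h}_{L^2}$, which is in turn absorbed into the stated Sobolev norms.

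For the weights, I would use $\vint{v}^m \leq C \vint{v-v_*}^m \vint{v_*}^m$ and the pre-post invariance $\vint{v'}^2 + \vint{v_*'}^2 = \vint{v}^2 + \vint{v_*}^2$ to account for the migration of the $m$-weight across the collision transform. The exponent $(m - \gamma/2)^+ + \gamma + 2s$ on $f$ arises by absorbing: the $\gamma/2$-weight not carried by $g$ or $h$, an extra $\gamma$ from $|v-v_*|^\gamma \leq \vint{v}^\gamma \vint{v_*}^\gamma$, and $2s$ from the weight commutations generated by the $H^{s\pm\sigma}$ norms together with the weight/derivative equivalence in Proposition 2.1.

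The main obstacle is the bookkeeping of weights when $m > \gamma/2$, where the portion of the weight that cannot be balanced between $g$ and $h$ must be transferred onto $f$; this is precisely why the exponent takes the form $(m-\gamma/2)^+$ rather than simply $m$. A secondary difficulty is ensuring dyadic summability at the boundary cases $\sigma = \pm s$, where one of the Sobolev indices collapses to $0$ or $2s$ and the naive Bobylev estimate just fails to converge; this is handled by a supplementary splitting into $\theta < 2^{-k}$ and $\theta \geq 2^{-k}$ within each dyadic piece, treating the former by Taylor expansion and the latter by $L^2$ arguments, so that $b_k$ is effectively replaced by a slightly regularized kernel that absorbs the logarithmic losses.
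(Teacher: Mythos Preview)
The paper does not prove this proposition. It is stated in Section~2 (``Some useful estimates'') with a citation to \cite{AMUXY2011, MS2016} and is used throughout as a black box; no argument for it appears anywhere in the paper. So there is no ``paper's own proof'' to compare your proposal against.

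Your outline is broadly consistent with the strategy in the cited references: the weak formulation, a dyadic/angular decomposition, Bobylev's identity to move derivatives between $g$ and $h$, and the cancellation lemma for the $(f'_\ast - f_\ast)$ piece are indeed the ingredients. That said, as written it is a plan rather than a proof; in particular the derivation of the precise weight exponent $(m-\gamma/2)^+ + \gamma + 2s$ and the handling of the full range $\sigma \in [\min\{s-1,-s\}, s]$ require considerably more care than your sketch indicates (the endpoint summability issue you flag is real, and the actual arguments in \cite{AMUXY2011} and \cite{MS2016} handle it by more delicate Littlewood--Paley analysis in both $v$ and $v_\ast$ rather than a simple supplementary $\theta$-splitting). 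If you need the full details you should consult those references directly; for the purposes of this paper the estimate is simply quoted.
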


In later analysis, we often use two types of change of variables given in 

\begin{prop}[\cite{ADVW2000}] \label{prop:change-variable} 
Suppose $f$ is smooth enough such that the integrals below are well-defined. Then

\Ni (a) (Regular change of variables)
\begin{align*}
   \int_{\R^3} \int_{\Ss^2} b(\cos\theta) |v-v_\ast|^\gamma f(v') \dsigma \dv 
= \int_{\R^3} \int_{\Ss^2} b(\cos\theta) \frac{1}{\cos^{3+\gamma}(\theta/2)}|v-v_\ast|^\gamma f(v) \dsigma \dv \,.
\end{align*}

\Ni (b) (Singular change of variables)
\begin{align*}
   \int_{\R^3} \int_{\Ss^2} b(\cos\theta) |v-v_\ast|^\gamma f(v') \dsigma \dv_\ast 
= \int_{\R^3} \int_{\Ss^2} b(\cos\theta) \frac{1}{\sin^{3+\gamma}(\theta/2)}|v-v_\ast|^\gamma f(v_\ast) \dsigma \dv_\ast \,.
\end{align*}
\end{prop}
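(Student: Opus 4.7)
My plan is to prove both (a) and (b) by direct Jacobian computations, following the strategy of \cite{ADVW2000}.

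For part (a), I would fix $v_*$ and $\sigma$ and perform the change of variable $v \mapsto v'$ inside the $v$-integral. Writing $\hat u = (v-v_*)/|v-v_*|$, the differential of $v \mapsto v' = \frac{v+v_*}{2} + \frac{|v-v_*|}{2}\sigma$ equals $\frac{1}{2}(I + \sigma \otimes \hat u)$. I would then compute its determinant by a spectral analysis of $I + \sigma \otimes \hat u$: the eigenvalues are $1 + \hat u\cdot\sigma = 2\cos^2(\theta/2)$ (along $\sigma$) and $1$ with multiplicity two (on $\hat u^\perp$), giving Jacobian $\cos^2(\theta/2)/4$. The geometric identities $v' - v_* = \frac{|v-v_*|}{2}(\hat u + \sigma)$ and $|v'-v_*| = |v-v_*|\cos(\theta/2)$, together with the fact that the angle between $\sigma$ and $v'-v_*$ is $\theta/2$, would then be substituted to rewrite the integrand in the new variable, and the collected $\cos(\theta/2)$ contributions would produce the power $3+\gamma$ stated.

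For part (b), I would carry out an exactly parallel computation with $v_* \mapsto v'$ at fixed $v$ and $\sigma$: the differential becomes $\frac{1}{2}(I - \sigma \otimes \hat u)$, with determinant $\sin^2(\theta/2)/4$, and the companion identity $|v' - v| = |v-v_*|\sin(\theta/2)$ follows from $v' - v = \frac{|v-v_*|}{2}(\sigma - \hat u)$. Tracking the sine factors in the same way would yield $\sin^{-(3+\gamma)}(\theta/2)$. The label \emph{singular} reflects the blow-up of this factor as $\theta \to 0$, exactly the grazing regime where $b(\cos\theta)$ is non-integrable.

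The main obstacle I anticipate is the careful bookkeeping of how the angular parameter $\theta$ is reinterpreted under each change, since the single power $3+\gamma$ only emerges from the combined contributions of the Jacobian, the transformation of $|v-v_*|^\gamma$, and the angular measure $d\sigma$; I would also verify that the image of each map covers its target up to measure zero, so that no boundary contribution is missed.
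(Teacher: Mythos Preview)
The paper does not supply its own proof of this proposition; it is quoted directly from \cite{ADVW2000}. Your plan reproduces exactly the argument given there, so there is nothing in the paper to compare against beyond the citation.

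Your Jacobian computations are correct: for (a), $\det\tfrac{\partial v'}{\partial v}=\tfrac{1}{8}\det(I+\sigma\otimes\hat u)=\tfrac{1}{8}(1+\hat u\cdot\sigma)=\tfrac{\cos^2(\theta/2)}{4}$, and for (b) the analogous computation with $I-\sigma\otimes\hat u$ gives $\tfrac{\sin^2(\theta/2)}{4}$. The obstacle you anticipate is indeed the heart of the matter: after the substitution $v\to v'$ (resp.\ $v_*\to v'$), the angle between the new relative vector and $\sigma$ is $\theta/2$ (resp.\ $\tfrac{\pi}{2}-\tfrac{\theta}{2}$), so one must reparametrize the $\sigma$-integral accordingly; it is this angular change, combined with the Jacobian $\cos^{-2}$ (resp.\ $\sin^{-2}$) and the rescaling $|v-v_*|^\gamma=|v'-v_*|^\gamma\cos^{-\gamma}(\theta/2)$ (resp.\ $|v'-v|^\gamma\sin^{-\gamma}(\theta/2)$), that assembles the power $3+\gamma$. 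The image of each map is the half-space $\{(v'-v_*)\cdot\sigma>0\}$ (resp.\ $\{(v'-v)\cdot\sigma>0\}$), which under the angular reparametrization corresponds exactly to the full $\theta$-range on the right-hand side, so no mass is lost.
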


The proof of the main theorems relies on estimates of the
solution in some weighted Sobolev spaces. For this, we need to consider
the difference of the weight before and after collision, in particular, to seek for the cancellation of the angular singularity. Additional symmetry is also needed for strong singularity. To this end, we establish a technical lemma about the difference of the weights that is essential for the analysis. First, note that
\begin{align} 
   |v'|^{2} 
 = |v|^{2}\cos^{2}\frac{\theta}{2} + |v_{*}|^{2}\sin^{2}\frac{\theta}{2}+2\cos\frac{\theta}{2}\sin\frac{\theta}{2}|v-v_*| v\cdot\omega,
\label{formula:classical-1}
\end{align}
and
\begin{align}
  \vint{v'}^{2} 
= \vint{v}^{2}\cos^{2}\frac{\theta}{2} 
     + \vint{v_\ast}^{2} \sin^{2}\frac{\theta}{2}
     + 2\cos\frac{\theta}{2}\sin\frac{\theta}{2}|v-v_*| v\cdot\omega,
\label{formula:classical-2}
\end{align}
where $\omega = \frac{\sigma - (\sigma \cdot k) k}{\abs{\sigma - (\sigma \cdot k) k}}$ with $k = \frac{v - v_\ast}{|v - v_\ast|}$. Here $\omega$ satisfies that $\omega \perp (v - v_\ast)$.

\begin{rmk} \label{rmk:v-v-ast-omega}
Since $\omega \perp (v - v_\ast)$, we have $v \cdot \omega = v_\ast \cdot \omega$. Hence, we have the freedom to choose
when to use $v \cdot \omega $ or $v_\ast \cdot \omega$ in later  estimates. 
\end{rmk}

\begin{lem} \label{lem:diff-v-k}
Suppose $k > 3$ and $(v, v_\ast), (v', v'_\ast)$ are the velocity pairs before and after the collision. Let $\omega$ be the same vector as in~\eqref{formula:classical-1} and~\eqref{formula:classical-2}
Then,
\begin{align}\label{precise}
\langle v' \rangle^{2k} - \langle v \rangle^{2k}  
&=   2k \langle v \rangle^{2k-2}  |v-v_*| \big (v \cdot \omega \big)\cos^{2k-1} \tfrac{\theta}{2}  \sin\tfrac{\theta}{2} 
+ \vint{v_\ast}^{2k}\sin^{2k} \tfrac{\theta}{2}  + \mathfrak{R}_1 + \mathfrak{R}_2 + \mathfrak{R}_3 \,,
\end{align} 
where there exists a constant $C_k$ only depending on $k$ such that
\begin{align} \label{bound:mathfrak-R-2-3}
  \abs{\mathfrak{R}_1}
\leq
  & C_k \vint{v} \vint{v_\ast}^{2k-1} \sin^{2k-3} \tfrac{\theta}{2},
\qquad
  \abs{\mathfrak{R}_2}
\leq
  C_k \vint{v}^{2k-2} \vint{v_\ast}^2 \sin^2\tfrac{\theta}{2},
\quad \,
  \abs{\mathfrak{R}_3}
\leq
  C_k \vint{v}^{2k-4} \vint{v_\ast}^4 \sin^2\tfrac{\theta}{2}.
\end{align}
\end{lem}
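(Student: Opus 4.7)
The plan is to derive \eqref{precise} and \eqref{bound:mathfrak-R-2-3} by a direct expansion of $\vint{v'}^{2k}$ starting from the identity \eqref{formula:classical-2}, isolating the two explicit terms and then organising the residue into the three $\mathfrak{R}_i$.

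Setting $a=\vint{v}^2$, $b=\vint{v_\ast}^2$, $c=\cos(\theta/2)$, $s=\sin(\theta/2)$, and $u=|v-v_\ast|(v\cdot\omega)$, the identity \eqref{formula:classical-2} reads $\vint{v'}^2=ac^2+bs^2+2ucs$. I would apply Taylor's theorem with integral remainder to $y\mapsto y^k$ at the base point $y_0=ac^2$,
\begin{align*}
    \vint{v'}^{2k}=(ac^2)^k+k(ac^2)^{k-1}(bs^2+2ucs)+k(k-1)\int_0^1(1-t)\bigl(ac^2+t(bs^2+2ucs)\bigr)^{k-2}\,dt\cdot(bs^2+2ucs)^2.
\end{align*}
The linear piece immediately yields $2k\vint{v}^{2k-2}|v-v_\ast|(v\cdot\omega)\cos^{2k-1}(\theta/2)\sin(\theta/2)$, matching the leading term of \eqref{precise}, together with a $k\vint{v}^{2k-2}\vint{v_\ast}^2\cos^{2k-2}(\theta/2)\sin^2(\theta/2)$ piece of $\mathfrak{R}_2$-type. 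The explicit $\vint{v_\ast}^{2k}\sin^{2k}(\theta/2)$ term in \eqref{precise} is the pure $b^ks^{2k}$ monomial hidden in the second-order remainder; it is cleanest to extract via the trinomial coefficient $(i,j,l)=(0,k,0)$ when $k\in\NN$, and by iterating the Taylor expansion in the variable $s^2$ for general real $k>3$.

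The remaining terms I would then partition by their power of $\vint{v_\ast}$: $\mathfrak{R}_1$ collects the near-top-$\vint{v_\ast}$ monomials (those with $\vint{v_\ast}^{j}$ for $j$ close to $2k$, naturally carrying high sine powers $s^{m}$, $m\ge 2k-3$), while $\mathfrak{R}_2$ and $\mathfrak{R}_3$ collect the degree-$2$ and degree-$4$ in $\vint{v_\ast}$ contributions, respectively. The bounds \eqref{bound:mathfrak-R-2-3} then come from: (i) the pointwise inequality $|v-v_\ast|\,|v\cdot\omega|\le C\vint{v}\vint{v_\ast}\min(\vint{v},\vint{v_\ast})$, combining $|v-v_\ast|\le\vint{v}+\vint{v_\ast}$ with $|v\cdot\omega|=|v_\ast\cdot\omega|\le\min(|v|,|v_\ast|)$ from Remark~\ref{rmk:v-v-ast-omega}; (ii) the trivial bounds $c,s\le 1$; and (iii) elementary AM-GM interpolation to absorb the intermediate-power-in-$\vint{v_\ast}$ monomials ($\vint{v_\ast}^3,\vint{v_\ast}^5,\ldots$) into $\mathfrak{R}_1+\mathfrak{R}_2+\mathfrak{R}_3$ through a case-split on $\vint{v_\ast}\gtrless\vint{v}$, using $s\le 1$ to reduce high sine powers down to $\sin^2(\theta/2)$ in the $\mathfrak{R}_2,\mathfrak{R}_3$ parts.

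The main obstacle is the combinatorial bookkeeping in the iterated expansion, and in particular the treatment of the $\vint{v}^{2k}$-scale contribution $\vint{v}^{2k}(\cos^{2k}(\theta/2)-1)$ coming from $(ac^2)^k-a^k$. This term is not on its own of $\mathfrak{R}_2+\mathfrak{R}_3$ size, and must be handled by combining it with the matching pieces $k\vint{v}^{2k-2}\vint{v_\ast}^2\cos^{2k-2}(\theta/2)\sin^2(\theta/2)$ from the linear Taylor term and $2k(k-1)\vint{v}^{2k-4}u^2\cos^{2k-2}(\theta/2)\sin^2(\theta/2)$ from the quadratic remainder; equivalently, one may rewrite $\vint{v'}^{2k}-\vint{v}^{2k}=k(\vint{v'}^2-\vint{v}^2)\int_0^1\bigl(\vint{v}^2+t(\vint{v'}^2-\vint{v}^2)\bigr)^{k-1}\,dt$ and bound the integrand by $\max(\vint{v},\vint{v_\ast})^{2(k-1)}$ to extract the correct mixed powers. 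Finally, the restriction $k>3$ ensures that $\sin^{2k-3}(\theta/2)$ is a non-negative sine power, which is precisely what is needed later to integrate $\mathfrak{R}_1$ against the non-cutoff singular kernel $b(\cos\theta)\sim\sin^{-1-2s}(\theta/2)$.
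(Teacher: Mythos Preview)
Your Taylor expansion at the base point $\vint{v}^2\cos^2(\theta/2)$ is exactly the paper's strategy, and your plan to extract the $\vint{v_\ast}^{2k}\sin^{2k}(\theta/2)$ term by a second expansion inside the quadratic remainder matches how the paper proceeds (they expand the integrand of $\mathfrak{D}_3$ around the point $t\vint{v_\ast}^2\sin^2(\theta/2)$). The gap is precisely the point you flag as ``the main obstacle'': the contribution $\vint{v}^{2k}\bigl(\cos^{2k}(\theta/2)-1\bigr)$. Your two proposed fixes---cancelling it against the $k\vint{v}^{2k-2}\vint{v_\ast}^2$ and $u^2$ pieces, or bounding the mean-value integrand by $\max(\vint{v},\vint{v_\ast})^{2(k-1)}$---cannot work. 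Take $v_\ast=0$: then $v\cdot\omega=v_\ast\cdot\omega=0$, the explicit leading term vanishes, $\vint{v_\ast}=1$, and a direct computation gives $\vint{v'}^{2k}-\vint{v}^{2k}\sim -k\vint{v}^{2k}\sin^2(\theta/2)$ for small $\theta$, which is \emph{not} controlled by $C_k\bigl(\vint{v}\sin^{2k-3}\tfrac{\theta}{2}+\vint{v}^{2k-2}\sin^2\tfrac{\theta}{2}+\vint{v}^{2k-4}\sin^2\tfrac{\theta}{2}\bigr)$ when $|v|$ is large.

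The resolution is that the paper's proof in fact establishes \eqref{precise} with the left-hand side $\vint{v'}^{2k}-\vint{v}^{2k}\cos^{2k}(\theta/2)$: the zeroth-order term $(ac^2)^k$ is the base point of the expansion, so no $\vint{v}^{2k}\bigl(\cos^{2k}(\theta/2)-1\bigr)$ discrepancy ever arises. This reading is consistent with every application in the paper: in Proposition~\ref{prop:trilinear-weight} the piece $(1-\cos^k\tfrac{\theta}{2})\vint{v}^k$ is carried separately as $\Gamma_6$, and in Proposition~\ref{prop:coercivity} the analogous term is what produces the dissipation in \eqref{bound:T-1-explicit}. So drop the attempted cancellation argument and regard the lemma as giving the expansion of $\vint{v'}^{2k}-\vint{v}^{2k}\cos^{2k}(\theta/2)$; with that adjustment your outline goes through along the paper's lines.
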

\begin{proof}
By the Taylor expansion and~\eqref{formula:classical-2}, we have
\begin{align} \label{eq:diff-v-k-precise-1}
& \quad \, \vint{v'}^{2k} - \vint{v}^{2k} \cos^{2k}\tfrac{\theta}{2} \nn
\\
&= k \Big(\langle v \rangle^2 \cos^2 \tfrac{\theta}{2}\Big)^{k-1} \langle v_*\rangle^2 \sin^2 \tfrac{\theta}{2} 
+ 2 k \Big(\langle v \rangle^2 \cos^2 \tfrac{\theta}{2}\Big)^{k-1}
   \cos \tfrac{\theta}{2} \sin\tfrac{\theta}{2}|v-v_*| \vpran{v \cdot\omega} \nn
\\
& \quad + k(k-1) \int_0^1 (1-t) \Big(
\vint{v}^2 \cos^2\tfrac{\theta}{2} + t \big(\langle v_*\rangle^2 \sin^2 \tfrac{\theta}{2} + 
2\cos\tfrac{\theta}{2}\sin \tfrac{\theta}{2}|v-v_*| \vpran{v \cdot\omega} \big) \Big)^{k-2} \dt  \nn
\\
&\qquad \qquad \qquad \times 
\Big( \langle v_*\rangle^2 \sin^2 \tfrac{\theta}{2} + 
2\cos\tfrac{\theta}{2}\sin\tfrac{\theta}{2}|v-v_*| \vpran{v \cdot\omega} \Big)^2 \nn
\\
& \Denote 
\mathfrak{D}_1 + \mathfrak{D}_2 + \mathfrak{D}_3 \,. 
\end{align}
Note that $\mathfrak{D}_2$ gives the first term on the right hand side of~\eqref{precise} and $\mathfrak{D}_1$ is part of $\mathfrak{R}_1$. To estimate $\mathfrak{D}_3$, we use the mean value theorem for the integrand in $\mathfrak{D}_3$ such that
\begin{align} \label{term:integral-1}
 & \quad \,
 \Big(
\vint{v}^2 \cos^2\tfrac{\theta}{2} + t \big(\langle v_*\rangle^2 \sin^2 \tfrac{\theta}{2} + 
2\cos\tfrac{\theta}{2}\sin\tfrac{\theta}{2}|v-v_*| v \cdot\omega\big) \Big)^{k-2} \nn
\\
&= t^{k-2}\vpran{\vint{v_\ast} \sin \tfrac{\theta}{2}}^{2k-4}
+ (k-2)\int_0^1 \Big(
 t \langle v_*\rangle^2 \sin^2 \frac{\theta}{2} + \tau\big(
\langle v \rangle^2 \cos^2\frac{\theta}{2} +
2 t\cos\frac{\theta}{2}\sin\frac{\theta}{2}|v-v_*| v \cdot\omega\big) \Big)^{k-3} \dtau  \nn
\\
& \hspace{5.5cm} \times \big(
\langle v \rangle^2 \cos^2\frac{\theta}{2}+
2 t\cos\frac{\theta}{2}\sin\frac{\theta}{2}|v-v_*| v \cdot\omega\big)  \nn
\\
& \Denote 
  t^{k-2}\vpran{\vint{v_\ast} \sin \tfrac{\theta}{2}}^{2k-4} + \mathfrak{D}_{3,1}.
\end{align}
By a direct estimate on $\mathfrak{D}_{3,1}$, we have
\begin{align} \label{bound:D-3-1}
  \mathfrak{D}_{3,1}
\leq
  C_k \vpran{\vint{v}^{2k-4} + \vint{v} \vpran{\vint{v_\ast} \sin\tfrac{\theta} {2}}^{2k-5}} \,.
\end{align}
Denoting $\mathfrak{h}=\langle v_*\rangle^2 \sin^2 \tfrac{\theta}{2} + 
2\cos\tfrac{\theta}{2}\sin\tfrac{\theta}{2}|v-v_*| \vpran{v \cdot\omega} $ and applying the bound on $\mathfrak{D}_{3,1}$ in $\mathfrak{D}_3$, we have
\begin{align*}
   \mathfrak{D}_3
&= k (k-1) \vpran{\int_0^1 (1-t) t^{k-2}\vpran{\vint{v_\ast} \sin \tfrac{\theta}{2}}^{2k-4} \dt} \mathfrak{h}^2
   + k(k-1) \vpran{\int_0^1 (1-t) \mathfrak{D}_{3,1} \dt} \mathfrak{h}^2
\\
&= \vpran{\vint{v_\ast} \sin \tfrac{\theta}{2}}^{2k-4} \mathfrak{h}^2
     + k(k-1) \vpran{\int_0^1 (1-t) \mathfrak{D}_{3,1} \dt} \mathfrak{h}^2 
\\
& \Denote 
  \mathfrak{D}_{3,2} + \mathfrak{D}_{3,3} \,.
\end{align*}
When estimating $\mathfrak{h}^2$ in $\mathfrak{D}_{3,2}$, we use $v \cdot \omega$ in its second term and obtain
\begin{align*}
   \mathfrak{h}^2 = \vpran{\vint{v_\ast} \sin\tfrac{\theta}{2}}^4 + \mathfrak{h}_1 
\end{align*}
with
\begin{align*}
   \mathfrak{h}_1
&\leq
   C \vint{v} \sin^2\tfrac{\theta}{2}
   \vpran{\vint{v}^3 + \vint{v}\vint{v_\ast}^2 + \vint{v_\ast}^3 \sin\tfrac{\theta}{2}}.
\end{align*}
Therefore,
\begin{align*}
   \mathfrak{D}_{3,2}
= \vpran{\vint{v_\ast} \sin\tfrac{\theta}{2}}^{2k} 
  + \vpran{\vint{v_\ast} \sin\tfrac{\theta}{2}}^{2k-4} \mathfrak{h}_1 \,, 
\end{align*}
where
\begin{align*}
  \vpran{\vint{v_\ast} \sin\tfrac{\theta}{2}}^{2k-4} \mathfrak{h}_1
&\leq
   C \vint{v}^4 \vint{v_\ast \sin\tfrac{\theta}{2}}^{2k-4} 
   + C\vint{v}^2 \vint{v_\ast \sin\tfrac{\theta}{2}}^{2k-2} 
   + C \vint{v} \vpran{\vint{v_\ast} \sin\tfrac{\theta}{2}}^{2k-1} 
\\
& \leq
   C \vint{v} \vint{v_\ast}^{2k-1} \sin^{2k-3}\tfrac{\theta}{2}
   + C \vint{v}^{2k-2} \vint{v_\ast}^2 \sin^2\tfrac{\theta}{2}
   + C \vint{v}^{2k-4} \vint{v_\ast}^4 \sin^2\tfrac{\theta}{2} \,.
\end{align*}
Hence, the second term of $\mathfrak{D}_{3,2}$  contributes only
to the remainder term $\mathfrak{R}_1 + \mathfrak{R}_2 + \mathfrak{R}_3$ in~\eqref{precise}. Finally, when estimating the term $\mathfrak{D}_{3,3}$, we replace $v \cdot \omega$ in $\mathfrak{h}$ by $v_\ast \cdot \omega$ (see Remark~\ref{rmk:v-v-ast-omega}). Then $\mathfrak{h}$ is directly bounded by
\begin{align*}
  \mathfrak{h}^2
\leq
  C \vint{v}^2 \vpran{\vint{v_\ast} \sin\tfrac{\theta}{2}}^2 
  + C \vint{v_\ast}^2 \vpran{\vint{v_\ast} \sin\tfrac{\theta}{2}}^2 \,.
\end{align*}
Together with~\eqref{bound:D-3-1}, we obtain the bound of $\mathfrak{D}_{3,3}$ as
\begin{align*}
   \abs{\mathfrak{D}_{3,3}}
\leq 
   C_k \vint{v} \vint{v_\ast}^{2k-1} \sin^{2k-3}\tfrac{\theta}{2}
   + C_k \vint{v}^{2k-2} \vint{v_\ast}^2 \sin^2\tfrac{\theta}{2}
   + C_k \vint{v}^{2k-4} \vint{v_\ast}^4 \sin^2\tfrac{\theta}{2} \,.  
\end{align*}
In summary, we have
\begin{align*}
   \mathfrak{D}_2 = 2k \langle v \rangle^{2k-2}  |v-v_*| \big (v \cdot \omega \big)\cos^{2k-1} \tfrac{\theta}{2}  \sin\tfrac{\theta}{2} \,,
\qquad
  \mathfrak{D}_1 + \mathfrak{D}_3
= \vint{v_\ast}^{2k}\sin^{2k} \tfrac{\theta}{2}  + \mathfrak{R}_1 + \mathfrak{R}_2 + \mathfrak{R}_3 \,,
\end{align*}
which completes the proof of the lemma. 
\end{proof}

Next we recall a coercivity estimate obtained in~\cite{AMUXY2012JFA}.
\begin{prop} [\cite{AMUXY2012JFA}] \label{prop:coercivity-1}
Suppose $F$ satisfies 
\begin{align*}
   F \geq 0 \,,
\qquad
   \norm{F}_{L^1} \geq D_0 \,,
\qquad
  \norm{F}_{L^1_2} + \norm{F}_{L\log L} \leq E_0 \,.
\end{align*}
Then there exist two constants $c_0$ and $C$ such that
\begin{align*}
  \int_{\R^3} Q(F, f) \, f \dv
\leq
  -c_0 \norm{f}^2_{H^s_{\gamma/2}}
  + C \norm{f}^2_{L^2_{\gamma/2}} \,.
\end{align*}
\end{prop}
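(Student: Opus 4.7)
The plan is to split $\int_{\R^3} Q(F,f) f \dv$ via a pre/post-collisional symmetrization into a nonnegative dissipation functional plus commutator-type error terms of lower order, and then invoke the functional inequality that bounds the dissipation from below by the desired $H^s_{\gamma/2}$ norm, uniformly in $F$ within the given class.

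Step one is to symmetrize. Writing $F'_\ast f' - F_\ast f = F'_\ast(f'-f) + (F'_\ast - F_\ast) f$ and exploiting the pre/post-collisional involution $(v,v_\ast,\sigma) \mapsto (v',v'_\ast,\cdot)$ on the first piece produces the decomposition
\begin{align*}
\int_{\R^3} Q(F,f) f \dv = -\tfrac{1}{2} D(F,f) + \CalR(F,f) \,,
\end{align*}
where
\begin{align*}
D(F,f) := \int_{\R^3}\!\int_{\R^3}\!\int_{\Ss^2} b(\cos\theta) |v-v_\ast|^\gamma F_\ast (f'-f)^2 \dsigma \dv_\ast \dv \,,
\end{align*}
and $\CalR(F,f)$ is a sum of terms each carrying the factor $F'_\ast - F_\ast$ and therefore exhibiting cancellation at $\theta = 0$. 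Step two is to estimate $\CalR$. By the cancellation lemma of Alexandre-Desvillettes-Villani-Wennberg, the $\sigma$-integral of $b(\cos\theta)|v-v_\ast|^\gamma(F'_\ast - F_\ast)$ equals a convolution $(F \ast S)(v)$ with $|S(z)| \leq C|z|^\gamma$; combining this with a Cauchy--Schwarz on the bilinear $(f'-f)f$ piece and the hypothesis $\|F\|_{L^1_2} \leq E_0$ yields $|\CalR(F,f)| \leq \tfrac{1}{4} D(F,f) + C(E_0) \|f\|^2_{L^2_{\gamma/2}}$, with the first contribution reabsorbed into the dissipation.

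Step three --- the heart of the argument --- is the functional inequality
\begin{align*}
D(F,f) \geq c_0 \|f\|^2_{H^s_{\gamma/2}(\R^3)} - C \|f\|^2_{L^2_{\gamma/2}} \,,
\end{align*}
valid uniformly over all $F$ satisfying the standing hypotheses; combining the three steps then gives the proposition. The main obstacle is precisely this lower bound on $D(F,f)$, established in~\cite{AMUXY2012JFA} generalizing the Maxwellian computation via Bobylev's identity. One would localize $f$ in $v$ by a partition of unity tailored to the $\vint{v}^{\gamma/2}$ weight and, on each cell, pass to Fourier variables where $D(F,f)$ becomes a weighted convolution involving $\hat F$ against a kernel with $|\xi|^{2s}$ behaviour at infinity --- the signature of the fractional Laplacian on the sphere. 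The delicate point is that $F$ is not a Maxwellian: the entropy bound $\|F\|_{L\log L} \leq E_0$ forbids concentration, the $L^1_2$ bound prevents mass from escaping to infinity, and the mass bound $\|F\|_{L^1} \geq D_0$ keeps $|\hat F|$ bounded below on a fixed-radius ball of Fourier space. Only together do these ingredients yield a uniform non-degeneracy of the dissipation kernel from which the weighted fractional Sobolev norm of $f$ can be extracted, the $L^2_{\gamma/2}$ remainder arising from the localization and the $|v-v_\ast|^\gamma$ weight.
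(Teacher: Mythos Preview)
The paper does not prove this proposition in-text; it simply cites \cite{AMUXY2012JFA}. (Related material does appear in the appendix: \eqref{another-corollary} combined with the coercivity bound $\|h\|^2_{H^s}\lesssim J_1^{\Phi_0}(h)+\|h\|^2_{L^2}$ from \cite{ADVW2000} yields exactly the stated estimate.) Your three-step outline is the standard route from the cited reference, and you correctly identify Step~3 as the substantive ingredient.

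One point in Step~2 needs correction. Your splitting leaves in $\CalR$ a cross term $\tfrac12\int b\,|v-v_\ast|^\gamma (F'_\ast - F_\ast)(f'-f)f$, and the direct Cauchy--Schwarz on $(f'-f)f$ you propose does not close for $s\ge 1/2$: it produces a piece $\int b\,|v-v_\ast|^\gamma |F'_\ast - F_\ast|\, f^2$ which carries no cancellation in $\sigma$, and the angular integral diverges. The repair is algebraic, not analytic. By the pre/post involution one checks $\int b\,|v-v_\ast|^\gamma (F'_\ast - F_\ast)(f'-f)f = -\int b\,|v-v_\ast|^\gamma F_\ast\bigl((f')^2-f^2\bigr)$ and $\int b\,|v-v_\ast|^\gamma (F'_\ast - F_\ast)f^2 = \int b\,|v-v_\ast|^\gamma F_\ast\bigl((f')^2-f^2\bigr)$, so that $\CalR$ collapses to a single cancellation-lemma term bounded directly by $C\|F\|_{L^1_\gamma}\|f\|^2_{L^2_{\gamma/2}}$, with nothing to reabsorb into $D$. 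Equivalently and more cleanly, apply the involution to the full bilinear form first to get $\int Q(F,f)f = \int b\,|v-v_\ast|^\gamma F_\ast f(f'-f)$ and then use the pointwise identity $f(f'-f) = -\tfrac12(f'-f)^2 + \tfrac12\bigl((f')^2-f^2\bigr)$.
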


Finally, we have two technical lemmas that will be used in the spectral analysis of the linearized operator ~$\CalL$. 
\begin{lem}\label{A2}
For any $h\in H_v^{s}(\mathbb{R}^{3})$ with $s\in(0,1)$, let $h^\pm$ denote the positive and negative parts of $h$. Then it holds
\begin{equation}\label{lA2-e1}
\tfrac{1}{2}\|h\|_{H_v^{s}}^{2}\leq \sum_{g\in\{h^{\pm}\}}\|g\|^{2}_{H_v^{s}}\leq 2\|h\|^{2}_{H_v^{s}}\,.
\end{equation}
\end{lem}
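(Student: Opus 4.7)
The plan is to reduce both inequalities to the pointwise decomposition $h = h^+ - h^-$ with $h^+ h^- \equiv 0$, using the Gagliardo--Slobodeckij representation of the fractional Sobolev seminorm, which is available for $s \in (0,1)$:
\begin{equation*}
\|g\|_{\dot H^s_v}^2 = C_s \iint_{\R^3\times \R^3} \frac{|g(v)-g(w)|^2}{|v-w|^{3+2s}} \dv\, \mathrm{d} w \,,
\end{equation*}
which, together with the $L^2$ part, defines a norm equivalent to the Fourier--based $H^s_v$ norm (and equality of squared norms can be arranged by choice of constant). Working with this representation reduces the fractional-regularity part to an integration of a pointwise inequality.

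For the upper bound, the key pointwise observation is that for any two real numbers $a,b$, writing $a = a^+ - a^-$, $b = b^+ - b^-$ with $a^+ a^- = b^+ b^- = 0$, one has
\begin{equation*}
(a-b)^2 = (a^+ - b^+)^2 + (a^- - b^-)^2 + 2\bigl(a^+ b^- + a^- b^+\bigr) \geq (a^+ - b^+)^2 + (a^- - b^-)^2,
\end{equation*}
since the cross term is manifestly nonnegative. Applying this with $a = h(v)$, $b=h(w)$ and integrating against $|v-w|^{-(3+2s)}$ yields $\|h^+\|_{\dot H^s_v}^2 + \|h^-\|_{\dot H^s_v}^2 \leq \|h\|_{\dot H^s_v}^2$. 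In the $L^2$ part we have equality $\|h^+\|_{L^2_v}^2 + \|h^-\|_{L^2_v}^2 = \|h\|_{L^2_v}^2$ because $h^+$ and $h^-$ have disjoint supports. Summing these two gives $\sum_{g\in\{h^\pm\}}\|g\|_{H^s_v}^2 \leq \|h\|_{H^s_v}^2 \leq 2\|h\|_{H^s_v}^2$, which proves the right inequality of~\eqref{lA2-e1}.

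For the lower bound, one simply applies the triangle inequality to $h = h^+ - h^-$ followed by the elementary inequality $(\alpha+\beta)^2 \leq 2(\alpha^2+\beta^2)$:
\begin{equation*}
\|h\|_{H^s_v}^2 = \|h^+ - h^-\|_{H^s_v}^2 \leq \bigl(\|h^+\|_{H^s_v} + \|h^-\|_{H^s_v}\bigr)^2 \leq 2\bigl(\|h^+\|_{H^s_v}^2 + \|h^-\|_{H^s_v}^2\bigr),
\end{equation*}
which is exactly the left inequality.

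I do not anticipate any serious obstacle here; the only subtlety worth flagging is the choice of representation of $\|\cdot\|_{H^s_v}$. The Fourier-based definition does not interact well with taking positive/negative parts, so one must use (or pass through the equivalent) singular-integral representation, which is where the restriction $s \in (0,1)$ is used. Once that representation is in place, the proof is entirely pointwise plus integration.
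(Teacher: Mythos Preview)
Your proof is correct and follows essentially the same route as the paper: both arguments use the Gagliardo--Slobodeckij integral representation of the $H^s$ seminorm (which is precisely where the restriction $s\in(0,1)$ enters) and reduce the fractional part to pointwise inequalities, with the $L^2$ part handled trivially by disjointness of supports.

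The one noteworthy difference is in the upper bound. The paper uses the cruder pointwise estimate $|h^\pm(y)-h^\pm(x)| \leq |h(y)-h(x)|$ for each sign separately, which after summing gives the factor~$2$. Your algebraic identity
\[
(a-b)^2 = (a^+-b^+)^2 + (a^--b^-)^2 + 2(a^+b^- + a^-b^+)
\]
is sharper: it yields $\|h^+\|_{\dot H^s}^2 + \|h^-\|_{\dot H^s}^2 \leq \|h\|_{\dot H^s}^2$ directly, so in fact you prove $\sum_{g\in\{h^\pm\}}\|g\|_{H^s}^2 \leq \|h\|_{H^s}^2$ before relaxing to $2\|h\|_{H^s}^2$. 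This is a strictly stronger intermediate statement than what the paper obtains, at no extra cost. For the lower bound, your triangle-inequality argument at the norm level and the paper's pointwise squaring of $|h(y)-h(x)| \leq |h^+(y)-h^+(x)| + |h^-(y)-h^-(x)|$ are equivalent.
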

\begin{proof}
Firstly of all, for $f\in H^{s}(\mathbb{R}^{3})$, we have
\begin{equation}\label{lA2-e2}
\|(-\Delta)^{s/2} f\|^{2}_{L^{2}} = c_{d,s}\int_{R^{2d}}\frac{|f(y)-f(x)|^{2}}{|y-x|^{d+2s}}\text{d}y\text{d}x\,,
\end{equation}
with $c_{d,s} = \tfrac{4^{s}\Gamma(d/2+s)}{\pi^{d/2}|\Gamma(-s)|}$.  Observe that
\begin{equation*}
|f^{\pm}(y)-f^{\pm}(x)|\leq|f(y)-f(x)|\,,\quad \text{for any}\quad (y,x)\in\mathbb{R}^{3}\times \mathbb{R}^{3} \,.
\end{equation*}
As a consequence, it readily follows from \eqref{lA2-e2} that
\begin{equation*}
\|(-\Delta)^{s/2} f^{\pm}\|^{2}_{L^{2}} \leq \|(-\Delta)^{s/2} f\|^{2}_{L^{2}}\,.
\end{equation*}
This leads to the second inequality in \eqref{lA2-e1}.  Furthemore,
\begin{align*}
|f(y)-f(x)| = |(f^{+}(y) - f^{+}(x)) - (f^{-}(y) -f^{-}(x))|
\leq |f^{+}(y) - f^{+}(x)| + |f^{-}(y) -f^{-}(x)|\,.
\end{align*} 
Thus,
\begin{equation*}
\|(-\Delta)^{s/2} f\|^{2}_{L^{2}}\leq 2\|(-\Delta)^{s/2} f^{+}\|^{2}_{L^{2}} + 2\|(-\Delta)^{s/2} f^{-}\|^{2}_{L^{2}}\,,
\end{equation*}
which, after using \eqref{lA2-e2}, gives the first inequality in \eqref{lA2-e1}.
\end{proof}

\begin{lem}\label{A1}
Let $h\in H^{s}_{v}(\mathbb{R}^{3})$ and set $H = \mu^{-1/2}h$. Let $b_2$ be the truncated collision kernel defined in~\eqref{DAK1}. Then there exist constants $C>0$ and $c>0$ depending only on the mass and energy of $\mu$ such that for every $\varepsilon\in(0,1)$,
\begin{align*}
\int_{\mathbb{R}^{6}}\int_{\mathbb{S}^{2}}b_{2}(\cos\theta)\mu(v_{*})\mu(v)\big(H(v') - H(v)\big)^{2}
\geq c\,\sum_{g\in\{h^{\pm}\}}\Big\|\widehat{g}(\xi)\,|\xi|^{s} \textbf{1}\big\{|\xi|\geq\tfrac{1}{\varepsilon}\big\}\Big\|^{2}_{L^{2}_{\xi}} - C\|\theta^{2} b_{2}\|_{L^{1}_{\theta}}\|h\|^{2}_{L^{2}_{v}}\,.
\end{align*}
\end{lem}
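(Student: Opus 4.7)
The plan is to combine a pointwise sign decomposition of $(H(v')-H(v))^2$ with a standard single-sign coercivity estimate, and then sum the contributions of $h^+$ and $h^-$.

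\textbf{Step 1 (Sign splitting).} Since $\mu>0$, the decomposition $h = h^+ - h^-$ lifts to $H = H^+ - H^-$ with $H^\pm = \mu^{-1/2}h^\pm$ and with disjoint supports, so $H^+(v)H^-(v) \equiv 0$. Expanding
\[
H(v') - H(v) = \bigl(H^+(v')-H^+(v)\bigr) - \bigl(H^-(v')-H^-(v)\bigr),
\]
squaring, and using $H^+ H^-\equiv 0$ to evaluate the cross term as $-\bigl(H^+(v')H^-(v)+H^+(v)H^-(v')\bigr) \le 0$, I obtain the pointwise lower bound
\[
(H(v')-H(v))^2 \;\ge\; \bigl(H^+(v')-H^+(v)\bigr)^2 + \bigl(H^-(v')-H^-(v)\bigr)^2.
\]
Integrated against $b_2(\cos\theta)\mu_*\mu$, this reduces the lemma to proving the \emph{single-sign} estimate below, because $\|h\|_{L^2}^2 = \|h^+\|_{L^2}^2+\|h^-\|_{L^2}^2$.

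\textbf{Step 2 (Single-sign coercivity).} For each fixed sign $g \in \{h^\pm\}$ with $G = \mu^{-1/2}g$, I will show
\[
\int_{\mathbb{R}^6}\!\!\int_{\mathbb{S}^2} b_2(\cos\theta)\mu_*\mu(G'-G)^2\,\dsigma\dv_*\dv
\;\geq\; c \,\bigl\|\widehat g(\xi)|\xi|^s \mathbf{1}\{|\xi|\ge 1/\varepsilon\}\bigr\|_{L^2_\xi}^2 - C\|\theta^2 b_2\|_{L^1_\theta}\|g\|_{L^2_v}^2.
\]
This is a Plancherel-type computation in the spirit of the cancellation lemma and coercivity identities of \cite{ADVW2000,AMUXY2012JFA}. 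After integrating $\mu_*$ in $v_*$ and expanding $(G'-G)^2$, I apply Bobylev's identity and Taylor-expand $\mu(v) = \mu(v')+O(|v-v'|)$ around the post-collisional velocity to restore approximate translation invariance. The principal piece reduces to
\[
\int_{\mathbb{R}^3} |\widehat g(\xi)|^2 \,\Psi_{b_2}(\xi)\,\dxi, \qquad \Psi_{b_2}(\xi) := \int_{\mathbb{S}^2} b_2(\cos\theta)\bigl(1-\cos(|\xi|\sin(\theta/2))\bigr)\dsigma,
\]
while the remainder from the Taylor expansion is controlled by $\|\theta^2 b_2\|_{L^1_\theta}\|g\|_{L^2}^2$ via Cauchy--Schwarz and the smoothness and decay of $\mu^{1/2}$. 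Standard asymptotic analysis using the singularity $b(\cos\theta)\sin\theta \sim \theta^{-(1+2s)}$ encoded in $b_2$ then gives $\Psi_{b_2}(\xi) \gtrsim |\xi|^{2s}$ uniformly on $|\xi|\ge 1/\varepsilon$; restricting the $\xi$-integral to this high-frequency regime yields the claim.

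\textbf{Step 3 (Assembly).} Summing the single-sign estimate for $g = h^+$ and $g=h^-$ and invoking Step 1 produces exactly the stated inequality.

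\textbf{Main obstacle.} The hardest part is the Fourier-side lower bound in Step 2: isolating the principal $|\xi|^{2s}$ contribution from the error terms forced by the loss of exact translation invariance due to the Maxwellian weight $\mu(v)$. These error terms must be shown to be bounded by $\|\theta^2 b_2\|_{L^1_\theta}\|g\|_{L^2}^2$ rather than by a fractional-regularity seminorm, which is where the high-frequency cutoff $|\xi|\ge 1/\varepsilon$ is crucial: it lets us absorb the nontrivial low-frequency residue from the Taylor expansion of $\mu^{1/2}$ into the $L^2$ remainder without damaging the $|\xi|^{2s}$ lower bound.
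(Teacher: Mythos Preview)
Your Step~1 is correct and in fact coincides with the paper's first move: both drop the nonnegative cross terms $H^+(v')H^-(v)+H^+(v)H^-(v')$ to reduce to single-sign pieces. The gap is entirely in Step~2.

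First, the sentence ``after integrating $\mu_*$ in $v_*$'' is a red flag: $v'$ depends on $v_*$ through the collision rules, so $v_*$ cannot be integrated out before anything else. Second, and more importantly, the ``Taylor-expand $\mu(v)=\mu(v')+O(|v-v'|)$'' strategy is the wrong tool here. Energy conservation $\mu\mu_*=\mu'\mu'_*$ gives an \emph{exact} identity
\[
\int b_2\,\mu_*\mu\,(G'-G)^2
 \;=\; 2\int b_2\Big(\mu_*\,g^2-\mu_*^{1/2}\mu_*'^{1/2}\,g'g\Big),
\]
so no expansion of the Maxwellian is needed to remove the weight; attempting one introduces an error $\sim\int b_2\,\theta\,|v-v_*|\,(\cdots)$ that carries only a single power of $\theta$ and an unbounded $|v-v_*|$, which does not obviously collapse to $C\|\theta^2 b_2\|_{L^1_\theta}\|g\|_{L^2}^2$ as you claim. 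Your principal-piece formula $\Psi_{b_2}(\xi)=\int b_2(1-\cos(|\xi|\sin(\theta/2)))\,\dsigma$ is also not what arises; the correct multiplier is $\hat\mu(0)-\hat\mu(\xi^-)=1-e^{-|\xi|^2\sin^2(\theta/2)/2}$.

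The missing idea in Step~2 is this: since $g\ge 0$, one has $g'g\ge 0$, so by AM--GM $\mu_*^{1/2}\mu_*'^{1/2}\le\tfrac12(\mu_*+\mu_*')$, and by the pre/post involution $\int b_2\,\mu_*\,g'g=\int b_2\,\mu_*'\,g'g$. Combined, $\int b_2\,\mu_*^{1/2}\mu_*'^{1/2}g'g\le\int b_2\,\mu_*'\,g'g$, and now Bobylev's identity applies cleanly:
\[
\int b_2\,\mu_*\mu(G'-G)^2\;\ge\;2\int b_2\Big(\hat\mu(0)|\hat g(\xi)|^2-\hat\mu(\xi^-)\,\hat g(\xi^+)\overline{\hat g(\xi)}\Big)\dsigma\,\dxi.
\]
From here one uses Cauchy--Schwarz on $\hat g(\xi^+)\overline{\hat g(\xi)}$, the cancellation lemma for the piece $\hat\mu(0)(|\hat g(\xi)|^2-|\hat g(\xi^+)|^2)$ (this is the only source of the $-C\|\theta^2 b_2\|_{L^1_\theta}\|g\|_{L^2}^2$ term), and the elementary lower bound $\int b_2(\hat\mu(0)-\hat\mu(\xi^-))\,\dsigma\gtrsim|\xi|^{2s}\mathbf 1_{|\xi|\ge 1/\varepsilon}$ to get the stated coercivity. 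Your overall architecture (Steps~1 and~3) is fine; replacing the Taylor-expansion argument in Step~2 by this exact-identity-plus-AM--GM route closes the proof.
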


\begin{proof}
As in \cite[Proposition 1]{ADVW2000}, we expand the square 
in the above integrand and then apply
 Bobylev's identity together with the Cauchy-Schwarz inequality to obtain
\begin{align*}
\int_{\mathbb{R}^{6}}\int_{\mathbb{S}^{2}}b(\cos\theta)\mu(v_{*})\mu(v)\big(H(v') - H(v)\big)^{2} 
& = 2\int_{\mathbb{R}^{6}}\int_{\mathbb{S}^{2}}b(\cos\theta)\big(\mu(v_{*})|h|^{2}(v) - \mu^{1/2}(v_{*})\mu^{1/2}(v'_{*})h'h\big)
\\
&\geq 2\int_{\mathbb{R}^{6}}\int_{\mathbb{S}^{2}}b(\cos\theta)\sum_{g\in\{h^{\pm}\}}\big(\mu(v_{*})g^{2}(v) - \mu(v'_{*})g(v')g(v)\big)\\
&= 2\int_{\mathbb{R}^{3}}\int_{\mathbb{S}^{2}}b(\cos\theta)\sum_{g\in\{h^{\pm}\}}\big(\widehat{\mu}(0)\big|\widehat{g}(\xi)\big|^{2} - \widehat{\mu}(\xi^{-})\widehat{g}(\xi^{+})\widehat{g}(\xi)\big)\,.
\end{align*}
By applying the  Cauchy-Schwarz inequality to the second term in
 the above summation,  we  obtain 
\begin{align*}
\int_{\mathbb{R}^{3}}\int_{\mathbb{S}^{2}}b(\cos\theta)\mu(v_{*})\mu(v)\big(H(v') - H(v)\big)^{2}
&\geq \int_{\mathbb{R}^{3}}\int_{\mathbb{S}^{2}}b(\cos\theta)\sum_{g\in\{h^{\pm}\}}\big(\widehat{\mu}(0) - \widehat{\mu}(\xi^{-})\big)\big|\widehat{g}(\xi)\big|^{2}\\
& \quad \,+ \int_{\mathbb{R}^{3}}\int_{\mathbb{S}^{2}}b(\cos\theta)\sum_{g\in\{h^{\pm}\}}\big(\widehat{\mu}(0) - \widehat{\mu}(\xi^{-})\big)\big|\widehat{g}(\xi^{+})\big|^{2}\\
& \quad \, + \int_{\mathbb{R}^{3}}\int_{\mathbb{S}^{2}}b(\cos\theta)\sum_{g\in\{h^{\pm}\}}\widehat{\mu}(0)\big(\big|\widehat{g}(\xi)\big|^{2} - \big|\widehat{g}(\xi^{+})\big|^{2}\big)\,.
\end{align*}
For the last term on the right side, we apply the cancellation lemma  from \cite[Lemma 1]{ADVW2000} to obtain
\begin{equation}\label{A1e1}
\int_{\mathbb{R}^{3}}\int_{\mathbb{S}^{2}}b(\cos\theta)\widehat{\mu}(0)\big(\big|\widehat{g}(\xi)\big|^{2} - \big|\widehat{g}(\xi^{+})\big|^{2}\big) \geq -C\|\theta^{2} b\|_{L^{1}_{\theta}}\|\mu\|_{L^{1}_{v}}\|g\|^{2}_{L^{2}_{v}}\,,
\end{equation}
for some generic constant $C>0$.  The first and second terms are both positive that can be  treated similarly.  Consider the second term by applying the change of variables $\xi\rightarrow\xi^{+}$ and
 the fact that $\widehat{\mu}(\xi)$ is decreasing in $|\xi|$  to obtain
\begin{align*}
\int_{\mathbb{R}^{3}}\int_{\mathbb{S}^{2}}b(\cos\theta)\big(\widehat{\mu}(0)- \widehat{\mu}(\xi^{-})\big)\big|\widehat{g}(\xi^{+})\big|^{2}
\geq\int_{\mathbb{R}^{3}}\int_{\mathbb{S}^{2}}\frac{2^{d-1}b(\cos(2\theta))}{\cos^d(\theta)}\big(\widehat{\mu}(0)- \widehat{\mu}(\xi^{-})\big)\big|\widehat{g}(\xi)\big|^{2}\,,
\end{align*}
where $\cos\theta = \widehat{\xi}\cdot\sigma$.  Now, set $b=b_{2}$ which is supported in $\{|\sin\theta|\leq\varepsilon\}$.  Then, 
\begin{align}\label{A1e2}
\begin{split}
\int_{\mathbb{S}^{2}}\frac{2^{d-1}b(\cos(2\theta))}{\cos^d(\theta)}\big(\widehat{\mu}(0)- \widehat{\mu}(\xi^{-})\big)
&\geq c\int^{|\xi|\varepsilon}_{0}\frac{1}{\theta^{1+2s}}\big(1- \mu_{o}(\theta)\big)\big|\xi\big|^{2s}
\\
&\geq c\int^{1}_{0}\frac{1}{\theta^{1+2s}}\big(1- \mu_{o}(\theta)\big)\big|\xi\big|^{2s}\textbf{1}_{\{|\xi|\geq\frac{1}{\varepsilon}\}}\,.
\end{split}
\end{align}
Here $\mu_o(\theta)=e^{-\theta^{2}/2}$ is the radial profile of the Fourier transform of $\mu$.  Similar estimate holds for the second term.  Thus, \eqref{A1e1} and \eqref{A1e2} give the result of the lemma.
\end{proof}

For the
cross section $B(\cos\theta, |v - v'|)$ with an angular
cutoff, we will use the notation $Q^\pm$ defined as follows throughout the paper:
\begin{align*}
   Q^+(f, g) 
   = \int_{\R^3} \int_{\Ss^2} B(\cos\theta, |v - v'|) f'_\ast g' \dsigma \dv_\ast \,,
\qquad
   Q^-(f, g) 
   = \int_{\R^3} \int_{\Ss^2} B(\cos\theta, |v - v'|) f_\ast g \dsigma \dv_\ast \,.
\end{align*}
Note that $Q(f, g) = Q^+(f, g) - Q^-(f, g)$.


\section{Upper bounds on $Q$}

In this section, we will derive some bounds on the collision operator in some weighted $L^2$-norms. For simplicity of notations, we denote 
$\dbmu = \dsigma \dv_\ast \dv$.

The first estimate is about a commutator on the 
collision operator with a weight function.
\begin{prop} \label{prop:trilinear-weight}
Suppose $0 < s < 1$ and $k >\frac{9}{2} +\frac{\gamma}{2} + 2s$. Then 
\begin{align} \label{bound:commutator-weight}
& \quad \,
   \IntRRS b(\cos\theta) |v - v_\ast|^\gamma
         \vpran{\vint{v'}^{k} - \vint{v}^{k}} f_\ast g \, h' \dbmu \nn
\\
&\leq
     \vpran{\int_{\Ss^2} b(\cos\theta) \sin^{k- \frac{3}{2}- \frac{\gamma}{2}} \tfrac{\theta}{2} \dsigma}
    \norm{g}_{L^1_\gamma}
    \norm{\vint{v}^k f}_{L^2_{\gamma/2}} 
   \norm{h}_{L^2_{\gamma/2}}
   + C_k \norm{g}_{L^1_{1+\gamma}}
    \norm{\vint{v}^k f}_{L^2} 
   \norm{h}_{L^2} \nn
\\
& \quad \,
   + C_k \norm{f}_{L^1_{4+\gamma}} \norm{\vint{v}^k g}_{L^2} \norm{h}_{L^2}
  +   C_k \norm{f}_{L^1_\gamma}
  \norm{\vint{v}^k g}_{L^2_{\gamma/2}}
  \norm{h}_{L^2_{\gamma/2}}
\\
& \quad \,
  + C_k \norm{f}_{L^1_{3+\gamma+2s} \cap L^2}
  \norm{\vint{v}^k g}_{H^{s'}_{\gamma'/2}}
  \norm{h}_{L^2_{\gamma/2}} \,. \nn
\end{align}
The parameters $s', \gamma'$ satisfy the following conditions: if $0 < s < 1/2$, then
\begin{align*}
   (s', \gamma') = (0, \gamma) \,,
\qquad
   0 < s < 1/2 \,;
\end{align*}
if $1/2 \leq s < 1$, then
\begin{align*}
   s' = 2s - 1 + \frac{\Eps}{2} \in (0, s) \,,
\qquad
   \Eps \in (0, 2(1-s)) \,,
\qquad
   \frac{\gamma'}{2} = \frac{\gamma}{2} + (s' - 1) \in (0, \gamma/2) \,.
\end{align*}
\end{prop}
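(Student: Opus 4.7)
My plan is to apply a pointwise expansion analogous to Lemma~\ref{lem:diff-v-k} (with exponent $k$ in place of $2k$) to write
\begin{align*}
\langle v'\rangle^{k}-\langle v\rangle^{k} = T_{\text{lin}}+T_{\text{sym}}+\sum_{j=1}^{3}\mathfrak{R}_{j},
\end{align*}
where the dangerous leading piece is
\begin{align*}
T_{\text{lin}} = k\,\langle v\rangle^{k-2}|v-v_*|(v\cdot\omega)\cos^{k-1}\tfrac{\theta}{2}\sin\tfrac{\theta}{2},
\end{align*}
the diagonal piece is $T_{\text{sym}} = \langle v_*\rangle^{k}\sin^{k}\tfrac{\theta}{2}$, and each remainder $\mathfrak{R}_{j}$ carries at least $\sin^{2}\tfrac{\theta}{2}$ (or $\sin^{2k-3}\tfrac{\theta}{2}$) of angular decay. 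Substituted into the left-hand side of~\eqref{bound:commutator-weight}, this produces five contributions which I estimate in turn.

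Step~1 (angularly integrable terms). For the $T_{\text{sym}}$ contribution, I use Cauchy--Schwarz in $(v,v')$ combined with the regular change of variable in Proposition~\ref{prop:change-variable}(a) to transport $h'$ back to $h$; redistributing $|v-v_*|^{\gamma}\leq \langle v\rangle^{\gamma}\langle v_*\rangle^{\gamma}$ then yields the first term on the right-hand side with the explicit angular factor $\int b(\cos\theta)\sin^{k-3/2-\gamma/2}\tfrac{\theta}{2}\,d\sigma$. Each of the three $\mathfrak{R}_{j}$'s inherits one of the weight splittings in~\eqref{bound:mathfrak-R-2-3}: Cauchy--Schwarz together with Proposition~\ref{prop:change-variable} and a few applications of Young's inequality to peel weights apart produce the second, third, and fourth terms on the right-hand side (with the weight $\langle v\rangle^{k}$ landing on $f$ for the $\sin^{2k-3}\tfrac{\theta}{2}$ remainder and on $g$ for the other two).

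Step~2 (the dangerous $T_{\text{lin}}$ piece). Here $\sin\tfrac{\theta}{2}$ alone cannot absorb $b(\cos\theta)$. I split $h'(v')=h(v)+\bigl(h(v')-h(v)\bigr)$. Parameterising $\sigma\in\Ss^{2}$ by $(\theta,\phi)$ with polar axis $k=(v-v_*)/|v-v_*|$ makes $\omega$ the equatorial unit vector and $v\cdot\omega$ an odd function of $\phi$, while all other factors are $\phi$-independent; hence the $h(v)$-piece vanishes identically after azimuthal integration. For the $(h(v')-h(v))$-piece, Cauchy--Schwarz against a $J_{1}^{\gamma}$-type functional in $h$ (absorbing $|v\cdot\omega|\leq|v|$ into $\langle v\rangle^{k-2}$) gives an angularly integrable integrand in the mild regime $0<s<1/2$ with parameters $(s',\gamma')=(0,\gamma)$. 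In the strong regime $1/2\leq s<1$ the gain from $(h(v')-h(v))^{2}$ is short by $s-\tfrac{1}{2}$ derivatives; I recover this by Taylor-expanding $h(v')$ one further order in $(v'-v)$, integrating by parts in $v$ so that the extra derivatives land on $\langle v\rangle^{k}g$, and then invoking Proposition~\ref{prop:trilinear} with the shifted indices $(s',\gamma')=(2s-1+\varepsilon/2,\,\gamma+2(s'-1))$ from the statement. This yields the final term with $\|\langle v\rangle^{k}g\|_{H^{s'}_{\gamma'/2}}$ and $\|f\|_{L^{1}_{3+\gamma+2s}\cap L^{2}}$.

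The main obstacle is Step~2 in the strongly singular regime: one must verify that the Taylor-plus-integration-by-parts manoeuvre places the resulting derivatives on $\langle v\rangle^{k}g$ rather than on $f_{*}$ (which would force an undesired weight escalation), and that the chosen $(s',\gamma')$ lie exactly inside the admissible window of Proposition~\ref{prop:trilinear}. Summing the five contributions from Steps~1 and~2 then gives~\eqref{bound:commutator-weight}.
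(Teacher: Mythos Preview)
Your Step~1 is close to the paper's treatment of $\Gamma_2,\dots,\Gamma_6$, with one correction: for the $T_{\text{sym}}=\langle v_*\rangle^{k}\sin^{k}\tfrac{\theta}{2}$ piece the paper uses the \emph{singular} change of variables (Proposition~\ref{prop:change-variable}(b)), not the regular one. After Cauchy--Schwarz the factor carrying $|h'|^{2}$ still contains $g(v)$; a regular change $v\to v'$ would leave $g$ evaluated at an implicit function of $(v',v_*,\sigma)$. The singular change $v_*\to v'$ (with $v$ fixed) avoids this and produces the clean $\iint |v-v'|^{\gamma}|g||h|^{2}$ that yields the explicit constant $\int b\sin^{k-3/2-\gamma/2}\tfrac{\theta}{2}\,d\sigma$.

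The real gap is in Step~2 for $1/2\le s<1$. Your azimuthal-symmetry cancellation of the $h(v)$ piece is valid, but it leaves the difference on $h$, and the target inequality carries \emph{no} derivatives on $h$ (only $\|h\|_{L^{2}_{\gamma/2}}$); the fractional regularity must land on $g$. Your proposed repair---Taylor-expand $h(v')$ one more order and integrate by parts in $v$---does not recover this: the first-order Taylor term produces $\nabla h(v)$, and integration by parts in $v$ then yields a full $\nabla g$ (together with derivatives of $|v-v_*|^{2+\gamma}$, $\omega$, etc.), overshooting the claimed $H^{s'}_{\gamma'/2}$ with $s'<1$. After that manoeuvre there is no $Q(\cdot,\cdot)$ structure left, so it is unclear how Proposition~\ref{prop:trilinear} would enter with the specific indices $(s',\gamma')$.

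The paper resolves this by placing the difference on $g$ from the outset. Using $v\cdot\omega=v_*\cdot\omega$ (Remark~\ref{rmk:v-v-ast-omega}) and the decomposition $\omega=\tilde\omega\cos\tfrac{\theta}{2}+\tfrac{v'-v_*}{|v'-v_*|}\sin\tfrac{\theta}{2}$ with $\tilde\omega\perp(v'-v_*)$, one writes $G=g\langle v\rangle^{k-2}$ and splits $G=G'+(G-G')$. After the regular change $v\to v'$, the $G'h'$ piece has integrand proportional to $v_*\cdot\tilde\omega$ with $\tilde\omega$ now equatorial relative to $v'-v_*$, so the $\phi$-integral vanishes. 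The surviving $(G-G')h'$ piece is handled by Cauchy--Schwarz with angular weights $\theta^{2-2s-\epsilon}$ and $\theta^{2s+\epsilon}$; the first factor is $\int b\,\theta^{2-2s-\epsilon}|v-v_*|^{2+\gamma}\langle v_*\rangle|f_*|(G-G')^{2}$, which---after writing $(G-G')^{2}=(G'^{2}-G^{2})+2G(G-G')$---is exactly $\int Q_{\tilde b}(\langle v\rangle f,G)\,G$ for the modified kernel $\tilde b=b\,\theta^{2-2s-\epsilon}|v-v_*|^{2+\gamma}$ with reduced singularity $s'=2s-1+\epsilon/2$. Now Proposition~\ref{prop:trilinear} applies directly to $Q_{\tilde b}$ and delivers $\|\langle v\rangle^{k}g\|_{H^{s'}_{\gamma'/2}}$ with $\gamma'/2=\gamma/2+(s'-1)$, while the second Cauchy--Schwarz factor gives $\|h\|_{L^{2}_{\gamma/2}}$. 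This is how the precise exponents $(s',\gamma')$ arise.
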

\begin{proof} 
Denote 
\begin{align}\label{for-section-4-2}
   \Gamma =\Gamma(f,g,h)
=  \IntRRS b(\cos\theta) |v - v_\ast|^\gamma
         \vpran{\vint{v'}^{k} - \vint{v}^{k}} f_\ast g \, h' \dbmu \,.
\end{align}
Then by Lemma~\ref{lem:diff-v-k},
\begin{align*}
  \Gamma 
& = \IntRRS b(\cos\theta) |v - v_\ast|^\gamma
         \vpran{k \langle v \rangle^{k-2}  |v-v_*| \big (v_\ast \cdot \omega \big)\cos^{k-1} \frac{\theta}{2}  \sin  \frac{\theta}{2} 
} f_\ast g \, h' \dbmu
\\
& \quad \,
  + \IntRRS b(\cos\theta) |v - v_\ast|^\gamma
         \vpran{\vint{v_\ast}^{k}\sin^{k} \vpran{\tfrac{\theta}{2}}} f_\ast g \, h' \dbmu
\\
& \quad \,
   + \sum_{m=1}^3 \IntRRS b(\cos\theta) |v - v_\ast|^\gamma
         \mathfrak{R}_m f_\ast g \, h' \dbmu
\\
& \quad \,
  + \IntRRS b(\cos\theta) |v - v_\ast|^\gamma
         \vpran{1-\cos^k \tfrac{\theta}{2}} f_\ast \vpran{g \vint{v}^{k}} \, h' \dbmu
\Denote 
   \sum_{m=1}^6 \Gamma_m \,.
\end{align*}
Here, we have replaced $v \cdot \omega$ by $v_\ast \cdot \omega$ in $\Gamma_1$.
Now we estimate all the $\Gamma_m$ separately. First, by the Cauchy-Schwarz inequality and the singular change of variables, we have
\begin{align} \label{bound:Gamma-2}
  \abs{\Gamma_2}
&\leq \left(
\IntRRS b(\cos\theta) 
\sin^{k-\frac{3}{2} -\frac{\gamma}{2}} \vpran{\tfrac{\theta}{2}}
|v - v_\ast|^\gamma |g|\vint{v_*}^k f_*|^2 dvdv_* d\sigma \right)^{1/2}\nn \\
&\qquad \times \left(
\IntRRS b(\cos\theta) 
\sin^{k+\frac{3}{2} -\frac{\gamma}{2}} \vpran{\tfrac{\theta}{2}}
|v' - v|^\gamma |g| |h(v') |^2 dvdv_* \sigma\right)^{1/2}  \nn \\
&\leq \int_{\Ss^2} b(\cos \theta)\sin^{k-\frac{3}{2} -\frac{\gamma}{2}}\vpran{\tfrac{\theta}{2}}
d\sigma  
\left(
\iint_{\RR^3 \times \RR^3}
|v - v_\ast|^\gamma |g| |\vint{v_*}^k f_*|^2 dvdv_*\right)^{1/2} \nn \\
&\qquad \qquad \times 
\left( \iint_{\RR^3 \times \RR^3}
|v - v'|^\gamma |g||h(v')||^2 dvdv' \right)^{1/2} \nn \\
&
\le \vpran{\int_{\Ss^2} b(\cos\theta) \sin^{k-\frac{3}{2} -\frac{\gamma}{2}}
\vpran{\tfrac{\theta}{2} }\dsigma} 
  \norm{g}_{L^1_\gamma}
  \norm{\vint{v}^k f}_{L^2_{\gamma/2}}
  \norm{h}_{L^2_{\gamma/2}} \,,
\end{align}
which holds when $k > \frac{3}{2} + \frac{\gamma}{2} + 2s$.  Similarly,  
\begin{align} \label{bound:Gamma-3}
  \abs{\Gamma_3}
&\leq
  \IntRRS b(\cos\theta) |v - v_\ast|^\gamma
         \abs{\mathfrak{R}_1} |f_\ast| |g| \, |h'| \dbmu \nn
\\
& \leq 
   C_k \IntRRS b(\cos\theta) |v - v_\ast|^\gamma
         \vint{v} \vint{v_*}^{k-1} \sin^{k-3} \tfrac{\theta}{2} 
         |f_\ast| |g| \, |h'| \dbmu
\\
& \leq
   C_k \norm{g}_{L^1_{1+\gamma}}
  \norm{\vint{v}^k f}_{L^2}
  \norm{h}_{L^2} \,, \nn
\end{align}
where we need $k >\frac{9}{2} +\frac{\gamma}{2} + 2s$. Next, 
\begin{align} \label{bound:Gamma-4}
   \abs{\Gamma_4}
&\leq
     \IntRRS b(\cos\theta) |v - v_\ast|^\gamma
         \abs{\mathfrak{R}_2} |f_\ast| |g| \, |h'| \dbmu \nn
\\
& \leq
     C_k \IntRRS b(\cos\theta) |v - v_\ast|^\gamma
         \vint{v}^{k-2} \vint{v_\ast}^2 \sin^2\tfrac{\theta}{2} |f_\ast| |g| \, |h'| \dbmu
\\
& \leq
  C_k \norm{f}_{L^1_{2+\gamma}} 
  \norm{\vint{v}^k g}_{L^2}
  \norm{h}_{L^2} \,. \nn
\end{align}
Similarly, 
\begin{align} \label{bound:Gamma-5}
   \abs{\Gamma_5}
&\leq
     \IntRRS b(\cos\theta) |v - v_\ast|^\gamma
         \abs{\mathfrak{R}_3} |f_\ast| |g| \, |h'| \dbmu \nn
\\
& \leq
     C_k \IntRRS b(\cos\theta) |v - v_\ast|^\gamma
         \vint{v}^{k-4} \vint{v_\ast}^4 \sin^2\tfrac{\theta}{2} |f_\ast| |g| \, |h'| \dbmu
\\
& \leq
  C_k \norm{f}_{L^1_{4+\gamma}} 
  \norm{\vint{v}^k g}_{L^2}
  \norm{h}_{L^2} \,. \nn
\end{align}
We can also estimate the bound
on  $\Gamma_6$ directly as
\begin{align}
   \abs{\Gamma_6}
\leq 
   C_k \norm{f}_{L^1_\gamma}
  \norm{\vint{v}^k g}_{L^2_{\gamma/2}}
  \norm{h}_{L^2_{\gamma/2}}  \,.
\end{align}
To estimate $\Gamma_1$, we rewrite $\omega$ as
\begin{align*}
   \omega 
= \tilde \omega \cos\tfrac{\theta}{2}
   + \frac{v' - v_\ast}{|v' - v_\ast|} \sin\tfrac{\theta}{2} \,,
\end{align*}
where $\tilde \omega = (v' - v)/|v' - v|$. Note that $\tilde \omega \perp (v' - v_\ast)$. Accordingly, 
\begin{align*}
   \Gamma_1
&= k\IntRRS b(\cos\theta) |v - v_\ast|^\gamma
         \vpran{\langle v \rangle^{k-2}  |v-v_*| \big (v_\ast \cdot \tilde \omega \big)\cos^{k} \tfrac{\theta}{2}  \sin\tfrac{\theta}{2} 
} f_\ast g \, h' \dbmu
\\
& \quad \,
  + k\IntRRS b(\cos\theta) |v - v_\ast|^\gamma
         \vpran{\langle v \rangle^{k-2}  |v-v_*| \vpran{v_\ast \cdot \frac{v'-v_\ast}{|v' - v_\ast|}} \cos^{k-1} \tfrac{\theta}{2}  \sin^2 \tfrac{\theta}{2} 
} f_\ast g \, h' \dbmu
\\
& \Denote \Gamma_{1, 1} + \Gamma_{1,2} \,.
\end{align*}
The second term $\Gamma_{1,2}$ is obviously bounded by
\begin{align} \label{bound:Gamma-1-2}
  \abs{\Gamma_{1,2}}
\leq 
  C_k \norm{f}_{L^1_{2+\gamma}}
  \norm{\vint{v}^k g}_{L^2}
  \norm{h}_{L^2} \,.
\end{align}
To estimate $\Gamma_{1,1}$, we 
consider  the cases when $0 < s < 1/2$ and $1/2 \leq s < 1$
separately. In the case of mild singularity when $0 < s < 1/2$, we can directly bound $\Gamma_{1,1}$ by
\begin{align*} 
  \abs{\Gamma_{1,1}}
\leq
  C_k \norm{f}_{L^1_{2+\gamma}}
  \norm{\vint{v}^k g}_{L^2}
  \norm{h}_{L^2} \,.
\end{align*}
Therefore, when $0 < s < 1/2$, we have
\begin{align} \label{bound:Gamma-1-mild}
  \abs{\Gamma_1}
\leq
  C_k \norm{f}_{L^1_{2+\gamma}}
  \norm{\vint{v}^k g}_{L^2}
  \norm{h}_{L^2} \,,
\qquad
  s \in (0, 1/2) \,.
\end{align}
To treat the strong singularity when $1/2 \leq s < 1$, we denote $G(v) = g(v) \vint{v}^{k-2}$ and separate $\Gamma_{1,1}$ such that
\begin{align*}
   \Gamma_{1,1}
&= k \IntRRS b(\cos\theta) |v - v_\ast|^{1+\gamma}
         \big (v_\ast \cdot \tilde \omega \big)\cos^{k} \tfrac{\theta}{2}  \sin  \tfrac{\theta}{2} f_\ast G' \, h' \dbmu
\\
& \quad \,
  + k \IntRRS b(\cos\theta) |v - v_\ast|^{1+\gamma}
         \big (v_\ast \cdot \tilde \omega \big)\cos^{k} \tfrac{\theta}{2}  \sin  \tfrac{\theta}{2} f_\ast \vpran{G - G'} \, h' \dbmu
\, \Denote \,
  \Gamma^{(1)}_{1,1} + \Gamma^{(2)}_{1,1} \,.
\end{align*}
One key  observation 
in this decomposition is that  $\Gamma^{(1)}_{1,1} = 0$. Indeed, one can make the regular change of variables $v \to v'$ and take the new $v' - v_\ast$ as the north pole. Then 
\begin{align*}
   \Gamma^{(1)}_{1,1}
= k \IntRRS b(\cos\theta)\frac{1}{\cos^{4+\gamma}\tfrac{\theta}{2}} |v' - v_\ast|^{1+\gamma}
         \big (v_\ast \cdot \tilde \omega \big)\cos^{k} \tfrac{\theta}{2}  \sin  \tfrac{\theta}{2} f_\ast G' \, h' \sin\theta \dtheta {\rm \, d}\phi \dv' \dv_\ast,
\end{align*}
where $\tilde \omega = (\cos\phi, \sin\phi, 0)$. Thus formally the integration in $\phi$ gives that $\Gamma^{(1)}_{1,1} = 0$. This can be made rigorous by first truncating the singularity of $b$ in $\theta$ and then passing the limit
of truncation. Hence, if $1/2 \leq s < 1$, then
\begin{align*}
   \abs{\Gamma_{1,1}} = \abs{\Gamma^{(2)}_{1,1}}
&\leq
   C_k \IntRRS b(\cos\theta) |v - v_\ast|^{1+\gamma}
         \sin\tfrac{\theta}{2} \vpran{\vint{v_\ast} |f_\ast|} \abs{G - G'} \, |h'| 
\\
&\leq
   C_k \IntRRS b(\cos\theta) |v - v_\ast|^{1+\frac{\gamma}{2}} |v' - v_\ast|^{\gamma/2}
         \sin\tfrac{\theta}{2} \vpran{\vint{v_\ast} |f_\ast|} \abs{G - G'} \, |h'| \dbmu.
\end{align*}
Let $\Eps > 0$ be determined later. Then
\begin{align} \label{bound:Gamma-1-1-strong-1}
   \abs{\Gamma_{1,1}} 
&\leq
  C_k 
 \vpran{\IntRRS b(\cos\theta) |v - v_\ast|^{2+\gamma} 
         \theta^{2-2s-\Eps} \vpran{\vint{v_\ast} |f_\ast|} \abs{G - G'}^2 \dbmu}^{1/2}  \nn
\\
& \quad \quad \,
  \times \vpran{\IntRRS b(\cos\theta)  
         \theta^{2s+\Eps} \vpran{\vint{v_\ast}^{1+\gamma} |f_\ast|}  
         \vpran{\vint{v'}^{\gamma} |h'|^2} \dbmu}^{1/2}  \nn
\\
& \leq 
  C_k \norm{f}^{1/2}_{L^1_{1+\gamma}}
  \norm{h}_{L^2_{\gamma/2}}
  \vpran{\underbrace{\IntRRS b(\cos\theta) |v - v_\ast|^{2+\gamma} 
         \theta^{2-2s-\Eps} \vpran{\vint{v_\ast} |f_\ast|} \abs{G - G'}^2 \dbmu}_{\Gamma_{1,1}^{(3)}}}^{1/2}.
\end{align}
To bound the last factor in~\eqref{bound:Gamma-1-1-strong-1}, we write
\begin{align*}
   |G - G'|^2 = \vpran{(G')^2 - G^2} + 2 G (G - G') \,.
\end{align*}
Hence,
\begin{align*}
   \Gamma^{(3)}_{1,1}
&= \IntRRS b(\cos\theta) |v - v_\ast|^{2+\gamma} 
         \theta^{2-2s-\Eps} \vpran{\vint{v_\ast} |f_\ast|} \vpran{(G')^2 - G^2} \dbmu
\\
& \quad \,
  + 2 \IntRRS b(\cos\theta) |v - v_\ast|^{2+\gamma} 
         \theta^{2-2s-\Eps} \vpran{\vint{v_\ast} |f_\ast|} G (G - G') \dbmu
\\
& \leq 
  C \norm{f}_{L^1_{3+\gamma}}
  \norm{\vint{v}^{k-2} g}^2_{L^2_{1+\gamma/2}}
+ 2 \int_{\R^3} Q_{\tilde b}(\vint{v} f, \,\, G) G \dv
\\
& \leq
  C \norm{f}_{L^1_{3+\gamma}}
  \norm{\vint{v}^{k} g}^2_{L^2_{\gamma/2}}
+ 2 \int_{\R^3} Q_{\tilde b}(\vint{v} f, \,\, G) G \dv,
\end{align*}
where $Q_{\tilde b}$ denotes the bilinear operator with the 
cross section $\tilde b = b(\cos\theta) \theta^{2-2s-\Eps} |v - v_\ast|^{2+\gamma}$. Hence, the singularity is given by 
\begin{align} \label{def:s-prime}
   \theta \tilde b \sim \frac{1}{\theta^{1+2s'}}\,,
\qquad
  s' = 2s - 1 + \frac{\Eps}{2} \,.
\end{align}
We choose $\Eps > 0$ such that $s' < s$, that is,
\begin{align*}
    0 < \Eps < 2(1-s) \,.
\end{align*}
By the trilinear estimate given in Proposition~\ref{prop:trilinear} for $Q_{\tilde b}$, we have
\begin{align*}
   \abs{\int_{\R^3} Q_{\tilde b}(\vint{v} f, \,\, G) G \dv}
\leq
  C \norm{\vint{v} f}_{L^1_{2+\gamma+2s'} \cap L^2}
  \norm{G}^2_{H^{s'}_{\frac{2+\gamma}{2} + s'}}
\leq
  C \norm{\vint{v} f}_{L^1_{2+\gamma+2s'} \cap L^2}
  \norm{\vint{v}^k g}^2_{H^{s'}_{\gamma'/2}} ,
\end{align*}
where the weight $\gamma'$ is given by
\begin{align} \label{def:gamma-prime}
   \frac{\gamma'}{2}
= \frac{2+\gamma}{2} + s' -2
= \frac{\gamma}{2} + (s' -1)
< \frac{\gamma}{2} \,.
\end{align}
Altogether we have
\begin{align*}
   \abs{\Gamma^{(3)}_{1,1}}
\leq
   C \norm{f}_{L^1_{3+\gamma+2s} \cap L^2}
  \norm{\vint{v}^k g}^2_{H^{s'}_{\gamma'/2}}  \,,
\end{align*}
which, by~\eqref{bound:Gamma-1-1-strong-1}, further gives
\begin{align} \label{bound:Gamma-1-1-strong}
   \abs{\Gamma_{1,1}} 
&\leq
  C_k \norm{f}_{L^1_{3+\gamma+2s} \cap L^2}
  \norm{\vint{v}^k g}_{H^{s'}_{\gamma'/2}}
  \norm{h}_{L^2_{\gamma/2}} \,,
\end{align}
where $s', \gamma'$ are defined in~\eqref{def:s-prime} and~\eqref{def:gamma-prime} respectively. Combining the estimates in~\eqref{bound:Gamma-2}-\eqref{bound:Gamma-1-mild} and \eqref{bound:Gamma-1-1-strong}, we obtain the desired estimate in~\eqref{bound:commutator-weight}.  
\end{proof}

We are now ready to show a key coercivity estimate for $Q(F, f)$
stated in     
\begin{prop} \label{prop:coercivity}
Suppose $F = \mu + g$ with $F$ satisfying the conditions in Proposition~\ref{prop:coercivity-1}. For $k >\frac{9}{2} +\frac{\gamma}{2} + 2s$, we have
\begin{align} \label{estimate:coercivity}
& \quad \,
   \int_{\T^3}\int_{\R^3} Q(F, f) \, f \, \vint{v}^{2k} \dv\dx \nn
\\
&\leq
  - \frac{\gamma_0}{2} \int_{\T^3} \norm{\vint{v}^k f}_{L^2_{\gamma/2}}^2 \dx
  - \vpran{\frac{c_0}{4} \delta_2 - C_k \sup_{\T^3}\norm{g}_{L^1_{3+\gamma+2s} \cap L^2}} \int_{\T^3} \norm{\vint{v}^k f}^2_{H^s_{\gamma/2}} \dx \nn
\\
& \quad \,
  + C_k \int_{\T^3} \norm{\vint{v}^k f}^2_{L^2} \dx
  + C_k \int_{\T^3} \norm{\vint{v}^k g}_{L^2}
   \norm{\vint{v}^k f}_{L^2}
   \norm{f}_{L^1_{1+\gamma}} \dx
\\
& \quad \, 
  +  \vpran{\int_{\Ss^2} b(\cos\theta) \sin^{k-\frac{3}{2}-\frac{\gamma}{2}}\tfrac{\theta}{2}  \dsigma} 
    \int_{\T^3} 
    \norm{f}_{L^1_\gamma} \norm{\vint{v}^k g}_{L^2_{\gamma/2}}
     \norm{\vint{v}^k f}_{L^2_{\gamma/2}} \dx  \nn
 \,,
\end{align}
where $\gamma_0$ is defined in~\eqref{def:gamma-0}, $c_0$ is the coefficient in Proposition~\ref{prop:coercivity-1}, and $\delta_2$ is a small enough constant (which may depend on $k$).
\end{prop}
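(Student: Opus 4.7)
The plan is to insert the weight into $Q$ via the algebraic identity
\[
\int_{\R^3} Q(F,f) \, f \vint{v}^{2k} \dv
= \int_{\R^3} Q(F, \vint{v}^k f)(\vint{v}^k f) \dv
  + \mathcal{C},
\]
where the commutator
\[
\mathcal{C} = \int_{\R^3}\!\!\int_{\R^3}\!\!\int_{\Ss^2} b(\cos\theta)|v-v_\ast|^\gamma \bigl(\vint{v'}^k - \vint{v}^k\bigr) F_\ast f \, \vint{v'}^k f' \dsigma \dv_\ast \dv
\]
is in exactly the form controlled by Proposition~\ref{prop:trilinear-weight} with $(f,g,h) = (F, f, \vint{v}^k f)$. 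I would then decompose $F = \mu + g$ everywhere and treat the $\mu$- and $g$-pieces separately.

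For the transported piece $\int Q(F, \vint{v}^k f)(\vint{v}^k f) \dv$, apply Proposition~\ref{prop:coercivity-1} in a form sharpened for hard potentials: since the collision frequency of $Q(\mu,\cdot)$ grows like $\gamma_0 \vint{v}^\gamma$ at infinity, one obtains not only the regularity gain $-c_0 \|\vint{v}^k f\|^2_{H^s_{\gamma/2}}$ but also a true moment gain $-\gamma_0 \|\vint{v}^k f\|^2_{L^2_{\gamma/2}}$, together with a harmless $C_k \|\vint{v}^k f\|^2_{L^2}$ remainder. The corresponding $g$-piece is bounded by Proposition~\ref{prop:trilinear} with $(m,\sigma)=(0,0)$, contributing $C_k \|g\|_{L^1_{3+\gamma+2s}\cap L^2} \|\vint{v}^k f\|^2_{H^s_{\gamma/2}}$, which is precisely what enters the coefficient of $-\|\vint{v}^k f\|^2_{H^s_{\gamma/2}}$ as the subtraction $C_k \sup_x \|g\|_{L^1_{3+\gamma+2s}\cap L^2}$. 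The commutator $\mathcal{C}$ is estimated term by term via Proposition~\ref{prop:trilinear-weight} with $F = \mu + g$: the $\mu$-contributions involve only fixed Gaussian moments and are absorbed by Young's inequality into the $-\gamma_0$ moment gain (halving it) and into a small fraction of the $-c_0$ regularity gain; the $g$-contributions deliver exactly the two cross terms in the statement, namely the one carrying the finite angular integral $\int_{\Ss^2} b(\cos\theta)\sin^{k-3/2-\gamma/2}\tfrac{\theta}{2}\dsigma$ coming from the $\vint{v_\ast}^{2k}\sin^{2k}\tfrac{\theta}{2}$ piece of Lemma~\ref{lem:diff-v-k}, and the one of the form $C_k \|\vint{v}^k g\|_{L^2}\|\vint{v}^k f\|_{L^2}\|f\|_{L^1_{1+\gamma}}$ coming from the remainders $\mathfrak{R}_1,\mathfrak{R}_2,\mathfrak{R}_3$.

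The main obstacle is the strong-singularity case $s\in[1/2,1)$: the last line of Proposition~\ref{prop:trilinear-weight} produces an $H^{s'}_{\gamma'/2}$-norm of $\vint{v}^k f$ with $s'<s$ and $\gamma'<\gamma$, and for the $\mu$-contribution this norm must be interpolated between $H^s_{\gamma/2}$ and $L^2_{\gamma/2}$ before it can be absorbed. This interpolation is what forces the small parameter $\delta_2$ in the final statement — only a fraction $\delta_2$ of the raw $c_0/4$ survives in front of $-\|\vint{v}^k f\|^2_{H^s_{\gamma/2}}$ after the interpolated $H^s_{\gamma/2}$ residue has been eaten, while the complementary $L^2_{\gamma/2}$ residue is absorbed into the halved moment gain $-\gamma_0/2$. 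The last step is to integrate the pointwise-in-$x$ estimate over $\T^3$, pulling $\sup_{x\in\T^3}$ onto the $g$-norm prefactor to arrive at the stated inequality.
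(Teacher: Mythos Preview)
Your decomposition into the transported piece $\int Q(F,\vint{v}^k f)\,(\vint{v}^k f)\dv$ plus the commutator $\mathcal{C}$ is exactly the paper's second estimate~\eqref{est:coercivity-2}, but the ``sharpened'' form of Proposition~\ref{prop:coercivity-1} you invoke is false in this polynomial-weight setting. The paper states this explicitly in the introduction: the coercivity of $Q(\mu,\cdot)$ ``only gains regularity rather than moments''. Concretely, the cancellation-lemma term in $\int Q(\mu,h)h\dv$ equals $+C\int(\mu\ast|\cdot|^\gamma)\,h^2\dv \sim +C\|h\|^2_{L^2_{\gamma/2}}$ with a \emph{positive} sign, so after absorbing it you are left with $-c_0\|h\|^2_{H^s_{\gamma/2}}+C\|h\|^2_{L^2_{\gamma/2}}$ and \emph{no} moment gain. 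The commutator $\mathcal{C}=T_4$ is likewise bounded only from above by Proposition~\ref{prop:trilinear-weight}, so it contributes another $+C_k\|\vint{v}^k f\|^2_{L^2_{\gamma/2}}$. Your single estimate therefore has an uncontrolled $+C_k\|\vint{v}^k f\|^2_{L^2_{\gamma/2}}$ term and cannot produce the $-\tfrac{\gamma_0}{2}$ in the statement.

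The paper obtains the moment gain from a \emph{separate} first estimate~\eqref{est:coercivity-1}, in which the weight $\vint{v}^{2k}$ is kept \emph{outside} $Q$. After the pre/post-collision change of variables one writes the integrand as $T_1+T_2$; in $T_1$ a Cauchy--Schwarz and the regular change $v\to v'$ yield the Povzner-type factor $\cos^{2k-3-\gamma}\tfrac{\theta}{2}-1<0$, which produces $-\gamma_0\|\vint{v}^k f\|^2_{L^2_{\gamma/2}}$ (this is where the exponent $2k$ of the weight is essential). This first estimate has a bad $+\delta_1\|\vint{v}^k f\|^2_{H^s_{\gamma/2}}$ error from interpolating the $H^{s'}_{\gamma'/2}$ term. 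The final inequality is obtained by adding $\delta_2$ times the second estimate to the first: $\delta_2$ is the weight of the combination (chosen so that $C_k\delta_2<\gamma_0/2$), not an interpolation residue, and $\delta_1$ is then chosen with $\delta_1<\tfrac{c_0}{4}\delta_2$. Your proposal conflates these two roles and, more importantly, omits the first estimate entirely.
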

\begin{proof}
We will give two different estimates on $\int_{\T^3}\int_{\R^3} Q(F, f) \, f \, \vint{v}^{2k} \dv\dx$. The first one contains dissipation
in terms of  $\norm{\vint{v}^k f}_{L^2_{\gamma/2}}$ while the second one contains dissipation of $\norm{\vint{v}^k f}_{H^s_{\gamma/2}}$.
First, by the definition of $Q$, we have
\begin{align*}
   \int_{\R^3}  Q(F, f) f \vint{v}^{2k} \dv
& = \IntRRS 
       b(\cos\theta) |v - v_\ast|^\gamma 
       \vpran{F_\ast' f' - F_\ast f} f \vint{v}^{2k} \dbmu
\\
& = \IntRRS 
       b(\cos\theta) |v - v_\ast|^\gamma 
       F_\ast f \vpran{f' \vint{v'}^{2k} - f \vint{v}^{2k}} \dbmu
\\
& = \IntRRS 
       b(\cos\theta) |v - v_\ast|^\gamma 
       F_\ast \vpran{f  f' \vint{v'}^{2k} - |f|^2 \vint{v}^{2k}} \dbmu
\\
& \leq \IntRRS 
       b(\cos\theta) |v - v_\ast|^\gamma 
       F_\ast \vpran{|f| \vpran{|f'| \vint{v'}^{k}} \vint{v'}^{k} - |f|^2 \vint{v}^{2k}}  \,.
\end{align*}
Hence, 
\begin{align} \label{def:T-1-T-2}
    \int_{\R^3}  Q(F, f) f \vint{v}^{2k} \dv \nn
& \leq \IntRRS 
       b(\cos\theta) |v - v_\ast|^\gamma 
       F_\ast \vpran{|f| \vpran{|f'| \vint{v'}^k} \vint{v}^{k} \cos^k \tfrac{\theta}{2} - |f|^2 \vint{v}^{2k}} 
       \dbmu \nn
\\
& \quad \,
    + \IntRRS 
       b(\cos\theta) |v - v_\ast|^\gamma 
       F_\ast |f| |f'| \vint{v'}^k
       \vpran{\vint{v'}^k - \vint{v}^{k} \cos^k \tfrac{\theta}{2}} 
       \dbmu \nn
\\& 
\Denote  
     T_1 + T_2 \,.
\end{align}
We estimate $T_1 $ and $ T_2$ separately. Firstly, 
$T_1$ is a dissipative term.  Indeed, by Cauchy-Schwarz, 
\begin{align*}
   |f| \vpran{|f'| \vint{v'}^k} \vint{v}^{k} \cos^k \tfrac{\theta}{2} - |f|^2 \vint{v}^{2k}
\leq 
   \frac{1}{2}
      \vpran{\vpran{|f'| \vint{v'}^k}^2 \cos^{2k} \tfrac{\theta}{2}
      - |f|^2 \vint{v}^{2k}} \,.
\end{align*}
Therefore, using a regular change of variables, we have
\begin{align} \label{bound:T-1-explicit}
   T_1 
& \leq 
  \frac{1}{2} \IntRRS
    b(\cos\theta) |v - v_\ast|^\gamma
    F_\ast \vpran{\vpran{|f'| \vint{v'}^k}^2 \cos^{2k} \tfrac{\theta}{2}
      - |f|^2 \vint{v}^{2k}} \dbmu \nn
\\
& = \frac{1}{2} \IntRRS
        b(\cos\theta) |v - v_\ast|^\gamma F_\ast |f|^2 \vint{v}^{2k}
        \vpran{\cos^{2k - 3 - \gamma} \tfrac{\theta}{2} - 1} \dbmu \,.
\end{align}
Let $0< \gamma_1 < \gamma_2$ be the coefficients such that 
\begin{align*}
   \gamma_1 \vint{v}^\gamma
\leq
   \int_{\R^3} |v - v_\ast|^\gamma \mu_\ast \dv_\ast
\leq
  \gamma_2 \vint{v}^\gamma \,. 
\end{align*}
Denote $\gamma_0$ as the constant given by 
\begin{align} \label{def:gamma-0}
    \gamma_0 = -\frac{\gamma_1}{2} \int_{\Ss^2} b(\cos\theta)
  \vpran{\cos^{2k - 3 - \gamma} \tfrac{\theta}{2} - 1} \dsigma \,.
\end{align}
Note that for $k \ge  \frac{5+\gamma}{2}$, the constant $\gamma_0$ has a strict lower bound that is independent of $k$. Hence, 
\begin{align} \label{bound:T-1}
  \int_{\T^3} T_1 \dx 
\leq 
  -\vpran{\gamma_0
  - C_k \sup_{\T^3} \norm{g}_{L^1_\gamma(\dv)}}
  \norm{\vint{v}^k f}^2_{L^2_{\gamma/2}(\dx\dv)} \,.
\end{align}

The bound of the second term $T_2$ can be obtained by a direct application of Proposition~\ref{prop:trilinear-weight}. We note that $T_2$ only contains $\Gamma_1 \sim \Gamma_5$ in Proposition~\ref{prop:trilinear-weight} since the difference in $T_2$ is $\vint{v'}^k - \vint{v}^{k} \cos^k \tfrac{\theta}{2}$ instead of $\vint{v'}^k - \vint{v}^{k}$. Hence, using the bounds for $\Gamma_1 \sim \Gamma_5$ in Proposition~\ref{prop:trilinear-weight}, we have 
\begin{align*}
   T_2
&\leq
  C_k \norm{f}_{L^1_\gamma} \norm{\vint{v}^k f}_{L^2_{\gamma/2}}
  + \vpran{\int_{\Ss^2} b(\cos\theta) \sin^{k-\frac{3}{2}-\frac{\gamma}{2}}\tfrac{\theta}{2} \dsigma} \norm{f}_{L^1_\gamma} \norm{\vint{v}^k g}_{L^2_{\gamma/2}}
     \norm{\vint{v}^k f}_{L^2_{\gamma/2}}
\\
& \quad \,
  + C_k \norm{f}_{L^1_{1+\gamma}} 
      \norm{\vint{v}^k g}_{L^2}
      \norm{\vint{v}^k f}_{L^2}
   + C_k \norm{\vint{v}^k f}^2_{L^2}
   + C_k \norm{g}_{L^1_{4+\gamma}}
      \norm{\vint{v}^k f}^2_{L^2}
\\
& \quad \,
   + C_k \norm{\vint{v}^k f}_{H^{s'}_{\gamma'/2}}
  \norm{\vint{v}^k f}_{L^2_{\gamma/2}}
   + C_k \norm{g}_{L^1_{3+\gamma+2s} \cap L^2}
  \norm{\vint{v}^k f}_{H^{s'}_{\gamma'/2}}
  \norm{\vint{v}^k f}_{L^2_{\gamma/2}} \,.
\end{align*}
By the interpolation of $H^{s'}_{\gamma'/2}$ between $L^2$ and $H^s_{\gamma/2}$, we have if $1/2 \leq s < 1$, then
\begin{align} \label{bound:T-2}
   T_2
&\leq
  C_k \norm{f}_{L^1_\gamma} \norm{\vint{v}^k f}_{H^s_{\gamma/2}}
  + \vpran{\int_{\Ss^2} b(\cos\theta) \sin^{k-\frac{3}{2}-\frac{\gamma}{2}}\tfrac{\theta}{2} \dsigma} \norm{f}_{L^1_\gamma} \norm{\vint{v}^k g}_{L^2_{\gamma/2}}
     \norm{\vint{v}^k f}_{L^2_{\gamma/2}} \nn
\\
& \quad \,
  + C_k \norm{f}_{L^1_{1+\gamma}} 
      \norm{\vint{v}^k g}_{L^2}
      \norm{\vint{v}^k f}_{L^2}
   + C_k \norm{\vint{v}^k f}^2_{L^2}
   + C_k \norm{g}_{L^1_{4+\gamma}}
      \norm{\vint{v}^k f}^2_{L^2} 
\\
&\leq
   \vpran{\delta_1 + C_k \norm{g}_{L^1_{3+\gamma+2s}}} \norm{\vint{v}^k f}^2_{H^s_{\gamma/2}}
  + C_k \norm{f}_{L^1_{1+\gamma}} 
      \norm{\vint{v}^k g}_{L^2}
      \norm{\vint{v}^k f}_{L^2}
   + C_k \norm{\vint{v}^k f}^2_{L^2} \nn
\\
& \quad \,
  + 
\vpran{\int_{\Ss^2} b(\cos\theta) \sin^{k-\frac{3}{2}-\frac{\gamma}{2}}\tfrac{\theta}{2}  \dsigma} \norm{f}_{L^1_\gamma} \norm{\vint{v}^k g}_{L^2_{\gamma/2}}
     \norm{\vint{v}^k f}_{L^2_{\gamma/2}}  \,. \nn
\end{align}
Combining~\eqref{bound:T-1} and~\eqref{bound:T-2} gives 
\begin{align} \label{est:coercivity-1}
& \quad \,
   \int_{\T^3}\int_{\R^3} Q(F, f) \, f \, \vint{v}^{2k} \dv\dx    \nn
\\
&\leq
  - \gamma_0 \norm{\vint{v}^k f}_{L^2_{\gamma/2}(\dx\dv)}^2
  + \vpran{\delta_1 + C_k \sup_{\T^3}\norm{g}_{L^1_{3+\gamma+2s} \cap L^2}} \norm{\vint{v}^k f}^2_{H^s_{\gamma/2}}
  + C_k \norm{\vint{v}^k f}^2_{L^2_{x,v}}
 \nn
\\
& \quad \,
  + \vpran{\int_{\Ss^2} b(\cos\theta) \sin^{k-\frac{3}{2}-\frac{\gamma}{2}}\tfrac{\theta}{2}  \dsigma} 
    \int_{\T^3} 
    \norm{f}_{L^1_\gamma} \norm{\vint{v}^k g}_{L^2_{\gamma/2}}
     \norm{\vint{v}^k f}_{L^2_{\gamma/2}} \dx  
\\
& \quad \, 
  + C_k \int_{\T^3} \norm{\vint{v}^k g}_{L^2}
   \norm{\vint{v}^k f}_{L^2}
   \norm{f}_{L^1_{1+\gamma}} \dx \nn
 \,.
\end{align}
Next, we give  the second estimate on $\int_{\T^3}\int_{\R^3} Q(F, f) \, f \, \vint{v}^{2k} \dv\dx$ by firstly rewriting it as
\begin{align*}
  \int_{\T^3}\int_{\R^3} Q(F, f) \, f \, \vint{v}^{2k} \dv\dx
&= \int_{\T^3}\int_{\R^3} Q(F, \, \vint{v}^k f) \, \vint{v}^{k} f \dv\dx
\\
& \quad \,
  + \int_{\T^3}\int_{\R^3} 
      \vpran{\vint{v}^k Q(F, f) - Q(F, \vint{v}^k f)} \, \vint{v}^k f \dv\dx
\\
& \Denote T_3 + T_4 \,.
\end{align*}
Applying Proposition~\ref{prop:coercivity-1} to $T_3$ yields
\begin{align} \label{bound:T-3}
   T_3 
\leq 
   -c_0 \int_{\T^3} \norm{\vint{v}^k f}^2_{H^s_{\gamma/2}} \dx
   + C_1 \int_{\T^3} \norm{\vint{v}^k f}^2_{L^2_{\gamma/2}} \dx \,.
\end{align}
Note that  the second term $T_4$ has the form
\begin{align*}
   T_4
=   \IntTRRS b(\cos\theta) |v - v_\ast|^\gamma
         \vpran{\vint{v'}^{k} - \vint{v}^{k}} F_\ast f \, f' \vint{v'}^k \dbmu \dx.
\end{align*}
Applying the commutator estimate in Proposition~\ref{prop:trilinear-weight} to $T_4$ gives
\begin{align} \label{bound:T-4}
   T_4 
&\leq  
     \vpran{\int_{\Ss^2} b(\cos\theta) \sin^{k-\frac{3}{2}-\frac{\gamma}{2}}\tfrac{\theta}{2} \dsigma}
    \int_{\T^3} \norm{f}_{L^1_\gamma}
    \norm{\vint{v}^k F}_{L^2_{\gamma/2}} 
   \norm{\vint{v}^k f}_{L^2_{\gamma/2}} \dx \nn
\\
& \quad \,
   + C_k \int_{\T^3} \norm{f}_{L^1_{1+\gamma}}
    \norm{\vint{v}^k F}_{L^2} 
   \norm{\vint{v}^k f}_{L^2} \dx 
   + C_k \int_{\T^3}
      \norm{F}_{L^1_{4+\gamma}} \norm{\vint{v}^k f}^2_{L^2} \dx  \nn
\\
& \quad \,
   + C_k \int_{\T^3} \norm{F}_{L^1_\gamma}
  \norm{\vint{v}^k f}^2_{L^2_{\gamma/2}} \dx
  + C_k \int_{\T^3} \norm{F}_{L^1_{3+\gamma+2s} \cap L^2}
  \norm{\vint{v}^k f}_{H^{s'}_{\gamma'/2}}
  \norm{\vint{v}^k f}_{L^2_{\gamma/2}} \dx \,, 
\\
& \leq
   \vpran{\frac{c_0}{2} + C_k \sup_{\T^3} \norm{g}_{L^1_{3+\gamma+2s}}}\int_{\T^3} \norm{\vint{v}^k f}^2_{H^s_{\gamma/2}} \dx
   + C_k \int_{\T^3} \norm{\vint{v}^k f}^2_{L^2_{\gamma/2}} \dx \,. \nn
\end{align}
Combining~\eqref{bound:T-3} and~\eqref{bound:T-4}, we obtain
\begin{align} \label{est:coercivity-2}
& \quad \,
     \int_{\T^3}\int_{\R^3} Q(F, f) \, f \, \vint{v}^{2k} \dv\dx    \nn
\\
&\leq
   -\vpran{\frac{c_0}{2} - C_k \sup_{\T^3} \norm{g}_{L^1_{3+\gamma+2s}}}\int_{\T^3} \norm{\vint{v}^k f}^2_{H^s_{\gamma/2}} \dx
   + C_k \int_{\T^3} \norm{\vint{v}^k f}^2_{L^2_{\gamma/2}} \dx \,. 
\\
& \quad \,
  + \vpran{\int_{\Ss^2} b(\cos\theta) \sin^{k-\frac{3}{2}-\frac{\gamma}{2}}\tfrac{\theta}{2}  \dsigma}
    \int_{\T^3} \norm{f}_{L^1_\gamma}
    \norm{\vint{v}^k g}_{L^2_{\gamma/2}} 
   \norm{\vint{v}^k f}_{L^2_{\gamma/2}} \dx \nn
\\
& \quad \,
   + C_k \int_{\T^3} \norm{f}_{L^1_{1+\gamma}}
    \norm{\vint{v}^k g}_{L^2} 
   \norm{\vint{v}^k f}_{L^2} \dx \,. \nn
\end{align}
Let $\delta_2 > 0$ be a small number  to be determined. Multiply $\delta_2$ to~\eqref{est:coercivity-2} and add it  to~\eqref{est:coercivity-1}. This gives
\begin{align*}
& \quad \,
     \int_{\T^3}\int_{\R^3} Q(F, f) \, f \, \vint{v}^{2k} \dv\dx    \nn
\\
&\leq
  - \vpran{\frac{c_0}{2} \delta_2 -\delta_1 - C_k \sup_{\T^3}\norm{g}_{L^1_{3+\gamma+2s} \cap L^2}} \int_{\T^3} \norm{\vint{v}^k f}^2_{H^s_{\gamma/2}} \dx
\\
& \quad \,
  - \vpran{\gamma_0 - C_k \delta_2} \int_{\T^3} \norm{\vint{v}^k f}_{L^2_{\gamma/2}}^2 \dx
  + C_k \int_{\T^3} \norm{\vint{v}^k f}^2_{L^2} \dx
 \nn
\\
& \quad \,
  + \vpran{\int_{\Ss^2} b(\cos\theta)\sin^{k-\frac{3}{2}-\frac{\gamma}{2}}\tfrac{\theta}{2}  \dsigma} 
    \int_{\T^3} 
    \norm{f}_{L^1_\gamma} \norm{\vint{v}^k g}_{L^2_{\gamma/2}}
     \norm{\vint{v}^k f}_{L^2_{\gamma/2}} \dx  
\\
& \quad \, 
  + C_k \int_{\T^3} \norm{\vint{v}^k g}_{L^2}
   \norm{\vint{v}^k f}_{L^2}
   \norm{f}_{L^1_{1+\gamma}} \dx \nn
 \,.
\end{align*}
Then the dissipation given in the  inequality~\eqref{estimate:coercivity} 
follows from the fact  by first taking $\delta_2$ small enough such that $C_k \delta_2 < \frac{\gamma_0}{2}$ and then taking $\delta_1 > 0$ small enough such that $\delta_1 < \frac{c_0}{4} \delta_2$. 
\end{proof}

\begin{rmk}
We keep the second term on the right hand side of the inequality in Proposition~\ref{prop:coercivity} in the current form since in later sections we may apply the supremum in $x \in \T^3$ to either the $g$-term or the $f$-term depending on the need. 
\end{rmk}

Now we state the proposition for the bound of $Q(g, \mu)$.
\begin{prop} \label{prop:bound-Q-M}
Let  $k >\frac{9}{2} +\frac{\gamma}{2} + 2s$. Then
\begin{align*}
   \int_{\T^3} \int_{\R^3} Q(g, \mu) f \, \vint{v}^{2k} \dv\dx
&\leq 
      \vpran{\int_{\Ss^2} b(\cos\theta) \sin^{k-\frac{3}{2}-\frac{\gamma}{2}}\tfrac{\theta}{2} \dsigma}
    \int_{\T^3} \norm{\vint{v}^k g}_{L^2_{\gamma/2}} 
   \norm{\vint{v}^k f}_{L^2_{\gamma/2}} \dx
\\
& \quad \,
   + C_k 
    \int_{\T^3} \norm{\vint{v}^k g}_{L^2} 
   \norm{\vint{v}^k f}_{L^2_{\gamma/2}} \dx \,.
\end{align*}
\end{prop}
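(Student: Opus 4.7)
My plan is to follow the decomposition strategy used in the proof of Proposition~\ref{prop:coercivity}, in a simpler form, since here the second argument of $Q$ is the Maxwellian $\mu$, which has Gaussian decay and all velocity moments. Starting from the weak form of $Q$,
\begin{equation*}
\int_{\R^3} Q(g,\mu)\,f\,\vint{v}^{2k}\,\dv = \int_{\R^3}\!\int_{\R^3}\!\int_{\Ss^2} b(\cos\theta)\,|v-v_\ast|^\gamma\, g_\ast\,\mu\,\bigl(f'\vint{v'}^{2k} - f\vint{v}^{2k}\bigr)\,\dsigma\,\dv_\ast\,\dv,
\end{equation*}
I would introduce the shorthand $F=\vint{v}^k f$, so $F'=\vint{v'}^k f'$, and split the bracket through the algebraic identity
\begin{equation*}
f'\vint{v'}^{2k} - f\vint{v}^{2k} = F'\bigl(\vint{v'}^k - \vint{v}^k\bigr) + \vint{v}^k\,(F'-F),
\end{equation*}
writing $\int Q(g,\mu) f\vint{v}^{2k}\dv = \mathcal{E}_1 + \mathcal{E}_2$.

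For the commutator piece
\begin{equation*}
\mathcal{E}_1 = \int_{\R^3}\!\int_{\R^3}\!\int_{\Ss^2} b(\cos\theta)\,|v-v_\ast|^\gamma\, g_\ast\,\mu\, F'\bigl(\vint{v'}^k-\vint{v}^k\bigr)\,\dsigma\,\dv_\ast\,\dv,
\end{equation*}
the integrand has exactly the structure of Proposition~\ref{prop:trilinear-weight} under the substitutions $f_\ast\leftrightarrow g_\ast$, $g\leftrightarrow \mu$, $h\leftrightarrow F$. Applying that commutator estimate, every norm $\|\mu\|_{L^1_m}$ or $\|\vint{v}^k\mu\|_X$ that appears collapses into a $k$-dependent constant since $\mu$ is a Maxwellian, while every occurrence of $\|g\|_{L^1_m}$ is absorbed into $\|\vint{v}^k g\|_{L^2}$ through the Cauchy--Schwarz bound $\|g\|_{L^1_m}\le \|\vint{v}^k g\|_{L^2}\,\|\vint{v}^{m-k}\|_{L^2}$, which is legitimate precisely because $k>\frac{9}{2}+\frac{\gamma}{2}+2s$ makes $\vint{v}^{m-k}\in L^2(\R^3)$ for every exponent $m$ that arises. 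The leading term of Proposition~\ref{prop:trilinear-weight} produces the first summand of the claim, and all other contributions take the form $C_k\,\|\vint{v}^k g\|_{L^2}\|\vint{v}^k f\|_{L^2_{\gamma/2}}$.

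For the second piece, reading the weak form backwards gives the identity
\begin{equation*}
\mathcal{E}_2 = \int_{\R^3}\!\int_{\R^3}\!\int_{\Ss^2} b(\cos\theta)\,|v-v_\ast|^\gamma\, g_\ast\,\bigl(\mu\vint{v}^k\bigr)\,(F'-F)\,\dsigma\,\dv_\ast\,\dv = \int_{\R^3} Q\bigl(g,\,\mu\vint{v}^k\bigr)\,F\,\dv,
\end{equation*}
to which I would apply the trilinear estimate of Proposition~\ref{prop:trilinear} with parameter $\sigma=s$, so that no Sobolev regularity is required of $F$. Since $\mu\vint{v}^k$ is smooth with Gaussian decay, the weighted Sobolev norm on its factor reduces to a constant $C_k$, and the $L^1$ and $L^2$ norms of $g$ are again absorbed into $\|\vint{v}^k g\|_{L^2}$. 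This produces another $C_k\,\|\vint{v}^k g\|_{L^2}\|\vint{v}^k f\|_{L^2_{\gamma/2}}$ contribution.

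Adding the two bounds and integrating in $x\in\T^3$ yields the stated inequality. The main obstacle is the weight bookkeeping: one must check that every $L^1_m$-norm of $g$ produced by Propositions~\ref{prop:trilinear-weight} and~\ref{prop:trilinear} can indeed be absorbed into $\|\vint{v}^k g\|_{L^2}$, and this is exactly where the hypothesis $k>\frac{9}{2}+\frac{\gamma}{2}+2s$ is invoked in its full strength rather than a weaker moment condition.
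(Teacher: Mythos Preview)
Your proof is correct and follows essentially the same approach as the paper: the same algebraic splitting of $f'\vint{v'}^{2k}-f\vint{v}^{2k}$ into a commutator piece handled by Proposition~\ref{prop:trilinear-weight} (with $\mu$ in the second slot) and a piece of the form $\int Q(g,\mu\vint{v}^k)\,(\vint{v}^k f)\,\dv$ handled by Proposition~\ref{prop:trilinear} with $\sigma=s$. The paper calls these $T_6$ and $T_5$ respectively, and proceeds exactly as you outline.
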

\begin{proof}
First of all, 
\begin{align*}
   \int_{\T^3} \int_{\R^3} Q(g, \mu) f \, \vint{v}^{2k} \dv \dx
&= \IntTRRS b(\cos\theta) |v - v_\ast|^\gamma
   g_\ast \mu \vpran{f' \vint{v'}^{2k} - f \vint{v}^{2k}} \dbmu\dx
\\
& \hspace{-0.8cm} 
  = \IntTRRS b(\cos\theta) |v - v_\ast|^\gamma
   g_\ast \mu \vint{v}^k \vpran{f' \vint{v'}^k - f \vint{v}^k}  \dbmu\dx
\\
& \hspace{-0.8cm} \quad \, 
    + \IntTRRS b(\cos\theta) |v - v_\ast|^\gamma
   g_\ast \mu \vpran{f' \vint{v'}^k}\vpran{\vint{v'}^k - \vint{v}^k} \dbmu \dx
\\
& \hspace{-0.8cm} \Denote T_5 + T_6 \,.
\end{align*}
Note that 
\begin{align*}
   T_5 = \int_{\T^3} \vpran{Q(g, \mu \vint{v}^k), \,\, f \vint{v}^k} \dx \,.
\end{align*}
Taking $m = \gamma/2$ and $\sigma = s$ in the trilinear estimate in Proposition~\ref{prop:trilinear}, we have
\begin{align} \label{bound:T-5}
    \abs{T_5} 
\leq 
   C \int_{\T^3} 
   \vpran{\norm{g}_{L^1_{\gamma + 2s} \cap L^2(\dv)}} \norm{\vint{v}^k f}_{L^2_v} \dx
\leq
   C \norm{\vint{v}^k g}_{L^2_{x, v}}\norm{\vint{v}^k f}_{L^2_{x, v}} \,,
\quad
   k > 2 + 2s + \gamma \,.
\end{align}
Applying Proposition~\ref{prop:trilinear-weight} to $T_6$, we obtain
\begin{align} \label{bound:T-6}
   T_6
&\leq
    \vpran{\int_{\Ss^2} b(\cos\theta) \sin^{k-\frac{3}{2}-\frac{\gamma}{2}}\tfrac{\theta}{2}  \dsigma}
    \norm{\vint{v}^k g}_{L^2_{\gamma/2}} 
   \norm{\vint{v}^k f}_{L^2_{\gamma/2}}
   + C_k 
    \norm{\vint{v}^k g}_{L^2} 
   \norm{\vint{v}^k f}_{L^2_{\gamma/2}} \,.
\end{align}
The desired bound is then obtained by combining~\eqref{bound:T-5} and ~\eqref{bound:T-6}.
\end{proof}

The next proposition is about the bounds on the
commutators with respect to the spatial derivatives. 
\begin{prop} \label{Prop:commute-deriv-x}
Let $\alpha$ be any multi-index such that $|\alpha| = 2$.
Suppose $l \geq 2 + \frac{6}{m_0}$ with $m_0$ being the exponent in~\eqref{def:W}. Let $F = \mu + f$. Then  
\begin{align} \label{bound:commut-deriv-x}
& \quad \,
 \abs{ \int_{\T^3} \int_{\R^3}
    \vpran{\del^\alpha_x Q(F, g) - Q \vpran{F, \, \del^\alpha_x g}}
    W^{2(l - |\alpha|)} \del^\alpha_x h \dx \dv} \nn
\\
& \leq 
  C_l \norm{f}_{Y_l} 
  \vpran{\sum_{\alpha}  \norm{W^{l - |\alpha|} \del^\alpha_x g}_{L^2 (\dx; H^s_{\gamma/2}(\dv))}}
   \vpran{\sum_{\alpha}  \norm{W^{l - |\alpha|} \del^\alpha_x h}_{L^2 (\dx; H^s_{\gamma/2}(\dv))}} \nn
\\
& 
  + C_l \norm{g}_{Y_l} 
  \vpran{\sum_{\alpha}  \norm{W^{l - |\alpha|} \del^\alpha_x f}_{L^2 (\dx; H^s_{\gamma/2}(\dv))}}
   \vpran{\sum_{\alpha}  \norm{W^{l - |\alpha|} \del^\alpha_x h}_{L^2 (\dx; H^s_{\gamma/2}(\dv))}} \,.
\end{align}
\end{prop}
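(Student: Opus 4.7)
Since $\mu$ is independent of $x$ and $Q$ is bilinear, the Leibniz rule gives
\begin{align*}
   \del^\alpha_x Q(F, g) - Q(F, \del^\alpha_x g)
 = \sum_{0 < \beta \le \alpha} \binom{\alpha}{\beta} Q(\del^\beta_x f, \del^{\alpha - \beta}_x g) \,.
\end{align*}
For $|\alpha| = 2$, only the splittings $(|\beta|, |\alpha - \beta|) \in \{(2, 0), (1, 1)\}$ contribute, so it suffices to estimate, uniformly in such $\beta$,
\begin{align*}
 I_\beta
\Denote
 \int_{\T^3} \int_{\R^3} Q(\del^\beta_x f, \del^{\alpha - \beta}_x g) \, W^{2(l - 2)} \, \del^\alpha_x h \dv \dx \,.
\end{align*}

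\textbf{Velocity-variable estimates.} For each $I_\beta$ I would split the weight and transport one factor inside $Q$ via
\begin{align*}
   W^{l-2}(v) Q(F_1, G_1)(v)
= Q(F_1, W^{l-2} G_1)(v)
- \iint b(\cos\theta) |v - v_\ast|^\gamma (F_1)_\ast G_1(v') \bigl(W(v')^{l-2} - W(v)^{l-2}\bigr) \dsigma \dv_\ast,
\end{align*}
with $F_1 = \del^\beta_x f$ and $G_1 = \del^{\alpha - \beta}_x g$. The ``smooth'' piece is controlled by Proposition~\ref{prop:trilinear} with a judicious choice of $m \in \R$ so that the weights on $W^{l-2} G_1$ and on $W^{l-2} \del^\alpha_x h$ match the target weights $W^{l-|\alpha-\beta|}\vint{v}^{\gamma/2}$ and $W^{l-|\alpha|}\vint{v}^{\gamma/2}$ respectively. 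The commutator tail is of the form handled by Proposition~\ref{prop:trilinear-weight} with $k = m_0(l-2)$, since $W = \vint{v}^{m_0}$. The hypothesis $l \ge 2 + 6/m_0$ combined with $m_0 > \max\{4s, 1\}$ simultaneously supplies the requirement $k > \tfrac{9}{2} + \tfrac{\gamma}{2} + 2s$ and ensures enough polynomial margin to convert, via Cauchy--Schwarz in $v$, all of the $L^1_v$-weighted factors produced by the two propositions into $L^2_v$ norms weighted by $W^{l - |\beta|}$ or $W^{l - |\alpha - \beta|}$.

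\textbf{Spatial-variable estimates.} Integrating in $x \in \T^3$, I would distribute the three remaining factors via Sobolev embeddings on the torus. When $(|\beta|, |\alpha - \beta|) = (2, 0)$, the undifferentiated factor $g$ is placed in $L^\infty_x$ via $H^2(\T^3) \hookrightarrow L^\infty(\T^3)$ and absorbed into $\norm{g}_{Y_l}$; the factors $\del^\alpha_x f$ and $\del^\alpha_x h$ then sit in $L^2_x$, producing the $\norm{g}_{Y_l}$--prefactored clause of~\eqref{bound:commut-deriv-x}. When $(|\beta|, |\alpha - \beta|) = (1, 1)$, neither argument bears two $x$-derivatives, so I would invoke $H^1(\T^3) \hookrightarrow L^4(\T^3)$ to place both $\del^\beta_x f$ and $\del^{\alpha - \beta}_x g$ in $L^4_x$ with the appropriate $v$-norms and couple with $\del^\alpha_x h \in L^2_x$ through $L^4_x \cdot L^4_x \cdot L^2_x \subset L^1_x$. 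Depending on which argument is routed to the $L^1_v \cap L^2_v$ slot of Proposition~\ref{prop:trilinear}, the bound lands in either the $\norm{f}_{Y_l}$-- or the $\norm{g}_{Y_l}$--prefactored clause, and both clauses in~\eqref{bound:commut-deriv-x} are needed to absorb the full Leibniz sum.

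\textbf{Main obstacle.} The chief technical issue is the weight bookkeeping: the several weighted $L^1_v$, $L^2_v$, and $H^{s'}_{\gamma'/2}$ norms produced by Propositions~\ref{prop:trilinear} and~\ref{prop:trilinear-weight} must all collapse into the two target aggregates, $\norm{\cdot}_{Y_l}$ and $\sum_{|\alpha'| \le 2} \norm{W^{l - |\alpha'|} \del^{\alpha'}_x \cdot}_{L^2(\dx; H^s_{\gamma/2}(\dv))}$. Achieving this requires carefully tuning the parameter $m$ in Proposition~\ref{prop:trilinear} to align the velocity weights with $\vint{v}^{\gamma/2}$ on both the middle and the test factor, and exploiting the strict polynomial margin $m_0 (l - 2) \ge 6 \gg \gamma + 2s + 3/2$ to dispose of the auxiliary $L^1_v$ factors via Cauchy--Schwarz. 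The remainder with the intermediate exponent $H^{s'}_{\gamma'/2}$ (with $s' < s$ and $\gamma'/2 < \gamma/2$) appearing in Proposition~\ref{prop:trilinear-weight} for $s \ge 1/2$ is then interpolated between $L^2_{\gamma/2}$ and $H^s_{\gamma/2}$ to fit into the same target aggregate.
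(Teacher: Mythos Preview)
Your overall architecture---Leibniz expansion, the splitting into the ``smooth'' piece $T_{7,2}$ handled by Proposition~\ref{prop:trilinear} and the commutator tail $T_{7,1}$ handled by Proposition~\ref{prop:trilinear-weight}, followed by Sobolev embeddings in $x$---matches the paper. The gap is in the spatial step, and it is a real one.

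After applying Proposition~\ref{prop:trilinear} with $(m,\sigma)=(0,0)$ to $T_{7,2}$, the $g$-factor comes out as $\norm{W^{l-2}\del^{\alpha-\beta}_x g}_{H^s_{\gamma/2+2s}(\dv)}$: it carries both $H^s_v$-regularity \emph{and} an extra $+2s$ velocity weight. Your routing in the $(2,0)$ case sends $g$ to $\norm{g}_{Y_l}$, but $Y_l$ contains only weighted $L^2_v$ norms with no $v$-regularity, so the $H^s_v$ on $g$ has nowhere to go; no choice of $\sigma$ in Proposition~\ref{prop:trilinear} fixes this without pushing $H^{2s}_v$ onto $h$, which the $h$-aggregate cannot absorb either. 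In the $(1,1)$ case your $L^4_x\cdot L^4_x\cdot L^2_x$ scheme would, after $H^{3/4}(\T^3)\hookrightarrow L^4(\T^3)$, require $\norm{W^{l-1}\vint{D_x}^{7/4} g}_{L^2_x H^s_{\gamma/2}}$ to be controlled by the aggregate $\sum_{\alpha'}\norm{W^{l-|\alpha'|}\del^{\alpha'}_x g}_{L^2_x H^s_{\gamma/2}}$. This fails: the aggregate couples weight to derivative count as $W^{l-|\alpha'|}$, and the weight $W^{l-1}$ sits strictly above the interpolated value $W^{l-7/4}$ at derivative order $7/4$.

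The paper's remedy is to route $T_{7,2}$ uniformly as $f\to\norm{f}_{Y_l}$ and $g\to$ aggregate (so only the first clause of~\eqref{bound:commut-deriv-x} arises from $T_{7,2}$; the second clause comes from the $g$-in-$L^1$ terms of $T_{7,1}$), and to absorb the extra $+2s$ on $g$ by a \emph{calibrated} fractional embedding: writing $W^{l-2}\vint{v}^{2s}=W^{l-(2-2s/m_0)}$, one uses $H^{2-2s/m_0}(\T^3)\hookrightarrow L^\infty(\T^3)$ in the $(2,0)$ case and $H^{1-2s/m_0}(\T^3)\hookrightarrow L^q(\T^3)$ with $q=3/(1/2+2s/m_0)$ in the $(1,1)$ case. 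This trades exactly $2s/m_0$ of $x$-derivative for the surplus velocity weight, so the resulting $\norm{W^{l-(2-2s/m_0)}\vint{D_x}^{2-2s/m_0}g}_{L^2_x H^s_{\gamma/2}}$ lands precisely on the interpolation scale of the aggregate. The hypothesis $m_0>4s$ is what makes $2s/m_0<1/2$ and hence $q>3$, $p\in(2,6)$, so that both embeddings are available.
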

\begin{proof}
By the Lebniz rule for the bilinear operator, the commutator satisfies
\begin{align*}
   \del^\alpha_x Q(F, g) - Q \vpran{F, \, \del^\alpha_x g}
&= \sum_{|\alpha_1| \neq 0}
     C^{\alpha_1, \alpha_2}
     Q \vpran{\del^{\alpha_1}_x F, \, \del^{\alpha_2}_x g} 
= \sum_{|\alpha_1| \neq 0}
     C^{\alpha_1, \alpha_2}
     Q \vpran{\del^{\alpha_1}_x f, \, \del^{\alpha_2}_x g} \,.
\end{align*}
For each $(\alpha_1, \alpha_2) \neq (0, 2)$, we have
\begin{align*}
& \quad \,
   \abs{\int_{\R^3} Q \vpran{\del^{\alpha_1}_x f, \, \del^{\alpha_2}_x g} 
   \vpran{W^{2\vpran{l - |\alpha|}} \del^\alpha_x h} \dv}
\\
& \leq
    \abs{\int_{\R^3} \vpran{W^{l - |\alpha|}Q \vpran{\del^{\alpha_1}_x f, \, \del^{\alpha_2}_x g} 
                 - Q \vpran{\del^{\alpha_1}_x f, \, W^{l - |\alpha|}\del^{\alpha_2}_x g} }
   \vpran{W^{l - |\alpha|} \del^\alpha_x h} \dv}
\\
& \quad \,
   + \abs{\int_{\R^3} Q \vpran{\del^{\alpha_1}_x f, \, W^{l - |\alpha|}\del^{\alpha_2}_x g} 
   \vpran{W^{l - |\alpha|} \del^\alpha_x h} \dv}
\,\, \Denote  T_{7,1} + T_{7,2} \,.
\end{align*}
By the trilinear estimate given in Proposition~\ref{prop:trilinear} with $(m, \sigma) = (0, 0)$, we have
\begin{align*}
  T_{7, 2}
&\leq
    C_l \norm{\del^{\alpha_1}_x f}_{L^1_{\gamma + 2s} \cap L^2}
     \norm{W^{l - |\alpha|}\del^{\alpha_2}_x g}_{H^{s}_{\gamma/2 + 2s}}
     \norm{W^{l - |\alpha|} \del^\alpha_x h}_{H^s_{\gamma/2}} 
\\
& \leq
   C_l \norm{\vint{v}^{4+\gamma}\del^{\alpha_1}_x f}_{L^2_v}
     \norm{W^{l - |\alpha|}\del^{\alpha_2}_x g}_{H^{s}_{\gamma/2 + 2s}}
     \norm{W^{l - |\alpha|} \del^\alpha_x h}_{H^s_{\gamma/2}} \,.
\end{align*}
To bound $\int_{\T^3} T_{7, 2} \dx$, we consider
 the two cases: $|\alpha_1| = |\alpha_2| = 1$ and $\alpha_2 = 0$. If $|\alpha_1| = |\alpha_2| = 1$, then let 
\begin{align*}
   q = \frac{3}{\frac{1}{2} + \frac{2s}{m_0}} \,,
\qquad
   \frac{2}{p} + \frac{2}{q} = 1\,.
\end{align*}
Note that in this case,  $q > 3$ and $p \in (2, 6)$ because $m_0 > 4s$. Recall that in $\R^3$, we have the Sobolev embeddings
\begin{align*}
   H^1(\R^3) \hookrightarrow H^{\frac{3}{2} - \frac{3}{p}}(\R^3)
   \hookrightarrow L^p(\R^3) \,,
\qquad
   H^{\frac{3}{2} - \frac{3}{q}}(\R^3) \hookrightarrow L^q(\R^3)
\end{align*}
with $\frac{3}{2} - \frac{3}{p} = 1 - \frac{2s}{m_0}$. Hence,
\begin{align} \label{bound:T-7-2}
\begin{split}
& \quad \,
   \int_{\T^3} \norm{\vint{v}^{4+\gamma}\del^{\alpha_1}_x f}^2_{L^2_v}
     \norm{W^{l - |\alpha|}\del^{\alpha_2}_x g}^2_{H^{s}_{\gamma/2 + 2s}} \dx 
\\
& = \int_{\T^3} \norm{\vint{v}^{4+\gamma}\del^{\alpha_1}_x f}^2_{L^2_v}
     \norm{W^{l - 1 - (1 - 2s/m_0)}\del^{\alpha_2}_x g}^2_{H^{s}_{\gamma/2}} \dx
\\
& \leq
   \vpran{\int_{\T^3} \norm{\vint{v}^{4+\gamma}\del^{\alpha_1}_x f}^p_{L^2_v} \dx}^{\frac{2}{p}}
   \vpran{\int_{\T^3} \norm{W^{l - 1 - (1 - 2s/m_0)}\del^{\alpha_2}_x g}^q_{H^{s}_{\gamma/2}} \dx}^{\frac{2}{q}} 
\\
& \leq 
   C \norm{\vint{v}^{4+\gamma} \vint{D_x}^2 f}^2_{L^2_{x, v}}
   \vpran{\int_{\T^3} \norm{W^{l - (2 - 2s/m_0)} \vint{D_x}^{2 - 2s/m_0} g}^2_{H^{s}_{\gamma/2}} \dx}  
\\
&\leq 
  C \norm{f}^2_{Y_l} \sum_{\alpha}
  \norm{W^{l - |\alpha|} \del^\alpha_x g}_{L^2 (\dx; H^s_{\gamma/2}(\dv))}^2 \,.
\end{split}
\end{align}
The bound for $T_{7, 2}$ with $(|\alpha_1|, |\alpha_2|) = (2, 0)$ also follows from the Sobolev embedding. In this case, we have $2 - \frac{2s}{m_0} > 1/2$, which implies that 
\begin{align*}
   H^{2 - \frac{2s}{m_0}}(\R^d) \hookrightarrow L^\infty(\R^3) \,.
\end{align*}
Hence,
\begin{align*}
  \sup_{\T^3} \norm{W^{l - |\alpha|}\del^{\alpha_2}_x g}^2_{H^{s}_{\gamma/2 + 2s}}
= \sup_{\T^3} \norm{W^{l - (2 - \frac{2s}{m_0})}  g}^2_{H^{s}_{\gamma/2}}
&\leq 
   \int_{\T^3} \norm{W^{l - (2 - \frac{2s}{m_0})} \vint{D_x}^{(2 - \frac{2s}{m_0})}  g}^2_{H^{s}_{\gamma/2}} \dx \\
& \leq 
   \sum_\alpha \norm{W^{l - |\alpha|} \del^\alpha_x g}^2_{L^2(\dx; H^s_{\gamma/2}(\dv))} \,.
\end{align*}
Therefore, the bound in~\eqref{bound:T-7-2} also holds when $(\alpha_1, \alpha_2) = (2, 0)$. Applying such bound, we obtain that
\begin{align} \label{bound:T-7-2}
  \int_{\T^3} T_{7, 2} \dx
&\leq
  C \norm{f}_{Y_l} 
  \vpran{\sum_{\alpha}  \norm{W^{l - |\alpha|} \del^\alpha_x g}_{L^2 (\dx; H^s_{\gamma/2}(\dv))}}
   \vpran{\sum_{\alpha}  \norm{W^{l - |\alpha|} \del^\alpha_x h}_{L^2 (\dx; H^s_{\gamma/2}(\dv))}} \,.
\end{align}

By the definition of $Q$, the term $T_{7,1}$ has the form
\begin{align*}
   T_{7,1}
=   \abs{\IntRRS b(\cos\theta) |v - v_\ast|^\gamma
         \vpran{W^{l - |\alpha|}(v') - W^{l - |\alpha|}(v)} 
         \vpran{\del^{\alpha_1}_x f} 
         \vpran{\del^{\alpha_2}_x g} \, 
         \vpran{W^{l - |\alpha|} \del_x^\alpha h}  \dbmu}.
\end{align*}
By the commutator estimate in Proposition~\ref{prop:trilinear-weight}, we have
\begin{align*}
    T_{7, 1}
&\leq
   C_l  \norm{\del^{\alpha_1}_x f}_{L^1_{3+\gamma+2s} \cap L^2}
   \norm{W^{l - |\alpha|}
                   \del^{\alpha_2}_x g}_{H^{s'}_{\gamma'/2}(\dv)}
   \norm{W^{l - |\alpha|}
             \del^{\alpha}_x h}_{L^2_{\gamma/2}(\dv)}
\\
& \quad \,
   + C_l 
   \norm{\vint{v}^{1+\gamma} \del^{\alpha_2}_x g}_{L^1_v}
   \norm{W^{l - |\alpha|}
                \del^{\alpha_1}_x f}_{L^2_{\gamma/2}(\dv)}
   \norm{W^{l - |\alpha|}
             \del^{\alpha}_x h}_{L^2_{\gamma/2}(\dv)} \,.
\end{align*}
The upper bound of $T_{7,1}$ is derived in a similar way as that for $T_{7,2}$ by Sobolev embeddings in $\R^3_x$.  Therefore, we have
\begin{align} \label{bound:T-7-1}
\begin{split}
  \int_{\T^3} T_{7, 1} \dx
&\leq
  C_l \norm{f}_{Y_l} 
  \vpran{\sum_{\alpha}  \norm{W^{l - |\alpha|} \del^\alpha_x g}_{L^2 (\dx; H^s_{\gamma/2}(\dv))}}
   \vpran{\sum_{\alpha}  \norm{W^{l - |\alpha|} \del^\alpha_x h}_{L^2 (\dx; H^s_{\gamma/2}(\dv))}} 
\\
& \quad \,  
  + C_l \norm{g}_{Y_l} 
  \vpran{\sum_{\alpha}  \norm{W^{l - |\alpha|} \del^\alpha_x f}_{L^2 (\dx; H^s_{\gamma/2}(\dv))}}
   \vpran{\sum_{\alpha}  \norm{W^{l - |\alpha|} \del^\alpha_x h}_{L^2 (\dx; H^s_{\gamma/2}(\dv))}} \,.
\end{split}
\end{align}
The estimate  in~\eqref{bound:commut-deriv-x} is then obtained by combining ~\eqref{bound:T-7-2} with~\eqref{bound:T-7-1}. 
\end{proof}


\section{Spectral properties of $\CalL$ in $L^{2}_{x,v}$}
In this section, we establish the spectral analysis of the linearized operator $\CalL$ defined in~\eqref{def:L}. This will play a key role in controlling the linear growth of the nonlinear equation when performing energy estimates. 

\subsection{Spectral Analysis of Linearized operator in Gaussian-weighted $L^{2}_{x,v}$}
First we study the spectrum of the operator $L^{(\mu)}: \CalD(L^{(\mu)})\rightarrow L^{2}_{v}$, defined on a dense subset of $\CalD(L^{(\mu)}) \subseteq L^{2}_{v}$, where
\begin{align*}
L^{(\mu)}(h) :&= \mu^{-1/2}\Big( Q(\mu^{1/2}\,h , \mu) + Q(\mu, \mu^{1/2}\,h)\Big)\\
& = \mu^{1/2}\int_{\mathbb{R}^{3}}|u|^{\gamma}\,b(\widehat{u}\cdot{\sigma})\mu_{*}\int_{\mathbb{S}^{2}}\bigg(\frac{h'}{\mu'^{1/2}} + \frac{h'_{*}}{\mu'^{1/2}_{*}} - \frac{h}{\mu^{1/2}} -\frac{h_{*}}{\mu^{1/2}_{*}} \bigg)\text{d}\sigma\text{d}v_{*}\,,
\end{align*}
where $\mu$ is the normalized global Maxwellian. The kernel of $L^{(\mu)}$ in $L^{2}_{v}$ is given by
\begin{equation*}
\text{Ker}(L^{(\mu)}) 
= \text{Span}\big\{\sqrt{\mu}, \,\, v \sqrt{\mu}, \,\, |v|^{2} \sqrt{\mu} \big\}\,.
\end{equation*}
Thus, the generators of the kernel are in the Schwartz space $\mathcal{S}$.  
The projection onto $\text{Ker}(L^{(\mu)})$ is defined as
\begin{equation*}
\pi(h)\Denote\sum_{\varphi\in\text{Ker}(\mathcal{L}^{(\mu)})}\bigg(\int_{\mathbb{R}^{3}}h\,\varphi\,\text{d}v\bigg)\,\varphi\,.
\end{equation*}
Let us first address the decomposition of $L^{(\mu)}$ which is based on truncations of small and large velocities, and grazing angles.  This decomposition is a bit different from the classical decomposition made in the spectral analysis in the cutoff case.  Recall that the scattering kernel $b$ satisfies~\eqref{assump:b-gamma}. 
We use the decomposition
\begin{equation}\label{DAK1}
b\vpran{\cos\theta} = b\vpran{\cos\theta} \big(\text{1}_{|\sin(\theta)| \geq \varepsilon} + \text{1}_{|\sin(\theta)| < \varepsilon}\big)\Denote b_{1}\vpran{\cos\theta} + b_{2}\vpran{\cos\theta}\,.
\end{equation}
For the kinetic potential write $|\cdot|^{\gamma}\Denote\Phi_{1}+\Phi_{2}$, with $\gamma\in(0,1]$, where
\begin{equation}\label{DAK2}
\Phi_{1}(|u|)\Denote |u|^{\gamma}{\chi_{\delta\leq |u|\leq \delta^{-1}}}\,,\qquad \Phi_{2}(|u|)\Denote |u|^{\gamma}\big(1-{\chi_{\delta\leq |u|\leq \delta^{-1}}}\big)\,.
\end{equation}
Here ${\chi_{\delta\leq |u|\leq \delta^{-1}}}$ is a smooth version of the indicator $\One_{\delta\leq |u|\leq \delta^{-1}}$.  Also, denote $L^{(\mu)}$ by $L^{(\mu)}_{\Phi,b}$ to emphasize the kernel dependence and then decompose it as
\begin{align}\label{Decomposition}
\begin{split}
L^{(\mu)}_{\Phi,b} 
&=L^{(\mu)}_{\Phi,b_{1}} + L^{(\mu)}_{\Phi,b_{2}} 
= L^{(\mu)}_{\Phi_{1},b_{1}} + L^{(\mu)}_{\Phi_{2},b_{1}} + L^{(\mu)}_{\Phi,b_{2}}
\\
& = \Big(L^{(\mu)}_{\Phi_{1},b_{1}} + \mu^{-1/2}Q^{-}_{\Phi_{1},b_1}\big(\mu,\mu^{1/2}\,h\big) \Big)
+ \Big(L^{(\mu)}_{\Phi_{2},b_{1}} - \mu^{-1/2}Q^{-}_{\Phi_{1},b_1}\big(\mu,\mu^{1/2}\,h\big) + L^{(\mu)}_{\Phi,b_{2}}\Big) 
\\
& \Denote \mathcal{K}_{\delta,\varepsilon} - \Lambda_{\delta,\varepsilon} \,.
\end{split}
\end{align}
The operators $\mathcal{K}_{\delta,\varepsilon}$ and $\Lambda_{\delta,\varepsilon}$ are self-adjoint in $L^{2}_{v}$ since $L^{(\mu)}$ is self-adjoint in $L^{2}_{v}$ for any reasonable kinetic kernel $\Phi(u)b(\cos\theta)$ (see \cite[Chapter 7]{CIP}) and $\mu^{-1/2}Q^{-}\big(\mu,\mu^{1/2}\,h\big)$ is a multiplication operator.  The operator $\Lambda_{\delta, \Eps}$ include all the singular features of $L^{(\mu)}$ in terms of tails and regularization.  
%
%
%

The linearization that we make in this subsection is $f = \mu + \mu^{1/2}\,h$.  
The full equation for $h$ reads
\begin{equation}\label{Boltzmann-equation}
\partial_{t}h = \mu^{-1/2}Q(\mu^{1/2}\,h, \mu^{1/2}\,h) + {L}^{(\mu)}(h) - v\cdot\nabla_{x}h\,.
\end{equation}
In this way, we want to study the $L^{2}_{x,v}$ spectral properties of the operator
\begin{equation*}
{L}^{(\mu)}(h) - v\cdot\nabla_{x}h\,.
\end{equation*}
A spectral gap in $H^{1}_{x,v}$ was found for \cite{MN} for this operator in the cutoff case using a combination of spectral theory and energy estimates.  The proof follows after checking some structural conditions and \textit{a priori} estimates satisfied by ${L}^{(\mu)}$.  This approach does not seem to apply directly to the non-cutoff case.
Here we give a more ``perturbation-type'' argument that works in both cutoff and non-cutoff cases.   
\subsubsection{Dissipative part}  Let us prove that for $\delta>0$ and $\varepsilon>0$ sufficiently small, the operator $-\big(\Lambda_{\delta,\varepsilon} + v\cdot\nabla_{x}\big)$ is dissipative in $L^{2}_{x,v}$.  The operator $\Lambda_{\delta,\varepsilon}$ is composed of two singular parts such that $\Lambda_{\delta,\varepsilon} = \Lambda_{1} + \Lambda_{2}$, where $\Lambda_1$ is related to the growth in velocity (tails)
\begin{equation*}
- \Lambda_{1} \Denote {L}^{(\mu)}_{\Phi_{2},b_{1}} - \mu^{-1/2}Q^{-}_{\Phi_{1},b_1}\big(\mu,\mu^{1/2}\,h\big)\,,
\end{equation*}
and $\Lambda_2$ is related to regularity
\begin{equation*}
- \Lambda_{2}\Denote {L}^{(\mu)}_{\Phi,b_{2}}\,.
\end{equation*}
\begin{proposition}[Singular part $\Lambda_{2}$]\label{pro-lambda2}
There exist constants $c>0$ and $C>0$ depending only on mass and energy of $\mu$, and $\kappa_0>0$ depending only on $\Phi=|\cdot|^{\gamma}$, such that for any $s\in(0,1)$ and $\varepsilon\in(0,1/5]$, we have
\begin{equation*}
\langle {L}^{(\mu)}_{\Phi,b_{2}}(h),h\rangle \leq -c\kappa_0\sum_{g\in\{h^{\pm}\}}\Big\|\widehat{\langle \cdot \rangle^{\gamma/2} g}(\xi)\,|\xi|^{s} \textbf{1}\big\{|\xi|\geq\tfrac{1}{\varepsilon}\big\}\Big\|^{2}_{L^{2}_{\xi}} + C\| 
\theta^2\, b_{2}\|_{L^{1}_{\theta}}\|\langle{v}\rangle^{\gamma/2} h \|^{2}_{L^{2}_{v}}\,,
\end{equation*}
where $h^\pm$ are the positive and negative parts of $h$ respectively. We remark that the constants $c,\,C$ and $\kappa_0$ are independent of $\varepsilon$.
\end{proposition}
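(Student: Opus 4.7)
The plan is to reduce the coercivity estimate to Lemma~\ref{A1}, applied to the weighted function $\tilde h := \vint{v}^{\gamma/2} h$; the kinetic factor $|v-v_\ast|^\gamma$ in the collision kernel will be used to generate the weight $\vint{v}^{\gamma/2}$ on the function side. First, I would exploit the standard symmetrizations of the linearized operator around $\mu$ --- the pre/post symmetry $(v, v_\ast)\leftrightarrow (v', v'_\ast)$, the $v\leftrightarrow v_\ast$ symmetry, and the Maxwellian identity $\mu\mu_\ast=\mu'\mu'_\ast$ --- together with the cancellation lemma of~\cite{ADVW2000} to dispose of the non-symmetric remainders, so as to reach
\begin{equation*}
\vint{L^{(\mu)}_{\Phi,b_2}(h),\, h}_{L^2_v} \leq -\tfrac{1}{2} \int_{\R^3}\!\!\int_{\R^3}\!\!\int_{\Ss^2} b_2(\cos\theta)\, |v-v_\ast|^\gamma\, \mu\mu_\ast \bigl(H(v')-H(v)\bigr)^2 \dsigma \dv_\ast \dv \,+\, R[h],
\end{equation*}
where $H=h/\sqrt{\mu}$ and $|R[h]|\leq C\norm{\theta^2 b_2}_{L^1_\theta}\norm{\vint{v}^{\gamma/2} h}^2_{L^2_v}$.

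Next, I would transfer the kernel weight $|v-v_\ast|^\gamma$ onto the function. Writing $\tilde H := \vint{v}^{\gamma/2} H = \tilde h/\sqrt{\mu}$ and expanding
\begin{equation*}
\tilde H(v') - \tilde H(v) = \vint{v}^{\gamma/2}\bigl(H(v')-H(v)\bigr) + \bigl(\vint{v'}^{\gamma/2}-\vint{v}^{\gamma/2}\bigr)H(v'),
\end{equation*}
the triangle-type bound $\vint{v}^\gamma \lesssim |v-v_\ast|^\gamma + \vint{v_\ast}^\gamma$ for $\gamma\in(0,1]$ (with the $\vint{v_\ast}^\gamma$ contribution absorbed by the Gaussian $\mu_\ast$) yields the dominating main term, giving
\begin{equation*}
\int_{\R^3}\!\!\int_{\R^3}\!\!\int_{\Ss^2} b_2 |v-v_\ast|^\gamma \mu\mu_\ast (H'-H)^2 \geq c\kappa_0 \int_{\R^3}\!\!\int_{\R^3}\!\!\int_{\Ss^2} b_2 \mu\mu_\ast (\tilde H'-\tilde H)^2 \,-\, C\norm{\theta^2 b_2}_{L^1_\theta}\norm{\tilde h}^2_{L^2_v}.
\end{equation*}
The cross term from the expansion is handled by Taylor-expanding $\vint{\cdot}^{\gamma/2}$ and using the angular decomposition $v'-v = \sin^2(\theta/2)(v_\ast-v) + \sin(\theta/2)\cos(\theta/2)|v-v_\ast|\omega$: the $\omega$-linear part integrates to zero by angular symmetry, while the $\sin^2(\theta/2)$-type remainder is integrable against $b_2 \dsigma$ for $s<1$ and produces an $\norm{\theta^2 b_2}_{L^1_\theta}$-type error.

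Finally, I would apply Lemma~\ref{A1} directly to $\tilde h$. Since $\vint{v}^{\gamma/2}$ is positive, $(\tilde h)^\pm = \vint{v}^{\gamma/2}h^\pm$, so Lemma~\ref{A2} is compatible with the splitting and delivers
\begin{equation*}
\int_{\R^3}\!\!\int_{\R^3}\!\!\int_{\Ss^2} b_2 \mu_\ast\mu (\tilde H'-\tilde H)^2 \geq c\sum_{g\in\{h^\pm\}} \Bigl\|\widehat{\vint{\cdot}^{\gamma/2} g}(\xi)\, |\xi|^s \mathbf{1}\{|\xi|\geq \tfrac{1}{\varepsilon}\}\Bigr\|^2_{L^2_\xi} - C\norm{\theta^2 b_2}_{L^1_\theta}\norm{\tilde h}^2_{L^2_v}.
\end{equation*}
Chaining the three estimates yields the proposition, with all constants independent of $\varepsilon$.

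The principal obstacle is the second step: the commutator-type analysis that transfers the kernel weight $|v-v_\ast|^\gamma$ onto the functional weight $\vint{v}^{\gamma/2}$ requires delicate handling of the cross terms, using the singular change of variables together with the small-angle structure of $b_2$, while keeping all constants independent of $\varepsilon$. This $\varepsilon$-independence is essential for the later dissipativity argument on the full operator $\Lambda_{\delta,\varepsilon}$ after choosing $\varepsilon$ sufficiently small.
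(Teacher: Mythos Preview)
Your proposal is correct and follows essentially the same route as the paper: the paper also isolates the coercive piece $-\tfrac12\int b_2\Phi\,\mu\mu_\ast (H'-H)^2$, then invokes the weight-transfer technique of \cite[Proposition~2.1]{AMUXY2012JFA} (your second step) together with Lemma~\ref{A1} to produce the frequency-side lower bound with the weight $\vint{v}^{\gamma/2}$. The only minor discrepancy is your treatment of the ``non-symmetric remainder'': the paper does not use the cancellation lemma for the piece $\langle \mu^{-1/2}Q_{\Phi,b_2}(\mu^{1/2}h,\mu),h\rangle$ but instead cites \cite[Lemma~2.15]{AMUXY2012JFA} directly to bound it by $C\|\theta^2 b_2\|_{L^1_\theta}\|\mu^{1/10^3}h\|_{L^2_v}^2$.
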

\begin{proof}
Note that
\begin{equation}\label{p1-lambda2-e1}
\langle {L}^{(\mu)}_{\Phi,b_{2}}(h),h\rangle= \langle\mu^{-1/2}Q_{\Phi,b_{2}}(\mu,\mu^{1/2}h),h\rangle+\langle\mu^{-1/2}Q_{\Phi,b_{2}}(\mu^{1/2}h,\mu),h\rangle\,.
\end{equation}
For the first term in the right side of \eqref{p1-lambda2-e1} it follows
\begin{align}\label{p1-lambda2-e2}
\langle\mu^{-1/2}Q_{\Phi,b_{2}}(\mu,\mu^{1/2}h),h\rangle 
& = \int_{\mathbb{R}^{6}}\int_{\mathbb{S}^{2}}\Phi(u)b_{2}(\cos\theta)\mu(v_{*})\mu(v)H(v)\big(H(v') - H(v)\big)  \nn
\\
&= -\tfrac{1}{2}\int_{\mathbb{R}^{6}}\int_{\mathbb{S}^{2}}\Phi(u)b_{2}(\cos\theta)\mu(v_{*})\mu(v)\big(H(v') - H(v)\big)^{2}
\\
& \quad \,
  +\tfrac{1}{2}\int_{\mathbb{R}^{6}}\int_{\mathbb{S}^{2}}\Phi(u)b_{2}(\cos\theta)\mu(v_{*})\mu(v)\big(H(v')^{2} - H(v)^{2}\big)\,, \nn
\end{align}
where $u = v - v_\ast$ and $H(v) = \mu^{-1/2}(v)h(v)$.  Since $\mu\mu_{*}=\mu'\mu'_{*}$, the last term in the right side of \eqref{p1-lambda2-e2} is zero.  Using the technique of proof of \cite[Proposition 2.1]{AMUXY2012JFA} and Lemma \ref{A1}, we have
\begin{align}\label{p1-lambda1-e2.5}
\begin{split}
& \quad \,
 -\tfrac{1}{2}\int_{\mathbb{R}^{6}}\int_{\mathbb{S}^{2}}\Phi(u)b_{2}(\cos\theta)\mu(v_{*})\mu(v)\big(H(v') - H(v)\big)^{2}
 \\
&\leq -c\kappa_0\sum_{g\in\{h^{\pm}\}}\Big\|\widehat{\langle \cdot \rangle^{\gamma/2}g}(\xi)\,|\xi|^{s} \textbf{1}\big\{|\xi|\geq\tfrac{1}{\varepsilon}\big\}\Big\|^{2}_{L^{2}_{\xi}} + C\|\theta^{2} b_{2}\|_{L^{1}_{\theta}}\|\langle{v}\rangle^{\gamma/2} h \|^{2}_{L^{2}_{v}}\,,
\end{split}
\end{align}
where the constants $c, C>0$ depend only on mass and energy of $\mu$, and $\kappa_0>0$ only on the potential $\Phi$.  For the second term in \eqref{p1-lambda2-e1} we can use  \cite[Lemma 2.15]{AMUXY2012JFA} with $B$ replaced by $\Phi b_2$, so that we obtain
\begin{align}\label{p1-lambda1-e4}
\begin{split}
\langle\mu^{-1/2}Q_{\Phi,b_{2}}(\mu^{1/2}h,\mu),h\rangle  &\leq C\|\theta^{2}b_{2}\|_{L^{1}_{\theta}}\|\mu^{1/10^3}h\|^2_{L^{2}_{v}}
\,.
\end{split}
\end{align}
The proposition follows from \eqref{p1-lambda1-e2.5} and \eqref{p1-lambda1-e4}. 
\end{proof}

\begin{proposition}[Singular part $\Lambda_1$]\label{pro-lambda1}
For every $\varepsilon>0$ there exist constants $c>0$ and $C>0$, depending only on the mass and energy of $\mu$, such that for any $s\in(0,1)$, $\varepsilon\in(0,1)$, and $\delta^{\gamma}\in(0,\frac{c}{2C}\varepsilon^{2\gamma})$, it follows that
\begin{equation*}
\langle {L}^{(\mu)}_{\Phi_{2},b_{1}}h - \mu^{-1/2}Q^{-}_{\Phi_{1},b_1}\big(\mu,\mu^{1/2}\,h\big),h\rangle\leq - \tfrac{c}{2}\|b_{1}\|_{L^{1}_{\theta}}\|\langle v \rangle^{\gamma/2}h\|^{2}_{L^{2}_{v}}\,.
\end{equation*}
The constants are independent of both $\delta>0$ and $\varepsilon>0$.
\end{proposition}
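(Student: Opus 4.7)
My plan is to expand the inner product
\[
\langle L^{(\mu)}_{\Phi_{2},b_{1}}h - \mu^{-1/2}Q^{-}_{\Phi_{1},b_{1}}(\mu,\mu^{1/2}h),h\rangle
\]
into its gain and loss components, and to combine the loss $-\mu^{-1/2}Q^{-}_{\Phi_{2},b_{1}}(\mu,\mu^{1/2}h)$ already contained in $L^{(\mu)}_{\Phi_{2},b_{1}}$ with the subtracted loss $-\mu^{-1/2}Q^{-}_{\Phi_{1},b_{1}}(\mu,\mu^{1/2}h)$. Since $\Phi_{1}+\Phi_{2}=|\cdot|^{\gamma}$, these two loss pieces fuse into multiplication by the full collision frequency
\[
\nu(v) = \|b_{1}\|_{L^{1}_{\theta}} \int_{\RR^{3}} |v-v_{\ast}|^{\gamma}\mu_{\ast}\dv_{\ast},
\]
for which one has the standard pointwise bound $\nu(v)\geq c_{0}\|b_{1}\|_{L^{1}_{\theta}}\langle v\rangle^{\gamma}$ with $c_{0}>0$ depending only on the mass and energy of $\mu$. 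This yields the principal dissipative contribution $-\int \nu(v)h^{2}\dv \leq -c_{0}\|b_{1}\|_{L^{1}_{\theta}}\|\langle v\rangle^{\gamma/2}h\|^{2}_{L^{2}_{v}}$.

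The remaining three contributions are
\[
\langle \mu^{-1/2}Q^{+}_{\Phi_{2},b_{1}}(\mu,\mu^{1/2}h),h\rangle, \quad \langle \mu^{-1/2}Q^{+}_{\Phi_{2},b_{1}}(\mu^{1/2}h,\mu),h\rangle, \quad -\langle \mu^{-1/2}Q^{-}_{\Phi_{2},b_{1}}(\mu^{1/2}h,\mu),h\rangle,
\]
and each is supported, as a function of $(v,v_{\ast})$, in $\{|v-v_{\ast}|\leq\delta\}\cup\{|v-v_{\ast}|\geq\delta^{-1}\}$. On the small-velocity piece $\{|v-v_{\ast}|\leq\delta\}$ the weight $\Phi_{2}$ is pointwise bounded by $\delta^{\gamma}$, so Cauchy-Schwarz in $(v,v_{\ast},\sigma)$ together with the regular and singular changes of variables from Proposition~\ref{prop:change-variable} (admissible because $b_{1}$ carries no angular singularity) and a Young-type estimate on the loss piece bound each of these terms by a multiple of $\delta^{\gamma}\|b_{1}\|_{L^{1}_{\theta}}\|\langle v\rangle^{\gamma/2}h\|^{2}_{L^{2}_{v}}$. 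On the large-velocity piece $\{|v-v_{\ast}|\geq\delta^{-1}\}$, at least one of $|v|$ or $|v_{\ast}|$ exceeds $(2\delta)^{-1}$, so one of the Gaussian factors $\mu^{1/2}(v)$ or $\mu^{1/2}(v_{\ast})$ appearing in the integrand is of order $e^{-c\delta^{-2}}$; this dominates the polynomial factor $|u|^{\gamma}$ and produces an error that is exponentially small in $\delta$, hence negligible against $\delta^{\gamma}$.

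The main obstacle I anticipate is the careful tracking of the $\varepsilon$-dependence of the remainder constant. The singular change of variables used on the gain terms yields a Jacobian of order $\sin^{-(3+\gamma)}(\theta/2)$, which on the support of $b_{1}$ (where $|\sin\theta|\geq\varepsilon$) produces a factor of size $\varepsilon^{-(3+\gamma)}$; balancing this against the smallness supplied by $\Phi_{2}$ and the available angular mass $\|b_{1}\|_{L^{1}_{\theta}}$ should yield a cumulative remainder of the form $C\delta^{\gamma}\varepsilon^{-2\gamma}\|b_{1}\|_{L^{1}_{\theta}}\|\langle v\rangle^{\gamma/2}h\|^{2}_{L^{2}_{v}}$. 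The stated smallness condition $\delta^{\gamma}<\tfrac{c}{2C}\varepsilon^{2\gamma}$ is then precisely what absorbs this remainder into half of the principal dissipation, delivering the claimed inequality.
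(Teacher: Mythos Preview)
Your overall decomposition---merging the two loss pieces into the full collision frequency $\nu(v)=\|b_1\|_{L^1_\theta}\int|v-v_*|^\gamma\mu_*\dv_*$ and then bounding the three remaining terms---matches the paper exactly, and the lower bound $\nu(v)\geq c\|b_1\|_{L^1_\theta}\langle v\rangle^\gamma$ is what supplies the main dissipation.

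There is, however, a genuine gap in your treatment of the gain terms on the region $\{|u|\geq\delta^{-1}\}$. After using energy conservation, the integrand of $\langle\mu^{-1/2}Q^+_{\Phi_2,b_1}(\mu,\mu^{1/2}h),h\rangle$ is $\Phi_2(u)\,b_1\,\mu^{1/2}(v_*)\,\mu^{1/2}(v'_*)\,h(v)\,h(v')$: the Gaussian weights sit on $v_*$ and $v'_*$, \emph{not} on $v$. Thus when $|v_*|$ is small and $|v|$ is large (which forces $|u|\geq\delta^{-1}$), neither $\mu^{1/2}(v)$ nor $\mu^{1/2}(v_*)$ helps, and your exponential-smallness claim fails as stated. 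The rescue is the kinematic inequality $|v'_*|\geq|u|\sin(\theta/2)-|v_*|$; on $\operatorname{supp}b_1$ one has $\sin(\theta/2)\gtrsim\varepsilon$, hence $|v'_*|\gtrsim\varepsilon|v|-2|v_*|$ and $\mu^{1/2}(v'_*)\lesssim\mu^{1/4}(v_*)^{-1}\mu^{1/32}(\varepsilon v/2)$. This is the paper's route and is the \emph{actual} source of the factor $\varepsilon^{-2\gamma}$, via $\sup_v\langle v\rangle^{2\gamma}\mu^{1/32}(\varepsilon v/2)\sim\varepsilon^{-2\gamma}$.

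In particular, the $\varepsilon$-dependence does not come from a singular change-of-variables Jacobian; no such change of variables is needed (Young's inequality for $Q^+$ suffices once the Gaussian in $v'_*$ has been converted). The paper also streamlines your two-region split by observing the single pointwise bound $\Phi_2(|u|)\leq\delta^\gamma\langle v_*\rangle^{2\gamma}\langle v\rangle^{2\gamma}$, valid on both $\{|u|\leq\delta\}$ (trivially) and $\{|u|\geq\delta^{-1}\}$ (since $|u|^\gamma\leq\delta^\gamma|u|^{2\gamma}$ there). Feeding this into the kinematic inequality gives the remainder bound $C\delta^\gamma\varepsilon^{-2\gamma}\|b_1\|_{L^1_\theta}\|h\|^2_{L^2_v}$ in one stroke.
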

\begin{proof}
Note that
\begin{align*}
-\Lambda_{1}h = {L}^{(\mu)}_{\Phi_{2},b_{1}}h - \mu^{-1/2}Q^{-}_{\Phi_{1},b_1}\big(\mu,\mu^{1/2}\,h\big)
&=\mu^{-1/2}Q^{+}_{\Phi_{2},b_{1}}(\mu^{1/2}h,\mu) - \mu^{-1/2}Q^{-}_{\Phi_{2},b_{1}}(\mu^{1/2}h,\mu)\\
& \quad \,
+\mu^{-1/2}Q^{+}_{\Phi_{2},b_{1}}(\mu,\mu^{1/2}h) -\mu^{-1/2}Q^{-}_{\Phi,b_{1}}(\mu,\mu^{1/2}h)\,.
\end{align*}
The first three terms in the right side are treated similarly. Let us proceed with one of them and leave the other two to 
the reader. 
Note that
\begin{equation}\label{p2-lambda2-e0.9}
\langle\mu^{-1/2}Q^{+}_{\Phi_{2},b_{1}}(\mu,\mu^{1/2}h),h\rangle = \int_{\mathbb{R}^{6}}\int_{\mathbb{S}^{2}} \Phi_{2}(u)b_{1}(\cos\theta)\mu^{1/2}(v_{*}) h(v) \mu^{1/2}(v'_{*})h(v')\,.
\end{equation}
Since $b_{1}$ is supported in $|\sin(\theta)|\geq\varepsilon$ one has
\begin{equation*}
|v'_{*}|\geq |u||\sin\frac{\theta}{2}| - |v_*|\geq 
\frac{\varepsilon}{2} |v|  -  2|v_*| \,, 
\end{equation*}
thus,
\begin{equation*}
\mu^{1/32}(\varepsilon v/2 )\geq \mu^{1/16}(v'_*)\mu^{1/4}(v_{*})\,.
\end{equation*}
Plugging this inequality in \eqref{p2-lambda2-e0.9} and recalling that $\Phi_{2}$ is supported on $\{|u|\leq\delta\}\cup\{|u|\geq\delta^{-1}\}$ (so that $\Phi_{2}(|u|)\leq \delta^{\gamma}\langle v_{*}\rangle^{2\gamma}\langle v \rangle^{2\gamma}$), one concludes that the right side of \eqref{p2-lambda2-e0.9} is controlled by
\begin{align*}
&\int_{\mathbb{R}^{6}}\int_{\mathbb{S}^{2}} \Phi_{2}(u)b_{1}(\cos\theta)\mu^{1/4}(v_{*})\mu^{1/32}(\varepsilon v/2) h(v) h(v')\\
&\hspace{.5cm}\leq \delta^{\gamma}\int_{\mathbb{R}^{6}}\int_{\mathbb{S}^{2}}b_{1}(\cos\theta)\langle v_{*}\rangle^{2\gamma}\mu^{1/4}(v_{*})\langle v \rangle^{2\gamma}\mu^{1/32}(\varepsilon v/2 ) h(v) h(v')\\
&\hspace{1cm}\leq\frac{\delta^{\gamma}}{\varepsilon^{2\gamma}}\sup_{x\in\mathbb{R}^{3}}\langle x \rangle^{2\gamma}\mu^{1/32}(x/2)\int_{\mathbb{R}^{6}}\int_{\mathbb{S}^{2}}b_{1}(\cos\theta)\langle v_{*}\rangle^{2\gamma}\mu^{1/4}(v_{*}) h(v) h(v')\,.
\end{align*}
As a consequence,
\begin{equation}\label{p2-lambda2-e1}
\langle\mu^{-1/2}Q^{+}_{\Phi_{2},b_{1}}(\mu,\mu^{1/2}h),h\rangle\leq \frac{C\delta^{\gamma}}{\varepsilon^{2\gamma}}\langle Q^{+}_{1,b_{1}}(\langle \cdot \rangle^{2\gamma}\mu^{1/4},h),h\rangle\leq \frac{C\delta^{\gamma}}{\varepsilon^{2\gamma}}\|b_{1}\|_{L^{1}_{\theta}}\|\langle \cdot \rangle^{2\gamma}\mu^{1/4}\|_{L^{1}_{v}}\|h\|^{2}_{L^{2}_{v}}\,.
\end{equation}
Now, for the last term, it follows readily
\begin{equation}\label{p2-lambda2-e2}
\langle\mu^{-1/2}Q^{-}_{\Phi,b_{1}}(\mu,\mu^{1/2}h),h\rangle \geq c\|b_{1}\|_{L^{1}_{\theta}}\|\langle v\rangle^{\gamma/2} h\|^{2}_{L^{2}_{v}}\,, 
\end{equation}
with $c>0$ depending only on mass and energy of $\mu$.  The result follows from \eqref{p2-lambda2-e1} and \eqref{p2-lambda2-e2}.
\end{proof}
\begin{theorem}\label{t1-dissipative}
Let $h\in H^{s}_{\gamma/2}(\dv)$ with $s\in(0,1)$.  There exist constants $c>0$, $C>0$, $c_o>0$, and $\varepsilon_o>0$ depending only on mass and energy of $\mu$ such that for any $\varepsilon\in(0,\varepsilon_o]$ and $\delta^{\gamma}\in(0,\frac{c}{2C}\varepsilon^{2\gamma})$, the operator $-\Lambda_{\delta,\varepsilon}$ satisfies the dissipative estimate
\begin{equation*}
\langle -\Lambda_{\delta,\varepsilon}(h),h)\rangle \leq -c_o\,\kappa_0\|\langle v \rangle^{\gamma/2}h\|^{2}_{H^{s}_{v}}\,.
\end{equation*}
The constant $\kappa_0>0$ was introduced in Proposition \ref{pro-lambda2}.
\end{theorem}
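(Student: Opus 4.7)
The approach would be to add the two bounds from Propositions \ref{pro-lambda2} and \ref{pro-lambda1}, then absorb the $\Lambda_2$ error into the $\Lambda_1$ dissipation and upgrade the high-frequency piece of $\Lambda_2$ to a full $H^s_v$-bound. Throughout, write $f = \langle v\rangle^{\gamma/2}h$ and set
\begin{equation*}
M(h) \Denote \sum_{g\in\{h^\pm\}}\Bigl\|\widehat{\langle\cdot\rangle^{\gamma/2} g}(\xi)\,|\xi|^{s}\,\mathbf{1}\{|\xi|\geq 1/\varepsilon\}\Bigr\|^{2}_{L^{2}_{\xi}}.
\end{equation*}
Summing the two propositions produces
\begin{equation*}
\langle -\Lambda_{\delta,\varepsilon}(h),h\rangle
\leq -c\kappa_0\,M(h)-\Bigl(\tfrac{c}{2}\|b_{1}\|_{L^{1}_{\theta}}-C\|\theta^{2}b_{2}\|_{L^{1}_{\theta}}\Bigr)\|f\|^{2}_{L^{2}_{v}}.
\end{equation*}
The non-cutoff assumption $b(\cos\theta)\sin\theta\sim\theta^{-1-2s}$ yields the scalings $\|b_{1}\|_{L^{1}_{\theta}}\sim\varepsilon^{-2s}$ and $\|\theta^{2}b_{2}\|_{L^{1}_{\theta}}\sim\varepsilon^{2-2s}$, so their ratio vanishes with $\varepsilon$ and the bracketed coefficient exceeds $\tfrac{c}{4}\|b_{1}\|_{L^{1}_{\theta}}$ once $\varepsilon$ is small enough.

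The next step would be to upgrade $M(h)$ to a full $H^{s}_{v}$-norm. Splitting Fourier integrals at $|\xi|=1/\varepsilon$, the low-frequency part of $\|f\|^{2}_{\dot H^{s}_{v}}$ is controlled by $\varepsilon^{-2s}\|f\|^{2}_{L^{2}_{v}}$, while on the high-frequency region the pointwise Fourier inequality $|\hat f(\xi)|^{2}\leq 2|\widehat{f^{+}}(\xi)|^{2}+2|\widehat{f^{-}}(\xi)|^{2}$, coming from $f = f^{+}-f^{-}$ and $(\langle v\rangle^{\gamma/2}h)^{\pm}=\langle v\rangle^{\gamma/2}h^{\pm}$, gives $\int_{|\xi|\geq 1/\varepsilon}|\xi|^{2s}|\hat f(\xi)|^{2}\,d\xi\leq 2M(h)$. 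Combining these contributions shows $M(h)\geq \tfrac{1}{2}\|f\|^{2}_{\dot H^{s}_{v}}-\tfrac{1}{2}\varepsilon^{-2s}\|f\|^{2}_{L^{2}_{v}}$, so substitution produces
\begin{equation*}
\langle -\Lambda_{\delta,\varepsilon}(h),h\rangle
\leq -\tfrac{c\kappa_0}{2}\|f\|^{2}_{H^{s}_{v}}-\Bigl(\tfrac{c}{4}\|b_{1}\|_{L^{1}_{\theta}}-\tfrac{c\kappa_0(1+\varepsilon^{-2s})}{2}\Bigr)\|f\|^{2}_{L^{2}_{v}},
\end{equation*}
from which the desired bound follows with $c_o = c/2$ as soon as the bracketed quantity is nonnegative.

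The main difficulty will be closing this last step: the $\Lambda_1$ dissipation $\tfrac{c}{4}\|b_{1}\|_{L^{1}_{\theta}}\sim\varepsilon^{-2s}$ and the cost $\tfrac{c\kappa_0}{2}\varepsilon^{-2s}$ of converting high-frequency control into a full $\dot H^{s}$-control scale identically in $\varepsilon$, so the comparison is a numerical one and cannot be settled by merely sending $\varepsilon\to 0$. One would need a quantitative lower bound of the form $\|b_{1}\|_{L^{1}_{\theta}}\geq c_{b}\,\varepsilon^{-2s}$ with $c_{b}$ explicit from the non-cutoff assumption, and then choose $\varepsilon_o$ sufficiently small, depending only on $c$, $C$, $\kappa_0$ and $c_b$ (i.e. on the mass and energy of $\mu$ together with the structure of $b$), to guarantee the bracket is nonnegative on the whole range $\varepsilon\in(0,\varepsilon_o]$.
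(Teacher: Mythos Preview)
Your strategy coincides with the paper's: add Propositions~\ref{pro-lambda2} and~\ref{pro-lambda1}, use the scalings $\|b_1\|_{L^1_\theta}\sim\varepsilon^{-2s}$ and $\|\theta^2 b_2\|_{L^1_\theta}\sim\varepsilon^{2-2s}$ to absorb the error, then upgrade to a full $H^s$ bound by a high/low frequency split. Your pointwise inequality $|\hat f|^2\le 2|\widehat{f^+}|^2+2|\widehat{f^-}|^2$ is a perfectly good substitute for the paper's detour through $\sum_g\|\langle v\rangle^{\gamma/2}g\|_{H^s}^2$ followed by Lemma~\ref{A2}.

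The gap is in your closing step, and your proposed cure does not treat it. By spending \emph{all} of $-c\kappa_0 M(h)$ on the $\dot H^s$ term you have committed to $c_o=c/2$; the resulting low-frequency debt is $\tfrac{c\kappa_0}{2}\varepsilon^{-2s}\|f\|_{L^2}^2$, and against it you only have $\tfrac{c}{4}\|b_1\|_{L^1_\theta}\|f\|_{L^2}^2\sim\tfrac{c\,c_b}{4}\varepsilon^{-2s}\|f\|_{L^2}^2$. Both sides scale like $\varepsilon^{-2s}$, so the bracket is essentially $\big(\tfrac{c_b}{4}-\tfrac{\kappa_0}{2}\big)c\,\varepsilon^{-2s}$, whose sign is determined by the structural constants and is unchanged by shrinking $\varepsilon_o$. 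If $c_b<2\kappa_0$ the bracket is negative for every small $\varepsilon$ and no choice of $\varepsilon_o$ rescues it. The correct move is to choose $c_o$, not $\varepsilon_o$: retain the full $L^2$ coefficient $\tfrac{\tilde c}{2}\|b_1\|_{L^1_\theta}\gtrsim c'\kappa_0\varepsilon^{-2s}$ from Proposition~\ref{pro-lambda1} rather than downgrading it to a constant, so that after absorbing the $b_2$ error you have
\[
-\langle\Lambda_{\delta,\varepsilon}h,h\rangle \le -c\kappa_0\,M(h)-c'\kappa_0\,\varepsilon^{-2s}\|f\|_{L^2}^2 .
\]
Your own frequency split already gives $\|f\|_{H^s}^2\lesssim M(h)+\varepsilon^{-2s}\|f\|_{L^2}^2$ with an implicit constant independent of $\varepsilon$, so taking $c_o$ proportional to $\min\{c,c'\}$ closes the estimate uniformly on $(0,\varepsilon_o]$.
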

\begin{proof}
Using Propositions \ref{pro-lambda2} and \ref{pro-lambda1} we have that
\begin{align*}
- \langle \Lambda_{\delta,\varepsilon}(h),h)\rangle \leq -c\kappa_0\sum_{g\in\{h^{\pm}\}}\Big\|\widehat{\langle \cdot \rangle^{\gamma/2}g}&(\xi)\,|\xi|^{s} \textbf{1}\big\{|\xi|\geq\tfrac{1}{\varepsilon}\big\}\Big\|^{2}_{L^{2}_{\xi}} - \tilde{c}\,\|b_{1}\|_{L^{1}_{\theta}}\|\langle v \rangle^{\gamma/2}h\|^{2}_{L^{2}_{v}}\\
&+ C\|\theta^2 b_{2}\|_{L^{1}_{\theta}}\|\langle{v}\rangle^{\gamma/2} h \|^{2}_{L^{2}_{v}}\,.
\end{align*}
Note that $\|b_{1}\|_{L^{1}_{\theta}}\sim\kappa_0\varepsilon^{-2s}$ and $\|\theta^2 b_{2}\|_{L^{1}_{\theta}}\sim\kappa_0\varepsilon^{2-2s}$.  Thus, we may choose any $\varepsilon$ such that
\begin{equation*}
\varepsilon \leq \min\Big\{ \Big(\tfrac{\tilde{c}}{2c}\Big)^{\frac{1}{2s}}, \Big(\tfrac{\tilde{c}}{C}\Big)^{\frac{1}{2-2s}} \Big\} \Denote \varepsilon_o\,,
\end{equation*}
and obtain
\begin{align*}
- \langle \Lambda_{\delta,\varepsilon}(h),h)\rangle &\leq -c\kappa_0\bigg(\sum_{g\in\{h^{\pm}\}}\Big\|\widehat{\langle \cdot \rangle^{\gamma/2} g}(\xi)\,|\xi|^{s} \textbf{1}\big\{|\xi|\geq\tfrac{1}{\varepsilon}\big\}\Big\|^{2}_{L^{2}_{\xi}} + \|\langle v \rangle^{\gamma/2}h\|^{2}_{L^{2}_{v}}\bigg)\\
&\leq - 2c_o\kappa_0\sum_{g\in\{h^{\pm}\}}\|\langle v \rangle^{\gamma/2}g\|^{2}_{H^{s}_{v}}\leq - c_o\kappa_0\|\langle v \rangle^{\gamma/2}h\|^{2}_{H^{s}_{v}}\,.
\end{align*}
This proves the result after using Lemma \ref{A2} in the last inequality. 
\end{proof}
In the sequel, we fix $\varepsilon=\varepsilon_o$ and $\delta_o : =\Big(\frac{c}{2C}\Big)^{1/\gamma}\varepsilon^{2}_o$.  We also set $\Lambda\Denote\Lambda_{\delta_o,\varepsilon_o}$ and denote the dissipative operator as
\begin{equation*}
\mathcal{L}^{(\mu)}_{D}\Denote -\big(\Lambda + v\cdot\nabla_{x}\big).
\end{equation*} 
This operator is closed in $L^{2}_{x,v}$. 
\begin{proposition}\label{c1-dissipative}
The spectrum of $\mathcal{L}^{(\mu)}_{D}$, as operator on $L^{2}_{x,v}$, lies in $\{z\in\mathbb{C}:\mathfrak{R}_e{z}\leq-c_o\kappa_0\}$.
\end{proposition}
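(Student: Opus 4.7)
The strategy is to convert the pointwise (in $x$) dissipative estimate of Theorem \ref{t1-dissipative} into a dissipative estimate for $\CalL^{(\mu)}_D$ on $L^2_{x,v}$ and then invoke a Lumer--Phillips-type argument. First, I would exploit the fact that the transport operator $v\cdot\nabla_x$ is skew-adjoint on $L^2_{x,v}$ (on its natural periodic-in-$x$ domain), so that
\begin{equation*}
   \mathfrak{R}_e \, \vint{v\cdot\nabla_x h, \, h}_{L^2_{x,v}} = 0 \,.
\end{equation*}
Combining this with Theorem \ref{t1-dissipative} applied pointwise in $x\in\T^3$ and integrating in $x$, I would obtain
\begin{equation*}
  \mathfrak{R}_e \, \vint{\CalL^{(\mu)}_D h, \, h}_{L^2_{x,v}}
  = \vint{-\Lambda h, \, h}_{L^2_{x,v}}
  \leq -c_o\kappa_0 \int_{\T^3} \norm{\vint{v}^{\gamma/2} h(x,\cdot)}_{H^s_v}^2 \dx
  \leq -c_o\kappa_0 \norm{h}^2_{L^2_{x,v}} \,,
\end{equation*}
since $\gamma\ge 0$ and $s\ge 0$ make the right-hand side dominate $\norm{h}_{L^2_{x,v}}^2$ up to the constant $c_o\kappa_0$. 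In other words, $\CalL^{(\mu)}_D + c_o\kappa_0 I$ is dissipative on $L^2_{x,v}$.

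Next, I would check the same estimate for the adjoint. Because $\Lambda$ is self-adjoint on $L^2_v$ (and hence on $L^2_{x,v}$) and $v\cdot\nabla_x$ is skew-adjoint, the formal adjoint is $(\CalL^{(\mu)}_D)^\ast = -\Lambda + v\cdot\nabla_x$, and an identical computation yields
\begin{equation*}
  \mathfrak{R}_e \, \vint{(\CalL^{(\mu)}_D)^\ast h, \, h}_{L^2_{x,v}}
  \leq -c_o\kappa_0 \norm{h}^2_{L^2_{x,v}} \,.
\end{equation*}
Hence both $\CalL^{(\mu)}_D + c_o\kappa_0 I$ and its adjoint are dissipative. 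Since $\CalL^{(\mu)}_D$ is closed in $L^2_{x,v}$ (as already noted in the excerpt), the Lumer--Phillips theorem implies that $\CalL^{(\mu)}_D + c_o\kappa_0 I$ generates a $C_0$-semigroup of contractions. In particular, its spectrum lies in the closed left half-plane, which translates to
\begin{equation*}
  \sigma(\CalL^{(\mu)}_D) \subset \{z\in\CC : \mathfrak{R}_e z \leq -c_o\kappa_0\} \,,
\end{equation*}
as claimed.

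The only delicate point I expect is the verification that $\CalL^{(\mu)}_D$ (or rather $\CalL^{(\mu)}_D + c_o\kappa_0 I$) is $m$-dissipative, i.e.\ that the range condition $\mathrm{Range}(\lambda I - \CalL^{(\mu)}_D) = L^2_{x,v}$ holds for some $\lambda>-c_o\kappa_0$. Dissipativity of the operator only gives injectivity and a bounded left inverse on the range, so one must separately establish denseness of the range, which is equivalent (for a closed densely defined operator on a Hilbert space) to injectivity of $\bar\lambda I - (\CalL^{(\mu)}_D)^\ast$; this is furnished by the adjoint dissipativity above. Alternatively, the existence of a solution to the resolvent equation $(\lambda I - \CalL^{(\mu)}_D)u = f$ for large $\mathfrak{R}_e \lambda$ can be obtained by a Lax--Milgram argument on the coercive sesquilinear form associated with $\lambda I + \Lambda + v\cdot\nabla_x$, using the $H^s_{\gamma/2}$-coercivity of $\Lambda$ and a standard velocity-averaging/regularization approximation to deal with the unbounded transport term. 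Once the range condition is in hand, Lumer--Phillips closes the argument.
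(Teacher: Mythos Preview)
Your overall strategy is sound and essentially matches the paper's: the dissipativity estimate comes straight from Theorem~\ref{t1-dissipative} integrated over $\T^3$ (using skew-adjointness of $v\cdot\nabla_x$), and the whole content of the proposition is indeed the range condition for $\lambda I-\CalL^{(\mu)}_D$ with $\mathfrak{R}_e\lambda>-c_o\kappa_0$.

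One caveat on your option (a): what you verified is dissipativity of the \emph{formal} adjoint $-\Lambda+v\cdot\nabla_x$ on smooth functions, not of the Hilbert-space adjoint $(\CalL^{(\mu)}_D)^\ast$ on its full domain. The Lumer--Phillips corollary you invoke needs the latter. Showing that the formal and true adjoints coincide is essentially equivalent to the range condition itself, so option (a) is not a genuine shortcut.

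Your option (b) is exactly what the paper does, and it is where the work lies. The paper carries it out as follows: write the resolvent equation as a real $2\times2$ system, add an elliptic perturbation
\[
-\epsilon\,\CalL_P,\qquad \CalL_P=\nabla_v\!\cdot\!\langle v\rangle^{\gamma+2}\nabla_v+\langle v\rangle^{\gamma+2}\Delta_x-\langle v\rangle^{\gamma+2},
\]
so that the associated bilinear form is bounded \emph{and} coercive on $\CalH=H^1_{x,v}(\langle v\rangle^{\gamma/2+1})\times H^1_{x,v}(\langle v\rangle^{\gamma/2+1})$ (the transport term now bounded because of the extra $H^1_x$ regularity). Lax--Milgram gives a solution $u^\epsilon\in\CalH$; the key point is that the coercivity of $\Lambda$ from Theorem~\ref{t1-dissipative} yields the $\epsilon$-uniform bound
\[
\int_{\T^3}\|\langle v\rangle^{\gamma/2}u^\epsilon\|_{H^s_v}^2\,\dx\ \le\ (c_o\kappa_0+\lambda_R)^{-2}\|f\|_{L^2_{x,v}}^2,
\]
which lets one pass to a weak limit as $\epsilon\to0$ and obtain a (unique) solution of the unperturbed resolvent equation with the same bound. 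This establishes the range condition and the resolvent estimate simultaneously.
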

\begin{proof}
Note that the domain $\mathcal{D}(\mathcal{L}^{(\mu)}_{D})$ is dense in ${L^{2}_{x,v}}$, for instance, it contains $\mathcal{C}^{1}_{x,v}-$functions with strong velocity decay.  We now prove the existence and uniqueness of the problem
\begin{equation}\label{c1-p1}
\big(-\mathcal{L}^{(\mu)}_{D} + \lambda\,\textbf{1}\big)u=f\in L^{2}_{x,v}\,,\quad \lambda\in\mathbb{C}\,.
\end{equation}
Writing $u=u_{R} + i\,u_{I}$, $f=f_{R} + i\,f_{I}$, and $\lambda=\lambda_{R} + i\,\lambda_{I}$, problem \eqref{c1-p1} is equivalent to the 2-system of real valued problems
\begin{equation}\label{c1-s-p1}
\Bigg( \big(-\mathcal{L}^{(\mu)}_{D}
+ \lambda_{R}\big)\textbf{1}_{2\times2} + \lambda_{I}\,
\bigg[\begin{array}{cc}
0 & -1 \\
1 & 0
\end{array}\bigg]\Bigg)
\bigg[\begin{array}{c}
u_{R}\\
u_{I}
\end{array}\bigg] =
\bigg[\begin{array}{c}
f_{R}\\
f_{I}
\end{array}\bigg]\,.
\end{equation}
We start perturbing this problem to
\begin{equation}\label{c1-s-p2}
\Bigg( \big(-\mathcal{L}^{(\mu)}_{D} - \epsilon\, \mathcal{L}_{P}
+ \lambda_{R}\big)\textbf{1}_{2\times2} + \lambda_{I}\,
\bigg[\begin{array}{cc}
0 & -1 \\
1 & 0
\end{array}\bigg]\Bigg)
\bigg[\begin{array}{c}
u_{R}\\
u_{I}
\end{array}\bigg] =
\bigg[\begin{array}{c}
f_{R}\\
f_{I}
\end{array}\bigg]\,,
\end{equation}
where
\begin{equation*}
\mathcal{L}_{P}:= \Big(\nabla_{v}\cdot \langle v \rangle^{\gamma+2}\nabla_{v} + \langle v \rangle^{\gamma+2}\Delta_{x} - \langle v \rangle^{\gamma+2}\Big)\textbf{1}_{2\times2}\,.
\end{equation*}
This leads us to introduce the bilinear form $B^{\epsilon}[\cdot,\cdot]: \mathcal{H}\times\mathcal{H} \rightarrow \mathbb{R}$ with Hilbert space given by
\begin{equation*}
\mathcal{H}:=H^{1}_{x,v}\big(\langle v \rangle^{\gamma/2+1}\big)\times H^{1}_{x,v}\big(\langle v \rangle^{\gamma/2+1}\big)\,,
\end{equation*}
and, using the definition of $\mathcal{L}^{(\mu)}_{D}$ and $\mathcal{L}_{P}$, weak formulation
\begin{align*}
&B^{\epsilon}[u,w] := \int_{\mathbb{T}^{3}}\int_{\mathbb{R}^{3}}\bigg( \big(\Lambda u\big)\cdot w  - u\cdot\big(v\cdot\nabla_{x}w\big) \\
&\hspace{-0.5cm}+ \epsilon\,\langle v \rangle^{\gamma+2}\Big(\nabla_{v}u\cdot\nabla_{v}w + \nabla_{x}u\cdot\nabla_{x}w + u\cdot w\Big) + \lambda_{R}\,u\cdot w + \lambda_{I}\,
\bigg[\begin{array}{cc}
0 & -1 \\
1 & 0
\end{array}\bigg]u\cdot w\bigg)\text{d}v\text{d}x\,.
\end{align*}  
Thanks to Proposition \ref{prop:trilinear} and the arguments given previously in this section, it follows that
\begin{equation*}
\big| B^{\epsilon}[u,w] \big| \leq (C(\mu) + |\lambda| + \epsilon)\|u\|_{\mathcal{H}}\|w\|_{\mathcal{H}}\,.
\end{equation*}
In addition, thanks to Theorem \ref{t1-dissipative}, as long as $\lambda_{R} + c_o\kappa_0>0$ it follows that
\begin{equation}\label{p2.5}
B^{\epsilon}[u,u] \geq (c_o\kappa_0 + \lambda_{R})\int_{\mathbb{T}^{3}}\|\langle v \rangle^{\gamma/2}u\|^{2}_{H^{s}_{v}\times H^{s}_{v}}\text{d}x + \epsilon\,\|u\|^{2}_{\mathcal{H}}\geq \epsilon\,\|u\|^{2}_{\mathcal{H}}\,.
\end{equation}
Note that the antisymmetric term related to $\lambda_{I}$ vanishes.  As a consequence, invoking Lax-Milgram theorem, for any $f$ in the dual of $\mathcal{H}$ (in particular, for any $f\in L^{2}_{x,v}\times L^{2}_{x,v}$) one has a unique $u\in\mathcal{H}$ such that
\begin{equation*}
B^{\epsilon}[u,w] = \langle f, w \rangle\,,\qquad \forall\,w\in \mathcal{H}\,,\quad \forall\,\epsilon>0\,.
\end{equation*}
This provides existence and uniqueness for problem \eqref{c1-s-p2} as long as $\lambda_{R} + c_o\kappa_0>0$.  Furthermore, using the first inequality in estimate \eqref{p2.5}, one concludes that for $f\in L^{2}_{x,v}\times L^{2}_{x,v}$ the weak solution to problem \eqref{c1-s-p2} satisfies 
\begin{align}\label{p3}
\begin{split}
(c_o\kappa_0&+\lambda_{R})\int_{\mathbb{T}^{3}}\|\langle v \rangle^{\gamma/2}u\|^{2}_{H^{s}_{v}\times H^{s}_{v}}\text{d}x \leq B^{\epsilon}[u,u] = \langle f,u\rangle \leq \|f\|_{L^{2}_{x,v}\times L^{2}_{x,v}}\|u\|_{L^{2}_{x,v}\times L^{2}_{x,v}}\,,\\
&\text{that is}\,,\quad \Bigg(\int_{\mathbb{T}^{3}}\|\langle v \rangle^{\gamma/2}u\|^{2}_{H^{s}_{v}\times H^{s}_{v}}\text{d}x\Bigg)^{1/2} \leq (c_o\kappa_0+\lambda_{R})^{-1}\|f\|_{L^{2}_{x,v}\times L^{2}_{x,v}}\,.
\end{split}
\end{align} 
Now for a fixed $f\in L^{2}_{x,v}\times L^{2}_{x,v}$ take the sequence of solutions $\{u^{\epsilon}\}$ with $\epsilon\rightarrow0$ to problem \eqref{c1-s-p2}.  By previous estimate, there exists (up to a subsequence) a weak limit $u_{f}\in L^{2}_{x,v}\times L^{2}_{x,v}$.  Clearly, such limit satisfies problem \eqref{c1-s-p1} in the sense of distributions\footnote{We note here that each term in the evaluation $\mathcal{L}^{(\mu)}_{D}\textbf{1}_{2\times 2}u_{f}$ is not, in general, an $L^{2}_{x,v}$ function.  However, one has $\mathcal{L}^{(\mu)}_{D}\textbf{1}_{2\times 2}u_{f} = - f + \lambda_{R}u_{f} +\Big[\begin{array}{cc} 0 & -1\\1 & 0 \end{array}\Big]u_{f} \in L^{2}_{x,v}\times L^{2}_{x,v}$.} with estimate \eqref{p3}.  Furthermore, \textit{any} solution to \eqref{c1-s-p1} in $L^{2}_{x,v}\times L^{2}_{x,v}$ satisfies estimate \eqref{p3}.  Therefore, by linearity, solutions are unique in this space. 
Additionally, estimate \eqref{p3} shows that $\mathcal{D}(\mathcal{L}^{(\mu)}_{D})\subset H^{0,s}_{x,v}(\langle v \rangle^{\gamma/2})\subset{L^{2}_{x,v}}$.  This proves that any $\lambda\in\mathbb{C}$ such that $\lambda_{R}>-c_o\kappa_0$ belongs to the range of $\mathcal{L}^{(\mu)}_{D}$.
\end{proof}
\begin{remark} By the same token, the spectrum of $-\Lambda$, as operator on $L^{2}_{v}$, lies in $\{z\in\mathbb{C}:\mathfrak{R}_e{z}\leq-c_o\kappa_0\}$.  Since $\Lambda$ is self-adojint we conclude that $\text{Spectrum}(-\Lambda)\subset(-\infty,-c_o\kappa_0]$.
\end{remark}
\subsubsection{Localization of the spectrum.}
We know that $\mathcal{K}\Denote\mathcal{K}_{\delta_o,\varepsilon_o}$ is continuous in $L^{2}_{v}$, that is,
\begin{equation*}
\|\mathcal{K}(h)\|_{L^{2}_{v}}\leq C(\delta_{o},\varepsilon_o)\|h\|_{L^{2}_{v}}\,,
\end{equation*}
with $C(\delta_o,\varepsilon_o)$ depending only on mass and energy of $\mu$.  Let us prove $\mathcal{K}$ is $\Lambda$-compact by taking a sequence $\{h_{n}\}\subseteq\mathcal{D}(\Lambda)\subseteq L^{2}_{v}$ such that both $\{h_{n}\}$ and $\{\Lambda h_{n}\}$ are bounded in $L^{2}_{v}$.  Then, by Theorem \ref{t1-dissipative}
\begin{equation*}
c_{o}\kappa_0\|\langle v \rangle^{\gamma/2}h_{n}\|^{2}_{H^{s}_{v}}\leq \langle\Lambda h_{n},h_{n}\rangle\leq\|\Lambda h_{n}\|_{L^{2}_{v}}\|h_n\|_{L^{2}_{v}}\,.
\end{equation*}
Thus, $\{h_{n}\}$ is compact in $L^{2}_{v}$.  Using Weyl's theorem for stability of essential spectrum under relative compact (self-adjoint) perturbations we just proved the following result.
\begin{corollary}\label{c2-compact}
The essential spectrum of ${L}^{(\mu)} = \mathcal{K} - \Lambda$, as an operator in $L^{2}_{v}$, lies in $(-\infty,-c_o\kappa_0]$.  In particular, if $0\in\text{Spectrum}({L}^{(\mu)})$, it will be a discrete eigenvalue and, thus, the kernel of ${L}^{(\mu)}$ will be finite dimensional.
\end{corollary}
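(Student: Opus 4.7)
The plan is to deduce the corollary from Weyl's theorem on stability of the essential spectrum under relatively compact perturbations of self-adjoint operators. Since $L^{(\mu)} = \mathcal{K} - \Lambda$ with both $\mathcal{K}$ and $\Lambda$ self-adjoint on $L^2_v$, the two ingredients I need are: (i) a precise localization of the spectrum of $-\Lambda$ alone, and (ii) a verification that $\mathcal{K}$ is $\Lambda$-compact.

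For (i), I would repeat verbatim the argument of Proposition \ref{c1-dissipative}, but stripped of the $v\cdot\nabla_x$ transport term. Concretely, for $\lambda\in\mathbb{C}$ with $\mathfrak{R}_e\lambda>-c_o\kappa_0$, I solve $(-\Lambda+\lambda)u=f$ by a Lax-Milgram argument on the bilinear form $B^\epsilon[u,w] = \langle \Lambda u,w\rangle + \epsilon\,\langle v\rangle^{\gamma+2}(\nabla_v u\cdot\nabla_v w + u\cdot w) + \lambda\langle u,w\rangle$, coercive by Theorem \ref{t1-dissipative}, then send $\epsilon\to 0$. This yields $\mathrm{Spectrum}(-\Lambda)\subset(-\infty,-c_o\kappa_0]$ (as already noted in the remark following Proposition \ref{c1-dissipative}).

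For (ii), I take a sequence $\{h_n\}\subset\mathcal{D}(\Lambda)\subset L^2_v$ with $\|h_n\|_{L^2_v}+\|\Lambda h_n\|_{L^2_v}\le M$. Pairing and using Theorem \ref{t1-dissipative} gives
\begin{equation*}
c_o\kappa_0\,\|\langle v\rangle^{\gamma/2}h_n\|_{H^s_v}^2 \;\le\; \langle \Lambda h_n,h_n\rangle \;\le\; \|\Lambda h_n\|_{L^2_v}\|h_n\|_{L^2_v} \;\le\; M^2.
\end{equation*}
Since $\gamma>0$ and $s>0$, the embedding of $\{g:\langle v\rangle^{\gamma/2}g\in H^s_v\}$ into $L^2_v$ is compact (polynomial weight gives tightness at infinity, fractional regularity gives local compactness via Rellich-Kondrachov), so $\{h_n\}$ has a subsequence converging in $L^2_v$. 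Continuity of $\mathcal{K}$ on $L^2_v$, established in the remark just before the corollary, then yields a convergent subsequence of $\{\mathcal{K}h_n\}$. This is precisely the statement that $\mathcal{K}$ is relatively compact with respect to $\Lambda$.

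With (i) and (ii) in hand, Weyl's theorem on stability of essential spectra under relatively compact self-adjoint perturbations gives $\mathrm{Spectrum}_{\mathrm{ess}}(L^{(\mu)}) = \mathrm{Spectrum}_{\mathrm{ess}}(-\Lambda)\subset(-\infty,-c_o\kappa_0]$. Therefore any point of $\mathrm{Spectrum}(L^{(\mu)})$ lying in $(-c_o\kappa_0,\infty)$ is isolated and a discrete eigenvalue of finite multiplicity; in particular $0$, if it lies in the spectrum, is such a point, and $\mathrm{Ker}(L^{(\mu)})$ is finite dimensional. The only step that requires care is the compact embedding claim in (ii), where one must invoke both the velocity weight and the fractional $H^s$ regularity; everything else is a direct citation of Theorem \ref{t1-dissipative} and standard perturbation theory.
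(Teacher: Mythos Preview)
Your proposal is correct and follows essentially the same route as the paper: the paper likewise uses Theorem \ref{t1-dissipative} on a sequence with bounded $\|h_n\|_{L^2_v}+\|\Lambda h_n\|_{L^2_v}$ to obtain a uniform $H^s_{\gamma/2}$ bound, invokes the compact embedding into $L^2_v$, and then applies Weyl's theorem for self-adjoint relatively compact perturbations, relying on the Remark after Proposition \ref{c1-dissipative} for the spectrum of $-\Lambda$. Your write-up is simply a more explicit version of the paper's terse argument.
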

Recall that the Dirichlet form of ${L}^{(\mu)}$ is non positive, $\langle {L}^{(\mu)}h,h\rangle\leq0$.  This implies, since ${L}^{(\mu)}$ is self adjoint, that the discrete spectrum of ${L}^{(\mu)}$ lies in $(-\infty,0]$.  As a consequence, the restriction of ${L}^{(\mu)}$ to $L^{2}_{v}\setminus\text{Ker}({L}^{(\mu)})$, denoted by ${L}^{(\mu)}_o$, is invertible with inverse $\Big({L^{(\mu)}_o}\Big)^{-1}$ and with domain $\mathcal{D} \Big(\Big({L^{(\mu)}_o\Big)}^{-1}\Big)=L^{2}_{v}\setminus \text{Ker}(L^{(\mu)})$.  This observation together with Corollary \ref{c2-compact} show that, in fact, ${L}^{(\mu)}_o$ has a spectral gap (denoted by $\lambda_o>0$)
\begin{equation*}
\langle L^{(\mu)}_o h , h \rangle \leq -\lambda_{o}\|h\|^{2}_{L^{2}_{v}}\,,\qquad h\in L^{2}_{v}\setminus\text{Ker}({L}^{(\mu)})\,.
\end{equation*}
This leads to the following additional feature in the spectrum.
\begin{proposition}\label{p1-compact}
The eigenvectors associated to the eigenvalues of ${L}^{(\mu)}$ form a basis in $L^{2}_{v}$.
\end{proposition}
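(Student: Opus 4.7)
I would strengthen Corollary \ref{c2-compact} to the sharper statement $\sigma_{\mathrm{ess}}(L^{(\mu)}) = \emptyset$, from which the conclusion follows by spectral theory: since $L^{(\mu)}$ is self-adjoint on $L^2_v$, the existence of an orthonormal basis of eigenvectors is equivalent to the spectrum being purely discrete.

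The first step is to show that the positive self-adjoint operator $\Lambda$ has compact resolvent on $L^2_v$. Theorem \ref{t1-dissipative} provides the coercivity $\langle \Lambda h, h\rangle \geq c_o \kappa_0 \|\langle v\rangle^{\gamma/2} h\|^2_{H^s_v}$, so the form domain $D(\Lambda^{1/2})$ embeds continuously into $H^s_{\gamma/2}(\mathbb{R}^3)$. I would then verify that the embedding $H^s_{\gamma/2}(\mathbb{R}^3) \hookrightarrow L^2(\mathbb{R}^3)$ is compact: given a bounded sequence $\{h_n\}$ in $H^s_{\gamma/2}$, Rellich--Kondrachov applied on each bounded velocity ball $\{|v| \leq R\}$ produces a subsequence converging in $L^2$ there (the weight $\langle v\rangle^{\gamma/2}$ being bounded on the ball), while the tail is controlled uniformly by $\int_{|v|>R} |h_n|^2 \,dv \leq \langle R\rangle^{-\gamma}\|h_n\|^2_{L^2_{\gamma/2}}$, which vanishes as $R\to\infty$. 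A standard diagonal extraction then yields convergence in $L^2(\mathbb{R}^3)$. Consequently, for any $z$ in the resolvent set of $\Lambda$ and any bounded $\{g_n\}$ in $L^2_v$, the sequence $h_n := (\Lambda - z)^{-1} g_n$ is bounded both in $L^2_v$ and in the graph norm of $\Lambda$, hence bounded in $H^s_{\gamma/2}$, and therefore precompact in $L^2_v$ by the compact embedding. This shows $(\Lambda - z)^{-1}$ is compact, so $\sigma_{\mathrm{ess}}(\Lambda) = \sigma_{\mathrm{ess}}(-\Lambda) = \emptyset$.

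Since $\mathcal{K}$ is $\Lambda$-compact, as established in the derivation of Corollary \ref{c2-compact}, Weyl's theorem on the stability of the essential spectrum under relatively compact self-adjoint perturbations then yields $\sigma_{\mathrm{ess}}(L^{(\mu)}) = \sigma_{\mathrm{ess}}(-\Lambda) = \emptyset$. Hence $L^{(\mu)}$ has purely discrete spectrum, consisting of isolated eigenvalues of finite multiplicity, which (since the Dirichlet form of $L^{(\mu)}$ is non-positive) can only accumulate at $-\infty$. The spectral theorem for self-adjoint operators with pure point spectrum now produces an orthonormal basis of $L^2_v$ formed by eigenvectors of $L^{(\mu)}$; equivalently, for any $z$ in the resolvent set, $(L^{(\mu)} - z)^{-1}$ is a compact normal operator whose eigenvectors coincide with those of $L^{(\mu)}$ and, by Hilbert--Schmidt, are complete.

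The main obstacle I anticipate is the compact embedding $H^s_{\gamma/2}(\mathbb{R}^3) \hookrightarrow L^2(\mathbb{R}^3)$, which relies crucially on the combination $s>0$ (for local compactness via Rellich--Kondrachov) and $\gamma>0$ (for turning the weighted $L^2$-control into genuine tail decay). Both hold under assumption \eqref{assump:b-gamma}, so there is no fundamental difficulty, but the diagonal extraction must be carried out with care and the form domain identification made precise. Everything else reduces to a standard application of Weyl's theorem and the spectral theorem for self-adjoint operators.
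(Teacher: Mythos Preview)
Your proposal is correct and uses the same essential ingredient as the paper---the compact embedding $H^s_{\gamma/2}(\mathbb{R}^3)\hookrightarrow L^2(\mathbb{R}^3)$---but the organization is different. You show first that $\Lambda$ itself has compact resolvent (using only the coercivity $\langle\Lambda h,h\rangle\ge c_o\kappa_0\|h\|^2_{H^s_{\gamma/2}}$), deduce $\sigma_{\mathrm{ess}}(-\Lambda)=\emptyset$, and then transfer this to $L^{(\mu)}=\mathcal K-\Lambda$ via Weyl's theorem. The paper instead works directly with the restriction $L^{(\mu)}_o$ of $L^{(\mu)}$ to $(\mathrm{Ker}\,L^{(\mu)})^\perp$ and shows that $(L^{(\mu)}_o)^{-1}$ is compact: for $h_n=(L^{(\mu)}_o)^{-1}g_n$ one combines the same coercivity of $\Lambda$ with the boundedness of $\mathcal K$ to get an $H^s_{\gamma/2}$ bound, but this also requires the spectral gap $\lambda_o$ of $L^{(\mu)}_o$ (already available from Corollary~\ref{c2-compact}) to control $\|h_n\|_{L^2}$ in terms of $\|g_n\|_{L^2}$. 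Your route is slightly cleaner in that it avoids invoking the spectral gap and yields the sharper statement $\sigma_{\mathrm{ess}}(L^{(\mu)})=\emptyset$ as a byproduct (the paper only records this implicitly in the remark following the proposition); the paper's route is a bit more direct in that it deals with $L^{(\mu)}$ in one step rather than passing through $\Lambda$ and Weyl.
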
 
\begin{proof}
Note that $\vpran{{L^{(\mu)}_o}}^{-1}$ is compact.  Indeed, take $\{g_{n}\}$ a bounded sequence in $L^{2}_{v}$ and set $h_{n}= \vpran{{L^{(\mu)}_o}}^{-1} g_n$.  Then, thanks to Theorem \ref{t1-dissipative} and the continuity of $\mathcal{K}$, it follows that
\begin{equation*}
- \langle g_n,h_{n}\rangle = -\langle {L}^{(\mu)}_o(h_{n}),h_{n}\rangle = \langle\Lambda(h_{n}),h_{n}\rangle - \langle\mathcal{K}(h_{n}),h_{n}\rangle \geq c_o\kappa_0\|h_{n}\|^{2}_{H^{s}_{\gamma/2,v}} - C\|h_{n}\|^{2}_{L^{2}_{v}}\,.
\end{equation*}
We conclude by Cauchy-Schwarz inequality that
\begin{equation}\label{p1-compact-e1}
c_o\kappa_0\| h_{n} \|^{2}_{H^{s}_{\gamma/2,v}} \leq \|h_{n}\|_{L^{2}_{v}}\|g_{n}\|_{L^{2}_{v}} + C\|h_{n}\|^{2}_{L^{2}_{v}} \leq \tilde{C}\big(\|g_{n}\|^{2}_{L^{2}_{v}} + \|h_{n}\|^{2}_{L^{2}_{v}}\big)\,.
\end{equation}
Furthermore, since $L^{(\mu)}_{o}$ has spectral gap in $L^{2}_{v}\setminus\text{Ker}({L}^{(\mu)})$, one has that
\begin{equation*}
- \langle g_n,h_{n}\rangle = -\langle {L}^{(\mu)}_o(h_{n}),h_{n}\rangle \geq \lambda_{o}\|h_{n}\|^{2}_{L^{2}_{v}}\,.
\end{equation*}
As a consequence, again by Cauchy-Schwarz inequality,
\begin{equation}\label{p1-compact-e1.5}
\|h_{n}\|^{2}_{L^{2}_{v}}\leq \frac{1}{\lambda^{2}_{o}}\|g_{n}\|^{2}_{L^{2}_{v}}\,.
\end{equation}
Gathering the estimates \eqref{p1-compact-e1} and \eqref{p1-compact-e1.5} lead to
\begin{equation*}
\| h_{n} \|^{2}_{H^{s}_{\gamma/2}(\dv)} \leq \frac{\tilde{C}}{c_o\kappa_0}\Big(1+\frac{1}{\lambda^{2}_{o}}\Big)\|g_{n}\|^{2}_{L^{2}_{v}}\,,
\end{equation*}
which proves that $\vpran{{L^{(\mu)}_o}}^{-1}$ is compact as an operator onto $L^{2}_{v} \setminus \text{Ker}(L^{(\mu)})$.  Being the inverse of a self adjoint operator, it is also self adjoint.  Therefore, by the spectral theorem for compact self adjoint operators, its eigenvectors, and hence the eigenvectors of ${L}^{(\mu)}_{o}$, form a basis of $L^{2}_{v} \setminus \text{Ker}(L^{(\mu)})$. Together with the eigenvectors of the null space one obtains a basis of $L^2_v$ composed of the eigenvectors of $L^{(\mu)}$.
\end{proof}
\begin{remark} Recall that the discrete spectrum of compact operators accumulate at $0$.  As a consequence, the proof of Proposition \ref{p1-compact} shows a difference between the discrete spectrum of $\mathcal{L}^{(\mu)}$ in the cutoff and non cutoff cases.  In the later, the discrete spectrum decreases up to $-\infty$.
\end{remark}
In this final part of the section we localize the spectrum of the operator
\begin{equation*}
{L}^{(\mu)} - v\cdot\nabla_{x} = \mathcal{K} + \mathcal{L}^{(\mu)}_{D} \,.
\end{equation*}
\begin{lemma}\label{l1-compact}
The operator $\mathcal{K}$ is relative compact with respect to $\mathcal{L}^{(\mu)}_{D}$.
\end{lemma}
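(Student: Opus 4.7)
The plan is to show that every sequence $\{h_n\}\subset \CalD(\mathcal{L}^{(\mu)}_D)$ with $\|h_n\|_{L^2_{x,v}}+\|\mathcal{L}^{(\mu)}_D h_n\|_{L^2_{x,v}}\le C$ admits a subsequence along which $\mathcal{K} h_n$ converges in $L^2_{x,v}$. The strategy has three ingredients: (i) extract uniform $v$-regularity of $h_n$ from the dissipation of $-\Lambda$; (ii) show that $\mathcal{K}$ itself is compact as an operator in $v$ alone; and (iii) upgrade this to $L^2_{x,v}$-compactness by velocity averaging on $\T^3$.

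For (i), pairing with $h_n$ and using that $v\cdot\nabla_x$ is skew-adjoint on the torus yields
\begin{equation*}
\langle -\Lambda h_n,h_n\rangle_{L^2_{x,v}} \;=\; \langle -\mathcal{L}^{(\mu)}_D h_n,h_n\rangle_{L^2_{x,v}} \;\le\; \|\mathcal{L}^{(\mu)}_D h_n\|_{L^2_{x,v}}\|h_n\|_{L^2_{x,v}}\;\le\; C,
\end{equation*}
so Theorem~\ref{t1-dissipative} delivers the uniform bound $\int_{\T^3}\|\vint{v}^{\gamma/2}h_n\|_{H^s_v}^2\dx \le C$. For (ii), I would expand $\mathcal{K}=L^{(\mu)}_{\Phi_1,b_1}+\mu^{-1/2}Q^-_{\Phi_1,b_1}(\mu,\mu^{1/2}\cdot)$ and observe that the ``multiplication by the collision frequency $\nu_{\delta_o,\varepsilon_o}(v)$'' piece hidden in $L^{(\mu)}_{\Phi_1,b_1}$ cancels exactly against the extra $Q^-(\mu,\mu^{1/2}\cdot)=\mu^{1/2}(v)\nu_{\delta_o,\varepsilon_o}(v)\,\cdot$ term. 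What remains is a finite sum of purely integral operators in $v$ whose kernels are supported on $\delta_o\le |v-v'|\le\delta_o^{-1}$ (via $\Phi_1$), carry no angular singularity ($b_1$ is bounded), and decay in a Schwartz-like fashion through the Maxwellian factors. A variant of Grad's classical cutoff compactness result then yields that $\mathcal{K}:L^2_v\to L^2_v$ is compact; in particular $\mathcal{K}$ is the $L^2_v$-operator-norm limit of finite-rank operators $\mathcal{K}_N=\sum_{i=1}^N \phi_i\otimes \eta_i$ with $\phi_i,\eta_i\in C^\infty_c(\R^3)$.

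For (iii), I would write $(\mathcal{K}_N h_n)(x,v)=\sum_{i=1}^N \phi_i(v)\,\rho_i^n(x)$ with $\rho_i^n(x)=\int_{\R^3}\eta_i(w)h_n(x,w)\,dw$. From $v\cdot\nabla_x h_n = -\mathcal{L}^{(\mu)}_D h_n - \Lambda h_n$ the right-hand side is bounded in $L^2_x(H^{-s}_v)$ with weight $\vint{v}^{-\gamma/2}$: the first piece is in $L^2_{x,v}$ by hypothesis, and the second is controlled by duality from (i) together with the boundedness of $\Lambda$ from $H^s_{\gamma/2}(\dv)$ into $H^{-s}_{-\gamma/2}(\dv)$. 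A fractional velocity-averaging lemma on $\T^3\times\R^3$ then yields a uniform $H^\sigma_x(\T^3)$ bound on each $\rho_i^n$ for some $\sigma>0$, and Rellich's embedding extracts a subsequence converging in $L^2_x$. A diagonal extraction over $i$ and $N$, combined with the $L^2_v$-operator-norm convergence $\mathcal{K}_N\to\mathcal{K}$ (which is uniform in $x$ since $\mathcal{K}$ is $x$-independent), produces a subsequence along which $\mathcal{K} h_n\to\mathcal{K} h$ in $L^2_{x,v}$, proving the lemma.

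I anticipate the main obstacle to be step (iii): the forcing $\Lambda h_n$ in the transport equation is only controlled in a negative velocity-Sobolev index, so one must invoke a version of the averaging lemma (in the spirit of DiPerna--Lions--Meyer or Golse--Saint-Raymond) flexible enough to tolerate such forcing while still producing genuinely positive fractional $x$-regularity on the torus, and to track the $\vint{v}$-weights needed to make the duality in the bound on $\Lambda h_n$ legitimate.
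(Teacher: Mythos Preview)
Your approach is correct but takes a genuinely different and more elaborate route than the paper. The paper's argument is more direct: after obtaining (as you do in step (i)) the uniform bound $\int_{\T^3}\|\vint{v}^{\gamma/2}h_n\|_{H^s_v}^2\dx\le C$, it invokes a Bouchut-type hypoelliptic interpolation inequality (\cite[Proposition~1.1]{B}) to get uniform fractional $x$-regularity on $h_n$ \emph{itself}, namely $\|(-\Delta_x)^{s/(2(1+s))}h_n\|_{L^2_{x,v}}\le C$. The combination of $H^s_v$-regularity, $v$-weight $\vint{v}^{\gamma/2}$, and $H^{s/(1+s)}_x$-regularity on the torus gives precompactness of $\{h_n\}$ in $L^2_{x,v}$; since $\mathcal{K}$ is merely bounded on $L^2_{x,v}$, $\{\mathcal{K} h_n\}$ is then automatically precompact. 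In particular, the paper never needs your step (ii) (compactness of $\mathcal{K}$ in $L^2_v$ alone) nor the finite-rank approximation and moment-by-moment averaging of step (iii). Your route still works---the cancellation of the collision frequency in $\mathcal{K}$ is correct, the Grad-type compactness of $\mathcal{K}$ on $L^2_v$ is classical for cutoff, compactly supported kernels, and the averaging lemma with $H^{-s}_v$-forcing you invoke in (iii) is essentially the same Bouchut ingredient the paper uses, just applied to individual velocity moments rather than to $h_n$ globally---but it trades one clean hypoelliptic estimate for a longer detour through operator approximation and diagonal extraction.
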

\begin{proof}
Take a sequence $\{h_{n}\}\subset\mathcal{D}(\mathcal{L}^{(\mu)}_{D})\subset L^{2}_{x,v}$ such that both $\{h_{n}\}$ and $\{\mathcal{L}^{(\mu)}_{D}(h_{n})\}$ are bounded in $L^{2}_{x,v}$.  Then, by the Divergence theorem and Theorem \ref{t1-dissipative}
\begin{align*}
c_{o}\kappa_0\int_{\T^3}\|\langle v \rangle^{\gamma/2}h_{n}\|^{2}_{H^{s}_{v}}
\leq 
\int_{\T^3}\langle\mathcal{L}^{(\mu)}_{D}(h_{n}),h_{n}\rangle\leq \int_{\T^3}\|\mathcal{L}^{(\mu)}_{D}(h_{n})\|_{L^{2}_{v}}\|h_n\|_{L^{2}_{v}}
\leq 
\|\mathcal{L}^{(\mu)}_{D}(h_{n})\|_{L^{2}_{x,v}}\|h_n\|_{L^{2}_{x,v}} \,.
\end{align*}
As a consequence, using \cite[Proposition 1.1]{B}\footnote{This proposition is shown for $x\in\mathbb{R}^{3}$.  The same argument applies for $x\in\mathbb{T}^{3}$ using Fourier series instead of Fourier transform.}, it also follows that
\begin{align*}
\|(-\Delta_{x})^{\frac{s}{1+s}}h_{n}\|^{2}_{L^{2}_{x,v}}
\leq 
C_{d,s}&\|(-\Delta_{v})^{\frac{s}{2}}h_{n}\|^{\frac{2}{1+s}}_{L^{2}_{x,v}}\|\mathcal{L}^{(\mu)}_{D}(h_{n})\|^{\frac{2s}{1+s}}_{L^{2}_{x,v}}
\leq 
C_{d,s}(c_o\kappa_0)^{-\frac{1}{1+s}}\|\mathcal{L}^{(\mu)}_{D}(h_{n})\|^{\frac{1+2s}{1+s}}_{L^{2}_{x,v}}\|h_n\|^{\frac{1}{1+s}}_{L^{2}_{x,v}}\,.
\end{align*}
Thus,
\begin{equation*}
\sup_{n}\Big\{\int_{\T^3}\|\langle v \rangle^{\gamma/2}h_{n}\|^{2}_{H^{s}_{v}} + \|(-\Delta_{x})^{\frac{s}{1+s}}h_{n}\|^{2}_{L^{2}_{x,v}}\Big\}
\leq
C \vpran{\|\mathcal{L}^{(\mu)}_{D}(h_{n})\|_{L^{2}_{x,v}}^2 + \|h_n\|_{L^{2}_{x,v}}^2}\,,
\end{equation*}
which implies that $\{h_{n}\}$ is compact in $L^{2}(\mathbb{T}^{3}\times\mathbb{R}^{3})$.  That is, $\mathcal{K}$ is $\mathcal{L}^{(\mu)}_{D}$-compact.
\end{proof}
\begin{proposition}\label{p2-compact}
The essential spectrum of ${L}^{(\mu)} - v\cdot\nabla_{x}$ lies in $\{z\in\mathbb{C}:\mathfrak{R}_{e}\leq-c_o\kappa_0\}$.  Furthermore, the set $\{z\in\mathbb{C}:\mathfrak{R}_{e}z>-c_o\kappa_0\}$ is contained in the resolvent of ${L}^{(\mu)} - v\cdot\nabla_{x}$ except, possibly, for countably many eigenvalues. 
\end{proposition}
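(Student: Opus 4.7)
The natural plan is to use the decomposition $L^{(\mu)} - v \cdot \nabla_x = \mathcal{L}^{(\mu)}_{D} + \mathcal{K}$ established at the end of Subsection 4.1 and combine (i) Weyl's theorem on invariance of the essential spectrum under relatively compact perturbations with (ii) the analytic (meromorphic) Fredholm theorem applied to a compact-operator-valued factor.

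For the first assertion, I would invoke Proposition~\ref{c1-dissipative}, which places the full spectrum of $\mathcal{L}^{(\mu)}_{D}$ in $\{z\in\mathbb{C}:\mathfrak{R}_{e}z\leq -c_o\kappa_0\}$; in particular $\sigma_{\rm ess}(\mathcal{L}^{(\mu)}_{D})$ lies there. Since $\mathcal{K}$ is $\mathcal{L}^{(\mu)}_{D}$-compact by Lemma~\ref{l1-compact}, Weyl's stability theorem for the essential spectrum under relatively compact perturbations yields
\begin{equation*}
\sigma_{\rm ess}\bigl(L^{(\mu)}-v\cdot\nabla_{x}\bigr)=\sigma_{\rm ess}\bigl(\mathcal{L}^{(\mu)}_{D}\bigr)\subseteq\{z\in\mathbb{C}:\mathfrak{R}_{e}z\leq -c_o\kappa_0\},
\end{equation*}
which is exactly the first claim.

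For the second assertion, set $\Omega:=\{z\in\mathbb{C}:\mathfrak{R}_{e}z>-c_o\kappa_0\}$ and factor, for $z\in\Omega$,
\begin{equation*}
z-\bigl(\mathcal{L}^{(\mu)}_{D}+\mathcal{K}\bigr)=\bigl(z-\mathcal{L}^{(\mu)}_{D}\bigr)\bigl(I-T(z)\bigr),\qquad T(z):=\bigl(z-\mathcal{L}^{(\mu)}_{D}\bigr)^{-1}\mathcal{K}.
\end{equation*}
The key observation is that $T(z)$ is a \emph{compact} operator on $L^{2}_{x,v}$ depending holomorphically on $z\in\Omega$. Indeed, given a bounded sequence $\{g_n\}\subset L^{2}_{x,v}$, the image $h_n=(z-\mathcal{L}^{(\mu)}_{D})^{-1}g_n$ is bounded in $L^{2}_{x,v}$ by the resolvent bound extracted from the proof of Proposition~\ref{c1-dissipative}, and $\mathcal{L}^{(\mu)}_{D}h_n=zh_n-g_n$ is then bounded as well, so Lemma~\ref{l1-compact} gives precompactness of $\{h_n\}$; composing with the bounded operator $\mathcal{K}$ preserves compactness. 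A Neumann-series argument based on $\|(z-\mathcal{L}^{(\mu)}_{D})^{-1}\|_{L^{2}\to L^{2}}\leq (\mathfrak{R}_{e}z+c_o\kappa_0)^{-1}$ shows that $I-T(z)$ is invertible for $\mathfrak{R}_{e}z$ sufficiently large, so the resolvent set of $L^{(\mu)}-v\cdot\nabla_{x}$ meets $\Omega$. The meromorphic Fredholm theorem then guarantees that $(I-T(z))^{-1}$ extends to a meromorphic operator-valued function on the connected open set $\Omega$ whose poles form a discrete (hence at most countable) subset, each pole corresponding to an eigenvalue of $L^{(\mu)}-v\cdot\nabla_{x}$ of finite algebraic multiplicity.

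\textbf{Main obstacle.} The argument is largely operator-theoretic bookkeeping; the only nontrivial ingredient is the compactness of $(z-\mathcal{L}^{(\mu)}_{D})^{-1}$ on $L^{2}_{x,v}$, which in turn rests on the hypoelliptic-type gain in both $v$ (from the non-cutoff dissipation built into $\Lambda$) and $x$ (from the transport commutator trick used already in Lemma~\ref{l1-compact}), together with compactness of the torus $\mathbb{T}^{3}$. Once the resolvent is known to be compact, the spectral picture in $\Omega$ is delivered essentially for free by Weyl's theorem and the meromorphic Fredholm theorem.
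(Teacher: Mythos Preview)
Your argument is correct and reaches the same conclusion as the paper, but the packaging differs. The paper invokes Kato's perturbation theorem (Chapter~IV, Theorem~5.35) directly: relative compactness of $\mathcal{K}$ with respect to $\mathcal{L}^{(\mu)}_{D}$ forces the two operators to share the same complement of the Fredholm domain, hence the Fredholm set of $L^{(\mu)}-v\cdot\nabla_x$ contains all of $\Omega=\{\mathfrak{R}_e z>-c_o\kappa_0\}$; the paper then appeals to the general structure of the Fredholm set (locally constant nullity and deficiency on each connected component, punctured by at most countably many eigenvalues) and identifies the relevant component as the resolvent by checking that $(a,\infty)\subset\rho(L^{(\mu)}-v\cdot\nabla_x)$ for $a\ge\|\mathcal{K}\|$. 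You instead go through the explicit factorization $z-(\mathcal{L}^{(\mu)}_{D}+\mathcal{K})=(z-\mathcal{L}^{(\mu)}_{D})(I-T(z))$ and feed the compact holomorphic family $T(z)$ into the meromorphic Fredholm theorem. Both routes are standard and equivalent in strength; yours is slightly more self-contained, while the paper's is a one-line citation once Kato is on the shelf. One point worth noting: you need, and correctly extract, the compactness of the \emph{resolvent} $(z-\mathcal{L}^{(\mu)}_{D})^{-1}$ itself, which is stronger than the statement of Lemma~\ref{l1-compact} but is exactly what its proof establishes via the combined $H^s_v$ gain from dissipation and the $H^{s/(1+s)}_x$ gain from the transport averaging lemma on $\mathbb{T}^3$.
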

\begin{proof}
We use a similar argument given in \cite[proof of Proposition 3.4]{Mo} using relative compact perturbations in Banach spaces.  More precisely, one uses \cite[Chapter IV - Theorem 5.35 and footnote]{Kato} that asserts that, given Lemma \ref{l1-compact}, $\mathcal{K} + \mathcal{L}^{(\mu)}_{D}$ and $\mathcal{L}^{(\mu)}_{D}$ have the same complementary of the Fredholm domain.  Using Corollary \ref{c1-dissipative}, this implies that
\begin{equation}\label{p2-compact-e1}
\text{Complementary of the Fredholm domain of } {L}^{(\mu)} - v\cdot\nabla_{x}\subset\{z\in\mathbb{C}:\mathfrak{R}_{e}z\leq -c_o\kappa_0\}\,.
\end{equation}
Now, the Fredholm set is composed by a countable number of connected open set in which
\begin{align*}
\text{nul}(z):&=\text{dimension of null space of } T-z\,,\\
\text{def}(z):&=\text{codimension of the range of } T-z
\end{align*}
are finite and constant, and, a countable set of isolated values points (the eigenvalues).  It is known that the boundary of each of these components belong to the complementary of the Fredholm domain.  As a consequence, given \eqref{p2-compact-e1}, the intersection of the Fredholm set and $\{z\in\mathbb{C}:\mathfrak{R}_{e}z>-c_o\kappa_0\}$ is composed of, only, one component and a countable number of eigenvalues.  Since $(a,\infty)$, for any $a\geq\|\mathcal{K}\|_{2}$, belongs to the resolvent of ${L}^{(\mu)}-v\cdot\nabla_{x}$ one concludes that this component is part of the resolvent, that is, $\text{nul}(z)=0$ and $\text{def}(z)=0$ in $\{z\in\mathbb{C}:\mathfrak{R}_{e}z>-c_o\kappa_0\}$ except for a countably many eigenvalues.  Thus, the essential spectrum lies in $\{z\in\mathbb{C}:\mathfrak{R}_{e}\leq-c_o\kappa_0\}$ and the set $\{z\in\mathbb{C}:\mathfrak{R}_{e}z>-c_o\kappa_0\}$ is contained in the resolvent of ${L}^{(\mu)} - v\cdot\nabla_{x}$ except for a countably many eigenvalues. 
\end{proof}
\begin{theorem}\label{t2-localization}
The operator ${L}^{(\mu)} - v\cdot\nabla_{x}$, as an operator in $L^{2}_{x,v}$, has essential spectrum localized in $\{z\in\mathbb{C}:\mathfrak{R}_{e}z\leq-c_o\kappa_0\}$.  Furthermore, its eigenpairs are identical to those of ${L}^{(\mu)}$ as an operator in $L^{2}_{v}$.
\end{theorem}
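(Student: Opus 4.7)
\textbf{Proof plan for Theorem \ref{t2-localization}.}

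The first claim on the essential spectrum is effectively a restatement of what has already been established: since $\mathcal{K}$ is $\mathcal{L}^{(\mu)}_D$-compact by Lemma~\ref{l1-compact}, and since by Corollary~\ref{c1-dissipative} the spectrum of $\mathcal{L}^{(\mu)}_D$ lies in $\{z\in\mathbb{C}:\mathfrak{R}_e z\leq -c_o\kappa_0\}$, Weyl's theorem on the invariance of the essential spectrum under relatively compact perturbations applied to the decomposition $L^{(\mu)}-v\cdot\nabla_x=\mathcal{K}+\mathcal{L}^{(\mu)}_D$ gives the claim; this is exactly Proposition~\ref{p2-compact}.

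For the identification of eigenpairs, one direction is immediate: every eigenpair $(\lambda,\phi)$ of $L^{(\mu)}$ on $L^2_v$ yields an $x$-independent eigenpair of $L^{(\mu)}-v\cdot\nabla_x$ on $L^2_{x,v}$, since $v\cdot\nabla_x\phi=0$. For the converse, I would use the Fourier series in $x$ on the torus: writing $h(x,v)=\sum_{k\in\mathbb{Z}^3}\widehat h_k(v)e^{ik\cdot x}$, the eigenvalue equation $(L^{(\mu)}-v\cdot\nabla_x)h=\lambda h$ decouples into the countable family
\begin{equation*}
A_k\widehat h_k := \bigl(L^{(\mu)}-ik\cdot v\bigr)\widehat h_k=\lambda\widehat h_k,\qquad k\in\mathbb{Z}^3.
\end{equation*}
For $k=0$, this is exactly the eigenvalue problem for $L^{(\mu)}$ on $L^2_v$, which by Proposition~\ref{p1-compact} has a complete system of eigenvectors; this accounts for all eigenpairs of $L^{(\mu)}$. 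The task is therefore to show that for $k\neq 0$, $A_k$ has no eigenvalue in the region $\{\mathfrak{R}_e z>-c_o\kappa_0\}$, so that no additional eigenpairs can appear.

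For $k\neq 0$, I would argue as follows. Since $-ik\cdot v$ is skew-adjoint in $L^2_v$ and $L^{(\mu)}$ is self-adjoint with $\langle L^{(\mu)} g,g\rangle\leq 0$, taking real and imaginary parts of $\langle A_k g,g\rangle=\lambda\|g\|^2$ for any putative eigenpair $(\lambda,g)$ gives
\begin{equation*}
\mathfrak{R}_e\lambda\,\|g\|_{L^2_v}^2 = \langle L^{(\mu)}g,g\rangle,\qquad
\mathfrak{I}_m\lambda\,\|g\|_{L^2_v}^2 = -\langle k\cdot v\,g,g\rangle.
\end{equation*}
Split $g=\pi g+g^\perp$ where $\pi$ is the projection onto $\mathrm{Ker}(L^{(\mu)})$. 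If $g^\perp=0$, the equation reduces to $-ik\cdot v\,g=\lambda g$ pointwise, forcing $g\equiv 0$ because $\lambda+ik\cdot v$ vanishes only on a set of measure zero for $k\neq 0$. If $g^\perp\neq 0$, the spectral gap of $L^{(\mu)}$ on $(\mathrm{Ker}\,L^{(\mu)})^\perp$ combined with an elliptic/Fredholm analysis of $A_k$ shows that any eigenvalue must satisfy $\mathfrak{R}_e\lambda\leq-c_o\kappa_0$: indeed $A_k$ admits the same decomposition $A_k=\mathcal{K}-(\Lambda+ik\cdot v)$ into a relatively compact piece plus a dissipative piece whose spectrum lies in $\{\mathfrak{R}_e z\leq -c_o\kappa_0\}$ (by the same argument as in Proposition~\ref{c1-dissipative} applied to the bilinear form with $v\cdot\nabla_x$ replaced by the bounded antisymmetric multiplier $ik\cdot v$), so that Weyl's theorem yields the same localization of the essential spectrum, while the dissipation estimate $\mathfrak{R}_e\lambda\|g\|^2\leq-c_o\kappa_0\|\langle v\rangle^{\gamma/2}g\|_{H^s_v}^2$ on eigenvectors, together with the kernel-rigidity step above, rules out any discrete eigenvalue of $A_k$ with $\mathfrak{R}_e\lambda>-c_o\kappa_0$.

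The main obstacle will be the last step, namely forbidding an eigenvalue of $A_k$ (for $k\neq 0$) with $\mathfrak{R}_e\lambda>-c_o\kappa_0$ when the eigenvector has a nontrivial component in $\mathrm{Ker}(L^{(\mu)})$. The coercivity alone controls only $g^\perp$, so to close the argument one must exploit the fact that $ik\cdot v$ maps $\mathrm{Ker}(L^{(\mu)})$ out of itself and couple the kernel and orthogonal components. Projecting the eigenvalue equation on $\mathrm{Ker}(L^{(\mu)})$ and on its complement yields a closed system from which $\pi g$ can be eliminated via the invertibility of $L^{(\mu)}-\lambda$ on the orthogonal complement (valid since $\mathfrak{R}_e\lambda>-\lambda_o$), and the resulting scalar condition on the kernel components is shown to have no nontrivial solution by a direct calculation using the explicit basis $\{\sqrt\mu,v_i\sqrt\mu,|v|^2\sqrt\mu\}$. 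This concludes $\widehat h_k=0$ for all $k\neq 0$, hence $h$ is $x$-independent and lies in the spectral projection of $L^{(\mu)}$.
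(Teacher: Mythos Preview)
Your Fourier-in-$x$ reduction to the mode operators $A_k=L^{(\mu)}-ik\cdot v$ is sound, and the case $g=\pi g$ is handled correctly. The gap lies in the case $k\neq 0$ with both $\pi g$ and $g^\perp$ nonzero. As you yourself note, coercivity controls only $\|g^\perp\|$, not $\|g\|$, so it gives no direct bound on $\Re\lambda$. Your proposed remedy --- solve for $g^\perp$ via $(L^{(\mu)}-\lambda)^{-1}$ on the orthogonal complement and then verify that the residual finite-dimensional system on $\mathrm{Ker}(L^{(\mu)})$ has only the trivial solution ``by a direct calculation using the explicit basis'' --- is not actually carried out, and it is not routine: it amounts to showing that a $5\times 5$ dispersion matrix built from the resolvent $(L^{(\mu)}-\lambda)^{-1}$ and the multiplier $k\cdot v$ is nonsingular for \emph{every} $k\neq 0$ and \emph{every} $\lambda$ with $\Re\lambda>-c_o\kappa_0$. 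That is a substantial claim requiring its own argument. (Incidentally, the invertibility of $L^{(\mu)}-\lambda$ on the orthogonal complement lets you eliminate $g^\perp$, not $\pi g$ as written.)

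The paper sidesteps this entirely by separating variables in $v$ rather than $x$. The ingredient you did not exploit is Proposition~\ref{p1-compact}: the eigenvectors $\{\varphi_i\}$ of $L^{(\mu)}$ form a basis of $L^2_v$. Expanding any eigenvector as $\varphi(x,v)=\sum_i a_i(x)\varphi_i(v)$ and inserting into $(L^{(\mu)}-v\cdot\nabla_x)\varphi=\lambda\varphi$, the paper reads off, for each $i$, the relation $a_i(x)(\lambda_i-\lambda)=v\cdot\nabla_x a_i(x)$ holding for all $v\in\mathbb{R}^3$; since the left side is $v$-independent while the right is linear in $v$, both must vanish, so $\nabla_x a_i=0$ and $a_i=0$ whenever $\lambda_i\neq\lambda$. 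This gives the conclusion directly, with no modal spectral analysis of the $A_k$ at all.
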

\begin{proof}
It remains to prove that the eigenpairs of ${L}^{(\mu)} - v\cdot\nabla_{x}$ and ${L}^{(\mu)}$ are identical.  Take first an $(\lambda,\varphi(v))$ eigenpair of $\mathcal{L}^{(\mu)}$.  Then,
\begin{equation*}
\big({L}^{(\mu)} - v\cdot\nabla_{x}\big)\varphi(v) = {L}^{(\mu)}(\varphi(v)) = \lambda\varphi(v)\,.
\end{equation*}
Therefore, $(\lambda,\varphi(v))$ is also an eigepair of ${L}^{(\mu)} - v\cdot\nabla_{x}$.  Now, take $(\lambda,\varphi(x,v))$ an eigenpair of ${L}^{(\mu)} - v\cdot\nabla_{x}$.  Since the set of eigenvectors $\{\varphi_{i}\}$ of ${L}^{(\mu)}$ form a base in $L^{2}_{v}$ by Proposition \ref{p1-compact}, we can write the separation of variables
\begin{equation*}
\varphi(x,v)=\sum_{i\in\mathbb{N}}a_{i}(x)\,\varphi_{i}(v)\,.
\end{equation*}
Plugging this expression into the equation
\begin{equation*}
\big({L}^{(\mu)} - v\cdot\nabla_{x}\big)\varphi(x,v) = \lambda\,\varphi(x,v)\,,
\end{equation*}
we conclude that
\begin{equation}\label{t2-localization-e1}
a_{i}(x)(\lambda_{i}-\lambda) = v\cdot\nabla_{x}a_{i}(x)\,,\qquad \forall\,v\in\mathbb{R}^{3}\,,\quad i\in\mathbb{N}\,,
\end{equation}
for eigenpairs $(\lambda_{i},\varphi_{i})$ of $\mathcal{L}^{(\mu)}$.  For the set $\mathcal{I}_{1}=\{i\in\mathbb{N}:\lambda_{i}\neq\lambda\}$, the right side of \eqref{t2-localization-e1} is a function of velocity and the left side is not.  We conclude that $a_{i}(x)=a_{i}=0$ for every $i\in\mathcal{I}_{1}$.  Note that $\mathcal{I}^{c}_{1}\neq\emptyset$, otherwise $\varphi(x,v)=0$.  In $\mathcal{I}^{c}_{1}$ we conclude immediately that $\lambda$ is eigenvalue of $\mathcal{L}^{(\mu)}$, $a_{i}(x)=a_{i}$ for any $i\in\mathcal{I}^{c}_{1}$, and
\begin{equation*}
\varphi(x,v)=\sum_{i\in\mathcal{I}^{c}_{1}}a_{i}\,\varphi_{i}(v) \Denote\varphi(v) = \text{eigenvector of } {L}^{(\mu)} \text{ associated to } \lambda\,.  \tag*{\qedhere}
\end{equation*}
\end{proof}

\subsection{Localization of the spectrum in polynomially weighted $L^{2}_{x,v}$}
In this section we want to ``enlarge'' the localization of the spectrum of the linearized Boltzmann operator from the space $E = L^{2}(\mu^{-1/2},\mathbb{T}^{3}\times\mathbb{R}^{3})$ to the space $\mathcal{E} = L^{2}(\langle v \rangle^{k},\mathbb{T}^{3}\times\mathbb{R}^{3})$ with $k\geq2$.  The idea of the enlargement of space in the Boltzmann context was introduced in \cite{Mo} to study rate of convergence of the homogeneous problem.  In fact, we will use a later development \cite[Theorem 2.1]{GMM} to facilitate the discussion, although, the argument could be accomplished with classical perturbation theory, as done in \cite{Mo}.  Let us first introduce the operators we work with in this section  
\begin{equation*}
{L}(h)\Denote Q(\mu,h) + Q(h,\mu)\,,
\end{equation*}
which is the operator that naturally appears after the linearization $f=\mu+h$ in the nonlinear problem.  We will consider ${L}$ as a closed operator in $L^{2}_{v}(\langle v \rangle^{k},\mathbb{R}^{3})$, with $k\geq2$.  The final objective is then to study the spectral properties in $L^{2}_{x,v}(\langle v \rangle^{k},\mathbb{T}^{3}\times\mathbb{R}^{3})$ of the (closed) operator
\begin{equation*}
{L} - v\cdot\nabla_{x}\,.
\end{equation*}
Given \cite[Theorem 2.1 and Remark 2.2 (1)]{GMM}, we will be able to localize the spectrum in the larger space $\mathcal{E}$ by knowing the following:\\
(i) The localization of the spectrum of ${L} - v\cdot\nabla_{x}$ in the smaller space $E$.\\
(ii) The operator decomposes as  ${L}=\mathcal{A} - \mathcal{B}$ where $\mathcal{B}$ is a (closed) dissipative operator and $\mathcal{A}:\mathcal{E}\rightarrow E$ is bounded.\\
Regarding item (i), this is exactly what we did in previous section section.  Regarding the decomposition in (ii), we use the analogous decomposition \eqref{Decomposition} adding the advection operator
\begin{align}\label{Decomposition1}
\begin{split}
{L}_{\Phi,b} - v\cdot\nabla_{x}
&= {L}_{\Phi,b_{1}} + {L}_{\Phi,b_{2}} - v\cdot\nabla_{x}
={L}_{\Phi_{1},b_{1}} + {L}_{\Phi_{2},b_{1}} + {L}_{\Phi,b_{2}} - v\cdot\nabla_{x}
\\
&=\Big({L}_{\Phi_{1},b_{1}} + Q^{-}_{\Phi_{1},b_1}\big(\mu, h\big) \Big)
    + \Big({L}_{\Phi_{2},b_{1}} - Q^{-}_{\Phi_{1},b_1}\big(\mu,h\big) + {L}_{\Phi,b_{2}} - v\cdot\nabla_{x}\Big) \Denote \mathcal{A}_{\delta,\varepsilon} - \mathcal{B}_{\delta,\varepsilon} \,.
\end{split}
\end{align}

\subsubsection{Dissipative part}  We already pointed out in the previous section that $\mathcal{B}_{\delta,\varepsilon}=\mathcal{B}_{1}+\mathcal{B}_{2}$ involves all singular part of the operator and decomposes in the tail associated component
\begin{equation*}
-\mathcal{B}_{1} \Denote {L}_{\Phi_{2},b_{1}} - Q^{-}_{\Phi_{1},b_1}\big(\mu,h\big)\,,
\end{equation*}
and the regularity associated component
\begin{equation*}
-\mathcal{B}_{2} \Denote {L}_{\Phi,b_{2}} - v\cdot\nabla_{x}\,.
\end{equation*}
Let us proceed, as we did previously, and prove that $\mathcal{B}$ is indeed a dissipative operator for suitable choices of $\delta>0$ and $\varepsilon>0$ depending only on the mass an energy of $\mu$.
\begin{proposition}[Singular part $L_{\Phi,b_{2}}$]\label{pro-B2}
Let $s \in (0,1)$.  For any $k > \frac{9}{2} + \tfrac{\gamma}{2} + 2s$ there exist constants $c>0$, depending only on mass and energy of $\mu$, and $C_k>0$  such that 
\begin{align*}
\langle L_{\Phi,b_{2}}(h),h\langle v \rangle^{2k}\rangle\leq -c\kappa_0\Big\|\widehat{\langle \cdot \rangle^{\gamma/2+k} h}(\xi)\,|\xi|^{s} \textbf{1}\big\{|\xi|\geq\tfrac{1}{\varepsilon}\big\}\Big\|^{2}_{L^{2}_{\xi}} + C_{k}\,\|\theta^{2} b_{2}\|_{L^{1}_{\theta}}\|\langle{v}\rangle^{\gamma/2+k} h \|^{2}_{L^{2}_{v}}\,.
\end{align*}
The constants are independent of $\varepsilon$.
\end{proposition}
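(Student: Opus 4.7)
The plan is to mirror the Gaussian-weight argument of Proposition \ref{pro-lambda2}, with the polynomial weight $\langle v\rangle^k$ playing the role of $\mu^{-1/2}$ and an additional weight-commutator step accounting for the fact that $\langle v\rangle^k$ is not a collisional invariant. First, split
\begin{equation*}
\vint{L_{\Phi,b_2}(h),\, h\langle v\rangle^{2k}}
= \vint{Q_{\Phi,b_2}(\mu,h),\, h\langle v\rangle^{2k}}
 + \vint{Q_{\Phi,b_2}(h,\mu),\, h\langle v\rangle^{2k}}.
\end{equation*}
For the first piece, apply the pre/post collisional involution $(v,v_*)\leftrightarrow(v',v'_*)$ (of Jacobian one, using $\mu\mu_* = \mu'\mu'_*$ only on the gain term) to rewrite it as $\int\int\int \Phi\, b_2\, \mu_*\, h\,(h'\langle v'\rangle^{2k} - h\langle v\rangle^{2k})\,\dsigma \dv_* \dv$. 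Setting $H(v) = \langle v\rangle^k h(v)$ and using the algebraic identity
\begin{equation*}
\tfrac{H}{\langle v\rangle^k}\vpran{H'\langle v'\rangle^k - H\langle v\rangle^k}
= -\tfrac12(H'-H)^2 + \tfrac12\vpran{(H')^2 - H^2} + HH'\vpran{\tfrac{\langle v'\rangle^k}{\langle v\rangle^k} - 1}
\end{equation*}
decomposes this first piece into a principal dissipative contribution $-\tfrac12\mathcal I_1$, a cancellation piece $\tfrac12\mathcal I_2$, and a weight-commutator piece $\mathcal I_3$.

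The main work is the lower bound on $\mathcal I_1 = \int\int\int \Phi\, b_2\, \mu_*\,(H'-H)^2\,\dsigma \dv_* \dv$. The plan is to transfer the $|v-v_*|^\gamma$-weight onto $H$ via $|v-v_*|^\gamma \gtrsim \langle v\rangle^\gamma - C\langle v_*\rangle^\gamma$, absorbing $\langle v_*\rangle^\gamma$ into $\mu_*$; then, modulo a weight-commutator error of the form $\|\theta^2 b_2\|_{L^1_\theta}\|\langle v\rangle^{\gamma/2+k}h\|^2_{L^2_v}$, one reduces $\mathcal I_1$ to a quantity amenable to Bobylev's identity in $v$, exactly as in the proof of Lemma \ref{A1}. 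The radial monotonicity of $\hat\mu$ together with the support $|\sin\theta|<\varepsilon$ of $b_2$ then deliver the Fourier dissipation $c\,\kappa_0\,\|\widehat{\langle\cdot\rangle^{\gamma/2+k}h}(\xi)\,|\xi|^s\,\mathbf{1}_{\{|\xi|\geq 1/\varepsilon\}}\|^2_{L^2_\xi}$; crucially, no $h^\pm$ splitting is needed here, because $\langle\cdot\rangle^{\gamma/2+k}h$ already lies in $L^2$ without the sign-related technicality that forced the decomposition in Lemma \ref{A1}. The cancellation piece $\mathcal I_2$ is controlled by the cancellation lemma of \cite{ADVW2000}, which reduces the $(v_*,\sigma)$ integration to a multiplication operator with symbol of order $\|\theta^2 b_2\|_{L^1_\theta}\langle v\rangle^\gamma$, giving $|\mathcal I_2| \leq C_k\|\theta^2 b_2\|_{L^1_\theta}\|\langle v\rangle^{\gamma/2+k}h\|^2_{L^2_v}$. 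The weight-commutator piece $\mathcal I_3$ is handled via the Taylor-type expansion of $\langle v'\rangle^k - \langle v\rangle^k$ analogous to Lemma \ref{lem:diff-v-k}: its leading term is odd in $\omega\perp(v-v_*)$ and vanishes after angular integration exactly as in the $\Gamma^{(1)}_{1,1}=0$ argument from the proof of Proposition \ref{prop:trilinear-weight}, while the remaining $O(\sin^2\tfrac{\theta}{2})$ corrections pair with $b_2$ to produce a factor of $\|\theta^2 b_2\|_{L^1_\theta}$ against the same $\|\langle v\rangle^{\gamma/2+k}h\|^2_{L^2_v}$.

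For the second piece $\vint{Q_{\Phi,b_2}(h,\mu),\, h\langle v\rangle^{2k}}$, the Gaussian decay of $\mu$ in the second slot makes the estimate routine: a polynomial-weighted adaptation of \cite[Lemma 2.15]{AMUXY2012JFA} (with $B$ replaced by $\Phi\, b_2$), combined with the weight-commutator bookkeeping of Proposition \ref{prop:trilinear-weight}, yields a contribution bounded by $C_k\|\theta^2 b_2\|_{L^1_\theta}\|\langle v\rangle^{\gamma/2+k}h\|^2_{L^2_v}$. Collecting the four bounds produces the desired inequality.

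The main obstacle is the Bobylev step in $\mathcal I_1$: the argument for Lemma \ref{A1} leverages the full collisional invariance $\mu\mu_* = \mu'\mu'_*$, whereas after the pre/post swap above only the single Maxwellian $\mu_*$ remains. One has to carefully \emph{spend} this single Gaussian simultaneously on the $v_*$-integration that drives the Bobylev monotonicity of $\hat\mu$, on controlling the large-$v_*$ tail of the kinetic factor $|v-v_*|^\gamma$, and on absorbing the commutator of $\langle v\rangle^{\gamma/2+k}$ with the non-local collision kernel, verifying at each stage that these distinct uses do not double-count and that the final Fourier gain is indeed expressed in the correct norm $\|\widehat{\langle\cdot\rangle^{\gamma/2+k}h}\|$-type quantity rather than in a Gaussian-weighted surrogate.
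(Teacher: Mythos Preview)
Your decomposition is exactly the paper's: writing $H=\langle v\rangle^k h$, the identity you use for the first piece coincides with the paper's split
\[
\langle Q_{\Phi,b_2}(\mu,h),h\langle v\rangle^{2k}\rangle
=\Gamma^{b_2}(\mu,h,H)+\langle Q_{\Phi,b_2}(\mu,H),H\rangle,
\]
your $\mathcal I_3$ being $\Gamma^{b_2}(\mu,h,H)$ and $-\tfrac12\mathcal I_1+\tfrac12\mathcal I_2$ being $\langle Q_{\Phi,b_2}(\mu,H),H\rangle$. Your treatment of $\mathcal I_1$ via \cite[Proposition~2.1]{AMUXY2012JFA} and of $\mathcal I_2$ via the cancellation lemma matches the paper; your observation that no $h^\pm$ splitting is needed is correct and is implicit in the paper's one--line citation for \eqref{pro-B2-e1}. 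The second piece $\langle Q_{\Phi,b_2}(h,\mu),h\langle v\rangle^{2k}\rangle$ is handled in the paper by the same ingredients you name.

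The gap is in $\mathcal I_3$. After the $\Gamma^{(1)}_{1,1}=0$ symmetry argument, the residual is \emph{not} $O(\sin^2\tfrac\theta2)$. With $G=H/\langle v\rangle^2$ the surviving term has the form
\[
\Gamma^{b_2}_{1,1,\mathrm{main}}
=k\int b_2\,\frac{|v-v_*|^{1+\gamma}}{\langle v\rangle^2}\,(v_*\!\cdot\tilde\omega)\cos^{k}\!\tfrac\theta2\,\sin\tfrac\theta2\;\mu_*\,(H-H')\,H'\,\dbmu,
\]
which carries only one power of $\sin\tfrac\theta2$ together with the difference $(H-H')$; integrating $b_2\sin\tfrac\theta2$ alone is still divergent for $s\geq\tfrac12$. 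The paper closes this by Cauchy--Schwarz,
\[
|\Gamma^{b_2}_{1,1,\mathrm{main}}|
\lesssim\Big(\int b_2\,\Phi\,\mu_*\,(H-H')^2\Big)^{1/2}
\Big(\|\theta^2 b_2\|_{L^1_\theta}\,\|\langle v\rangle^{\gamma/2+k}h\|_{L^2_v}^2\Big)^{1/2},
\]
so the first factor is precisely $\mathcal I_1^{1/2}$ (up to the cancellation--lemma correction), and a Young inequality \emph{reabsorbs} it into the dissipative term $-\tfrac12\mathcal I_1$. This reabsorption step is the one idea your sketch omits; once you add it, your argument and the paper's coincide.
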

\begin{proof}
Let us compute
\begin{align*}
\langle L_{\Phi,b_{2}}(h)&,h\langle \cdot \rangle^{2k}\rangle 
= \langle { Q}_{\Phi,b_{2}}\big(\mu,h\big),h\langle \cdot \rangle^{2k}\rangle
 + \langle {Q}_{\Phi,b_{2}}\big(h, \mu \big),h\langle \cdot \rangle^{2k}\rangle\\
&= \int_{\mathbb{R}^{6}}\int_{\mathbb{S}^{2}}\Phi(u)b_{2}\vpran{\cos\theta}\mu(v_{*})h(v)\big(h(v')\langle v'\rangle^{2k} - h(v)\langle v \rangle^{2k} \big)\text{d}\sigma\text{d}v_{*}\text{d}v
\\
&+\int_{\mathbb{R}^{6}}\int_{\mathbb{S}^{2}}\Phi(u)b_{2}\vpran{\cos\theta}h(v_{*})\mu(v)\big(h(v')\langle v'\rangle^{2k} - h(v)\langle v \rangle^{2k} \big)\text{d}\sigma\text{d}v_{*}\text{d}v\,.
\end{align*}
Let us consider each of these integral on the right side separately.  For the first integral,
\begin{align}\label{1st-integral}
\langle { Q}_{\Phi,b_{2}}\big(\mu,h\big),h\langle \cdot \rangle^{2k}\rangle 
&=\int_{\mathbb{R}^{6}}\int_{\mathbb{S}^{2}}\Phi(u)b_{2}\vpran{\cos\theta}\mu(v_{*})h(v)h(v')\langle v'\rangle^{k}\big(\langle v'\rangle^{k} - \langle v \rangle^{k} \big)\text{d}\sigma\text{d}v_{*}\text{d}v \nn
\\
& \quad \, 
+ \int_{\mathbb{R}^{6}}\int_{\mathbb{S}^{2}}\Phi(u)b_{2}\vpran{\cos\theta}\mu(v_{*})h(v)\langle v \rangle^{k}\big(h(v')\langle v'\rangle^{k} - h(v)\langle v \rangle^{k} \big)\text{d}\sigma\text{d}v_{*}\text{d}v \nn \\
&= 
\Gamma^{b_2}(\mu, h, h\langle \cdot \rangle^k ) + \langle Q_{\Phi,b_{2}}\big(\mu,h\langle \cdot \rangle^{k}\big),h\langle \cdot \rangle^{k}\rangle 
\,,
\end{align}
where $\Gamma^{b_2}$ is defined by \eqref{for-section-4-2} with $b$ replaced by $b_2$. 
Now, a similar (and simpler) argument given in the proof of Proposition \ref{pro-lambda2}, recalling \cite[Proposition 2.1]{AMUXY2012JFA}, shows
\begin{align}\label{pro-B2-e1}
\begin{split}
 &\langle {Q}_{\Phi,b_{2}}\big(\mu,h\langle \cdot \rangle^{k}\big),h\langle \cdot \rangle^{k}\rangle\\
& \leq -c\kappa_0\Big\|\widehat{\langle \cdot \rangle^{\gamma/2+k} h}(\xi)\,|\xi|^{s} \textbf{1}\big\{|\xi|\geq\tfrac{1}{\varepsilon}\big\}\Big\|^{2}_{L^{2}_{\xi}} + C\| \theta^2 b_{2}\|_{L^{1}_{\theta}}\|\langle{v}\rangle^{\gamma/2+k} h \|^{2}_{L^{2}_{v}}\,.
\end{split}
\end{align}
The estimation for $\Gamma^{b_2}(\mu, h, h\langle \cdot \rangle^k)$ is almost the same as in the proof of 
Proposition \ref{prop:trilinear-weight}. If one splits 
\[
\Gamma^{b_2}(\mu, h, h\langle \cdot \rangle^k) = \Gamma^{b_2}_{1,1} + \Gamma^{b_2}_{1,2} + \sum_{m=2}^6 \Gamma^{b_2}_m,
\]
by the same way, then it follows from the proof of Proposition \ref{prop:trilinear-weight} that
\begin{align}\label{easy-part}
|\Gamma^{b_2}_{1,2}| + \sum_{m=2}^6 |\Gamma^{b_2}_m|  \le C_k \|\theta^2 b_2\|_{L^1_\theta}\|\langle v\rangle^{\gamma/2 +k} h\|^2_{L^2_v}\,.
\end{align}
By the same observation as for $\Gamma_{1,1}$ in the proof of Proposition \ref{prop:trilinear-weight}, 
for any $0<s<1$ we have 
\begin{align*}
\Gamma^{b_2}_{1,1}(\mu, h, h\langle \cdot \rangle^k)  = 
k \IntRRS b_2(\cos\theta) |v - v_\ast|^{1+\gamma}
         \big (v_\ast \cdot \tilde \omega \big)\cos^{k} \tfrac{\theta}{2}  \sin  \tfrac{\theta}{2} \, \mu_\ast 
\vpran{\frac{H}{\langle v \rangle^2}  - \frac{H'}{\langle v' \rangle^2}    } \, H'\dbmu\,,
\end{align*}
where $H = h \langle v \rangle^k$. Therefore
\begin{align*}
&\Gamma^{b_2}_{1,1}(\mu, h, h\langle \cdot \rangle^k)  = 
k \IntRRS b_2(\cos\theta) \frac{|v - v_\ast|^{1+\gamma}}{\langle v \rangle^2}
         \big (v_\ast \cdot \tilde \omega \big)\cos^{k} \tfrac{\theta}{2}  \sin  \tfrac{\theta}{2} \, \mu_\ast 
\vpran{H- H' } \, H'\dbmu\\
&\qquad \qquad + k \IntRRS b_2(\cos\theta) |v - v_\ast|^{1+\gamma}\vpran{\frac{1}{\langle v \rangle^2}  - \frac{1}{\langle v' \rangle^2}    } 
         \big (v_\ast \cdot \tilde \omega \big)\cos^{k} \tfrac{\theta}{2}  \sin  \tfrac{\theta}{2} \, \mu_\ast 
\, |H'|^2\dbmu\\
& \Denote \Gamma^{b_2}_{1,1, main}(\mu, h, h\langle \cdot \rangle^k) + \Gamma^{b_2}_{1,1,rest}(\mu, h, h\langle \cdot \rangle^k) \,.
\end{align*}
Since it follows from the mean value theorem that
\begin{align*}
 |v - v_\ast|^{1+\gamma}\vpran{\frac{1}{\langle v \rangle^2}  - \frac{1}{\langle v' \rangle^2} }
\le (\sqrt 2)^3 \langle v_* \rangle^3 \sin \tfrac{\theta}{2} \,,
\end{align*}
$|\Gamma^{b_2}_{1,1,rest}|$ is estimated by
$C_k \|\theta^2 b_2\|_{L^1_\theta}\|\langle v\rangle^{\gamma/2 +k} h\|^2_{L^2_v}$.
It follows from the Cauchy-Schwarz inequality that
\begin{align}\label{main-prop-4-6}
&\left| \Gamma^{b_2}_{1,1, main}(\mu, h, h\langle \cdot \rangle^k) \right| 
 \leq
  C_k 
 \vpran{\IntRRS b_2(\cos\theta) |v - v_\ast|^{\gamma} 
        \mu_\ast \abs{H - H'}^2 \dbmu}^{1/2}  \nn
\\
& \qquad \qquad \qquad  \,
  \times \vpran{\IntRRS b_2(\cos\theta)  \frac{|v-v_*|^{\gamma+2}\langle v_*\rangle^2}{\langle v \rangle^4}
         \theta^{2} \mu_*
         |H'|^2 \dbmu}^{1/2}  \nn
\\
& \quad \leq 
  C_k \left( -2 \langle {Q}_{\Phi,b_{2}}\big(\mu,h\langle \cdot \rangle^{k}\big),h\langle \cdot \rangle^{k}\rangle 
+\|\theta^2 b_2\|_{L^1_\theta} \|\langle v \rangle^{\gamma/2 +k-1} h\|^2_{L^2_v}\right)^{1/2}
 \left( \|\theta^2 b_2\|_{L^1_\theta} \|\langle v \rangle^{\gamma/2 +k} h\|^2_{L^2_v}\right)^{1/2}    \,, 
\end{align}
where we used the formula
\[
\IntRRS b_2(\cos\theta) \Phi
        \mu_\ast \abs{H - H'}^2 \dbmu = -2 \langle {Q}_{\Phi,b_{2}}\big(\mu, H \big), H \rangle 
+ \IntRRS b_2(\cos\theta) \Phi
        \mu_\ast \vpran{{H'}^2 - H^2}\dbmu
\]
and the cancellation lemma in \cite{ADVW2000}. Summing up the above estimates we obtain 

\begin{align}\label{first}
\langle {Q}_{\Phi,b_{2}}\big(\mu,h\big),h\langle \cdot \rangle^{2k}\rangle
\leq -c\kappa_0\Big\|\widehat{\langle \cdot \rangle^{\gamma/2+k} h}(\xi)\,|\xi|^{s} \textbf{1}\big\{|\xi|\geq\tfrac{1}{\varepsilon}\big\}\Big\|^{2}_{L^{2}_{\xi}} + C_k\|\theta^2 b_{2}\|_{L^{1}_{\theta}}\|\langle{v}\rangle^{\gamma/2+k} h \|^{2}_{L^{2}_{v}}\,.
\end{align}

Let us move to the second integral 
 \begin{align*}
\langle {Q}_{\Phi,b_{2}}\big(h, \mu \big), h\langle \cdot \rangle^{2k}\rangle
&= \int_{\mathbb{R}^{6}}\int_{\mathbb{S}^{2}}\Phi(u)b_{2}\vpran{\cos\theta}h(v_*)\mu(v)h(v')\langle v'\rangle^{k}\big(\langle v'\rangle^{k}
- \langle v \rangle^{k} \big)\text{d}\sigma\text{d}v_{*}\text{d}v\\
& \quad \, 
+ \int_{\mathbb{R}^{6}}\int_{\mathbb{S}^{2}}\Phi(u)b_{2}\vpran{\cos\theta} h(v_{*})\mu(v)\langle v \rangle^{k}\big(h(v')\langle v'\rangle^{k} - h(v)\langle v \rangle^{k} \big)\text{d}\sigma\text{d}v_{*}\text{d}v \nn \\
&= 
\Gamma^{b_2}(h, \mu, h\langle \cdot \rangle^k ) + \langle Q_{\Phi,b_{2}}\big(h,\mu\langle \cdot \rangle^{k}\big),h\langle \cdot \rangle^{k}\rangle 
\,.
\end{align*}
As for $\Gamma^{b_2}(h, \mu, h\langle \cdot \rangle^k )$, we need only to consider 
\begin{align*}
\Gamma^{b_2}_{1,1}(h, \mu, h\langle \cdot \rangle^k)  = 
k \IntRRS b_2(\cos\theta) |v - v_\ast|^{1+\gamma}
         \big (v_\ast \cdot \tilde \omega \big)\cos^{k} \tfrac{\theta}{2}  \sin  \tfrac{\theta}{2} \, h_\ast 
\vpran{\mu \langle v \rangle^{k-2}  - \mu' \langle v' \rangle^{k-2}    } \, h'\langle v' \rangle^k \dbmu\,,
\end{align*}
because the other terms of the decomposition has the same bound as the right hand side of \eqref{easy-part}.
Since it follows from the mean value theorem  that 
\begin{align*}
|v - v_\ast|^{1+\gamma} |\mu \langle v \rangle^{k-2}  - \mu' \langle v' \rangle^{k-2} |
&\le C_k \int_0^1 |v'_\tau - v_*|^{2+\gamma} \mu(v'_\tau)^{1/2} d \tau \sin \tfrac{\theta}{2}, \enskip v'_\tau = v_* + \tau (v'-v_*) \\
&\le C_k \langle v_* \rangle^{2+\gamma}\int_0^1\mu(v'_\tau)^{1/4} d\tau  \sin \tfrac{\theta}{2}, 
\end{align*}
we have 
\[
|\Gamma^{b_2}_{1,1}(h, \mu, h\langle \cdot \rangle^k) | \le C_k \|\theta^2 b_{2}\|_{L^{1}_{\theta}}\|\langle{v}\rangle^{\gamma/2+k} h \|^{2}_{L^{2}_{v}}\,.
\]
If we put $\tilde \mu = \langle v \rangle^k \mu$, then 
\begin{align*}
\langle Q_{\Phi,b_{2}}\big(h,\mu\langle \cdot \rangle^{k}\big),h\langle \cdot \rangle^{k}\rangle
&= \int_{\mathbb{R}^{6}}\int_{\mathbb{S}^{2}}\Phi(u)b_{2}\vpran{\cos\theta} h(v_{*}) 
(\tilde \mu(v) - \tilde \mu(v')) H' \dbmu\\
& \quad \,
+ \int_{\mathbb{R}^{6}}\int_{\mathbb{S}^{2}}\Phi(u)b_{2}\vpran{\cos\theta} h(v_{*}) 
(\tilde \mu(v')H'  - \tilde \mu(v) H) \dbmu\,.
\end{align*}
Using  the Taylor expansion 
\[
\tilde \mu(v) - \tilde \mu(v') = \nabla \tilde \mu(v') \cdot (v-v') + 
\int_0^1 (1-\tau) \nabla \otimes \nabla \mu(v'_\tau) d\tau (v-v')^2
\]
to the first term and applying  the cancellation lemma to the second term, we obtain that 
\begin{align*}
|\langle Q_{\Phi,b_{2}}\big(h,\mu\langle \cdot \rangle^{k}\big),h\langle \cdot \rangle^{k}\rangle|
\le 
C_k \|\theta^2 b_{2}\|_{L^{1}_{\theta}}\|\langle{v}\rangle^{\gamma/2+k} h \|^{2}_{L^{2}_{v}}\,.  \tag*{\qedhere}
\end{align*}
\end{proof}
\begin{proposition}[Singular part $\mathcal{B}_1$]\label{pro-B1}
There exist constant $c>0$ depending only on the mass and energy of $\mu$, such that for any $s\in(0,1)$, $k > 4 + \tfrac{3+\gamma}{2}$, $\varepsilon\in(0,1)$ and $\delta^{\gamma/2}:=\delta(k,\varepsilon)^{\gamma/2}\in(0,C_{k, \nu}\,\varepsilon^{2s})$, it follows that for any $0 < \nu \le 1$, 
\begin{align*}
& \quad \,
\langle L_{\Phi_{2},b_{1}}h - Q^{-}_{\Phi_{1},b_{1}}\big(\mu,h\big),h\langle v \rangle^{2k}\rangle
\\
&\leq - \tfrac{c}{4}\,\|(1-\cos^{2k-(3+\gamma)}(\theta/2) - 2(1+\nu) \sin^{k-\frac{3+\gamma}{2}}(\theta/2))b_{1}\|_{L^{1}_{\theta}}\|\langle v \rangle^{k+\gamma/2}h\|^{2}_{L^{2}_{v}}\,.
\end{align*}
In addition to mass and energy of $\mu$ and $k$, the constant $C_{k}$ also depends on $b,s,\gamma$.
\end{proposition}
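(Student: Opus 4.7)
The plan is to mirror the structure of Proposition \ref{pro-lambda1} (the Gaussian case), while specializing the $T_1/T_2$-type coercivity argument from Proposition \ref{prop:coercivity} to the truncated kernel $\Phi_2 b_1$. The key algebraic observation is that, upon using $\Phi = \Phi_1+\Phi_2$ to merge the loss contributions coming from $L_{\Phi_2,b_1}(h)$ and from $Q^-_{\Phi_1,b_1}(\mu, h)$, one has
\[
L_{\Phi_2,b_1}h - Q^-_{\Phi_1,b_1}(\mu,h) = Q^+_{\Phi_2,b_1}(\mu,h) - Q^-_{\Phi,b_1}(\mu,h) + Q_{\Phi_2,b_1}(h,\mu),
\]
so that testing against $h\langle v\rangle^{2k}$ yields three pieces which I will call $\mathcal{J}_1-\mathcal{J}_2+\mathcal{J}_3$. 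The term $\mathcal{J}_2=\langle Q^-_{\Phi,b_1}(\mu,h),h\langle v\rangle^{2k}\rangle$ is the clean dissipative contribution: direct evaluation together with the standard hard-potential lower bound $\int|v-v_\ast|^\gamma \mu_\ast\,dv_\ast\ge c_0\langle v\rangle^\gamma$ gives $\mathcal{J}_2\ge c_0\|b_1\|_{L^1_\theta}\|\langle v\rangle^{k+\gamma/2}h\|^2_{L^2_v}$, and this is the quantity the right-hand side of the proposition will ultimately rewrite in the $\theta$-weighted $L^1_\theta$ form.

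For the gain piece $\mathcal{J}_1=\langle Q^+_{\Phi_2,b_1}(\mu,h),h\langle v\rangle^{2k}\rangle$, I would carry out the same splitting that produced $T_1$ in the proof of Proposition \ref{prop:coercivity}: first apply Cauchy--Schwarz with parameter $\nu$,
\[
|hh'\langle v\rangle^{2k}|\le \tfrac{1+\nu}{2}h^2\langle v\rangle^{2k} + \tfrac{1}{2(1+\nu)}(h')^2\langle v'\rangle^{2k}\cos^{2k}(\tfrac{\theta}{2}),
\]
the $\cos^{2k}$ factor arising because $\langle v\rangle^k\cos^k(\tfrac{\theta}{2})\le \langle v'\rangle^k$ modulo commutator errors controlled by Lemma \ref{lem:diff-v-k}. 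The $(h')^2$-piece is then converted via the regular change of variables of Proposition \ref{prop:change-variable}(a), which introduces a Jacobian $\cos^{-(3+\gamma)}(\tfrac{\theta}{2})$; combined with the $\cos^{2k}$ already in place this is exactly what generates the $\cos^{2k-(3+\gamma)}(\tfrac{\theta}{2})$ weight in the final bound. The residual commutator generated by Lemma \ref{lem:diff-v-k} (the analogue of $\Gamma_2$ in Proposition \ref{prop:trilinear-weight}) is treated with a further Cauchy--Schwarz and the singular change of variables of Proposition \ref{prop:change-variable}(b); this is where the $\sin^{k-(3+\gamma)/2}(\tfrac{\theta}{2})$ factor carrying the coefficient $2(1+\nu)$ appears.

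The mixed term $\mathcal{J}_3=\langle Q_{\Phi_2,b_1}(h,\mu),h\langle v\rangle^{2k}\rangle$ is a perturbation: the Gaussian factor $\mu(v)$ dominates the polynomial weight $\langle v\rangle^{2k}$, and because $\Phi_2$ is supported in $\{|u|\le\delta\}\cup\{|u|\ge\delta^{-1}\}$ one extracts a smallness factor $\delta^{\gamma}$ from the small-$|u|$ piece plus an exponentially small contribution from the large-$|u|$ piece combined with the Gaussian tail of $\mu$; both are absorbable into the main dissipation provided $\delta^{\gamma/2}\le C_{k,\nu}\varepsilon^{2s}$, which is exactly the hypothesis of the proposition. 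Assembling the bounds for $\mathcal{J}_1,\mathcal{J}_2,\mathcal{J}_3$ and pulling the common factor $\|\langle v\rangle^{k+\gamma/2}h\|^2_{L^2_v}$ out of each contribution produces precisely the target weight $1-\cos^{2k-(3+\gamma)}(\tfrac{\theta}{2})-2(1+\nu)\sin^{k-(3+\gamma)/2}(\tfrac{\theta}{2})$ in the $L^1_\theta$ norm.

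The main obstacle I expect is the bookkeeping of the Cauchy--Schwarz parameter $\nu$ throughout, so that the coefficient of $\sin^{k-(3+\gamma)/2}(\tfrac{\theta}{2})$ emerges as exactly $2(1+\nu)$ rather than as some inferior constant; this is precisely why the threshold on $\delta$ must be allowed to depend on $\nu$. A secondary technical annoyance is that $\mu'_\ast$ in $\mathcal{J}_1$ depends on $\sigma$, which I will handle by inserting the involution $T_\sigma$ (as in the first-integral step of Proposition \ref{pro-B2}) to trade $\mu'_\ast$ for $\mu_\ast$ at the cost of replacing $\langle v\rangle^{2k}$ by $\langle v'\rangle^{2k}$; after this substitution Lemma \ref{lem:diff-v-k} puts the integrand into exactly the form already handled by the $T_1/T_2$ split in Proposition \ref{prop:coercivity}.
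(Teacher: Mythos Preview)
Your decomposition into $\mathcal{J}_1-\mathcal{J}_2+\mathcal{J}_3$ and your treatment of the pure loss $\mathcal{J}_2$ are correct and match the paper. But the rest of your attribution is backwards, and this creates a real gap.

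\textbf{The $\sin^{k-(3+\gamma)/2}$ term does not come from $\mathcal{J}_1$.} It comes from $\mathcal{J}_3=\langle Q_{\Phi_2,b_1}(h,\mu),h\langle v\rangle^{2k}\rangle$, and $\mathcal{J}_3$ is \emph{not} purely perturbative. Your argument that ``the Gaussian factor $\mu(v)$ dominates the polynomial weight'' fails on the region $\{|v-v_\ast|\ge\delta^{-1}\}$: after the pre-post involution the weight sitting on $h'$ is $\langle v'\rangle^{2k}$, and when $|v|$ is moderate but $|v_\ast|$ is large, $\langle v'\rangle$ can be as large as $\langle v_\ast\rangle$. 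The paper handles $\mathcal{J}_3$ by the sharp pointwise bound
\[
\big|\langle v'\rangle^{k}-\langle v\rangle^{k}\big|\le (1+\nu)\sin^{k}(\theta/2)\,\langle v_\ast\rangle^{k}+C_{k,\nu}\sin(\theta/2)\,\langle v\rangle^{k}\langle v_\ast\rangle,
\]
proved via the elementary inequality $(1+X)^m\le(1+\nu)X^m+\big(1+((1+\nu)^{1/m}-1)^{-1}\big)^m$. The first piece, combined with the Cauchy--Schwarz/singular-change step you describe, is exactly what produces the main term $(1+\nu)\sin^{k-(3+\gamma)/2}$ (with \emph{no} $\delta$-smallness); the $C_{k,\nu}$ error is where the $\delta^{\gamma/2}$ threshold depending on $\nu$ enters. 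This is the true origin of both the $(1+\nu)$ coefficient and the $\nu$-dependence of $\delta$.

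\textbf{Your Cauchy--Schwarz for $\mathcal{J}_1$ is not valid as written.} The relation $\langle v\rangle^{k}\cos^{k}(\theta/2)\le\langle v'\rangle^{k}$ is false in general (from the classical formula $\langle v'\rangle^{2}=\langle v\rangle^{2}\cos^{2}(\theta/2)+\langle v_\ast\rangle^{2}\sin^{2}(\theta/2)+2\cos(\theta/2)\sin(\theta/2)|v-v_\ast|\,v\cdot\omega$, the cross term can be negative), so you cannot justify the $\cos^{2k}$ factor that way. The paper instead follows \cite{DeMo}: apply Cauchy--Schwarz with a \emph{free} $\theta$-dependent parameter $\nu(\theta)$, then set $\nu(\theta)=\cos^{-2k}(\theta/2)$. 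Combined with $-\mathcal{J}_2$ this yields the $\big(\cos^{-(3+\gamma)}(\theta/2)\,\nu(\theta)^{-1}-1\big)$ weight, i.e.\ exactly $\cos^{2k-(3+\gamma)}(\theta/2)-1$; the residual $\int\Phi_2 b_1\mu_\ast|h|^2\big(\langle v'\rangle^{2k}\nu(\theta)-\langle v\rangle^{2k}\big)$ is controlled using the $\omega$-symmetry of the cross term and the bound $\Phi_2\le\delta^\gamma\langle u\rangle^{2\gamma}$, giving a $\delta^\gamma$ error rather than any $\sin^{k-(3+\gamma)/2}$ contribution. Note this $\nu(\theta)$ has nothing to do with the constant $\nu\in(0,1]$ in the statement; you have conflated the two.
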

Before starting with the proof observe that for $0 < \theta \le \pi/2$ and $k > 4 + \tfrac{d+\gamma}{2}$,
\begin{align*}
1-\cos(\theta/2)^{2k-(3+\gamma)} -
2(1+\nu)\sin^{k-\frac{3+\gamma}{2}}(\theta/2) &\ge \sin^2(\theta/2)\Big( 1- 2(1+\nu)\sin^{k-2-\frac{3+\gamma}{2}}(\theta/2)\Big)\\
& \ge \sin^2(\theta/2)\left( 1- \frac{2(1+\nu)}{2^{\frac{2k-7-\gamma}{4}}}\right) >0.
\end{align*}
\begin{proof}
The proof is similar to those for Propositions~\ref{prop:coercivity} and~\ref{prop:bound-Q-M}.  However, since the coefficients involved need to be more explicit, we show the full details here.
Note that
\begin{equation*}
L_{\Phi_{2},b_{1}}h - Q^{-}_{\Phi_{1},b_{1}}\big(\mu,h\big) = Q_{\Phi_{2},b_{1}}(h,\mu) + Q^{+}_{\Phi_{2},b_{1}}(\mu,h) - Q^{-}_{\Phi,b_{1}}(\mu,h)\,.
\end{equation*}
Let us first control the term $Q_{\Phi_{2},b_{1}}(h,\mu)$.  As before,
\begin{align*}
  \langle Q_{\Phi_{2},b_1}(h,\mu),h\langle v \rangle^{2k}\rangle 
&= \int_{\mathbb{R}^{6}}\int_{\mathbb{S}^{2}}\Phi_{2}(u)b_{1}\vpran{\cos(\theta)}\big)h(v_*)\mu(v)\big(h(v')\langle v'\rangle^{2k} - h(v)\langle v \rangle^{2k} \big)\text{d}\sigma\text{d}v_{*}\text{d}v 
\\
& = \int_{\mathbb{R}^{6}}\int_{\mathbb{S}^{2}}\Phi_{2}(u)b_{1}\vpran{\cos(\theta)}h(v_*)\mu(v)h(v')\langle v'\rangle^{k}\big(\langle v'\rangle^{k} - \langle v \rangle^{k} \big)\text{d}\sigma\text{d}v_{*}\text{d}v
\\
& \quad \, 
  + \int_{\mathbb{R}^{6}}\int_{\mathbb{S}^{2}}\Phi_{2}(u)b_{1}\vpran{\cos(\theta)}h(v_{*})\mu(v)\langle v \rangle^{k}\big(h(v')\langle v'\rangle^{k} - h(v)\langle v \rangle^{k} \big)\text{d}\sigma\text{d}v_{*}\text{d}v\,.
\end{align*}
Since $\Phi_{2}(u)\leq \delta^{\gamma}\langle u \rangle^{2\gamma}$, the second integral in the right side is controlled by
\begin{align}\label{pro-B1-e1}
\begin{split}
& \quad \,
\int_{\mathbb{R}^{6}}\int_{\mathbb{S}^{2}}\Phi_{2}(u)b_{1}\big(\cos(\theta)\big)\big|h(v_{*})\big|\mu(v)\langle v \rangle^{k}\Big|h(v')\langle v'\rangle^{k} - h(v)\langle v \rangle^{k} \Big|\text{d}\sigma\text{d}v_{*}\text{d}v\\
&\leq \delta^{\gamma}C\|b_{1}\|_{L^{1}_{\theta}}\|h\langle\cdot\rangle^{2\gamma}\|_{L^{1}_{v}}\|\mu\langle\cdot\rangle^{k+3\gamma/2}\|_{L^{2}_{v}}\|h\langle\cdot\rangle^{k+\gamma/2}\|_{L^{2}_{v}}\\
&\leq \delta^{\gamma}C_{k}\|b_{1}\|_{L^{1}_{\theta}}\|h\langle\cdot\rangle^{k+\gamma/2}\|^{2}_{L^{2}_{v}}\,,\qquad \qquad k>1+\frac{3+\gamma}{2}\,.
\end{split}
\end{align}
For the first integral, we use the following estimate; for any $0<\nu \le 1$ there exists a $C_{k,\nu} >0$ such that 
\begin{align}\label{bound:diff-v-k-original}
\Big|\langle v'\rangle^{k} - \langle v \rangle^{k} \Big|
&\leq 
(1+\nu)|u|^{k}+C_{k, \nu}\sin(\theta/2)\langle v \rangle^{k}\langle v_{*}\rangle \nn \\
&\leq (1+\nu)\sin^{k}(\theta/2)\langle v_{*}\rangle^{k}+C_{k, \nu}\sin(\theta/2)\langle v \rangle^{k}\langle v_{*}\rangle\,.
\end{align}
Indeed, for any $m >0$  we have 
\begin{equation}\label{ineq:convex}
(1+ X)^m \le (1+\nu) X^m + \vpran{1 + \frac{1}{(1+\nu)^{1/m}-1}}^m, \enskip \mbox{for any $X>0$},
\end{equation}
because $(1+ X)^m > (1+\nu)X^m$ implies $X < 1/((1+\nu)^{1/m} -1)$. 
If we put  $v'_\tau = v + \tau(v'-v)$ for $\tau \in [0,1]$, then it follows from the mean value theorem that
\begin{align*}
\abs{\vint{v}^{k} - \vint{v'}^{k}}
& \leq k \int_0^1 \vint{v'_\tau}^{k-1}d\tau|v'-v|
\leq k \int_0^1 \big( 1 + |v'_\tau|\big)^{k-1}d\tau|v'-v|\\
&\leq k \int_0^1 \big( \sqrt{2}\vint{v}  + \tau|v'-v|\big)^{k-1}d\tau|v'-v|.
\end{align*}
Apply \eqref{ineq:convex} with $m =k-1$ and $X = \tau|v'-v|/(\sqrt{2}\vint{v})$. Then 
\begin{align*}
\abs{\vint{v}^{k} - \vint{v'}^{k}} &\le 
(1+\nu)k\int_0^1 \tau^{k-1} d\tau |v-v'|^k + k\vpran{1 + \frac{1}{(1+\nu)^{1/(k-1)}-1}}^{k-1}\vpran{\sqrt{2}\vint{v}}^{k-1}|v-v'|\\
&=(1+\nu)\abs{v - v'}^{k} + C_{k,\nu} \abs{v-v'} \vint{v}^{{k}-1}\,,
\end{align*}
which proves \eqref{bound:diff-v-k-original}.  The first integral is 
controlled by 
\begin{multline*}
\int_{\mathbb{R}^{6}}\int_{\mathbb{S}^{2}}\Phi_{2}(u)b_{1}\big(\cos(\theta)\big)\big|h(v_*)\big|\mu(v)\big|h(v')\big|\langle v'\rangle^{k}\Big|\langle v'\rangle^{k} - \langle v \rangle^{k} \Big|\text{d}\sigma\text{d}v_{*}\text{d}v\\
\leq (1+\nu)\int_{\mathbb{R}^{6}}\int_{\mathbb{S}^{2}}\Phi_{2}(u)\sin(\theta/2)^{k}b_{1}\big(\cos(\theta)\big)\big|h(v_*)\big|\langle v_{*}\rangle^{k}\mu(v)\big|h(v')\big|\langle v'\rangle^{k}\text{d}\sigma\text{d}v_{*}\text{d}v\\
+C_{k,\nu}\,\delta^{\gamma/2}\int_{\mathbb{R}^{6}}\int_{\mathbb{S}^{2}}\sin(\theta/2)b_{1}\big(\cos(\theta)\big)\big|h(v_*)\big|\langle v_{*}\rangle^{1+3\gamma/2}\mu(v)\langle v \rangle^{k+3\gamma/2}\big|h(v')\big|\langle v'\rangle^{k}\text{d}\sigma\text{d}v_{*}\text{d}v\,.
\end{multline*}
The last integral in this inequality is controlled, for any $k>1+\frac{3+2\gamma}{2}$, as
\begin{align}\label{pro-B1-e2}
\begin{split}
& \quad \,
C_{k,\nu}\,\delta^{\gamma/2}\int_{\mathbb{R}^{6}}\int_{\mathbb{S}^{2}}\sin(\theta/2)b_{1}\big(\cos(\theta)\big)\big|h(v_*)\big|\langle v_{*}\rangle^{1+3\gamma/2}\mu(v)\langle v \rangle^{k+3\gamma/2}\big|h(v')\big|\langle v'\rangle^{k}\text{d}\sigma\text{d}v_{*}\text{d}v\\
&
\leq \delta^{\gamma/2}C_{k,\nu}\|b_{1}\|_{L^{1}_{\theta}}\|h\langle\cdot\rangle^{1+3\gamma/2}\|_{L^{1}_{v}}
\|h\langle\cdot\rangle^{k+\gamma/2}\|_{L^{2}_{v}}
\leq \delta^{\gamma/2}C_{k,\nu}\|b_{1}\|_{L^{1}_{\theta}}\|h\langle\cdot\rangle^{k+\gamma/2}\|^{2}_{L^{2}_{v}}\,.
\end{split}
\end{align}
For the first integral, one uses Cauchy-Schwarz inequality
\begin{equation*}
h(v_{*})\langle v_{*} \rangle^{k}h(v')\langle v' \rangle^{k}\leq \frac{|h(v_{*})|^{2}\langle v_{*} \rangle^{2k}}{2\sin(\theta/2)^{\frac{3+\gamma}{2}}} + \frac{1}{2}|h(v')|^{2}\langle v' \rangle^{2k}\sin(\theta/2)^{\frac{3+\gamma}{2}}\,,
\end{equation*}
the bound $\Phi_{2}(u)\leq |u|^{\gamma}$, and the change of variables $v'\rightarrow v_{*}$ in the second of the above terms to conclude that
\begin{align}\label{pro-B1-e2.5}
\begin{split}
& \quad \,
\int_{\mathbb{R}^{6}}\int_{\mathbb{S}^{2}}\Phi_{2}(u)\sin(\theta/2)^{k}b_{1}\big(\cos(\theta)\big)\big|h(v_*)\big|\langle v_{*}\rangle^{k}\mu(v)\big|h(v')\big|\langle v'\rangle^{k}\text{d}\sigma\text{d}v_{*}\text{d}v
\\
&\leq \int_{\mathbb{R}^{6}}\int_{\mathbb{S}^{2}}|u|^{\gamma}\sin(\theta/2)^{k-\frac{3+\gamma}{2}}b_{1}\big(\cos(\theta)\big)\big|h(v_*)\big|^{2}\langle v_{*}\rangle^{2k}\mu(v)\text{d}\sigma\text{d}v_{*}\text{d}v\,.
\end{split}
\end{align}
Let us move now to the term $Q^{+}_{\Phi_{2},b_{1}}(\mu,h)$.  We use the ideas of \cite[Prop. 2.1 of page 131]{DeMo}  which give us
\begin{align*}
& \quad \,
\langle Q^{+}_{\Phi_{2},b_1}(\mu,h),h\langle v \rangle^{2k}\rangle\\
&\leq \tfrac{1}{2}\int_{\mathbb{R}^{6}}\int_{\mathbb{S}^{2}}\Phi_{2}\Big(\frac{u}{\cos(\theta/2)}\Big)b_{1}\big(\cos(\theta)\big)\cos^{-3}(\theta/2)\,\nu^{-1}(\theta)\,\mu(v_*)\big|h(v)\big|^{2}\langle v \rangle^{2k}\text{d}\sigma\text{d}v_{*}\text{d}v\\
&\hspace{+.4cm}+\tfrac{1}{2}\int_{\mathbb{R}^{6}}\int_{\mathbb{S}^{2}}\Phi_{2}(u)b_{1}\big(\cos(\theta)\big)\nu(\theta)\mu(v_*)\big|h(v)\big|^{2}\langle v' \rangle^{2k}\text{d}\sigma\text{d}v_{*}\text{d}v\\
&\leq \tfrac{1}{2}\int_{\mathbb{R}^{6}}\int_{\mathbb{S}^{2}}\Phi(u)b_{1}\big(\cos(\theta)\big)\cos^{-3-\gamma}(\theta/2)\,\nu^{-1}(\theta)\,\mu(v_*)\big|h(v)\big|^{2}\langle v \rangle^{2k}\text{d}\sigma\text{d}v_{*}\text{d}v\\
&\hspace{+.4cm}+\tfrac{1}{2}\int_{\mathbb{R}^{6}}\int_{\mathbb{S}^{2}}\Phi_{2}(u)b_{1}\big(\cos(\theta)\big)\nu(\theta)\mu(v_*)\big|h(v)\big|^{2}\langle v' \rangle^{2k}\text{d}\sigma\text{d}v_{*}\text{d}v
\end{align*}
for any $\nu(\theta)>0$.  As a consequence, 
\begin{align}\label{pro-B1-e3}
\begin{split}
\langle Q^{+}_{\Phi_{2},b_{1}}&(\mu,h) - Q^{-}_{\Phi,b_{1}}(\mu,h), h \langle v \rangle^{2k}\rangle\leq\tfrac{1}{2}\int_{\mathbb{R}^{6}}\int_{\mathbb{S}^{2}}\Phi(u)b_{1}\big(\cos(\theta)\big)\\
&\times\big(\cos^{-3-\gamma}(\theta/2)\nu(\theta)^{-1} - 1\big)\mu(v_{*})\,|h(v)|^{2}\langle v \rangle^{2k} \text{d}\sigma\text{d}v_{*}\text{d}v\\
&\hspace{-1cm}+\tfrac{1}{2}\int_{\mathbb{R}^{6}}\int_{\mathbb{S}^{2}}\Phi_{2}(u)b_{1}\big(\cos(\theta)\big)\mu(v_{*})\,|h(v)|^{2}\big(\langle v' \rangle^{2k}\nu(\theta) - \langle v \rangle^{2k}\big)\text{d}\sigma\text{d}v_{*}\text{d}v\,.
\end{split}
\end{align}
At this point one chooses $\nu(\theta):=\cos^{-2k}(\theta/2)$.  For the second integral in \eqref{pro-B1-e3}, one uses the classical formula for $\omega\in\mathbb{S}^{1}$ with $\omega\perp u$,
\begin{equation*}
|v'|^{2}=|v|^{2}\cos^{2}(\theta/2) + |v_{*}|^{2}\sin^{2}(\theta/2)+2\cos(\theta/2)\sin(\theta/2)|u| v_{*}\cdot\omega\,.
\end{equation*}
Thus, for any $k\geq 2$ it follows from the Taylor expansion of the second order that  
\begin{align*}
\langle v' \rangle^{2k} & - \cos(\theta/2)^{2k}\langle v \rangle^{2k} 
= k \Big(\langle v \rangle^2 \cos^2 (\theta/2)\Big)^{k-1} \Big( \langle v_*\rangle^2 \sin^2 (\theta/2) + 
2\cos(\theta/2)\sin(\theta/2)|u| v_{*}\cdot\omega\Big) \\
& + k(k-1) \int_0^1 (1-t) \Big(
\langle v \rangle^2 \cos^2(\theta/2) + t \big(\langle v_*\rangle^2 \sin^2 (\theta/2) + 
2\cos(\theta/2)\sin(\theta/2)|u| v_{*}\cdot\omega\big) \Big)^{k-2}dt \\
&\qquad \qquad \times \Big( \langle v_*\rangle^2 \sin^2 (\theta/2) + 
2\cos(\theta/2)\sin(\theta/2)|u| v_{*}\cdot\omega\Big)^2\\
&
= 2k \langle v \rangle^{2k-2} \cos^{2k-1} (\theta/2)\sin(\theta/2)|u| v_{*}\cdot\omega
\pm c_{k}\sin^{2}(\theta/2)\langle v_{*}\rangle^{2k}\langle v \rangle^{2(k-1)}\,.
\end{align*}
Therefore,
\begin{equation*}
\bigg|\int_{\mathbb{S}^{1}}\Big(\langle v' \rangle^{2k} - \cos(\theta/2)^{2k}\langle v \rangle^{2k}\Big)\text{d}\omega\bigg|\leq c_{k}\sin^{2}(\theta/2)\langle v_{*}\rangle^{2k}\langle v \rangle^{2(k-1)}\,.
\end{equation*}
In this way, the second integral in \eqref{pro-B1-e3} is estimated by
\begin{align}\label{pro-B1-e4}
\begin{split}
&\int_{\mathbb{R}^{6}}\int_{\mathbb{S}^{2}}\Phi_{2}(u)\frac{b_{1}\big(\cos(\theta)\big)}{\cos(\theta/2)^{2k}}\Big(\langle v' \rangle^{2k} - \cos(\theta/2)^{2k}\langle v \rangle^{2k}\Big)\mu(v_*)\big|h(v)\big|^{2}\text{d}\sigma\text{d}v_{*}\text{d}v\\
&\leq c_{k}\delta^{\gamma}\Big\|\frac{b_{1}\sin^{2}(\theta/2)}{\cos(\theta/2)^{2k}}\Big\|_{L^{1}_{\theta}}\|\mu\langle \cdot \rangle^{2(k+\gamma)}\|_{L^{1}_{v}}\|h\langle v\rangle^{k}\|^{2}_{L^{2}_{v}}\leq C_{k}\delta^{\gamma}\|b_{1}\sin^{2}(\theta/2)\|_{L^{1}_{\theta}}\|h\langle v \rangle^{k}\|^{2}_{L^{2}_{v}}\,.
\end{split}
\end{align}
Gathering \eqref{pro-B1-e1},\eqref{pro-B1-e2},\eqref{pro-B1-e2.5},\eqref{pro-B1-e3} and \eqref{pro-B1-e4} one gets,
\begin{align*}
\langle & L_{\Phi_{2},b_{1}}h - Q^{-}_{\Phi_{1},b_1}\big(\mu,h\big),h\langle v \rangle^{2k}\rangle \\
&\leq -\tfrac{1}{2}\|\big(1-\cos^{2k-(3+\gamma)}(\theta/2) - 2(1+\nu)\sin^{k-\frac{3+\gamma}{2}}(\theta/2)\big)b_{1}\|_{L^{1}_{\theta}}\int_{\mathbb{R}^{3}}|h(v)|^{2}\langle v \rangle^{2k}\bigg(\int_{\mathbb{R}^{3}}\mu(v_{*})|u|^{\gamma}\text{d}v_{*}\bigg)\text{d}v\\
&\hspace{2cm} + \delta^{\gamma/2}C_{k, \nu }\|b_{1}\|_{L^{1}_{\theta}}\|h\langle \cdot \rangle^{k+\gamma/2}\|^{2}_{L^{2}_{v}}\,,\qquad k>2+\frac{3+\gamma}{2}\,.
\end{align*}
Now, one has the estimates
\begin{equation*}
\int_{\mathbb{R}^{3}}\mu(v_{*})|u|^{\gamma}\text{d}v_{*}\geq c\,\langle v \rangle^{\gamma}\,,\qquad \|b_{1}\|_{L^{1}_{\theta}}\sim\frac{\kappa_0}{2s\,\varepsilon^{2s}}\,,
\end{equation*}
as a consequence, the proposition follows taking any $\delta:=\delta(k,\varepsilon)>0$ such that
\begin{equation*}
\frac{\delta^{\gamma/2}C_{k, \nu }\kappa_0}{2s\,\varepsilon^{2s}}\leq \tfrac{c}{4}\|\big(1-\cos(\theta/2)^{2k-(3+\gamma)} -
2(1+\nu) \sin^{k-\frac{3+\gamma}{2}}(\theta/2)\big)b_{1}\|_{L^{1}_{\theta}}\,.  \tag*{\qedhere}
\end{equation*} 
\end{proof}
\begin{theorem}\label{t1-dissipative-L2}
Let $k > \frac{9}{2} + \tfrac{\gamma}{2} + 2s$ and $h\in H^{s}_{k+\gamma/2}(\dv)$ with $s\in (0,1)$.  There exist constants $c_o>0$, $\varepsilon_o>0$ depending on the mass and energy of $\mu$, the scattering kernel $b$ and $k>0$, such that for any $\delta^{\gamma/2}:=\delta(k,\varepsilon_o)^{\gamma/2}\in(0,C_{k}\,\varepsilon^{2s}_o)$ with $C_{k}>0$ given in the Proposition \ref{pro-B1}, the operator $-\mathcal{B}_{1} + L_{\Phi,b_2}$ (with $\varepsilon=\varepsilon_o$) satisfies the dissipative estimate
\begin{equation}\label{t1-e1}
\big\langle (-\mathcal{B}_{1} + L_{\Phi,b_2})(h),h\langle v\rangle^{2k} \big\rangle \leq -c_o\,\kappa_0\|\langle v \rangle^{k+\gamma/2}h\|^{2}_{H^{s}_{v}}\,.
\end{equation}
Furthermore, it follows from~\eqref{t1-e1} that
\begin{equation}\label{t1-e2}
\big\langle -\mathcal{B}_{\delta,\varepsilon_o}h,h\langle v\rangle^{2k} \big\rangle_{x,v} \leq -c_o\,\kappa_0\|\langle v \rangle^{k+\gamma/2}h\|^{2}_{H^{s}_{x,v}}\,.
\end{equation}
\end{theorem}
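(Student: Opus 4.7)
The plan is to adapt the argument used in Theorem~\ref{t1-dissipative} to the polynomially weighted setting by summing the bounds of Propositions~\ref{pro-B2} and~\ref{pro-B1}. Setting $H := \langle v \rangle^{k+\gamma/2} h$, addition produces a right-hand side consisting of the negative Fourier contribution $-c\kappa_0 \bigl\||\xi|^s \widehat{H}\, \mathbf{1}\{|\xi| \geq 1/\varepsilon\}\bigr\|^2_{L^2_\xi}$ from Proposition~\ref{pro-B2}, the negative $L^2_v$ contribution $-\tfrac{c}{4}\beta_k(\varepsilon)\, \|H\|^2_{L^2_v}$ from Proposition~\ref{pro-B1} (where $\beta_k(\varepsilon) := \|(1 - \cos^{2k-(3+\gamma)}(\theta/2) - 2(1+\nu)\sin^{k-(3+\gamma)/2}(\theta/2))\, b_1\|_{L^1_\theta}$), and a positive error $C_k \|\theta^2 b_2\|_{L^1_\theta} \|H\|^2_{L^2_v}$.

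Next I would fix $\nu = 1$ and take $k$ large enough that the remark preceding Proposition~\ref{pro-B1} gives $\beta_k(\varepsilon) \geq \tfrac{1}{2} \|\sin^2(\theta/2)\, b_1\|_{L^1_\theta}$. Using~\eqref{assump:b-gamma} a direct calculation shows $\|\sin^2(\theta/2) b_1\|_{L^1_\theta} \sim \kappa_0 \int_{\varepsilon}^{\pi/2} \theta^{1-2s}\,\rd\theta$, which for $s<1$ stays uniformly bounded below by a positive constant as $\varepsilon \to 0^+$, whereas $\|\theta^2 b_2\|_{L^1_\theta} \sim \kappa_0 \varepsilon^{2-2s} \to 0$. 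Hence one may fix $\varepsilon_o \in (0,1)$ small so that $C_k \|\theta^2 b_2\|_{L^1_\theta} \leq \tfrac{c}{8}\beta_k(\varepsilon_o)$, absorbing the positive error into half the coercive $L^2_v$ contribution; the admissible range of $\delta$ is then dictated by Proposition~\ref{pro-B1} applied at $\varepsilon = \varepsilon_o$.

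To upgrade the bound to the full $H^s_v$ norm, I would use Plancherel together with the elementary splitting
\[
\|H\|^2_{H^s_v} \leq \bigl\||\xi|^s \widehat{H}\, \mathbf{1}\{|\xi| \geq 1/\varepsilon_o\}\bigr\|^2_{L^2_\xi} + (1 + \varepsilon_o^{-2s})\, \|H\|^2_{L^2_v},
\]
so that the two surviving coercive terms dominate $c_o \kappa_0 \|H\|^2_{H^s_v}$ for some $c_o > 0$ depending on the (now fixed) $\varepsilon_o$; this proves~\eqref{t1-e1}. For~\eqref{t1-e2}, the decomposition~\eqref{Decomposition1} gives $-\mathcal{B}_{\delta,\varepsilon_o} = -\mathcal{B}_1 + L_{\Phi,b_2} - v\cdot\nabla_x$, and the transport contribution vanishes,
\[
\int_{\T^3}\int_{\R^3} (v\cdot\nabla_x h)\, h\, \langle v\rangle^{2k} \dv \dx = \tfrac{1}{2}\int_{\T^3}\int_{\R^3} v\cdot\nabla_x(h^2)\, \langle v\rangle^{2k} \dv \dx = 0,
\]
since $\nabla_x \cdot v = 0$ and $\langle v\rangle^{2k}$ is $x$-independent. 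Integrating~\eqref{t1-e1} over $\T^3$ then yields~\eqref{t1-e2}.

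The main delicate point is the scaling in $\varepsilon$: the weighted commutator analysis based on Lemma~\ref{lem:diff-v-k} produces a coercivity coefficient controlled by $\|\sin^2(\theta/2) b_1\|_{L^1_\theta}$ rather than by $\|b_1\|_{L^1_\theta} \sim \varepsilon^{-2s}$ as in the Gaussian case, so one must rely on the finiteness of $\int_0^{\pi/2}\theta^{1-2s}\,\rd\theta$ for $s < 1$ to keep $\beta_k(\varepsilon)$ bounded below uniformly as $\varepsilon \to 0^+$. This is precisely the reason the full strong-singularity range $s \in (0,1)$ is accessible.
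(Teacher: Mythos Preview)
Your argument is correct and follows essentially the same route as the paper: combine Propositions~\ref{pro-B2} and~\ref{pro-B1}, choose $\varepsilon_o$ small so that the error $C_k\|\theta^2 b_2\|_{L^1_\theta}\sim \kappa_0\varepsilon^{2-2s}$ is absorbed by the coercive $L^2$ term, then split the $H^s_v$ norm into high and low frequencies and use the divergence theorem for \eqref{t1-e2}. One small slip: $k$ is fixed by hypothesis, so rather than ``take $k$ large enough'' with $\nu=1$, you should pick $\nu\in(0,1]$ small enough (depending on the given $k$) so that the remark before Proposition~\ref{pro-B1} yields a positive lower bound; the rest of your argument is unaffected.
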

\begin{proof}
Using Propositions \ref{pro-B2} and \ref{pro-B1} one has
\begin{align*}
\big\langle (-\mathcal{B}_{1}& + L_{\Phi,b_2})(h),h\langle v\rangle^{2k} \big\rangle\leq -c_{1}\kappa_0\Big\|\widehat{\langle \cdot \rangle^{\gamma/2+k} h}(\xi)\,|\xi|^{s} \textbf{1}\big\{|\xi|\geq\tfrac{1}{\varepsilon}\big\}\Big\|^{2}_{L^{2}_{\xi}}
\\
&+ \Big(C_{k}\|\theta^2 b_{2}\|_{L^{1}_{\theta}} - c_{2}\|(1-\cos(\theta/2)^{2k-(3+\gamma)} - 2(1+\nu)\sin^{k-\frac{3+\gamma}{2}}(\theta/2))b_{1}\|_{L^{1}_{\theta}}\Big)\|\langle v \rangle^{k+\gamma/2}h\|^{2}_{L^{2}_{v}}
\end{align*}
for positive constants $c_{1}$, $c_{2}$ that depend only on the mass and energy of $\mu$ and positive constant $C_{k}$ that depends additionally on $k$.  Since $\| \theta^2 b_{2,\varepsilon}\|_{L^{1}_{\theta}}\sim \frac{\kappa_0\,
\varepsilon^{2-2s}}{2-2s}$, one can choose $\varepsilon = \varepsilon_{o}>0$ sufficiently small such that
\begin{equation*}
C_{k}\| \theta^2 b_{2,\varepsilon_o}\|_{L^{1}_{\theta}}\leq \tfrac{c_{2}}{2}\|\big(1-\cos(\theta/2)^{2k - (3+\gamma)} - 2(1+\nu)\sin(\theta/2)^{k-\frac{3+\gamma}{2}}\big)b_{1,\varepsilon_o}\|_{L^{1}_{\theta}}\,.
\end{equation*}
Clearly $\varepsilon_o$ has the aforementioned dependence on the parameters.  The choice
\begin{equation*}
c_o\Denote\min\Big\{ c_{1}, \tfrac{c_{2}\varepsilon^{2s}_{o}}{4\kappa_0}\|\big(1-\cos(\theta/2)^{2k - (3+\gamma)} - 2(1+\nu)\sin(\theta/2)^{k-\frac{3+\gamma}{2}}\big)b_{1,\varepsilon_o}\|_{L^{1}_{\theta}}\Big\}
\end{equation*}
proves the first statement \eqref{t1-e1}.  Using the divergence theorem one has $\langle -v\cdot\nabla_{x}h,h\langle v \rangle^{2k}\rangle_{x,v}=0$, which proves the second statement \eqref{t1-e2}.
\end{proof}
\subsubsection{Bounded part}  Let us consider the operator
\begin{equation*}
\mathcal{A}_{\delta,\varepsilon}h= L_{\Phi_{1},b_{1}}h+Q^{-}_{\Phi_{1},b_{1}}(\mu,h) = Q_{\Phi_{1},b_{1}}(h,\mu) + Q^{+}_{\Phi_{1},b_{1}}(\mu,h)\,.
\end{equation*}
The following bound for $\mathcal{A}_{\delta,\varepsilon}$ holds:
\begin{proposition}\label{pro-bounded-L2}
For any $\delta>0$, $\varepsilon>0$, $k>\frac{3}{2}$ the operator $\mathcal{A}_{\delta,\varepsilon}:L^{2}(\langle v\rangle^{k},\mathbb{R}^{3})\rightarrow L^{2}(\mu^{-1/2}(v),\mathbb{R}^{3})$ is a bounded operator with norm estimated as
\begin{equation*}
\|\mathcal{A}_{\delta,\varepsilon}\|\leq 3\,\delta^{-\gamma}e^{\frac{3}{4}\delta^{-2}}\|b_{1}\|_{L^{1}_{\theta}}\max\big\{\|\mu^{1/4}\|_{L^{1}},\|\mu^{1/4}\|_{L^{2}}\big\}\,.
\end{equation*}
In particular, the operator $\mathcal{A}_{\delta,\varepsilon}:L^{2}\big(\langle v\rangle^{k},\mathbb{T}^{3}\times\mathbb{R}^{3}\big)\rightarrow L^{2}\big(\mu^{-1/2}(v),\mathbb{T}^{3}\times\mathbb{R}^{3}\big)$ is bounded with the same norm.
\end{proposition}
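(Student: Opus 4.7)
The plan is to bound $\mathcal{A}_{\delta,\varepsilon}h$ in $L^2(\mu^{-1/2})$ by decomposing
\[
\mathcal{A}_{\delta,\varepsilon} h = Q^{+}_{\Phi_1,b_1}(h,\mu) - Q^{-}_{\Phi_1,b_1}(h,\mu) + Q^{+}_{\Phi_1,b_1}(\mu,h)
\]
and estimating each summand pointwise in $v$ before integrating. Two structural features make this feasible: $b_1\in L^1_\theta$ and $\Phi_1 \le \delta^{-\gamma}\mathbf{1}_{\{\delta \le |v-v_*|\le \delta^{-1}\}}$. On the support of $\Phi_1$, the elementary inequality $|v|^2 \le 2|v_*|^2 + 2\delta^{-2}$ yields the comparison $\mu(v)^{-1/2}\mu(v_*)^{3/2} \le (2\pi)^{-3/2} e^{(3/4)\delta^{-2}} e^{-|v|^2/8}$, which, combined with the Boltzmann identity $\mu(v')\mu(v'_*) = \mu(v)\mu(v_*)$, allows one to swap any post-collisional Maxwellian factor for a pre-collisional one at a cost of order $e^{O(\delta^{-2})}$.

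First I would dispose of the loss piece $Q^{-}_{\Phi_1,b_1}(h,\mu)(v) = \mu(v)\int \Phi_1 b_1\, h(v_*)\,d\sigma dv_*$. A single Cauchy--Schwarz in $v_*$ with the $L^2$ weight $\langle v_*\rangle^{-k}$ (integrable for $k>3/2$) yields the pointwise bound
\[
|Q^{-}_{\Phi_1,b_1}(h,\mu)(v)| \le C\,\delta^{-\gamma}\|b_1\|_{L^1_\theta}\,\mu(v)\,\|h\|_{L^2(\langle v\rangle^k)},
\]
and the explicit $\mu(v)$ prefactor absorbs the target weight $\mu(v)^{-1/2}$ with room to spare, producing a Gaussian moment of $\mu$ of the same order as $\|\mu^{1/4}\|_{L^2}$.

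Next I would treat the two gain pieces $Q^{+}_{\Phi_1,b_1}(h,\mu)(v) = \int \Phi_1 b_1\, h(v'_*)\mu(v')\,d\sigma dv_*$ and $Q^{+}_{\Phi_1,b_1}(\mu,h)(v) = \int \Phi_1 b_1\, \mu(v'_*)h(v')\,d\sigma dv_*$. The idea is to apply Cauchy--Schwarz in $(v_*, \sigma)$ with a splitting of the post-collisional Maxwellian into two half-powers, keeping one half-power together with the integrable kernel $\Phi_1 b_1$ and pairing the other with $|h|$. The change-of-variables formulas of Proposition~\ref{prop:change-variable}---the regular one for the factor kept with the kernel, the singular one (with the harmless Jacobian $\sin^{-3-\gamma}(\theta/2) \le \varepsilon^{-3-\gamma}$, bounded by the angular cutoff $b_1$) for the factor with $|h|$---reduce both resulting integrals to $v$-pointwise bounds. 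Combining these with the comparison inequality of the first paragraph produces bounds of the form $C\delta^{-\gamma}\|b_1\|_{L^1_\theta}e^{(3/4)\delta^{-2}}\|\mu^{1/4}\|_{L^1}\|h\|_{L^2(\langle v\rangle^k)}$.

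The hard part will be keeping the constant in $Q^{+}_{\Phi_1,b_1}(\mu,h)$ sharp: since both the Maxwellian and $h$ sit at post-collisional velocities, the weight $\mu(v)^{-1/2}$ must be routed through the integrand via $\mu(v')\mu(v'_*) = \mu(v)\mu(v_*)$, and the exponent $3/4$ is the optimal outcome of the weighted Cauchy--Schwarz split, namely matching $\mu(v)^{-1/2}$ against $\mu(v_*)^{3/2}$ on the restricted support. The extension from $L^2_v(\langle v\rangle^k;\mathbb{R}^3)$ to $L^2_{x,v}(\langle v\rangle^k;\mathbb{T}^3\times\mathbb{R}^3)$ is then automatic, because $\mathcal{A}_{\delta,\varepsilon}$ is local in $x$ and the velocity estimates above are uniform in $x$.
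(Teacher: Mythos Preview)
Your treatment of the loss term $Q^{-}_{\Phi_1,b_1}(h,\mu)$ is correct and is essentially what the paper does. The gain terms, however, are handled differently in the paper, and your outline for them has a genuine gap.

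After a pointwise-in-$v$ Cauchy--Schwarz in $(v_*,\sigma)$, both resulting factors are integrals over $v_*$ with $v$ fixed, so the \emph{regular} change $v\to v'$ of Proposition~\ref{prop:change-variable}(a) is not available for either factor; only the singular change $v_*\to v'$ is, and that brings in an extra $\varepsilon^{-3-\gamma}$ not present in the stated constant. More seriously, the Boltzmann identity $\mu'\mu'_{*}=\mu\mu_{*}$ exchanges only the \emph{product} of post-collisional Maxwellians for pre-collisional ones; it cannot convert a single factor $\mu(v')$ (in $Q^{+}(h,\mu)$) or $\mu(v'_*)$ (in $Q^{+}(\mu,h)$) into something involving $\mu(v_*)$ alone. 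Your comparison $\mu(v)^{-1/2}\mu(v_*)^{3/2}\leq C e^{3\delta^{-2}/4}e^{-|v|^2/8}$ therefore never attaches to the actual integrand. One can repair this by observing that $|v-v'_*|=|v-v_*|\cos(\theta/2)\leq\delta^{-1}$ on $\mathrm{supp}\,\Phi_1$ and comparing $\mu(v)$ to $\mu(v'_*)$ directly, but this yields only $\mu(v)^{-1/2}\mu(v'_*)\leq C e^{\delta^{-2}/2}$ with no residual decay in $v$, and the subsequent $v$-integration then picks up an extra $\delta^{-3}$ volume factor. You get boundedness, but not the constant in the statement.

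The paper avoids all of this by passing to the weak formulation: testing $Q^{+}$ against an arbitrary $\varphi\geq0$ and using the pre/post-collisional symmetry gives
\[
\int \big|Q^{+}_{\Phi_1,b_1}(h,\mu)\big|\,\mu^{-1/2}\varphi\,dv \;\leq\; \int \Phi_1\,b_1\,|h(v_*)|\,\mu(v)\,\mu^{-1/2}(v')\,\varphi(v')\,d\sigma\,dv_*\,dv\,.
\]
The key pointwise inequality is then purely collisional,
\[
\mu(v)\,\mu^{-1/2}(v') \;\leq\; \mu^{1/4}(v)\,e^{\frac{3}{4}|u|^{2}},\qquad u=v-v_*,
\]
obtained from $|v'|^{2}\leq\tfrac{3}{2}|v|^{2}+3|u|^{2}$ (Cauchy--Schwarz on $v'=v-u^{-}$ with parameter $\epsilon=2$, and similarly with $v_*$ in place of $v$). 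This yields the pointwise operator bounds $|Q^{+}_{\Phi_1,b_1}(h,\mu)|\mu^{-1/2}\leq Q^{+}_{e^{3|u|^{2}/4}\Phi_1,b_1}(|h|,\mu^{1/4})$ and $|Q^{+}_{\Phi_1,b_1}(\mu,h)|\mu^{-1/2}\leq Q^{+}_{e^{3|u|^{2}/4}\Phi_1,b_1}(\mu^{1/4},|h|)$, after which a standard Young inequality for the cutoff gain operator gives exactly the stated constant. The essential difference from your plan is that the paper compares $v'$ to $v$ via the collision geometry rather than $v$ to $v_*$ via the support of $\Phi_1$, and does so in the pre-collisional weak form where the target weight $\mu^{-1/2}$ lands on $v'$ naturally, so no Boltzmann-identity gymnastics are needed.
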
%
\begin{proof}
Noticing that Cauchy Schwarz inequality implies that for any $\epsilon>0$
\begin{align*}
|v'|^{2} = |v - u^{-}|^{2}&\leq \big(1+\epsilon^{-1}\big)|v|^{2} + \big(1+\epsilon\big)|u|^{2}\,,\\
|v'|^{2} = |v_{*} + u^{+}|^{2}&\leq \big(1+\epsilon^{-1}\big)|v_{*}|^{2} + \big(1+\epsilon\big)|u|^{2}\,,\qquad u^{\pm} := \tfrac{1}{2}\big(u\pm|u|\sigma\big)\,,
\end{align*}
one can choose $\epsilon=2$ to prove that $|v'|^{2}\leq \frac{3}{2}\min\{|v|^{2},|v_{*}|^{2}\}+3|u|^{2}$.  Therefore,
\begin{equation*}
\mu(v)\mu^{-1/2}(v')\leq\mu^{1/4}(v)e^{3/4|u|^{2}}\quad \text{and}\quad \mu(v_{*})\mu^{-1/2}(v')\leq\mu^{1/4}(v_{*})e^{3/4|u|^{2}}.
\end{equation*}
From here, one can readily prove the control  
\begin{align}\label{prop4.7-e1}
\begin{split}
\Big|Q^{+}_{\Phi_{1},b_{1}}(h,\mu)\Big|\mu^{-1/2}&\leq Q^{+}_{e^{3/4|u|^{2}}\Phi_{1},b_{1}}(|h|,\mu^{1/4})\,,\\
\Big|Q^{+}_{\Phi_{1},b_{1}}(\mu,h)\Big|\mu^{-1/2}&\leq Q^{+}_{e^{3/4|u|^{2}}\Phi_{1},b_{1}}(\mu^{1/4},|h|)\,.
\end{split}
\end{align}
Indeed, take an arbitrary $\varphi\geq0$ and compute using this estimate
\begin{align*}
\int_{\mathbb{R}^{3}}\Big|&Q^{+}_{\Phi_{1},b_{1}}(h,\mu)\Big|\mu^{-1/2}\varphi\,\text{d}v\leq \int_{\mathbb{R}^{6}}\int_{\mathbb{S}^{2}}\Phi_{1}(|u|)b_{1}(\cos(\theta))\big|h(v_{*})\big|\mu(v)\mu^{-1/2}(v')\varphi(v')\text{d}\sigma\text{d}v_{*}\text{d}v\\
&\leq\int_{\mathbb{R}^{6}}\int_{\mathbb{S}^{2}}e^{3/4|u|^{2}}\Phi_{1}(|u|)b_{1}(\cos(\theta))\big|h(v_{*})\big|\mu^{1/4}(v)\varphi(v')\text{d}\sigma\text{d}v_{*}\text{d}v=\int_{\mathbb{R}^{3}}Q^{+}_{e^{3/4|u|^{2}}\Phi_{1},b_{1}}(|h|,\mu^{1/4})\varphi\,\text{d}v\,.
\end{align*}
Since $\varphi$ is arbitrary, the first estimate in \eqref{prop4.7-e1} follows.  A similar argument gives the second estimate.  As a consequence, using Young's inequality for the gain collision operator in estimate \eqref{prop4.7-e1} it follows that
\begin{align*}
\big\|Q^{+}_{\Phi_{1},b_{1}}&(h,\mu)\mu^{-1/2}\big\|_{L^{2}_{v}}\leq C\delta^{-\gamma}e^{\frac{3}{4}\delta^{-2}}\|b_{1}\|_{L^{1}_{\theta}}\|h\|_{L^{1}_{v}}\leq C\delta^{-\gamma}e^{\frac{3}{4}\delta^{-2}}\|b_{1}\|_{L^{1}_{\theta}}\|h\langle v \rangle^{k}\|_{L^{2}_{v}}\\
&\text {and }\qquad \big\|Q^{+}_{\Phi_{1},b_{1}}(\mu,h)\mu^{-1/2}\big\|_{L^{2}_{v}}\leq C\delta^{-\gamma}e^{\frac{3}{4}\delta^{-2}}\|b_{1}\|_{L^{1}_{\theta}}\|h\|_{L^{2}_{v}}\,,
\end{align*}
with $C=\max\{\|\mu^{1/4}\|_{L^{1}},\|\mu^{1/4}\|_{L^{2}}\}$ and $k>\frac{3}{2}$.  Furthermore,
\begin{equation*}
\big\|Q^{-}_{\Phi_{1},b_{1}}(h,\mu)\mu^{-1/2}\big\|_{L^{2}_{v}}\leq C\delta^{-\gamma}\|b_{1}\|_{L^{1}_{\theta}}\|h\|_{L^{1}_{v}}\leq C\delta^{-\gamma}\|b_{1}\|_{L^{1}_{\theta}}\|h\langle v\rangle^{k}\|_{L^{2}_{v}}\,,\quad k>\tfrac{3}{2}\,,
\end{equation*}
for the same aforementioned constant $C$.
\end{proof}
\subsubsection{Enlargement of the Spectrum} We are in position now to extend Theorem \ref{t2-localization} to the larger space $L^{2}(\langle v\rangle^{k},\mathbb{T}^{3}\times\mathbb{R}^{3})$.
\begin{theorem}\label{tenlargement}
Let $s\in(0,1)$.  The operator $\mathcal{L} = {L} - v\cdot\nabla_{x}$ defined on $L^{2}(\langle v\rangle^{k},\mathbb{T}^{3}\times\mathbb{R}^{3})$, with $k > \frac{9}{2} + \tfrac{\gamma}{2}+2s$, has essential spectrum localized in $\{z\in\mathbb{C}:\mathfrak{R}_{e}z\leq-c_o\kappa_0\}$.  Furthermore, its eigenpairs are identical to those of $\mathcal{L}^{(\mu)}$ (as an operator in $L^{2}(\mu^{-1/2}(v),\mathbb{R}^{3})$) in $\{z\in\mathbb{C}:\mathfrak{R}_{e}z>-c_o\kappa_0\}$.  Thus, $\CalL$ has the same spectral gap as $\CalL^{(\mu)}$ and its null space is given by
\begin{equation*}
\text{Null}(\mathcal{L}) = \text{Span}\{\mu, \, v\mu, \, |v|^{2}\mu\}\,.
\end{equation*}
\end{theorem}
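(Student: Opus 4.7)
The plan is to apply the abstract enlargement-of-the-functional-space theorem of Gualdani-Mischler-Mouhot \cite[Theorem 2.1]{GMM} with the splitting $\CalL = \mathcal{A}_{\delta,\varepsilon_o} - \mathcal{B}_{\delta,\varepsilon_o}$ from \eqref{Decomposition1}, transferring spectral information from the Gaussian-weighted space $E = L^{2}(\mu^{-1/2},\mathbb{T}^{3}\times\mathbb{R}^{3})$ to the polynomially weighted space $\mathcal{E} = L^{2}(\langle v\rangle^{k},\mathbb{T}^{3}\times\mathbb{R}^{3})$.

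The three hypotheses of the enlargement theorem are already in place. First, the dissipativity $\langle -\mathcal{B}_{\delta,\varepsilon_o}h,h\langle v\rangle^{2k}\rangle_{x,v} \le -c_o\kappa_0 \|\langle v\rangle^{k+\gamma/2} h\|^{2}_{H^{s}_{x,v}}$ from estimate \eqref{t1-e2} of Theorem \ref{t1-dissipative-L2} provides the required semigroup decay $\|\CalS_{\mathcal{B}}(t)\|_{\mathcal{E}\to\mathcal{E}}\le e^{-c_o\kappa_0\,t}$, where the inclusion of the skew-adjoint transport term $-v\cdot\nabla_{x}$ in $\mathcal{B}_{\delta,\varepsilon_o}$ costs nothing on $\mathbb{T}^{3}$. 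Second, the boundedness $\mathcal{A}_{\delta,\varepsilon_o}:\mathcal{E}\to E$ follows from Proposition \ref{pro-bounded-L2}. Third, Theorem \ref{t2-localization} gives the spectral localization of $\CalL^{(\mu)}-v\cdot\nabla_{x}$ on $E$ together with a complete description of its eigenpairs in $\{\mathfrak{R}_{e}z>-c_o\kappa_0\}$. Feeding these into \cite[Theorem 2.1]{GMM} yields that the essential spectrum of $\CalL$ on $\mathcal{E}$ is contained in $\{\mathfrak{R}_{e}z\le -c_o\kappa_0\}$ and that the discrete spectra of $\CalL$ on $\mathcal{E}$ and of $\CalL^{(\mu)}-v\cdot\nabla_{x}$ on $E$ coincide in the half-plane $\{\mathfrak{R}_{e}z>-c_o\kappa_0\}$, with the same algebraic multiplicities.

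To obtain the identification of eigenpairs themselves, I would use the conjugation $h=\mu^{1/2}H$. A direct computation gives $L(h)=\mu^{1/2}L^{(\mu)}(H)$, while $v\cdot\nabla_{x}$ commutes with multiplication by $\mu^{1/2}$. Hence $(\lambda,H)$ is an eigenpair of $\CalL^{(\mu)}-v\cdot\nabla_{x}$ if and only if $(\lambda,\mu^{1/2}H)$ is an eigenpair of $\CalL$; the Gaussian factor automatically places $\mu^{1/2}H\in\mathcal{E}$, and conversely the GMM statement ensures every eigenfunction of $\CalL$ in $\{\mathfrak{R}_{e}z>-c_o\kappa_0\}$ is of this form. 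Applied to $\lambda=0$ and the known kernel $\text{Ker}(L^{(\mu)})=\text{Span}\{\sqrt{\mu},v\sqrt{\mu},|v|^{2}\sqrt{\mu}\}$ together with the separation-of-variables argument from the proof of Theorem \ref{t2-localization}, this produces $\Null(\CalL)=\Span\{\mu,\,v\mu,\,|v|^{2}\mu\}$ and shows that the spectral gap of $\CalL$ is exactly that of $\CalL^{(\mu)}$.

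The main technical obstacle is not in the abstract step but in the preceding verifications of (i)--(ii): in particular, extracting a dissipative constant for $-\mathcal{B}_{\delta,\varepsilon_o}$ that is \emph{uniform} in the truncation parameters $(\delta,\varepsilon_o)$ after they have been tuned against each other in Propositions \ref{pro-B2} and \ref{pro-B1}. Once the dissipative estimate \eqref{t1-e2} is obtained with a strictly positive rate $c_o\kappa_0$ and the boundedness of $\mathcal{A}_{\delta,\varepsilon_o}$ is secured, the invocation of \cite[Theorem 2.1]{GMM} and the conjugation argument above complete the proof in a routine manner.
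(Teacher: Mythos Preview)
Your proposal is correct and follows essentially the same route as the paper: invoke \cite[Theorem 2.1]{GMM} with the splitting \eqref{Decomposition1}, using Theorem~\ref{t1-dissipative-L2} for the dissipativity of $\mathcal{B}_{\delta,\varepsilon_o}$ on $\mathcal{E}$, Proposition~\ref{pro-bounded-L2} for $\mathcal{A}_{\delta,\varepsilon_o}:\mathcal{E}\to E$, and Theorem~\ref{t2-localization} for the spectral structure on the small space. Your explicit conjugation $h=\mu^{1/2}H$ makes precise the link between $\CalL$ on $E=L^{2}(\mu^{-1/2})$ and $L^{(\mu)}-v\cdot\nabla_x$ on $L^{2}_{x,v}$ that the paper's proof leaves implicit; note only that in your third hypothesis you should speak of the spectrum of $\CalL$ on $E$ (equivalently $L^{(\mu)}-v\cdot\nabla_x$ on $L^{2}_{x,v}$ via that conjugation), since GMM compares the \emph{same} operator on the two nested spaces.
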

\begin{proof}
Set the spaces $\mathcal{E}=L^{2}(\langle v\rangle^{k},\mathbb{T}^{3}\times\mathbb{R}^{3})$ and $E=L^{2}(\mu^{-1/2}(v),\mathbb{T}^{3}\times\mathbb{R}^{3})$.  Note that $E\subset\mathcal{E}$.  Theorem \ref{t2-localization} gives us the localization of $\mathcal{L}$ as an operator in $E$.  Using Theorem \ref{t1-dissipative-L2} and Proposition \ref{pro-bounded-L2}, we know that we can decompose $\mathcal{L} = \mathcal{A} - \mathcal{B}$ with $\mathcal{B}:\mathcal{E}\rightarrow\mathcal{E}$ closed and dissipative and $\mathcal{A}:\mathcal{E}\rightarrow E$ bounded (for a suitable choice of the parameters $\delta>0$ and $\varepsilon>0$).  This fulfills the hypothesis \textbf{(H1)} and \textbf{(H2)}\footnote{Note that the fact that $\mathcal{A}$ is bounded from the large space $\mathcal{E}$ to the small space $E$ ensures both \textbf{(H2) (ii)} and \textbf{(iii)}.} of \cite[Theorem 2.1]{GMM} which implies the result.
\end{proof}

A direct consequence of Theorem \ref{tenlargement} and \cite[Theorem 2.1]{GMM} is 
\begin{corollary}
Let $s\in(0,1)$.  The operator $\mathcal{L}$ generates a strongly continuous semigroup $e^{\mathcal{L}t}$ in $\mathcal{E}=L^{2}(\langle v\rangle^{k},\mathbb{T}^{3}\times\mathbb{R}^{3})$, for any $k > \frac{9}{2} + \tfrac{\gamma}{2} + 2s$. Moreover, if $h(t)$ is the solution to the initial value problem
\begin{equation*}
\frac{\text{d}h}{\text{d}t} = \mathcal{L} h\,,\quad h_o\in\mathcal{E}\,,
\end{equation*}
that is, $h(t)=e^{\mathcal{L}t}h_o$, then
\begin{equation*}
\|h(t) - \pi h_o\|_{\mathcal{E}}\leq C e^{-\lambda t}\|(1-\pi) h_o\|_{\mathcal{E}}\,.
\end{equation*}
Here $\pi$ is the projection onto $\text{Null}(\mathcal{L})$ such that
\begin{align*}
\pi g\Denote \sum_{\varphi\in\{1,v_{1},\cdots,v_{d}, |v|^{2}\}}\bigg(\int_{\mathbb{T}^{3}\times\mathbb{R}^{3}}g\,\varphi\,\text{d}v\text{dx}\bigg)\varphi\,\mu
\end{align*}
and $\lambda>0$ is the spectral gap of $\mathcal{L}^{(\mu)}$ as an operator in $L^{2}(\mu^{-1/2}(v),\mathbb{R}^{3})$.

\end{corollary}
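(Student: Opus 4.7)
The plan is to obtain this corollary by assembling ingredients already established in Theorem \ref{tenlargement} and the semigroup enlargement theory of \cite{GMM}, with essentially no new analytic work required. First I would establish that $\mathcal{L}$ generates a strongly continuous semigroup on $\mathcal{E}$. Theorem \ref{t1-dissipative-L2} shows that $-\mathcal{B}_{\delta,\varepsilon_o}$ is dissipative on $\mathcal{E}$ (in fact coercive modulo the $H^s$-norm gain in velocity), which via the Lumer--Phillips theorem makes $-\mathcal{B}$ the generator of a strongly continuous contraction semigroup on $\mathcal{E}$. Proposition \ref{pro-bounded-L2} shows that $\mathcal{A}:\mathcal{E}\to E\hookrightarrow\mathcal{E}$ is bounded, so by bounded perturbation theory $\mathcal{L}=\mathcal{A}-\mathcal{B}$ generates a strongly continuous semigroup $e^{\mathcal{L}t}$ on $\mathcal{E}$.

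Next, I would exploit the invariant decomposition $\mathcal{E}=\text{Null}(\mathcal{L})\oplus(1-\pi)\mathcal{E}$. By Theorem \ref{tenlargement}, $\text{Null}(\mathcal{L})=\text{Span}\{\mu,v\mu,|v|^{2}\mu\}$ consists of the collision invariants (mass, momentum, energy), so $\pi h_o$ is conserved under the flow, namely $e^{\mathcal{L}t}\pi h_o=\pi h_o$ for every $t\ge 0$. Since both $\pi$ and $1-\pi$ commute with $\mathcal{L}$ on their respective invariant subspaces, linearity gives $h(t)-\pi h_o=e^{\mathcal{L}t}(1-\pi)h_o$, reducing the task to proving exponential decay of $e^{\mathcal{L}t}$ on $(1-\pi)\mathcal{E}$.

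Finally, I would invoke the enlargement result \cite[Theorem 2.1]{GMM}. The two hypotheses needed are: \textbf{(H1)} exponential decay at rate $\lambda$ for the semigroup $e^{\mathcal{L}^{(\mu)}t}$ on $(1-\pi)E$ in the Gaussian-weighted space $E$, which is the classical spectral-gap decay following from the spectral analysis in the previous subsection (in particular from Proposition \ref{p1-compact} and Theorem \ref{t2-localization}); and \textbf{(H2)} the splitting $\mathcal{L}=\mathcal{A}-\mathcal{B}$ with $\mathcal{B}$ hypo-dissipative on $\mathcal{E}$ and $\mathcal{A}:\mathcal{E}\to E$ bounded, which is precisely Theorem \ref{t1-dissipative-L2} together with Proposition \ref{pro-bounded-L2}. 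The enlargement theorem then transports the decay rate $\lambda$ from $E$ to $\mathcal{E}$, yielding
\begin{equation*}
\|e^{\mathcal{L}t}(1-\pi)h_o\|_{\mathcal{E}}\le C\,e^{-\lambda t}\|(1-\pi)h_o\|_{\mathcal{E}},
\end{equation*}
which is the stated estimate.

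The main obstacle is really already behind us: the technical heart of the corollary lies in Theorem \ref{tenlargement} (localization of the essential spectrum and identification of the discrete spectrum in the polynomially weighted space), together with the dissipativity of $\mathcal{B}$ and the $\mathcal{E}\to E$ boundedness of $\mathcal{A}$. The only remaining subtlety --- that the \emph{exact} rate $\lambda$ available in the small Gaussian space is inherited without loss in the enlarged polynomially weighted space --- is exactly what \cite[Theorem 2.1]{GMM} delivers, and it applies here because we have already verified its structural assumptions \textbf{(H1)} and \textbf{(H2)}.
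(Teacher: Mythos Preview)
Your proposal is correct and follows essentially the same route as the paper, which simply states that the corollary is ``a direct consequence of Theorem \ref{tenlargement} and \cite[Theorem 2.1]{GMM}'' without further elaboration. You have spelled out in detail precisely how these pieces fit together --- the dissipativity of $\mathcal{B}$ (Theorem \ref{t1-dissipative-L2}), the boundedness of $\mathcal{A}:\mathcal{E}\to E$ (Proposition \ref{pro-bounded-L2}), and the invocation of the GMM enlargement theorem --- which is exactly the argument the paper leaves implicit.
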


\section{Regularization of  $\CalL$} \label{sec:regularization}
Recall the  linearized operator $\CalL$ is
\begin{align*} 
   \CalL h = -v \cdot \nabla_x h + Lh \,,
\qquad
    L h = Q(h, \mu) + Q(\mu, h) \,.
\end{align*}
In this section, we will show the regularization of $\CalL$ in both $x, v$. The main result is
\begin{thm} \label{thm:linear-regularization}
Let $\CalL$  be the linearized operator and let $k_0 \in \RR$ satisfy
\begin{align}\label{global-existence-number}
k_0 >\frac{5\gamma+37 }{2} , 
\end{align}
so that the spectral gap of $\CalL$ holds on the space $L^2(\vint{v}^{k_0} \dv\dx)$. Let $h$ be the solution to the linear equation
\begin{align} \label{eq:linearize}
   \del_t h = \CalL h \,,
\qquad
   h|_{t=0} = h^{in}(x, v) \,,
\end{align}
where 
 the Fourier transform of $h^{in}$ in both $x, v$ satisfies
\begin{align*}
   \sum_{\ell \in \ZZ^3} \int_{\R^3_v} \abs{\frac{1}{\vint{\xi}^s + \vint{\ell}^{\frac{s}{2s+1}}} \widehat{\vint{v}^{k_0} h^{in}_\ell}(\xi)}^2 \dxi
< \infty \,.
\end{align*}
Then for any $t > 0$, we have $h(t, \cdot, \cdot) \in L^2(\vint{v}^{k_0} \dv\dx)$ with the bound
\begin{align*}
    \iint_{\T^3 \times \R^3_v}
     \vint{v}^{2 k_0} |h(t,x, v)|^2 \dx\dv
\leq
   C_{k_0} \sum_{\ell \in \ZZ^3} \int_{\R^3_v} \abs{\frac{1}{\vint{\xi}^s + \vint{\ell}^{\frac{s}{2s+1}}} \widehat{\vint{v}^{k_0} h^{in}_\ell}(\xi)}^2 \dxi \,.
\end{align*}
\end{thm}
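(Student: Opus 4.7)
The plan is to introduce a time-dependent Fourier multiplier $M(t, D_x, D_v)$ that encodes the expected hypoelliptic regularization and converts the weighted negative Sobolev norm of the initial data into the $L^2(\vint{v}^{2k_0})$ norm at positive times. Working in the Fourier variables $(\ell, \xi) \in \ZZ^3 \times \R^3$ dual to $(x,v)$, a natural choice is a symbol behaving like
\[
m(t, \ell, \xi) \sim \min\bigl\{1, \, t^{\alpha}\bigl(\vint{\xi}^s + \vint{\ell}^{s/(2s+1)}\bigr)\bigr\},
\]
smoothly interpolated so that its inverse at $t = 0$ matches the weight appearing in the hypothesis and $m(t, \ell, \xi) \to 1$ as $t$ grows. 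The exponent $s/(2s+1)$ on $\vint{\ell}$ is forced by the Hörmander-type hypoelliptic scaling $\vint{\ell} \sim \vint{\xi}^{2s+1}$, which balances the velocity regularity of order $s$ produced by $L$ against the spatial regularity generated by the transport $v \cdot \nabla_x$.

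Next, I would apply $M(t)$ together with $\vint{v}^{k_0}$ to the equation $\partial_t h = \CalL h$ and compute $\tfrac{d}{dt}\|M(t)\,\vint{v}^{k_0} h\|^2_{L^2_{x,v}}$. Three principal contributions arise. First, $\partial_t M$ gives a dissipative lower bound of the form $\gtrsim \|\sqrt{\vint{\xi}^s + \vint{\ell}^{s/(2s+1)}}\,M\,\vint{v}^{k_0} h\|^2$ by direct symbolic differentiation. Second, the commutator $[M, v \cdot \nabla_x]$, computed via standard pseudo-differential calculus applied to the Poisson bracket $\{m, i v \cdot \ell\} = i \ell \cdot \nabla_\xi m$, transfers velocity regularity into spatial regularity and is absorbed into the first term by a Young-type inequality once $\alpha$ is chosen appropriately. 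Third, the collision contribution $\langle M L h,\, \vint{v}^{2k_0} M h\rangle$ is handled by the weighted coercivity estimates of Proposition \ref{prop:coercivity}, with the commutator bound of Proposition \ref{prop:trilinear-weight} used to exchange $M$ with $L$ and the weight $\vint{v}^{k_0}$; Proposition \ref{prop:equivalence} guarantees that moving $M$ past $\vint{v}^{k_0}$ is harmless. The spectral gap of $\CalL$ on $L^2(\vint{v}^{k_0})$ provided by Theorem \ref{tenlargement} is then used to absorb the lower-order $L^2_{\gamma/2}$ error terms generated by the commutators.

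The main obstacle will be controlling the commutator between the non-local pseudo-differential multiplier $M(t, D_x, D_v)$ and the weighted non-cutoff collision operator $\vint{v}^{k_0} L$, especially in the strong singularity regime $s \geq 1/2$. Standard symbolic calculus does not apply directly to the collision integral, and one must instead exploit the intrinsic symmetry that makes terms such as $\Gamma^{(1)}_{1,1}$ vanish in the proof of Proposition \ref{prop:trilinear-weight}, together with the precise form of the functional $J_1^\gamma$, to capture the regularizing effect without overestimating the strong singularity. A secondary delicate point is the treatment of the cross-term $\vint{\xi}^s \vint{\ell}^{s/(2s+1)}$ produced by $[M, v \cdot \nabla_x]$, which must be split by Young's inequality without spoiling the critical balance of exponents. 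Once the commutators are bounded by the leading dissipation plus lower-order $L^2_{\gamma/2}$ norms, a Grönwall-type argument applied to the resulting differential inequality on $[0, t]$, with the value at $t = 0$ being exactly the weighted negative-Sobolev norm of $h^{in}$ in the hypothesis, yields the stated bound.
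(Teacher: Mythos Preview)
Your plan has the right overall shape but contains two genuine gaps.

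\textbf{The multiplier and the dissipation mechanism.} Your proposed symbol $m \sim \min\{1,\, t^\alpha(\vint{\xi}^s + \vint{\ell}^{s/(2s+1)})\}$ does not do what you claim. With this choice $\partial_t m$ is a \emph{growth} term, not a dissipative one, and it is singular as $t\to 0$. More seriously, the transport commutator $[M, v\cdot\nabla_x]$ has Fourier symbol $\eta\cdot\nabla_\xi m \sim t^\alpha |\eta|\,\vint{\xi}^{s-1}$, which cannot be absorbed by Young's inequality into $\partial_t m$ when $|\eta|$ is large and $|\xi|$ is moderate: the balance $|\eta| \sim \vint{\xi}^{2s+1}$ you invoke only holds on a diagonal, not uniformly. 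The paper instead chooses
\[
\CalM(t,\ell,\xi) = \Big(1 + \delta \int_0^{T-t}\vint{\xi - \tau\eta}^{2s}\,d\tau\Big)^{-1/2-\varepsilon},
\]
designed along the free-transport characteristics so that the \emph{full} operator $\partial_t - \eta\cdot\nabla_\xi$ acting on $\CalM$ produces a single controlled positive term of size $\delta\vint{\xi}^{2s}$ (equation~\eqref{commute}). This small $H^s$-growth is then swallowed by the coercivity of $Q(\mu,\cdot)$ (Proposition~\ref{prop:coercivity}); the dissipation comes from $L$, not from $\partial_t\CalM$. The characteristic integral in $\CalM$ also yields the Ukai-type lower bound (Lemma~\ref{lem:Ukai}) $\int_0^{T-t}\vint{\xi-\tau\eta}^{2s}d\tau \gtrsim (T-t)\vint{\xi}^{2s} + (T-t)^{2s+1}|\eta|^{2s}$, which is exactly what produces the exponent $s/(2s+1)$ on $\vint{\ell}$ after a time integration---it is not an input but an output of the construction.

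\textbf{The commutator with the collision operator.} You propose to handle $[\CalM, L]$ via Proposition~\ref{prop:trilinear-weight}, but that proposition concerns commutators with the \emph{polynomial weight} $\vint{v}^k$, not with a Fourier multiplier. Commuting a non-smooth $v$-Fourier multiplier with the non-local collision integral is the hardest part of Section~\ref{sec:regularization}: the paper splits $Q = Q_R + Q_{\bar R}$ by cutting $|v-v_\ast|$ at scale $R$, treats $[\CalM, Q_R]$ via the Bobylev formula (Proposition~\ref{singular-part}), and handles $[\CalM, Q_{\bar R}]$ via Littlewood--Paley in $v-v_\ast$ and pseudo-differential calculus (Lemma~\ref{Lemma 2.1}, Corollaries~\ref{cor-234}--\ref{cor-5-7}, Proposition~\ref{reg-commute}), carefully tracking powers of $\delta$ and $R$ so that the commutator carries a small factor $\delta^{s'/(2s)} + R^{\max\{s-1,\gamma/2-1\}}$ times $J_1^\gamma$-type quantities. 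These are absorbed by the coercivity only after choosing $\delta$ small and $R$ large. The functional $J_1^\gamma$ appears here, but not through Proposition~\ref{prop:trilinear-weight}.

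Finally, the spectral gap is used only for $t\geq T_0$, after the short-time regularization already places $h$ in $L^2(\vint{v}^{k_0})$; during $[0,T_0]$ the lower-order $\|\vint{v}^{k_0}f_\ell\|_{L^2}^2$ error is handled by a Gr\"onwall/absorption argument with $T_0$ small (see~\eqref{int-smoothing}), not by the spectral gap.
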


The proof of Theorem~\ref{thm:linear-regularization} relies on various commutator estimates related to the collision operator. These estimates are the subjects of the following subsections.

\subsection{Definition of $\CalM$} 
The regularization of $\CalL$ will be shown by applying a Fourier multiplier $\CalM$ to the linearized equation~\eqref{eq:linearize}. To define the operator $\CalM$, we use the Fourier series with respect to $x$ and write 
\[
h(t,x,v) = \sum_{\ell \in \ZZ^3} e^{-2\pi i \ell\cdot x} h_\ell (t,v).
\]
Thus equation~\eqref{eq:linearize} is reduced to
\begin{align}\label{fourier}
    (\del_t   + v \cdot (-2\pi \ell i)h_\ell  = L h_\ell \,,
\qquad
   h_\ell|_{t=0} = h^{in}_\ell(v) \,.
\end{align}
Let $\delta >0$ be small to  be chosen later. For a fixed $T > 0$ and any $t \in [0,T]$, 
we define a symbol of $\CalM(t, \ell, D_v)$ as
\begin{align} \label{def:CalM}
\CalM(t,\ell, \xi)  &= \Big(1 + \delta \int_t^T \La \xi +2\pi (t-\rho)\ell \Ra^{2s} d\rho \Big)^{-1/2 -\varepsilon}
=  \Big(1 + \delta \int_0^{T-t} \La \xi - 2\pi \tau \ell \Ra^{2s} \dtau \Big)^{-1/2 -\varepsilon}
\end{align}
with $0 < \varepsilon  < \frac{1-s}{2s}$.
For brevity, we write $2\pi \ell = \eta$ and $\CalM = \CalM(t, \eta, \xi)$ sometimes in the following.  

The basic estimate for the symbol $\CalM$ is
\begin{lem} \label{lem:CalM}
Let $\CalM(t,\eta, \xi)$ be the symbol defined in~\eqref{def:CalM} with $\eta= 2\pi \ell$. Then for any $\alpha \in \ZZ^3$ there exists a constant $C$ that only depends on $s, \Eps, \alpha$ such that
\begin{align*}
    \abs{\frac{\nabla_\xi^\alpha  \CalM(t, \eta, \xi)}{\CalM(t, \eta, \xi)} }
\leq C
\left\{ \begin{array}{ll}
 ( \la \xi\ra +(T- t) |\eta |)^{-1} & \text{if $s <1/2$,}\\
 ( \la \xi \ra  + (T-t) |\eta| )^{-\min\{|\alpha|, 2\}}& \text{if $s >1/2$,}\\
 ( \la \xi \ra +(T- t )|\eta| )^{-\min\{|\alpha|, 2-\varepsilon_1\}}, \forall \, 0<  \varepsilon_1 < 1, & \text{if $s=1/2$,}
\end{array} \right.
\end{align*}
\end{lem}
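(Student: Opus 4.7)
The plan is to reduce the ratio $\nabla_\xi^\alpha \CalM / \CalM$ to estimates on derivatives of the auxiliary function
\[
   \Phi(t, \eta, \xi) := 1 + \delta \int_0^{T-t} \langle \xi - \tau \eta \rangle^{2s} \, d\tau,
\]
so that $\CalM = \Phi^{-1/2 - \varepsilon}$. By Faà di Bruno applied to the composition of $x \mapsto x^{-1/2 - \varepsilon}$ with $\Phi$, the quotient $\nabla_\xi^\alpha \CalM / \CalM$ is a finite sum of products of the form $\prod_j (\nabla_\xi^{\beta_j} \Phi)/\Phi$, with $\sum_j \beta_j = \alpha$ and $|\beta_j| \geq 1$. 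It therefore suffices to bound each factor $|\nabla_\xi^\beta \Phi|/\Phi$ by the appropriate power of $R := \langle\xi\rangle + (T-t)|\eta|$ and then multiply the bounds.

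For the numerator, differentiating under the integral and using the elementary estimate $|\nabla^\beta \langle y \rangle^{2s}| \leq C_\beta \langle y \rangle^{2s - |\beta|}$ yields
\[
   |\nabla_\xi^\beta \Phi| \leq C_\beta \, \delta \int_0^{T-t} \langle \xi - \tau \eta \rangle^{2s - |\beta|} \, d\tau.
\]
I would then decompose $\xi = \xi_\parallel + \xi_\perp$ along and perpendicular to $\eta$, change variables $u = (\xi - \tau\eta)\cdot \eta/|\eta|$, and reduce to a one-dimensional integral of $(1 + u^2 + |\xi_\perp|^2)^{s - |\beta|/2}$ over an interval of length $(T-t)|\eta|$. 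Whether $2s - |\beta|$ is greater than, equal to, or less than $-1$ governs the asymptotics of this integral; this trichotomy is precisely what produces the three cases in the statement, as $|\beta| = 2$ is the borderline at $s = 1/2$.

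For the denominator, I would split into two regimes. When $(T-t)|\eta| \lesssim \langle\xi\rangle$, the shifted momentum satisfies $\langle \xi - \tau\eta\rangle \sim \langle \xi\rangle$ uniformly on $[0, T-t]$, so $\Phi \geq c\, \delta (T-t) \langle\xi\rangle^{2s}$; when $(T-t)|\eta| \gg \langle\xi\rangle$, the trajectory $\xi - \tau\eta$ sweeps out a segment of length comparable to $(T-t)|\eta|$, and the one-dimensional computation yields $\Phi \geq c\, \delta(T-t)\bigl((T-t)|\eta|\bigr)^{2s}$. Together with the trivial bound $\Phi \geq 1$, these combine into $\Phi \geq c \max\{1, \delta(T-t) R^{2s}\}$, which is exactly what is needed to cancel the powers of $R$ produced by the numerator bounds.

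The main obstacle will be the endpoint case $s = 1/2$, where $2s - |\beta| = -1$ when $|\beta| = 2$ and the one-dimensional integral $\int_0^L \langle u\rangle^{-1}\,du$ contributes a logarithmic factor $\log R$. To convert this into the stated $R^{-\min\{|\alpha|, 2 - \varepsilon_1\}}$ bound for arbitrary $\varepsilon_1 \in (0,1)$, one absorbs the logarithm into an arbitrarily small polynomial loss via $\log R \leq C_{\varepsilon_1} R^{\varepsilon_1}$. A secondary technical nuisance is that higher-order derivatives $|\beta| \geq 3$ give no improvement beyond the $R^{-1}$ bound (when $s < 1/2$) or the $R^{-2}$ bound (when $s > 1/2$), because the single factor $(\nabla^\beta \Phi)/\Phi$ is already capped by those powers; one must be careful not to overcount in the Faà di Bruno expansion and use the trivial bound $(\nabla^\gamma \Phi)/\Phi \leq C$ for the remaining factors.
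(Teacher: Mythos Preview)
Your plan is correct and the skeleton matches the paper's: both reduce to bounding $|\nabla_\xi^\beta \Phi|/\Phi$ for the auxiliary function $\Phi$ and then combine (the paper does Fa\`a di Bruno implicitly, writing out the first and second derivatives by hand and invoking induction for higher orders). The one substantive difference is in how the integral $\int_0^{T-t}\langle\xi-\tau\eta\rangle^{2s-|\beta|}\,d\tau$ is handled. Rather than split into the three regimes $2s-|\beta|\gtrless -1$ as you propose, the paper \emph{pre-weakens} the pointwise derivative bound, replacing $|\partial^\beta\langle y\rangle^{2s}|\le C\langle y\rangle^{2s-|\beta|}$ by $C\langle y\rangle^{2s-k}$ with $k=1$ if $s<1/2$, $k=\min\{|\beta|,2\}$ if $s>1/2$, and $k=\min\{|\beta|,2-\varepsilon_1\}$ if $s=1/2$. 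This forces the exponent into $(-1,\infty)$, where the paper's ready-made appendix lemmas (the Ukai-type equivalence $\int_0^a\langle\xi-\tau\eta\rangle^{\alpha}\,d\tau\sim a\,R^{\alpha}$ for $\alpha>0$, and its companion for $\alpha\in(-1,0)$) apply directly. This single device simultaneously explains the saturation at $R^{-1}$ for $s<1/2$, at $R^{-2}$ for $s>1/2$, and the $\varepsilon_1$-loss at $s=1/2$, with no trichotomy and no logarithm to absorb. Your route via direct 1D asymptotics and $\log R\le C_{\varepsilon_1}R^{\varepsilon_1}$ also works but is more laborious. Two small corrections: for $s<1/2$ the saturation at $R^{-1}$ already occurs at $|\beta|=2$ (not $|\beta|\ge 3$), and your worry about ``overcounting'' in Fa\`a di Bruno is moot---since every factor $|\nabla^{\beta_j}\Phi|/\Phi$ with $|\beta_j|\ge 1$ is itself $\le CR^{-1}$, the single-factor term $\nabla^\alpha\Phi/\Phi$ is always the worst in the expansion.
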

\begin{proof}
Note that for $\alpha \ne 0$ we have 
\begin{align}\label{simple-form}
\left|\partial_\xi^\alpha \Big(\langle \xi\rangle^{2s} \Big)\right| \le C_\alpha \langle \xi\rangle^{2s-|\alpha|} 
\le C_\alpha \left\{ \begin{array}{ll}
 \langle \xi\rangle^{2s-1} & \text{if $s <1/2$,}\\
 \langle \xi\rangle^{2s-\min\{|\alpha|, 2\}}& \text{if $s >1/2$,}\\
\langle \xi\rangle^{1-\min\{|\alpha|, 2-\varepsilon_1\}}, \forall \, 0< \varepsilon_1 < 1, & \text{if $s=1/2$.}
\end{array}
\right.
\end{align}
By Corollary\ref{appendix-cor-A} and Lemma~\ref{lem:integral-bound-negative} in the appendix, we have
\begin{align*}
  \abs{\frac{\nabla_\xi \CalM}{\CalM}}
&= \abs{\vpran{\frac{1}{2} + \Eps} \frac{ \delta \int_0^{T-t}\nabla_\xi \vpran{\La \xi - \tau \eta \Ra^{2s} }d\tau}
{1 + \delta \int_0^{T-t} \La \xi - \tau \eta \Ra^{2s} d\tau}}\\
& \le C_s  \frac{\delta(T-t)\{\la \xi\ra + (T-t)|\eta|\}^{2s-1} } {1+\delta(T-t)\{\la \xi\ra +(T-t)|\eta|\}^{2s}} \le C_s
\vpran{\la \xi\ra + (T-t)|\eta|}^{-1}\,. 
\end{align*}
Furthermore, we have 
\begin{align*}
&\frac{\partial_{\xi_j} \nabla_\xi \CalM}{\CalM} = \frac{\partial_{\xi_j} \CalM}{\CalM}\frac{\nabla_\xi \CalM}{\CalM}\\
&\quad -\vpran{\frac{1}{2} +\Eps}\left [\frac{
\delta \int_0^{T-t} \partial_{\xi_j} \nabla_\xi \vpran{\La \xi - \tau \eta \Ra^{2s} }d\tau}
{1 + \delta \int_0^{T-t} \La \xi - \tau \eta \Ra^{2s} d\tau}  -\frac{
\delta \int_0^{T-t} \partial_{\xi_j}  \vpran{\La \xi - \tau \eta \Ra^{2s} }d\tau}
{1 + \delta \int_0^{T-t} \La \xi - \tau \eta \Ra^{2s} d\tau}
\frac{
\delta \int_0^{T-t} \nabla_\xi \vpran{\La \xi - \tau \eta \Ra^{2s} }d\tau}
{1 + \delta \int_0^{T-t} \La \xi - \tau \eta \Ra^{2s} d\tau} \right]\,.
\end{align*}
Applying Corollary\ref{appendix-cor-A} and Lemma~\ref{lem:integral-bound-negative}, in view of 
\eqref{simple-form},  we obtain the inequalities for the case of the second order derivatives. 
The cases for higher order derivatives can be obtained inductively.
\end{proof}

%

We are mainly concerned with the commutator estimate of $\CalM$ with the collision operator $Q$. The result will be shown by dividing the collision kernel in $Q$ into the bounded and unbouned 
domains in terms of $\abs{v - v_\ast}$ . More precisely, let $0 \leq \chi_{R}(r) \leq 1$ be a smooth cutoff function such that
\begin{align*}
     \chi_R(r) 
   = \begin{cases}
      1, & 0 \leq r \leq R \,, \\
      0, & r > 2R \,,
      \end{cases}
\end{align*}
and $\chi_R$ satisfies that for any $k \in \NN$
\begin{align} \label{property:chi-R}
     \abs{D^{k} \chi_R(r)} \leq C \vint{r}^{-k} \,,
\end{align}
where $C$ is independent of $R$. Such $\chi_R$ exists, for example, by rescaling a smooth cutoff function supported on $[0, 2]$. 
Denote
\begin{align} \label{def:Phi-R-bar-R}
    \Phi_{R} (v - v_\ast) = |v - v_\ast|^\gamma \chi_{R} (|v - v_\ast|) \,, 
\qquad
    \Phi_{\bar R} (v - v_\ast) 
    = |v - v_\ast|^\gamma \vpran{1 - \chi_{R} (|v - v_\ast|)} \,.
\end{align}
Decompose $Q$ such that
\begin{align} \label{def:Q-R-bar-R}
  Q(f, g)
&= Q_R(f, g) + Q_{\bar R}(f, g) \nn
\\
& = \int_{\R^3} \int_{\Ss^2} b(\cos \theta) \Phi_{R} (f'_\ast g' - f_\ast g) \dsigma \dv_\ast
+ \int_{\R^3} \int_{\Ss^2} b(\cos \theta) \Phi_{\bar R} (f'_\ast g' - f_\ast g) \dsigma \dv_\ast \,,
\end{align}
where the choice of the constant $R > 0$ will be specified later. 

\subsection{Commutator estimate for $Q_R$} 
The first commutator estimate is for $Q_R$ with $\CalM$. The estimate is proved in a similar way as in \cite[Proposition 3.4]{AMUXY2012JFA}.
\begin{prop}\label{singular-part} 
Let $R > 0$ and $Q_R$ be defined as in~\eqref{def:Q-R-bar-R}. Then there exists a constant $C_R$ such that
  \[\left | \Big( \CalM Q_R(f,g) - Q_R(f, \CalM g), h \Big)\right|
\leq 
C_R \|f\|_{L^1} \|\CalM g\|_{H^{s'} } \|h\|_{H^{s'}}
\]
for any $s'$ satisfying 
\begin{align*}
&\mbox{$s' \ge (s -1/2)^+$} \qquad \qquad \mbox{if  $s\ne 1/2$ \,,}
\\
&\mbox{$s' > 0 $} \hspace{2.9cm} \mbox{if $s=1/2$}.
\end{align*}
\end{prop}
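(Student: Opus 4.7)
The plan is to follow the Fourier-analytic strategy of \cite[Proposition 3.4]{AMUXY2012JFA}, adapted to the specific symbol $\CalM(t,\ell,\xi)$ defined in~\eqref{def:CalM}. Fixing $\ell \in \ZZ^3$ and passing to the partial Fourier transform in $v$, Bobylev's identity for $Q_R$ (which behaves well because $\Phi_R \in C_c^\infty$ makes $\widehat{\Phi_R}$ Schwartz) reduces the commutator symbol to an expression of the form
\[
\mathcal{F}_v\bigl[\CalM Q_R(f,g) - Q_R(f,\CalM g)\bigr](\xi) = \int_{\Ss^2} b(\cos\theta)\, K_R(\xi,\xi^-)\, \bigl[\CalM(t,\ell,\xi) - \CalM(t,\ell,\xi^+)\bigr]\,\widehat{g}(\xi^+)\, \dsigma,
\]
where $\xi^\pm = (\xi \pm |\xi|\sigma)/2$ and $|K_R(\xi,\xi^-)|\leq C_R\|f\|_{L^1}$ uniformly in $\xi$. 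The contributions from the $Q_R^-$ parts cancel exactly since $\CalM$ commutes with itself as a Fourier multiplier in $v$.

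The heart of the estimate is the symbol difference $\CalM(\xi) - \CalM(\xi^+)$. For $s < 1/2$, the mean-value theorem and the first-derivative bound $|\nabla_\xi\CalM|/\CalM \lesssim (\langle\xi\rangle + (T-t)|2\pi\ell|)^{-1}$ from Lemma~\ref{lem:CalM}, combined with $|\xi - \xi^+| = |\xi^-| \leq |\xi|\sin(\tfrac{\theta}{2})$, give $|\CalM(\xi) - \CalM(\xi^+)| \lesssim \CalM(\xi^+)\sin(\tfrac{\theta}{2})$; the angular integral $\int_{\Ss^2} b(\cos\theta)\sin(\tfrac{\theta}{2})\,\dsigma$ is finite, and Cauchy--Schwarz in $\xi$ together with Plancherel yields the bound with $s' = 0$. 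For $s > 1/2$ (and also the endpoint $s = 1/2$), I would interpolate between this first-order bound and the trivial bound $|\CalM(\xi) - \CalM(\xi^+)| \leq 2\CalM(\xi^+)$ (valid since $\CalM$ is decreasing in $|\xi|$), obtaining the H\"older-type estimate
\[
\bigl|\CalM(\xi) - \CalM(\xi^+)\bigr| \lesssim \CalM(\xi^+)\,\bigl(\sin(\tfrac{\theta}{2})\bigr)^{1+2s'}\,\langle\xi\rangle^{2s'}
\]
for any $s'$ satisfying the hypothesis. The exponent $1+2s'$ keeps the angular integral $\int_0^{\pi/2}\theta^{2s'-2s}\,d\theta$ finite, and the factor $\langle\xi\rangle^{2s'}$ is split symmetrically between $\widehat{\CalM g}(\xi^+)$ and $\widehat{h}(\xi)$ by Cauchy--Schwarz, producing the advertised $H^{s'}$ norms after Plancherel. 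The endpoint $s=1/2$ requires the $\varepsilon_1$-loss version of Lemma~\ref{lem:CalM}, which is precisely what forces the strict inequality $s' > 0$ there.

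The main obstacle will be carrying the parametric $\ell$-dependence of $\CalM$ cleanly through the H\"older interpolation: the derivative bounds in Lemma~\ref{lem:CalM} involve the sharper denominator $\langle\xi\rangle + (T-t)|\eta|$ rather than just $\langle\xi\rangle$, and one must verify the comparability $\CalM(t,\ell,\xi)\sim\CalM(t,\ell,\xi^+)$ on the Taylor segment so that the $\CalM$-weight transfers from $\widehat{g}(\xi^+)$ to the $\widehat{\CalM g}(\xi^+)$ appearing in the Sobolev norm. A minor bookkeeping point is that the constant $C_R$ inherits its $R$-dependence entirely from $\|\widehat{\Phi_R}\|_{L^\infty}$ and finitely many derivative seminorms of $\widehat{\Phi_R}$, all of which are finite because $\Phi_R\in C_c^\infty$; this justifies suppressing the precise form of $K_R$ in Step~1 and simply absorbing it into $C_R\|f\|_{L^1}$.
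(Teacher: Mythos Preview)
Your proposal has a fundamental error at the very first step: the Bobylev identity you write is the Maxwell-molecules version, valid only when the kinetic factor is constant. Because $Q_R$ carries the nontrivial factor $\Phi_R(|v-v_*|)$, the correct Fourier-side representation (cf.\ the appendix of \cite{ADVW2000}) involves an additional convolution variable $\xi_*$:
\[
\bigl(\CalM Q_R(f,g) - Q_R(f,\CalM g),\, h\bigr)
= \iiint b\Bigl(\tfrac{\xi}{|\xi|}\cdot\sigma\Bigr)\bigl[\hat\Phi_R(\xi_*-\xi^-)-\hat\Phi_R(\xi_*)\bigr]\bigl[\CalM(\xi)-\CalM(\xi-\xi_*)\bigr]\hat f(\xi_*)\,\hat g(\xi-\xi_*)\,\overline{\hat h(\xi)}\,d\xi\,d\xi_*\,d\sigma.
\]
Hence (i) the argument of $\hat g$ is $\xi-\xi_*$, not $\xi^+$; (ii) the symbol difference to control is $\CalM(\xi)-\CalM(\xi-\xi_*)$ with $\xi_*$ ranging over all of $\R^3$, not $\CalM(\xi)-\CalM(\xi^+)$; (iii) the claim that ``the $Q_R^-$ parts cancel exactly'' is false in this setting --- it is true only for Maxwell molecules, where the loss term becomes $\hat f(0)\hat g(\xi)$; and (iv) the Schwartz decay of $\hat\Phi_R$ in $\xi_*$ is not bookkeeping but the very mechanism that makes the $d\xi_*$-integral converge, and it is exactly where the constant $C_R$ comes from. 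You cannot absorb it into a bounded scalar $K_R(\xi,\xi^-)$.

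Even granting your formula, the interpolation step fails: from the first-order bound $|\CalM(\xi)-\CalM(\xi^+)|\lesssim\CalM(\xi^+)\sin(\theta/2)$ and the trivial bound $\lesssim\CalM(\xi^+)$ you obtain at best $\CalM(\xi^+)(\sin\tfrac{\theta}{2})^\alpha$ with $\alpha\in[0,1]$; this never yields the exponent $1+2s'>1$ on $\sin(\theta/2)$ nor the factor $\la\xi\ra^{2s'}$ you assert. The paper's proof proceeds quite differently: it splits the $\xi_*$-integration into the three frequency regimes $\la\xi_*\ra\geq\sqrt2|\xi|$, $|\xi|/2\leq\la\xi_*\ra\leq\sqrt2|\xi|$, and $\la\xi_*\ra\leq|\xi|/2$, proves the key comparison \eqref{M-inequality} for $\CalM(\xi)-\CalM(\xi-\xi_*)$ on each, and then further decomposes according to $|\xi^-|\lessgtr\tfrac12\la\xi_*\ra$. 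On the small-$|\xi^-|$ piece $\hat\Phi_R$ is Taylor-expanded to second order (with a symmetry cancellation killing the leading linear term), while on the large-$|\xi^-|$ piece the decay $|\hat\Phi_R(\xi_*)|\lesssim\la\xi_*\ra^{-3-\gamma}$ is used directly together with a singular change of variables. The threshold $s'\geq(s-1/2)^+$ emerges from balancing this $\la\xi_*\ra^{-(3+\gamma)}$ decay against the angular growth $(\la\xi\ra/\la\xi_*\ra)^{2s}$, via the choice $\kappa=2(s-s')$ in~\eqref{M-inequality}.
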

\begin{proof} 
We use a decomposition 
\begin{align*}
{\bf 1} = {\bf 1}_{\La \xi_* \Ra\geq \sqrt 2 |\xi|} + 
{\bf 1}_{\La \xi_*\Ra \leq |\xi |/2 } + {\bf 1}_{\sqrt 2 |\xi| \geq \La \xi_*\Ra \geq | \xi|/2 } \,.
\end{align*}
The following property holds in each of the following subdomains:
\begin{equation}\label{equivalence-relation-spatiall-homo}
\left \{ \begin{array}{ll}
\La \xi \Ra \lesssim \la \xi_* \ra \sim \la \xi-\xi_*\ra,   &\mbox{on supp ${\bf 1}_{\la \xi_* \ra\geq \sqrt 2 |\xi|}$},\\
\la \xi \ra \sim \la \xi-\xi_*\ra,   &\mbox{on supp ${\bf 1}_{\la \xi_*\ra \leq |\xi |/2 } $}, \\
\la \xi \ra \sim \la \xi_* \ra \gtrsim   \la \xi-\xi_*\ra,  &
\mbox{on supp ${\bf 1}_{\sqrt 2 |\xi| \geq \la \xi_*\ra \geq | \xi|/2 }$\,.}
\end{array}
\right.
\end{equation}
Noting the Ukai formula given in Lemma~\ref{lem:Ukai},
we put 
\[
F(x;a) = \frac{x}{\Big(1+ \delta( a x^{2s} + a^{2s+1} |\eta|^{2s})\Big)^{\frac{1}{2}+\varepsilon}}.
\]
It is easy to check that
$F(x;a)$
is an increasing function in $x \in (0,\infty)$ because of  $\varepsilon < \frac{1-s}{2s}$. 
Therefore, it follows from the first formula of \eqref{equivalence-relation-spatiall-homo} that,
on supp ${\bf 1}_{\la \xi_* \ra\geq \sqrt 2 |\xi|}$,
we have 
\begin{align*}
 \La \xi \Ra \CalM(t,\xi,\eta)  \sim F(\La \xi \Ra ; T-t) \le F(\La \xi_* \Ra ; T-t)  \lesssim
  \La \xi_* \Ra \CalM(t,\xi_* ,\eta) \sim \La \xi_* \Ra \CalM(t, \xi- \xi_* ,\eta).
\end{align*}
Hence,
\begin{align} \label{I-part}
  \CalM(t,\xi,\eta)  
\lesssim 
   \frac{\La \xi_* \Ra }{\La \xi \Ra }\CalM(t, \xi- \xi_* ,\eta) \,,
\qquad
   \text{if $\vint{\xi_\ast} \geq \sqrt{2} |\xi|$.}
\end{align}
Combining the mean value theorem, Lemma~\ref{lem:CalM} and the second formula in~\eqref{equivalence-relation-spatiall-homo}  $\mbox{on supp ${\bf 1}_{\la \xi_*\ra \leq |\xi |/2 } $} $, we have
\begin{align}\label{II-part}
\left|\CalM (t, \xi, \eta) - \CalM (t, \xi-\xi_*, \eta)\right| &\le
\int_0^1 |\big(\nabla_\xi  \CalM \big) (t, \xi - \tau \xi_*, \eta) )|d\tau |\xi_*| \notag \\
&  \lesssim  
\CalM(t, \xi-\xi_*,\eta) \displaystyle \frac{\la \xi_*\ra}{\la \xi \ra} \,,
\qquad \qquad 
\text{if $\vint{\xi_\ast} \leq |\xi|/2$} \,.
\end{align}
Here and in what follows, we abbreviate $\CalM (\xi) = \CalM(t,\xi,\eta)$ to show its dependence on
$\xi$.
On supp ${\bf 1}_{\la \xi_*\ra \leq |\xi |/2 } $ we have   $\la \xi \ra \sim \la \xi - \xi_*\ra$ and hence 
\[
|\CalM (\xi) - \CalM (\xi-\xi_*) |\lesssim \CalM(\xi -\xi_*)\,,
\]
which together with \eqref{II-part} gives 
\begin{equation}\label{II-part-bis}
|\CalM (\xi) - \CalM (\xi-\xi_*) |\lesssim \frac{\la \xi_* \ra^\kappa}{\la \xi \ra^{\kappa}} \CalM(\xi -\xi_*)
\end{equation}
for any $0 \le \kappa \le 1$ and $\vint{\xi_\ast} \leq |\xi |/2$.

On supp ${\bf 1}_{\sqrt{2}|\xi|\geq\la \xi_*\ra \geq |\xi |/2 } $, by means of Proposition~\ref{lem:Ukai} and the third formula from \eqref{equivalence-relation-spatiall-homo},  we have 
\begin{align}\label{III-part}
\CalM (\xi) \sim \CalM (\xi_*) \sim
\CalM(\xi-\xi_*) \frac{1 + \delta\Big((T-t) \la \xi -\xi_*\ra^{2s} + (T-t)^{2s+1} |\eta|^{2s} \Big)}
{1 + \delta\Big((T-t) \la \xi_* \ra^{2s} + (T-t)^{2s+1} |\eta|^{2s} \Big)}
\lesssim
   \CalM (\xi-\xi_*).
\end{align}
It follows from \eqref{I-part}, \eqref{II-part-bis} and \eqref{III-part} that
\begin{align}\label{M-inequality}
\left|\CalM (\xi) - \CalM (\xi-\xi_*)\right|\leq &\CalM (\xi-\xi_*)\left\{ \Big(
\frac{\la \xi_*\ra}{\la
 \xi \ra }\Big) {\bf 1}_{\la \xi_* \ra \ge \sqrt 2 |\xi|} 
+  {\bf 1}_{   \sqrt 2 |\xi| >\la \xi_* \ra \ge   |\xi|/2  }
+ \frac{\la \xi_*\ra^\kappa}{\la \xi \ra^\kappa}{\bf 1}_{|\xi|/2>\la \xi_* \ra }\right\}\,, 
\end{align}
which corresponds to \cite[(3.4)]{AMUXY2012JFA}.

We shall follow an almost same procedure as in the proof of \cite[Proposition 3.4]{AMUXY2012JFA}. By using the Bobylev formula {}from
the Appendix of \cite{ADVW2000}, we have
\begin{align*}
( Q_R(f, g), h ) =& \iiint_{ \RR^3\times\RR^3\times\Ss^2} b \Big({\frac{\xi}{ | \xi |}} \cdot \sigma \Big) [ \hat\Phi_R (\xi_* - \xi^- ) - \hat\Phi_R (\xi_* ) ]
\hat f (\xi_* ) \hat g(\xi - \xi_* ) \overline{{\hat h} (\xi )} \dxi \dxi_*d\sigma \,,
\end{align*}
where $\xi^-=\frac 1 2 (\xi-|\xi|\sigma)$.
Therefore,
\begin{align*}
 \Big (\CalM(D) \, Q_R (f,  g) -  Q_R (f,  \CalM (D)\, g)  , h \Big )
= &\iiint b \Big({\frac{\xi}{ | \xi |}} \cdot \sigma \Big) [ \hat\Phi_R (\xi_* - \xi^- ) - \hat\Phi_R (\xi_* ) ] \\
&\quad \times
\Big(\CalM (\xi) - \CalM (\xi-\xi_*)\Big)
\hat f (\xi_* ) \hat g(\xi - \xi_* ) \overline{{\hat h} (\xi )} \dxi \dxi_*d\sigma \\
= & \iiint_{ | \xi^- | \leq {1\over 2} \la \xi_*\ra }  \cdots\,\, \dxi \dxi_*d\sigma
+ \iiint_{ | \xi^- | \geq {1\over 2} \la \xi_*\ra } \cdots\,\, \dxi \dxi_*d\sigma \,\\
=& \CalA_1(f,g,h)  +  \CalA_2(f,g,h) \,\,.
\end{align*}
For $\CalA_1$, we use the Taylor expansion of $\hat\Phi_R$ of
order $2$ to have
$$
\CalA_1 = \CalA_{1,1} (f,g,h) +\CalA_{1,2} (f,g,h),
$$
where
\begin{align*}
\CalA_{1,1} &= \iiint b\,\, \xi^-\cdot (\nabla\hat\Phi_R)( \xi_*)
{\bf 1}_{ | \xi^- | \leq {1\over 2} \la \xi_*\ra }
\Big(\CalM  (\xi) - \CalM (\xi-\xi_*)\Big)
\hat f (\xi_* ) \hat g(\xi - \xi_* ) \bar{\hat h} (\xi ) \dxi \dxi_*d\sigma,
\end{align*}
and $\CalA_{1,2} (f,g,h)$ is the remaining term corresponding to the second order  Taylor expansion of $\hat\Phi_R$.

We first consider $\CalA_{1,1}$.
By writing
\[
\xi^- = \frac{|\xi|}{2}\left(\Big(\frac{\xi}{|\xi|}\cdot \sigma\Big)\frac{\xi }{|\xi|}-\sigma\right)
+ \left(1- \Big(\frac{\xi}{|\xi|}\cdot \sigma\Big)\right)\frac{\xi}{2},
\]
we see that the integral corresponding to the first term on the right hand side vanishes because of the symmetry
on $\Ss^2$.
Hence, we have
\[
\CalA_{1,1}= \iint_{\RR^6} K(\xi, \xi_*) \Big(\CalM (\xi) - \CalM (\xi-\xi_*)\Big)
\hat f (\xi_* ) \hat g(\xi - \xi_* ) \bar{\hat h} (\xi ) \dxi \dxi_* \,,
\]
where
\[
K(\xi,\xi_*) = \int_{\Ss^2}
 b \Big({\xi\over{ | \xi |}} \cdot \sigma \Big)
\left(1- \Big(\frac{\xi}{|\xi|}\cdot \sigma\Big)\right)\frac{\xi}{2}\cdot
(\nabla\hat\Phi_R)( \xi_*)
{\bf 1}_{ | \xi^- | \leq {1\over 2} \la \xi_*\ra } d \sigma \,.
\]
Note that $| \nabla \hat\Phi_R (\xi_*) | \lesssim \frac{1}{\la
\xi_*\ra^{3+\gamma +1}}$, {}from the Appendix of \cite{AMUXY2012JFA}. If $\sqrt 2
|\xi| \leq \la \xi_* \ra$, then $\sin\frac{\theta}{2} $ $| \xi|= |\xi^-| \leq \la \xi_* \ra/2$
because $0 \leq \theta \leq \pi/2$, and we have
\begin{align*}
|K(\xi,\xi_*)| &\lesssim \int_0^{\pi/2} \theta^{1-2s} d \theta\frac{ \la \xi\ra}{\la \xi_*\ra^{3+\gamma +1}}
\lesssim \frac{1  }{\la \xi_*\ra^{3+\gamma}}
\frac{\la \xi \ra}{\la \xi_*\ra} \,.
\end{align*}
On the other hand, if $\sqrt 2 |\xi| \geq \la \xi_* \ra$, then
\begin{align*}|K(\xi,\xi_*)| &\lesssim \int_0^{\pi\la \xi_*\ra /(2|\xi|)} \theta^{1-2s} d \theta\frac{ \la \xi\ra}{\la \xi_*\ra^{3+\gamma +1}}
\lesssim \frac{1  }{\la \xi_*\ra^{3+\gamma}}\left(
\frac{\la \xi \ra}{\la \xi_*\ra}\right)^{2s-1}\,.
\end{align*}
Hence,  we obtain
\begin{align}\label{later-use1}
|K(\xi,\xi_*)| &\lesssim \frac{1  }{\la \xi_*\ra^{3+\gamma}}\left\{
\left( \frac{\la \xi \ra}{\la \xi_*\ra}\right){\bf 1}_{ \la \xi_*
\ra \geq
\sqrt 2 |\xi| } +{\bf 1}_{ \sqrt 2 |\xi| \geq  \la \xi_* \ra \geq
|\xi|/2} + \left( \frac{\la \xi \ra}{\la \xi_*\ra}\right)^{2s-1}
{\bf 1}_{ |\xi|/2 \ge \la \xi_* \ra }\right\}\,. \notag
\end{align}
Similar to $\CalA_{1,1}$,
we  can also write
\[
\CalA_{1,2}= \iint_{\RR^6} \tilde K(\xi, \xi_*) \Big(\CalM (\xi) - \CalM (\xi-\xi_*)\Big)
\hat f (\xi_* ) \hat g(\xi - \xi_* ) \bar{\hat h} (\xi ) \dxi \dxi_* \,,
\]
where
\[
\tilde K(\xi,\xi_*) = \int_{\Ss^2}
 b \Big({\xi\over{ | \xi |}} \cdot \sigma \Big)
\int^1_0(1-\tau)  (\nabla^2\hat\Phi_R) (\xi_* -\tau\xi^- ) : \vpran{\xi^- \otimes\xi^-}
{\bf 1}_{ | \xi^- | \leq {1\over 2} \la \xi_*\ra } d\tau  d \sigma\,.
\]
Again {}from the Appendix of \cite{AMUXY2012JFA}, we have
$$
| (\nabla^2\hat\Phi_R) (\xi_* -\tau\xi^- ) | \lesssim {1\over{\la  \xi_* -\tau \xi^-\ra^{3+\gamma +2}}}
\lesssim
 {1\over{\la \xi_*\ra^{3+\gamma +2}}},
$$
because $|\xi^-| \leq \la \xi_*\ra/2$. This leads to
\begin{align}\label{later-use2}
|\tilde K(\xi,\xi_*)| &\lesssim \frac{1  }{\la
\xi_*\ra^{3+\gamma}}\left\{ \left( \frac{\la \xi \ra}{\la
\xi_*\ra}\right)^2 {\bf 1}_{ \la \xi_* \ra \geq \sqrt 2 |\xi|
}  +{\bf 1}_{ \sqrt 2 |\xi| \geq  \la \xi_* \ra \geq
|\xi|/2} + \left( \frac{\la \xi \ra}{\la \xi_*\ra}\right)^{2s} {\bf
1}_{ |\xi|/2 \ge \la \xi_* \ra }\right\}\,. \notag
\end{align}
It follows {}from  \eqref{M-inequality}, \eqref{later-use1} and \eqref{later-use2} that
we have 
$$
|\CalA_{1}|\lesssim  |\CalA_{1,1}| + |\CalA_{1,2}|
\lesssim A_1+ A_2 + A_3,
$$
where
\begin{equation}\label{A_1}
A_1=\iint_{\RR^6}
\left |\frac{\hat f(\xi_*)}{\la \xi_*\ra^{3+\gamma}}\right | \left |\CalM (\xi-\xi_*)\hat g(\xi -\xi_*)\right | |\hat h(\xi)|
{\bf 1}_{ \la \xi_* \ra \geq \sqrt 2 |\xi| }\dxi_* \dxi\, ,
\end{equation}
and
\begin{align*}
A_2=&
\iint_{\RR^6}
\left |\frac{\hat f(\xi_*)}{\la \xi_*\ra^{3+\gamma}}\right |  \left |\CalM (\xi-\xi_*)\hat g(\xi -\xi_*)\right | |\hat h(\xi)|
 {\bf 1}_{   \sqrt 2 |\xi| >\la \xi_* \ra \ge   |\xi|/2}\dxi_*\dxi\, ,
\\
A_3=&\iint_{\RR^6}
\left |\frac{\hat f(\xi_*)}{\la \xi_*\ra^{3+\gamma}}\right | \left |\CalM (\xi-\xi_*)\hat g(\xi -\xi_*)\right | |\hat h(\xi)|
\left(  \frac{\la \xi\ra^{2s - \kappa}}{\la \xi_* \ra^{2s-\kappa} } \right)  {\bf 1}_{|\xi|/2>\la \xi_* \ra } \dxi_* \dxi\, .
\end{align*}
For $\gamma >0$, we can obtain 
\begin{align*}
&\left| A_1\right|^2 \lesssim \|\hat f \|^2_{L^\infty}
\left(\int_{\RR^3} \frac{\dxi_*}{ \la \xi_*\ra^{3+ \gamma}}
\int_{\RR^3_\xi} |\hat h(\xi)|^2\dxi\right)\\
&\hspace{2cm} \times \left(\int_{\RR^3}  \frac{\dxi}{ \la \xi \ra^{3+ \gamma}}
\int_{\RR^3} \left(\frac{\la \xi\ra}{\la \xi_*\ra}\right)^{3+\gamma}
{\bf 1}_{ \la \xi_* \ra \geq \sqrt 2 |\xi| } |\CalM (\xi-\xi_*)\hat g ( \xi -\xi_*)|^2 \dxi_*
\right)\notag \\
& \qquad
\lesssim
\|f\|_{L^1}^2 \|\CalM g\|_{L^2}^2 \|h\|_{L^2}^2\,. \notag
\end{align*}

Noticing the third formula of \eqref{equivalence-relation-spatiall-homo}, we get
\begin{align*}
\left| A_2 \right|^2 \lesssim&
\left(\int_{\RR^3}\frac{|\hat f(\xi_*)|^2 \dxi_*}{
\la \xi_*\ra^{6+2\gamma}}
\int_{\la \xi -\xi_*\ra \lesssim \la \xi_* \ra} 
\dxi \right) \left( \iint_{\RR^6}|\CalM(\xi-\xi_*)\hat g ( \xi -\xi_*)|^2 |\hat h(\xi)|^2 \dxi \dxi_* \right) \\
& \lesssim
\int_{\RR^3} \frac{|\hat f(\xi_*)|^2} {
\la \xi_* \ra^{3+2\gamma} }
\dxi_* \|\CalM g\|^2_{L^2} \|h\|_{L^2}^2
\lesssim
\|f\|_{L^1}^2 \|\CalM g\|_{L^2}^2 \|h\|_{L^2}^2\,.
\end{align*}

Setting $\hat G(\xi) = \la \xi \ra^{s'} \CalM(\xi) \hat g(\xi)$ and $\hat H(\xi) =\la \xi \ra^{s'} \hat h(\xi)$ with 
$s' = s - \kappa/2$, 
we have when $\kappa \le 2s$,
\begin{align*}
&\left| A_3 \right|^2 \lesssim
\left(\int_{\RR^3} \frac{|\hat f(\xi_*)| \dxi_*}{ \la \xi_*\ra^{3+ \gamma + 2s-\kappa}}
\int_{\RR^3} |\hat H(\xi)|^2\dxi\right)
\left(\int_{\RR^3}  \frac{|\hat f(\xi_*)|\dxi_*}{ \la \xi_* \ra^{3+ \gamma +2s-\kappa}}
\int_{\RR^3} 
{\bf 1}_{ |\xi|/2 \ge \la \xi_* \ra } |\hat G ( \xi -\xi_*)|^2 \dxi
\right)\notag \\
& \qquad
\lesssim
\|f\|_{L^1}^2 \|\CalM g\|^2_{H^{s'} }\|h\|^2_{H^{s'}}\,.  \notag
\end{align*}
The above three estimates yield the desired estimate for $\CalA_1(f,g,h)$.

Next consider $\CalA_2(f,g,h)$. Write $\CalA_2(f,g,h)$ as
\begin{align*}
\CalA_2 &=  \iiint b \Big({\xi\over{ | \xi |}} \cdot \sigma \Big) {\bf 1}_{ | \xi^- | \ge {1\over 2}\la \xi_*\ra }
\hat\Phi_R (\xi_* - \xi^- ) \cdots 
\dxi \dxi_*d\sigma \\
&\quad - \iiint b \Big({\xi\over{ | \xi |}} \cdot
 \sigma \Big){\bf 1}_{ | \xi^- | \ge {1\over 2}\la \xi_*\ra } \hat\Phi_R (\xi_* )
\cdots 
 \dxi \dxi_*d\sigma \\
&= \CalA_{2,1}(f,g,h) - \CalA_{2,2}(f,g,h)\,.
\end{align*}
Since $|\xi^-|= |\xi| \sin(\theta/2)$ $\geq$
$\la \xi_*\ra/2$ and $\theta \in [0,\pi/2]$, we have $\sqrt 2 |\xi| \geq
\la \xi_*\ra$. Write
\[
\CalA_{2,j}= \iint_{\RR^6} K_j(\xi, \xi_*) \Big(\CalM (\xi) - \CalM (\xi-\xi_*)\Big)
\hat f (\xi_* ) \hat g(\xi - \xi_* ) \bar{\hat h} (\xi ) \dxi \dxi_* \,.
\]
Then by $\abs{\hat\Phi_R(\xi)} \lesssim \frac{1}{\vint{\xi_\ast}^{3+\gamma}}$, we have
\begin{align*}
   |K_2(\xi, \xi_*)| &= \left|\int  b \Big({\xi\over{ | \xi |}} \cdot \sigma \Big)\hat\Phi_R(\xi_*) {\bf 1}_{ | \xi^- | \ge {1\over 2}\la \xi_*\ra } d\sigma\right|
\lesssim  
   {1\over{\la \xi_* \ra^{3+\gamma }}} \frac{\la  \xi\ra^{2s} }{\la \xi_*\ra^{2s}}{\bf 1}_{\sqrt 2 |\xi| \geq \la \xi_* \ra} \notag 
\\
&\lesssim
\frac{1  }{\la \xi_*\ra^{3+\gamma}}\left\{
{\bf 1}_{ \sqrt 2 |\xi| \geq  \la \xi_* \ra \geq |\xi|/2}
+
\left(
\frac{\la \xi \ra}{\la \xi_*\ra}\right)^{2s}
{\bf 1}_{ |\xi|/2 \ge \la \xi_* \ra }\right\}  \notag \,,
\end{align*}
which shows the desired estimate for $\CalA_{2,2}$ in exactly the same way as
the estimation on $A_2$ and $A_3$.

As for $\CalA_{2,1}$, it suffices to work under the condition
 $|\xi_* \cdot \xi^-| \ge \frac1 2 |\xi^-|^2$.
In fact, on the complement of this
set, we have
 $|\xi_* -\xi^-| > | \xi_*|$, and $\hat\Phi_R(\xi_*-\xi^-)$ is
the same as $\hat\Phi_R(\xi_*)$.
Therefore, we can consider $\CalA_{2,1,p}$ in which  $K_1(\xi, \xi_*)$ is replaced by
\[
K_{1,p}(\xi,\xi_*) = \int_{\Ss^2}
 b \Big({\xi\over{ | \xi |}} \cdot \sigma \Big)
\hat\Phi_R ( \xi_*-\xi^-)
{\bf 1}_{ | \xi^- | \geq {1\over 2} \la \xi_*\ra }{\bf 1}_{| \xi_* \,\cdot\,\xi^-| \ge {1\over 2} | \xi^-|^2} d \sigma \,.
\]
By writing
\[
{\bf 1}= {\bf 1}_{\la \xi_* \ra \geq |\xi|/2} 
+  {\bf 1}_{\la \xi_* \ra < |\xi|/2},
\]
we decompose respectively
\begin{align*}
\CalA_{2,1,p}
=
B_1+ B_2\,.
\end{align*}
On the set of integration in $K_{1,p}$, we have $\la \xi_* -\xi^- \ra \lesssim \,
\la \xi_* \ra$, because $| \xi^- | \lesssim | \xi_*|$
by $| \xi^-|^2 \le 2 | \xi_* \cdot\xi ^-| \lesssim |\xi^-|\, | \xi_*|$.
Furthermore, on the set for $B_1$ 
we have $\la \xi \ra \sim \la \xi_* \ra$,
so that
$\la \xi_* -\xi^- \ra \lesssim \ \la \xi \ra$ and $b\,\, {\bf 1}_{ | \xi^- | \ge {1\over 2} \la \xi_*\ra } {\bf 1}_{\la \xi_* \ra \geq |\xi|/2}$
is bounded.
By means of \eqref{M-inequality} and Cauchy-Schwarz inequality, we have
\begin{align*}
|B_1| ^2  
&\lesssim \|f\|^2_{L^1} \iiint {
|\hat\Phi_R (\xi_* - \xi^-) |   }
|\CalM (\xi -\xi_*)\hat g(\xi -\xi_*)|^2 d \sigma \dxi \dxi_* \\
& \quad \,
\times \iiint {
|\hat\Phi_R (\xi_* - \xi^-) |   }
 |{ \hat h} (\xi ) |^2 d\sigma \dxi \dxi_*
 \\
&\lesssim  \|f\|^2_{L^1}  \|\CalM g\|^2_{L^2}
\|h\|_{L^2}^2\,.
\end{align*}
Here, 
noting  $\xi -\xi_* = \xi^+ -u$ with
$u = \xi_* - \xi^-$, 
it holds for the first integral factor
\begin{align*}
\iiint {
|\hat\Phi_R (\xi_* - \xi^-) |   }
|\CalM (\xi -\xi_*)\hat g(\xi -\xi_*)|^2 d \sigma \dxi \dxi_*
\lesssim \int_{\Ss^2}\left( \iint \la u \ra^{-3-\gamma} 
|\CalM (\xi^+ -u)\hat g(\xi^+ -u)|^2 \dxi \dxi_*\right) d \sigma\,,
\end{align*}
we have used 
the change of variables
$(\xi, \xi_*) \rightarrow (\xi^+, u)$ whose Jacobian is
\begin{align*}
&\Big|\frac{\partial(\xi^+,u)}{\partial (\xi, \xi_*)}\Big|=\Big|\frac{\partial \xi^+}{\partial \xi} \Big|=\frac{ \Big|I+ \frac{\xi}{|\xi|}\otimes
\sigma\Big|} {8}
 =\frac{|1+ \frac{\xi}{|\xi|}\cdot\sigma|}{8}=\frac{\cos^2
(\theta/2)}{4}\ge \frac{1}{8}, \qquad \theta\in [0, \, \pi/2].  \notag
\end{align*}

 

On the  set of the  integration for  $B_2$,  we recall  $\la \xi \ra \sim \la \xi - \xi_*\ra$ and $\eqref{II-part-bis}$ with $\kappa \in [0,1]$.
Setting
$\hat G(\xi) =  \CalM(\xi) \hat g(\xi)$, 
we have 
\begin{align*}
|B_2 | ^2  \lesssim& \|f\|_{L^1}^2\iiint b\,\, {\bf 1}_{ | \xi^- | \ge {1\over 2} \la \xi_*\ra}\frac{
|\hat\Phi_R (\xi_* - \xi^-) | \la \xi_* \ra^\kappa }
{\la \xi\ra^\kappa } |\hat G(\xi -\xi_*)|^2 d \sigma \dxi \dxi_* \\
& \times \iiint b\,\, {\bf 1}_{ | \xi^- | \ge {1\over 2} \la \xi_*\ra}\frac{
|\hat\Phi_R (\xi_* - \xi^-) | \la \xi_* \ra^\kappa }
{\la \xi\ra^\kappa } |{\hat h} (\xi ) |^2 d\sigma \dxi \dxi_*\,.
\end{align*}
We use the change of variables  $\xi_*  \rightarrow u= \xi_* -\xi^-$.
Note that $| \xi ^-| \ge {1\over 2} \la u +\xi^-\ra $ implies  $|\xi^-| \geq \la u\ra/\sqrt {10}$,
and that
\[
\la \xi_* \ra^\kappa \lesssim \la \xi_* - \xi^- \ra^\kappa + |\xi|^\kappa \sin^\kappa \theta/2\,.
\]
If $\kappa -2s\ne 0$,  then we have
\begin{align*}
& \quad \,
\iint  b\,\, {\bf 1}_{ | \xi^- | \ge {1\over 2} \la \xi_*\ra }  \frac{
|\hat\Phi_R (\xi_* - \xi^-) | \la \xi_* \ra}
{\la \xi\ra }d\sigma \dxi_*
\\
&\lesssim
\int \frac{{\bf 1}_{\la u\ra \lesssim |\xi|}}{\la u \ra^{3+\gamma}} 
\Big( \frac {\la u \ra^\kappa}
{\la \xi \ra^\kappa } \int  b\, {\bf 1}_{ | \xi^- |  \gtrsim \la u \ra } d\sigma +
\int  b(\cos \theta) \sin^\kappa (\theta/2)  {\bf 1}_{ | \xi^- |  \gtrsim \la u \ra } d\sigma
  \Big)du
\\
&\lesssim \int \frac{1}{\la u \ra^{3+\gamma} } \left( \frac {\la u \ra}
{\la \xi \ra}\right)^{ \kappa-2s} du \lesssim \la \xi \ra^{(2s-\kappa)^+}\,,
\end{align*}
which yields 
\[
|B_2| \lesssim \|f\|_{L^1} \|\CalM g\|_{H^{(s-\kappa/2)^+}}\| h\|_{H^{(s-\kappa/2)^+}},
\]
for any $\kappa \in [0,1]$ with $\kappa\neq 2s$.
Summing above estimates we complete the proof of  the proposition.
\end{proof}

The second commutator estimate is for $Q_R(\mu, g)$ with the weight $\vint{v}^k$. 
\begin{prop} \label{prop:commutator-R}
Let $k, R > 0$ and $Q_R$ be defined as in~\eqref{def:Q-R-bar-R}. Then
\begin{align*}
   \abs{\vpran{\vint{v}^k Q_R(\mu, g) - Q_R(\mu, \vint{v}^k g), \,\, h}}
\leq 
   C_{k,R} \min\big\{\norm{g}_{L^2} \norm{h}_{H^{s'}}, \, \,
   \norm{g}_{H^{s'}} \norm{h}_{L^2} \big\}  \,,
\end{align*}
where $C_{k, R}$ may depend on $k, R$ and 
$s' =0$ if $s < 1/2$, $s'>2s- 1$ if $s\ge 1/2$.
\end{prop}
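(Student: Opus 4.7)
The plan is to derive the commutator explicitly, localize via $\chi_R$, and then split the resulting integral in two different ways so as to recover both bounds in the $\min$. Starting from
\[
\vint{v}^k Q_R(\mu,g)(v) - Q_R(\mu,\vint{v}^k g)(v) = -\int_{\R^3}\!\!\int_{\Ss^2} b(\cos\theta)\,\Phi_R(v-v_*)\,\mu'_*\,g'\,(\vint{v'}^k - \vint{v}^k)\dsigma\dv_\ast,
\]
the quantity $I \Denote \vpran{\vint{v}^k Q_R(\mu,g) - Q_R(\mu,\vint{v}^k g),\,h}$ reads
\[
I = -\iiint b\,\Phi_R\,\mu'_*\,g'\,\vpran{\vint{v'}^k - \vint{v}^k}\,h\dbmu.
\]
Since $\chi_R$ enforces $|v-v_*|\le 2R$, we have $|v'-v|,\,|v'_*-v_*|\le 2R\sin(\theta/2)$ and $\vint{v},\vint{v'},\vint{v_*}$ are all comparable to $\vint{v'_*}$ up to $R$-dependent constants. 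Combined with the mean value theorem this delivers the key pointwise bound
\[
\abs{\mu'_*\vpran{\vint{v'}^k - \vint{v}^k}} \le C_{k,R}\,\sin(\theta/2)\,\mu^{1/2}(v'_*),
\]
which drives all subsequent estimates.

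To obtain the bound $\norm{g}_{L^2}\norm{h}_{H^{s'}}$, decompose $h = h' + (h-h')$ so that $I = I_A + I_B$. For $I_A$, apply the pre-post collisional involution $(v,v_*,\sigma) \leftrightarrow (v',v'_*,(v-v_*)/|v-v_*|)$: it preserves $b$ and $\Phi_R$, exchanges $\mu'_* g' h'$ with $\mu_*\,g\,h$, and flips the sign of $\vint{v'}^k - \vint{v}^k$, yielding
\[
I_A = \iiint b\,\Phi_R\,\mu_*\,g(v)\,h(v)\,\vpran{\vint{v'}^k - \vint{v}^k}\dbmu.
\]
Taylor expand $\vint{v'}^k - \vint{v}^k = k\vint{v}^{k-2}\,v\cdot(v'-v) + O(|v'-v|^2)$ and use $v'-v = \tfrac{|v-v_*|}{2}(-2\sin^2(\theta/2)\,k_0 + \sin\theta\,\omega_\perp)$ with $k_0 = (v-v_*)/|v-v_*|$. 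Taking $k_0$ as the north pole for $\sigma$, the $\sin\theta\,(v\cdot\omega_\perp)$ piece vanishes after $\phi$-integration since $\int_0^{2\pi}\omega_\perp\,d\phi = 0$ and all remaining factors are $\phi$-independent. The leftover contributions are $O(\sin^2(\theta/2))$, giving an integrable angular density $b\sin^2(\theta/2)\sin\theta \sim \theta^{1-2s}$ for $s<1$; together with $\int\Phi_R\,\mu_*\dv_\ast \le C_R$ one gets $|I_A| \le C_{k,R}\norm{g}_{L^2}\norm{h}_{L^2}$.

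For $I_B$, insert the pointwise bound and apply Cauchy--Schwarz with a parameter $\alpha \in [0,1-s)$:
\[
|I_B|^2 \le C_{k,R}^2\vpran{\iiint b\,\Phi_R\,\sin^{2(1-\alpha)}\!\tfrac{\theta}{2}\,\mu^{1/2}(v'_*)(g')^2\dbmu}\!\vpran{\iiint b\,\Phi_R\,\sin^{2\alpha}\!\tfrac{\theta}{2}\,\mu^{1/2}(v'_*)(h-h')^2\dbmu}.
\]
After the regular change $v\mapsto v'$ the first factor is controlled by $C_{k,R}\norm{g}_{L^2}^2$ times the angular integral $\int b\sin^{2(1-\alpha)}(\theta/2)\dsigma$, which is finite precisely when $\alpha < 1-s$. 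In the second factor the angular kernel $b\sin^{2\alpha}(\theta/2)$ has reduced index $\tilde s = s-\alpha$; a Bobylev--Plancherel calculation parallel to Lemma~\ref{A1} gives the upper bound $C\norm{h}_{H^{\tilde s}}^2$ when $\tilde s\in(0,1)$, while for $\alpha > s$ the kernel is angular integrable and the pointwise bound $(h-h')^2 \le 2(h^2 + (h')^2)$ yields $C\norm{h}_{L^2}^2$. Selecting $\alpha = s-s'$ with $s' \in (\max\{0,2s-1\},\,s)$ meets both constraints and gives $|I_B| \le C_{k,R}\norm{g}_{L^2}\norm{h}_{H^{s'}}$; for $s<1/2$ one instead picks $\alpha \in (s,1-s)$ (non-empty) to achieve $s'=0$; for $s' \ge s$ the conclusion follows from the case $s'=s$ via $H^{s'} \supseteq H^s$.

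The mirror bound $C_{k,R}\norm{g}_{H^{s'}}\norm{h}_{L^2}$ is obtained from the analogous split $g' = g + (g'-g)$: the $g$-piece is handled by the pre-post swap followed by the regular change $v\mapsto v'$, after which $(v'-v_*)/|v'-v_*|$ becomes the new north pole and an entirely analogous azimuthal-symmetry argument applies (here $\Phi_R$ remains $\phi$-independent since it depends only on $|v-v_*| = |u-v_*|/\cos(\theta/2)$), while the $(g'-g)$-piece is treated by Cauchy--Schwarz with the regularity now placed on $g$. The main technical obstacle in the whole plan lies in verifying the non-cutoff coercivity upper bound on the second Cauchy--Schwarz factor, where the weight $\Phi_R\,\mu^{1/2}(v'_*)$ replaces the standard $|v-v_*|^\gamma \mu_*$; this reduces to a Bobylev--Plancherel computation parallel to Lemma~\ref{A1}, in which the compact support of $\Phi_R$ only simplifies the analysis.
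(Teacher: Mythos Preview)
Your approach is correct and shares the same two core mechanisms as the paper's proof---azimuthal symmetry to eliminate the first-order (diagonal) piece, and Cauchy--Schwarz with a split angular weight to control the difference piece by an $H^{s'}$ norm---but the organization differs. The paper works from the dual form $\iiint b\,\Phi_R\,\mu_*\,g\,h'(\vint{v'}^k-\vint{v}^k)\dbmu$, applies Lemma~\ref{lem:diff-v-k} to isolate the leading term $k\vint{v}^{k-2}|v-v_*|(v_*\cdot\omega)\sin(\theta/2)$ plus an $O(\sin^2(\theta/2))$ remainder, and then (for $s\ge1/2$) inserts the algebraic identity $gh'=gh+\vint{v}g(h'/\vint{v'}-h/\vint{v})+\ldots$, invoking \cite[Proposition~2.2]{AMUXY2012JFA} for the difference term. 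You instead bound the whole factor $\mu'_*(\vint{v'}^k-\vint{v}^k)$ pointwise and split $h=h'+(h-h')$ up front; this is slightly more direct and avoids reusing Lemma~\ref{lem:diff-v-k}, at the cost of carrying the awkward weight $\mu^{1/2}(v'_*)$.

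Two points to tighten. First, Lemma~\ref{A1} is a \emph{lower} bound and does not give the upper estimate $\iiint \tilde b\,F_*(h'-h)^2\lesssim\|h\|_{H^{\tilde s}}^2$ you need for the second Cauchy--Schwarz factor; the right tool is \cite[Proposition~2.2]{AMUXY2012JFA} (equivalently, the upper bound in Lemma~\ref{lemma-2.17-JFA-2011} here). Second, that upper bound requires a weight on $v_*$, not on $v'_*$; you should insert the elementary consequence of the cutoff $|v-v_*|\le 2R$, namely $\mu^{1/2}(v'_*)\le C_R\,\mu^{1/4}(v_*)$, before applying it. With these two clarifications your argument closes.
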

\begin{proof}
By the definition of $Q_R$, we have
\begin{align*}
   \vpran{\vint{v}^k Q_R(\mu, g) - Q_R(\mu, \vint{v}^k g), \,\, h}
&= \int_{\R^3} \int_{\R^3} \int_{\Ss^2}
    b(\cos\theta) |v - v_\ast|^\gamma \chi_R(|v - v_\ast|)
    \mu_\ast g \vpran{\vint{v'}^k h' - \vint{v}^k h}
\\
& \quad \,
   - \int_{\R^3} \int_{\R^3} \int_{\Ss^2}
    b(\cos\theta) |v - v_\ast|^\gamma \chi_R(|v - v_\ast|)
    \mu_\ast g \vint{v}^k \vpran{h' - h}
\\
& = \int_{\R^3} \int_{\R^3} \int_{\Ss^2}
    b(\cos\theta) |v - v_\ast|^\gamma \chi_R(|v - v_\ast|)
    \vpran{\vint{v'}^k - \vint{v}^k} \mu_\ast g h' \,.
\end{align*}
Note that by the cutoff function on $|v-v_\ast|$, we have
\begin{equation}\label{cutoff-equi}
\la v \ra \sim \la v_* \ra \sim \la v' \ra \sim \la v'_* \ra\,,
\end{equation}
where the equivalence constants may depend on $R$. 
It follows from Lemma \ref{lem:diff-v-k} 
that
\begin{align} \label{ineq:v-v-k}
\vint{v'}^k - \vint{v}^k
= k \la v\ra^{k-2} |v-v_*|(v_*\cdot \omega)\cos^{k-1} \dfrac{\theta}{2} \sin 
\dfrac{\theta}{2} 
+ \mathfrak{R}\,,
\end{align}
where $|\mathfrak{R}| \le C_{k,R} \la v_*\ra^k \sin^2 \dfrac{\theta}{2}$ and $\Ss^2 \ni \omega \perp (v-v_*)$. As
in the proof of 
Proposition \ref{prop:trilinear-weight}, one can replace
$\omega$ by $\tilde \omega \in \Ss^2 $ with
$\tilde \omega \perp (v'-v_*)$. 
The term coming from $\mathfrak{R}$ can be  easily estimated by
$C_{k, R} \norm{g}_{L^2} \norm{h}_{L^2}$. In the case $0< s <1/2$,
we have the same bound for the term coming from 
the first term on the right hand side of \eqref{ineq:v-v-k}.
On the other hand, when $s \ge 1/2$
one can write 
\begin{align*}
&\int_{\R^3} \int_{\R^3} \int_{\Ss^2}
    b(\cos\theta) |v - v_\ast|^\gamma \chi_R(|v - v_\ast|)
 k \la v\ra^{k-2} |v-v_*|(v_*\cdot \omega)\cos^{k-1} \dfrac{\theta}{2} \sin 
\dfrac{\theta}{2}   
\mu_\ast g h' d\sigma dv dv_* 
\\
&= \int_{\R^3} \int_{\R^3} \int_{\Ss^2}
\cdots \mu_\ast g h d\sigma dv dv_* + 
\int_{\R^3} \int_{\R^3} \int_{\Ss^2}
\cdots \mu_\ast \la v \ra g \left(\frac{h'}{\la v'\ra}-\frac{h}{\la v \ra}\right) d\sigma dv dv_* 
\\
&  \quad \,
+ \int_{\R^3} \int_{\R^3} \int_{\Ss^2}
\cdots \mu_\ast g  \big(\la v'\ra -\la v\ra\big)\frac{h'}{\la v'\ra} d\sigma dv dv_*\,.
\end{align*}
The first term on the right hand side vanishes because
of the symmetry on $\Ss^2$. The third is estimated by 
$C\|g\|_{L^2} 
\|h\|_{L^2}$.  It follows from 
Cauchy-Schwarz inequality that the second is estimated by
\begin{align*}
&C_{k,R}
\left(\int_{\R^3} \int_{\R^3} \int_{\Ss^2}
b(\cos \theta) \theta^{2s+\epsilon} \la v_* \ra^{k}\mu_* g^2
d\sigma dv dv_* \right)^{1/2} \\
& \qquad 
\times 
\left(\int_{\R^3} \int_{\R^3} \int_{\Ss^2}
b(\cos \theta) \theta^{2-(2s+\epsilon)}
\la v_* \ra^{k}\mu_*
\left(\frac{h'}{\la v' \ra} -\frac{h}{\la v \ra} \right)^2d\sigma dv dv_* \right)^{1/2}\\
&\le C_{k,R} \|g\|_{L^2} 
\|\la v \ra^{-1} h\|_{H^{2s-1+\epsilon}_{2s-1+\epsilon}}\,,\enskip 
\mbox{for any $\epsilon >0$},
\end{align*}
because of \cite[Proposition 2.2]{AMUXY2012JFA}.
Similarly,  we have another bound since $\omega$ in
\eqref{ineq:v-v-k} 
can be replaced by $\tilde \omega$. Indeed, it suffices to
write
\begin{align*}
\la v \ra^{k-2}gh' = g' (\la \cdot \ra^{k-2} h)'
+\left(\frac{g}{\la v \ra} -\frac{g'}{\la v' \ra} \right) (\la \cdot \ra^{k-1} h)' + 
\left(\la v \ra^{k-1} - \la v' \ra^{k-1}\right)\frac{g}{\la v \ra} h'
\end{align*}
and to use the symmetry on $\Ss^2$ with the north pole
$v'-v_*$ for the first term
and 
Lemma  \ref{lem:diff-v-k} for the third term again. This completes the proof of the proposition.
\end{proof}

We also have the following direct bound on $Q_R(f, \mu)$.
\begin{prop} \label{prop:bound-Q-R}
Let $k, R > 0$ and $Q_R$ be defined as in~\eqref{def:Q-R-bar-R}. Then 
\begin{align*}
   \abs{\vpran{\vint{v}^k Q_R (f, \mu), \,\, h}}
\leq   C_{k,R} \norm{f}_{L^2} \norm{h}_{L^2} \,,
\end{align*}
where $C_{k, R}$ may depend on $k, R$.
\end{prop}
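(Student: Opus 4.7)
The strategy is to combine the pre-post collisional change of variables, the expansion of the weight difference from Lemma~\ref{lem:diff-v-k}, and the smoothness/Schwartz character of $\mu$, in the same spirit as the computations in Propositions~\ref{prop:coercivity} and~\ref{prop:commutator-R}.  By the pre-post change,
\begin{align*}
(\vint{v}^k Q_R(f,\mu), h)
= \int b(\cos\theta)\,\Phi_R\, f_\ast\,\mu(v)\bigl[\vint{v'}^k h(v') - \vint{v}^k h(v)\bigr] d\sigma dv_\ast dv =: I.
\end{align*}
Splitting the bracket as $h(v')\bigl(\vint{v'}^k - \vint{v}^k\bigr) + \vint{v}^k\bigl(h(v')-h(v)\bigr)$ decomposes $I=I_1+I_2$.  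On $\mathrm{supp}\,\chi_R$ one has $|v-v_\ast|\le 2R$, hence $\vint{v}\sim\vint{v_\ast}\sim\vint{v'}\sim\vint{v'_\ast}$ with constants depending on $R$, and the weight $\vint{v}^k\mu(v)$ is absorbed into $C_{k,R}\,\mu^{1/2}(v)$.

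For $I_1$, I would use the expansion of Lemma~\ref{lem:diff-v-k} adapted to the exponent $k$:
\[
\vint{v'}^k - \vint{v}^k = k\vint{v}^{k-2}|v-v_\ast|(v_\ast\cdot\omega)\cos^{k-1}\tfrac{\theta}{2}\sin\tfrac{\theta}{2} + O\!\vpran{\sin^2\tfrac{\theta}{2}}\vint{v}^k,
\]
with $\omega$ as in~\eqref{formula:classical-1}.  When $s<1/2$ the factor $b(\cos\theta)\sin(\theta/2)$ is integrable on $\Ss^2$, so a direct Cauchy--Schwarz in $(v,v_\ast,\sigma)$, coupled with the regular change of variables of Proposition~\ref{prop:change-variable}(a) on the $|h(v')|^2$ factor and absorption of all velocity weights into $\mu^{1/2}(v)$, yields the desired bound.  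For $s\ge 1/2$ the leading term is handled exactly as the $\Gamma_{1,1}^{(1)}$ piece in the proof of Proposition~\ref{prop:trilinear-weight}: decomposing $\omega = \tilde\omega\cos(\theta/2) + \tfrac{v'-v_\ast}{|v'-v_\ast|}\sin(\theta/2)$, making the regular change $v\to v'$ and taking $v'-v_\ast$ as the north pole, the azimuthal integral of $v_\ast\cdot\tilde\omega$ vanishes, so only the $O(\sin^2(\theta/2))$ remainder survives, and $b\sin^2(\theta/2)\in L^1(\Ss^2)$ for every $s<1$.

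For $I_2$, setting $\tilde\mu(v):=\vint{v}^k\mu(v)$ (still Schwartz) and undoing the pre-post change gives $I_2 = \int Q_R(f,\tilde\mu)\,h\,dv$.  I would then apply Bobylev's identity as in the proof of Proposition~\ref{singular-part} to rewrite
\[
I_2 = \iiint b(\cos\theta)\bigl[\hat\Phi_R(\xi_\ast-\xi^-)-\hat\Phi_R(\xi_\ast)\bigr]\hat f(\xi_\ast)\,\hat{\tilde\mu}(\xi-\xi_\ast)\,\overline{\hat h}(\xi)\,d\xi d\xi_\ast d\sigma,
\]
and employ the decay estimates $|\nabla^j_\xi\hat\Phi_R(\xi)|\lesssim_{R,j}\vint{\xi}^{-3-\gamma}$ together with the rapid decay of $\hat{\tilde\mu}$.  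Splitting the $\sigma$-region into $|\xi^-|\le\vint{\xi_\ast}/2$ (first/second order Taylor expansion of $\hat\Phi_R$ around $\xi_\ast$, exploiting the cancellation $\int_{\Ss^2}b(\cos\theta)\,\xi^-d\sigma = \tfrac{\xi}{2}\int_{\Ss^2} b(\cos\theta)(1-\cos\theta)\,d\sigma$, which is finite for $s<1$) and $|\xi^-|>\vint{\xi_\ast}/2$ (direct bound), Schur's test on the resulting kernel $K(\xi,\xi_\ast)$ delivers $|I_2|\le C_{k,R}\|f\|_{L^2}\|h\|_{L^2}$.  The main obstacle is the strong-singularity regime $s\ge 1/2$, where any crude bound involving $\int_{\Ss^2} b(\cos\theta)\sin(\theta/2)\,d\sigma$ diverges; this is circumvented precisely by the azimuthal cancellation used in $I_1$ and the symmetric cancellation of $\xi^-$ after the Bobylev rewriting in $I_2$.
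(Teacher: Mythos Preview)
Your decomposition $I=I_1+I_2$ matches the paper's $\mathcal{I}+\mathcal{II}$, and your $I_2$ argument via Bobylev is a legitimate (if heavier) alternative to the paper's one-line move of recognizing $I_2=(Q_R(f,\vint{v}^k\mu),h)$ and invoking the trilinear bound of \cite[Proposition~2.1]{AMUXY2011}, which immediately gives $C_{k,R}\|f\|_{L^2}\|h\|_{L^2}$ since $\vint{v}^k\mu$ is Schwartz.

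There is, however, a real gap in your treatment of $I_1$ when $s\ge 1/2$. After isolating the leading term and passing to $\tilde\omega$, the integrand still carries the factor $\mu(v)\vint{v}^{k-2}$. When you perform the regular change $v\to v'$ with north pole $v'-v_\ast$, this factor becomes $\mu\bigl(v(v',v_\ast,\sigma)\bigr)\vint{v(v',v_\ast,\sigma)}^{k-2}$, which \emph{depends on the azimuth} $\phi$ through $\tilde\omega$ (indeed $v=v'+|v'-v_\ast|\tan(\theta/2)\,\tilde\omega$). Hence the $\phi$-integral is not simply $\int_0^{2\pi}(v_\ast\cdot\tilde\omega)\,d\phi$, and the cancellation you invoke does not occur. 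This is exactly the difference between the full $\Gamma_{1,1}$ and the piece $\Gamma_{1,1}^{(1)}$ in Proposition~\ref{prop:trilinear-weight}: $\Gamma_{1,1}^{(1)}$ vanishes because its $G$-factor is evaluated at $v'$, not at $v$.

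The fix is the one the paper uses (and Proposition~\ref{prop:trilinear-weight} uses for $\Gamma_{1,1}$): set $G(v)=\vint{v}^{k-2}\mu(v)$ and write $G=G'+(G-G')$. The term carrying $G'$ now has all $(v,v_\ast,\sigma)$-dependence in primed variables and the azimuthal integral of $v_\ast\cdot\tilde\omega$ \emph{does} vanish. The remaining $(G-G')$-term gains an extra $\sin(\theta/2)$ from the mean value theorem applied to the Schwartz function $G$, namely
\[
\chi_R(|v-v_\ast|)\,\bigl|G(v)-G(v')\bigr|\ \le\ C_{k,R}\int_0^1\vint{v_\ast}^{-N}\mu^{1/2}\bigl(v+\tau(v'-v)\bigr)\,d\tau\;\sin\tfrac{\theta}{2},
\]
so that altogether one has a factor $b(\cos\theta)\sin^2(\theta/2)\in L^1(\Ss^2)$ and the bound $C_{k,R}\|f\|_{L^2}\|h\|_{L^2}$ follows by Cauchy--Schwarz. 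This is a genuine missing step, but once inserted your outline for $I_1$ goes through.
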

\begin{proof}
By the definition of $Q_R$, we have
\begin{align} \label{eq:v-k-Q-R}
   \vpran{\vint{v}^k Q_R (f, \mu), \,\, h}
& = \int_{\R^3} \int_{\R^3} \int_{\Ss^2}
     b(\cos\theta) |v - v_\ast|^\gamma \chi_R(|v - v_\ast|)
     f_\ast \mu \vpran{\vint{v'}^k h' - \vint{v}^k h} \nn
\\
& = \int_{\R^3} \int_{\R^3} \int_{\Ss^2}
     b(\cos\theta) |v - v_\ast|^\gamma \chi_R(|v - v_\ast|)
     f_\ast \mu \vpran{\vint{v'}^k - \vint{v}^k} h' \nn
\\
& \quad \,
     + \int_{\R^3} \int_{\R^3} \int_{\Ss^2}
     b(\cos\theta) |v - v_\ast|^\gamma \chi_R(|v - v_\ast|)
     f_\ast \mu \vint{v}^k \vpran{h' - h} \nn \\
&\Denote \mathcal{I} + \mathcal{II} \,.
\end{align}
For the first term $\mathcal{I}$ 
we use \eqref{ineq:v-v-k} with $\omega$ replaced by $\tilde \omega$ and split $\mathcal{I}=  \mathcal{I}_{main} + 
\mathcal{I}_{\mathfrak{R}}$.
In view of \eqref{cutoff-equi},
we have the following bound  for the term $\mathcal{I}_{\mathfrak{R}}$
coming from $\mathfrak{R}$, 
\begin{align*}
|\mathcal{I}_{\mathfrak{R}}| \le C_{k,R} \int_{\R^3} \int_{\R^3} \int_{\Ss^2}
b(\cos \theta) \theta^2 \la v_* \ra^{-2}| f_* |
\la v \ra^{k+2} \mu| h'|d\sigma dv dv_*
\le C_{k,R}\|f\|_{L^2} \|h\|_{L^2}.
\end{align*}
The main term is written as
\begin{align*}
&\mathcal{I}_{main} = k \int_{\R^3} \int_{\R^3} \int_{\Ss^2}
    b(\cos\theta) |v - v_\ast|^{\gamma+1} \chi_R(|v - v_\ast|)
(v_*\cdot \tilde \omega)\cos^{k-1} \dfrac{\theta}{2} \sin 
\dfrac{\theta}{2}   
f_\ast \big(\la v\ra^{k-2} \mu \big) h' d\sigma dv dv_* \\
&=k \int_{\R^3} \int_{\R^3} \int_{\Ss^2}
    b(\cos\theta) |v - v_\ast|^{\gamma+1} \chi_R(|v - v_\ast|)
(v_*\cdot \tilde \omega)\cos^{k-1} \dfrac{\theta}{2} \sin 
\dfrac{\theta}{2}   
f_\ast \Big( \big(\la v\ra^{k-2} \mu \big)- \big(\la v'\ra^{k-2} \mu' \big)\Big) h' d\sigma dv dv_* 
\end{align*}
because of the symmetry on $\Ss^2$ with the north pole
$v'-v_*$. Since it follows from the mean value theorem that 
\[
\chi_R(|v-v_*|)\left| \big(\la v\ra^{k-2} \mu \big)- \big(\la v'\ra^{k-2} \mu' \big)\right| \le C_{k,R} \int_0^1\la v_* \ra^{-10}\mu(v+ \tau (v' -v))^{1/2} d\tau
\sin \dfrac{\theta}{2},
\]
we have 
\begin{align*}
|\mathcal{I}_{main} | & \le C_{k,R}  \int_0^1\int_{\R^3}\int_{\R^3} \int_{\Ss^2}
    b(\cos\theta)\theta^2\frac{|f_*|}{\la v_*\ra^2}
\mu(v+ \tau (v' -v))^{1/2} |h'| d\sigma dv dv_* d\tau
\le C_{k,R}\|f\|_{L^2} \|h\|_{L^2}.
\end{align*}
To bound the second term $\mathcal{II}$ on the right hand side of~\eqref{eq:v-k-Q-R}, we 
use Proposition 2.1 in \cite{AMUXY2011}  
to have
\begin{align*} 
|\mathcal{II} |& =
   \abs{\int_{\R^3} \int_{\R^3} \int_{\Ss^2}
     b(\cos\theta) |v - v_\ast|^\gamma \chi_R(|v - v_\ast|)
     f_\ast \mu \vint{v}^k \vpran{h' - h}}  \nn
\\
& 
= \abs{\vpran{Q_R(f, \mu \vint{v}^k), \,\, h}}
\leq
   C_{k,R} \norm{f}_{L^2} \norm{h}_{L^2} \,.
\end{align*}
Then the  desired bound is a  combination of above three estimates.
\end{proof}



\subsection{Commutator estimate for $Q_{\bar R}$}

Now we consider the commutator between $\CalM(t,D_v, \eta)$ and the regular part of the kinetic 
factor $\Phi_{\bar R}(v-v_*)$ defined in~\eqref{def:Phi-R-bar-R}.  First, by the property of $\chi_R$ in~\eqref{property:chi-R}, we have
\begin{align} \label{bound:deriv-Phi-bar-R}
   \Phi_{\bar R} (z) \in C^\infty(\RR^3),   
\qquad
   |\del_z^\alpha \Phi_{\bar R}(z)| 
\leq 
   C \la z \ra^{\gamma- |\alpha|}\,,  
\end{align}
where $C$ is independent of $R$. From now on, we use the notation $\lesssim$ only when the constant involved is independent of $R, \delta, t, T$.

\begin{lem}\label{Lemma 2.1} Let $\gamma \le 1$ and denote 
\begin{align} \label{def:Phi-ast}
   \Phi_*(v)=\Phi_{\bar R}(v-v_*) \,,
\end{align} 
regarding $v_*$ as a parameter.
Denote
\begin{align*}
[\CalM(D_v), \Phi_*(v)]=\CalM(D_v)\Phi_*(v)-\Phi_*(v) M(D_v) \,.
\end{align*}
Then for any real number $r \in \R$ there
exists a constant $C = C(r) >0$ independent of $\delta, R >0$ and $0 \le t \le T$
such that the following estimate holds for any $f(v) \in {\mathcal
S} (\R^3)$.
\begin{equation}\label{2.1}
\|[\CalM(D_v), \Phi_*(v)]f\|_{{H^r}}\leq
C \|\CalM(D_v)f\|_{H^{r-1}} .
\end{equation}
In other words, we can write $[\CalM(D_v), \Phi_*(v)] = a(v,D_v) \La D_v \Ra^{-1}\CalM(D_v)$  with
$a(v,\xi)$ belonging to the symbol class $S^0_{0,0}$ uniformly with respect to $v_*$,  and in addition
to  $\delta, R, T, t$. 
\end{lem}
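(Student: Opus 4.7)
The plan is to recognize $[\CalM(D_v), \Phi_*(v)]$ as a pseudodifferential operator whose full symbol gains one order of decay in $\xi$, and then to invoke the Calder\'on--Vaillancourt theorem. The target structural identity is $[\CalM(D_v), \Phi_*(v)] = a(v, D_v)\,\vint{D_v}^{-1}\CalM(D_v)$ with $a(v,\xi) \in S^0_{0,0}$ uniformly in $v_*, R, \delta, t, T$. The three ingredients I will use are Lemma~\ref{lem:CalM} for the symbol derivatives of $\CalM$, estimate~\eqref{bound:deriv-Phi-bar-R} for the uniform smoothness of $\Phi_{\bar R}$, and the assumption $\gamma \leq 1$, which is exactly what is needed to make $|\nabla_v \Phi_*(v)| \leq C\,\vint{v-v_*}^{\gamma-1}$ bounded.

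First, I would represent the commutator's full symbol by the standard oscillatory integral
\begin{equation*}
c(v,\xi) = \frac{1}{(2\pi)^3}\iint e^{-iy\cdot\eta}\bigl[\CalM(\xi+\eta) - \CalM(\xi)\bigr]\Phi_*(v+y)\,dy\,d\eta.
\end{equation*}
Writing $\CalM(\xi+\eta) - \CalM(\xi) = \eta \cdot \int_0^1 (\nabla_\xi \CalM)(\xi+\tau\eta)\,d\tau$ and integrating by parts in $y$ via the identity $\eta\, e^{-iy\cdot\eta} = i D_y e^{-iy\cdot\eta}$, one obtains
\begin{equation*}
c(v,\xi) = -i \int_0^1 \frac{1}{(2\pi)^3}\iint e^{-iy\cdot\eta}(\nabla_\xi \CalM)(\xi+\tau\eta)\cdot (\nabla_v \Phi_*)(v+y)\,dy\,d\eta\,d\tau.
\end{equation*}
This representation isolates the critical first-order factor $\nabla_v \Phi_*$ and makes transparent why the hypothesis $\gamma \leq 1$ is the correct one.

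Next, I would set $a(v,\xi) := c(v,\xi)\,\vint{\xi}/\CalM(\xi)$ and verify that every derivative $\partial_v^\beta \partial_\xi^\alpha a(v,\xi)$ is bounded uniformly in all parameters. Differentiation in $v$ brings down higher $v$-derivatives of $\Phi_{\bar R}$, which by~\eqref{bound:deriv-Phi-bar-R} are bounded since each term carries at least one derivative and $\gamma-1 \leq 0$. Differentiation in $\xi$ produces ratios of the form $\vint{\xi}^{1+|\alpha|}\,\partial_\xi^\alpha \nabla_\xi \CalM(\xi+\tau\eta)/\CalM(\xi)$; by Lemma~\ref{lem:CalM} these are uniformly bounded, with even the worst case $s \in (0,1/2)$ supplying exactly a one-derivative gain per $\xi$-differentiation. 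Convergence of the oscillatory integrals in $(y,\eta)$ is then handled by standard integration by parts: any polynomial growth in $\eta$ carried by the ratio $\nabla_\xi \CalM(\xi+\tau\eta)/\CalM(\xi)$ is absorbed by applying $\vint{D_y}^{-N}\vint{\eta}^N$ to the $\eta$-integration, using the decay of $\partial_y^N \nabla_v \Phi_*(v+y)$ supplied by~\eqref{bound:deriv-Phi-bar-R}.

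Once $a(v,\xi) \in S^0_{0,0}$ is established uniformly, the Calder\'on--Vaillancourt theorem yields $\norm{a(v, D_v) g}_{H^r} \lesssim \norm{g}_{H^r}$, so that
\begin{equation*}
\norm{[\CalM(D_v), \Phi_*]f}_{H^r}
= \norm{a(v, D_v)\,\vint{D_v}^{-1}\CalM(D_v) f}_{H^r}
\lesssim \norm{\vint{D_v}^{-1}\CalM(D_v) f}_{H^r}
= \norm{\CalM(D_v) f}_{H^{r-1}}.
\end{equation*}
The principal obstacle is the careful tracking of uniformity in $v_*, R, \delta, t, T, \eta$. This uniformity is delivered by the structure of the bounds: Lemma~\ref{lem:CalM} produces constants depending only on $s, \varepsilon$ and the differentiation order (not on $t, T, \delta, \eta$), while the cutoff $\chi_R$ was built precisely so that~\eqref{property:chi-R} holds with a constant independent of $R$, and $\Phi_{\bar R}(v-v_*)$ is translation-invariant in $v_*$. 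A secondary subtlety is the case split $s<1/2$, $s=1/2$, $s>1/2$ of Lemma~\ref{lem:CalM}; the borderline $s=1/2$ case requires accepting the loss of an arbitrarily small $\varepsilon_1 > 0$ in the second-order bound, which is harmless for the $S^0_{0,0}$ verification.
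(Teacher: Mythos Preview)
Your approach is essentially the paper's: both compute the commutator symbol as an oscillatory integral, feed in Lemma~\ref{lem:CalM} for derivatives of $\CalM$, and use $\gamma \le 1$ together with~\eqref{bound:deriv-Phi-bar-R} to control the $\Phi_*$-derivatives. The paper organizes the computation through an $N$-th order Taylor expansion (separating the explicit terms $\Phi_{*,(\alpha)}\CalM^{(\alpha)}(D_v)$ for $1\le|\alpha|<N$ from an oscillatory remainder $r_N$), while you keep everything in a single first-order integral remainder; this is cosmetic.

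One sentence needs correcting: for $s<1/2$, Lemma~\ref{lem:CalM} does \emph{not} give one order of decay per $\xi$-derivative --- it gives a total gain of only $(\vint{\xi}+(T-t)|\eta|)^{-1}$ regardless of $|\alpha|$ --- so your claimed ratio $\vint{\xi}^{1+|\alpha|}\partial_\xi^\alpha\nabla_\xi\CalM/\CalM$ is unbounded for $|\alpha|\ge 1$. This does not actually hurt you, since $S^0_{0,0}$ requires only that $\partial_v^\beta\partial_\xi^\alpha a$ be bounded with no extra $\vint{\xi}$-weight, and the single $\vint{\xi}^{-1}$ gain from the first $\nabla_\xi\CalM$ already cancels the explicit $\vint{\xi}$ in $a=c\,\vint{\xi}/\CalM$. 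You should also be more explicit about the region where the oscillatory variable $\zeta$ is large compared to $\vint{\xi}$: there $\CalM(\xi+\tau\zeta)/\CalM(\xi)$ can grow, and the paper handles it by splitting $|\zeta|\le \vint{\xi}/2$ versus $|\zeta|\ge \vint{\xi}/2$ and using the explicit structure of $\CalM$ in the latter region (the $I_2$ computation), rather than by a generic integration-by-parts argument.
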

\begin{proof}
 The lemma follows from the calculus of
pseudo-differential operators ( for example, see Kumano-go
\cite{Kuma1982}). For the sake of completeness and for the later use in
some  arguments, we give a brief proof. It follows from
Theorem 3.1 of \cite{Kuma1982} that for any $N \in \N$,
\begin{equation}\label{2.2}
\CalM(D_v)\Phi_* = \Phi_* \CalM(D_v) + \sum_{1 \leq |\alpha| <
N} \frac{1}{\alpha!} \Phi_{*,(\alpha)} \CalM^{(\alpha)}(D_v) +
r_N(v,D_v),
\end{equation}
where $\Phi_{*,(\alpha)} (v)= D_v^{\alpha} \Phi_*(v)$,
$\CalM^{(\alpha)}(\xi)= \partial_{\xi}^{\alpha}\CalM_{\delta}(\xi)$
and
\begin{align*}
r_N(v,\xi) = N \sum_{|\alpha| = N} \int_0^1
\frac{(1-\tau)^{N-1}}{\alpha !} r_{N,\tau,\alpha}(v,\xi) d
\tau.\end{align*}
 Here
\begin{align*}
r_{N,\tau,\alpha}(v,\xi) = \mbox{Os}- \int \int e^{-iz\cdot \zeta}
\CalM^{(\alpha)}(\xi+ \tau \zeta) \Phi_{*,(\alpha)}(v+z)
\frac{dzd\zeta}{(2\pi)^3},
\end{align*} where ``Os-" means the
oscillating integral (\cite{Kuma1982}). Fix  $N \ge 3$. Since $\Phi_{*,(\alpha)} $ satisfies~\eqref{bound:deriv-Phi-bar-R} ($|\alpha| \ne 0$) 
and it follows from Lemma \ref{lem:CalM} that
\begin{align}\label{M-estimate-Weiran}
|\CalM^{(\alpha)}(\xi)| \leq C_{\alpha} \la \xi \ra^{-1}
\CalM(\xi)
\end{align}
with a constant  $C_{\alpha}>0$ depending only on $\alpha$, the
estimate (\ref{2.1}) with $r=0$ is obvious if we show that
$r_{N,\tau,\alpha}(v,\xi)/ \CalM(\xi) $ belongs to the symbol class
$S_{0,0}^{-1}$  uniformly with respect to $\tau \in [0,1]$. That
is, cf. \cite{Kuma1982},
\begin{equation}\label{2.3}
|D_v^{\beta} \partial_{\xi}^{\beta'}\Big(r_{N,\tau,\alpha}(v,\xi)/\CalM(\xi)\Big)| \leq
C_{\beta, \beta'} \la \xi \ra^{-1},
\end{equation}
because it follows from the product formula of pseudodifferential operators that
\[
Op(\Big(r_{N,\tau,\alpha}(v,\xi)/\CalM(\xi)\Big))Op\big(\CalM(\xi)\big) = r_{N,\tau,\alpha}(v,D_v).
\]
In view of \eqref{M-estimate-Weiran}, it suffices to show 
\begin{equation}\label{2.3-bis}
|D_v^{\beta} \partial_{\xi}^{\beta'}r_{N,\tau,\alpha}(v,\xi)| \leq
C_{\beta, \beta'} \la \xi \ra^{-1}\CalM(\xi),
\end{equation}
instead of \eqref{2.3}.
We only consider \eqref{2.3-bis} with $\beta = \beta'=0$ because the
proof for the general case is similar by taking the derivatives of
the integrand. Firstly, using the elementary identities
\[e^{-iz\cdot\zeta} =\la \zeta \ra^{-2l}(1-\Delta_z)^l e^{-iz\cdot\zeta}, \quad\quad
e^{-iz\zeta} = \la z \ra^{-2m}(1-\Delta_{\zeta})^m e^{-iz\cdot \zeta},
\]
we have, for $l, m \in \N$ with $l \ge 4, m \ge 2$,
\begin{equation}\label{remainder0}
 \begin{split}
r_{N,\tau,\alpha}(v,\xi)=& \displaystyle \int \displaystyle \Big(
\int e^{-i z \cdot \zeta} \la z \ra^{-2m} (1-\Delta_{\zeta})^m
\{
 \la \zeta \ra^{-2l}
(1-\Delta_{z})^l   \CalM^{(\alpha)}(\xi+ \tau \zeta)
\Phi_{*,(\alpha)}(v+z) \}  \frac{d\zeta}{(2\pi)^3}\Big) dz
\\
 =&
\displaystyle \int \displaystyle \{(1-\Delta_{z})^l
\Phi_{*,(\alpha)}(v+z)\}
\Big(  \int e^{-i z\cdot \zeta}
(1-\Delta_{\zeta})^m\{
\la \zeta \ra^{-2l}
  \CalM^{(\alpha)}(\xi+ \tau \zeta) \}  \frac{d\zeta}{(2\pi)^3}\Big)
\frac{dz}{\la z \ra^{2m}  }
\\
=&\displaystyle \int \displaystyle
\{(1-\Delta_{z})^l  \Phi_{*,(\alpha)}(v+z)\}
\Big(  \int_{|\zeta| \leq \frac{\la \xi
\ra}{2}}  \{\cdots \} \frac{d\zeta}{(2\pi)^3} +\int_{|\zeta| \geq
\frac{ \la \xi \ra}{2}}
 \{\cdots \}  \frac{d\zeta}{(2\pi)^3}\Big)
\frac{dz}{ \la z \ra^{2m}  }\\
 :=&\displaystyle \int \displaystyle
\{(1-\Delta_{z})^l  \Phi_{*,(\alpha)}(v+z)\} \Big( I_1(\xi;z) +
I_2(\xi,z) \Big) \frac{dz}{\la z\ra^{2m} } .
\end{split}
\end{equation}
Since $\la \xi \ra$ and $\la \xi+ \tau \zeta \ra$ are equivalent in
$I_1$, it follows that
\begin{align*}
|I_1| \leq C \la \xi \ra^{-1}\CalM(\xi).
\end{align*}
Moreover,  the same bound for $|I_2|$ holds because
\begin{align*}
|I_2| &\lesssim \int_{|\zeta| \geq
\frac{ \la \xi \ra}{2}}  \frac{\la \zeta \ra^{-1}\la \xi+\tau \zeta \ra^{-1}}
{\la \zeta \ra^{\ell-1} \Big((1 + \delta(T-t)\big(\la \xi+\tau \zeta \ra^{2s} + (T-t)^{2s} |\eta|^{2s}\big) \Big)^{ 1/2 + \varepsilon}}
\frac{d\zeta}{\la \zeta \ra^\ell}\\
&\lesssim \int_{|\zeta| \geq
\frac{ \la \xi \ra}{2}}  \frac{\la \xi \ra^{-1}}
{ \Big((1 + \delta(T-t)\big(\la \zeta \ra^{2(\ell-1)/(1+2\varepsilon)}} + (T-t)^{2s} |\eta|^{2s}\big) \Big)^{1/2 + \varepsilon}
\frac{d\zeta}{\la \zeta \ra^\ell} \lesssim \la \xi \ra^{-1}\CalM(\xi).
\end{align*}
These estimates lead us
to \eqref{2.3-bis}.  The estimate \eqref{2.1} with $r\ne 0$ also follows
by considering the expansion formula with $\la D_v \ra^r$ similar to
\eqref{2.2}.
\end{proof}
\begin{cor}\label{cor-234}
Let $\phi_{\ast, k}$ be defined as in~\eqref{def:phi-ast-k}
and $\Phi_{*,k}(v) = \Phi_{\bar R}(v-v_*)\phi_{*,k}(v)$. 
Then we have 
\begin{equation}\label{adjoint}
[\CalM(D_v), \Phi_{*,k}(v)] = A_{*,k}(v,D_v) \CalM(D_v)\la D_v \ra^{-1} \,,
\end{equation}
where $2^k\la v-v_*\ra^{-\gamma} A_{*,k}(v,D_v)$ is a pseudo-differential operator with a symbol belonging to $S^0_{0,0}$ and its 
operator norm from $L^2$ to $L^2$ is uniformly bounded with respect to $k, v_*, \delta, t, T, R$.  
Furthermore, for any real $a, b$ and $c$ satisfying $a+\gamma-1 = b+c$,
there exists a constant $C >0$ depending only on $a,b$ and $c$  such that 
\begin{equation}\label{com-imp-1}
\|\la v-v_*\ra^{a} A_{*,k}(v,D_v) f\|_{L^2}  \le  C 2^{kb} \|\la v-v_*\ra^c f\|_{L^2}. 
\end{equation}
\end{cor}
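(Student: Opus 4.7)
The plan is to run the pseudo-differential calculus of Lemma~\ref{Lemma 2.1} with $\Phi_*$ replaced by $\Phi_{*,k}(v)=\Phi_{\bar R}(v-v_*)\phi_{*,k}(v)$, and then carefully track how every occurrence of the factors $\Phi_{\bar R}$ and $\phi_{*,k}$ scales in the dyadic parameter $2^k$. Since $\phi_{*,k}$ localizes to an annulus $\{\,|v-v_*|\sim 2^k\,\}$, on its support we have $\la v-v_*\ra\sim 2^k$, and thus $|\partial_v^\alpha\Phi_{\bar R}(v-v_*)|\lesssim\la v-v_*\ra^{\gamma-|\alpha|}\sim 2^{k(\gamma-|\alpha|)}$ by \eqref{bound:deriv-Phi-bar-R}, while $|\partial_v^\alpha\phi_{*,k}(v)|\lesssim 2^{-k|\alpha|}$. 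By the Leibniz rule we therefore get the basic bound
\[
|\partial_v^\alpha \Phi_{*,k}(v)|\lesssim 2^{k\gamma}\,2^{-k|\alpha|},\qquad |\alpha|\ge 1,
\]
uniformly in $v_*,R,k$.

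First I would repeat the expansion \eqref{2.2} for $\CalM(D_v)\Phi_{*,k}$, retaining only the terms with $|\alpha|\ge 1$ to form the commutator. Exactly as in Lemma~\ref{Lemma 2.1}, each term $\Phi_{*,k,(\alpha)}\,\CalM^{(\alpha)}(D_v)$ may be written as $a_{*,k,\alpha}(v,D_v)\CalM(D_v)\la D_v\ra^{-1}$ by absorbing one factor $\la\xi\ra^{-1}\CalM(\xi)$ from $\CalM^{(\alpha)}$, using \eqref{M-estimate-Weiran}. The resulting symbol now satisfies $|\partial_v^\beta\partial_\xi^{\beta'}a_{*,k,\alpha}|\lesssim 2^{k(\gamma-1)}$, i.e.\ $a_{*,k,\alpha}\in 2^{k(\gamma-1)}S^0_{0,0}$ uniformly, with the $2^{k(\gamma-1)}$ coming from the $|\alpha|=1$ estimate above. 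The oscillatory-integral remainder $r_N$ is handled verbatim as in \eqref{remainder0}, the extra $k$-dependence entering only through a harmless factor of $2^{k(\gamma-1)}$ upon estimating $(1-\Delta_z)^l\Phi_{*,k,(\alpha)}(v+z)$. Collecting terms yields exactly the representation $[\CalM(D_v),\Phi_{*,k}(v)]=A_{*,k}(v,D_v)\CalM(D_v)\la D_v\ra^{-1}$ with $A_{*,k}\in 2^{k(\gamma-1)}S^0_{0,0}$; equivalently, $2^k\la v-v_*\ra^{-\gamma}A_{*,k}(v,D_v)$ is uniformly bounded in $S^0_{0,0}$ because on the support of $\phi_{*,k}$ one has $2^k\la v-v_*\ra^{-\gamma}\cdot 2^{k(\gamma-1)}\sim 1$.

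For the second assertion \eqref{com-imp-1}, I would exploit that the output of $A_{*,k}$ inherits the spatial support of $\Phi_{*,k}$, modulo the smoothing rapidly-decreasing tails produced by the oscillatory integral (which contribute only a negligible Schwartz-class error handled by standard pseudo-local arguments). Thus $\la v-v_*\ra^a\sim 2^{ka}$ on the effective support of $A_{*,k}f$, so that $\|\la v-v_*\ra^a A_{*,k}(v,D_v)f\|_{L^2}\lesssim 2^{ka}\cdot 2^{k(\gamma-1)}\|f\|_{L^2}=2^{k(a+\gamma-1)}\|f\|_{L^2}$. Splitting $a+\gamma-1=b+c$, one writes $2^{k(a+\gamma-1)}\|f\|_{L^2}=2^{kb}\cdot 2^{kc}\|f\|_{L^2}$ and then replaces $2^{kc}\|f\|_{L^2}$ by $\|\la v-v_*\ra^c f\|_{L^2}$, which is valid as long as the part of $f$ outside the support of $\phi_{*,k}$ is absorbed in the Schwartz-class error.

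The main obstacle will be the second part: since $A_{*,k}$ is a genuine pseudo-differential operator rather than a multiplier, the relation $\la v-v_*\ra^a A_{*,k}\la v-v_*\ra^{-c}$ is only approximately $2^{k(a+c\cdot 0)}A_{*,k}$ and needs to be justified by a commutator argument of the type $[A_{*,k},\la v-v_*\ra^c]\in 2^{k(\gamma-1)+k(c-1)}S^0_{0,0}$, together with careful use of the dyadic support of the kernel of $A_{*,k}$ inherited from $\Phi_{*,k}$. The rest of the calculation is a bookkeeping exercise in symbolic calculus.
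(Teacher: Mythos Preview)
Your first part is essentially correct and matches the paper: rerun Lemma~\ref{Lemma 2.1} with $\Phi_*$ replaced by $\Phi_{*,k}$, use $|\partial_v^\alpha\Phi_{*,k}|\lesssim 2^{k(\gamma-|\alpha|)}$, and conclude $A_{*,k}\in 2^{k(\gamma-1)}S^0_{0,0}$.

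For \eqref{com-imp-1}, however, there is a real gap. Your chain
\[
\|\la v-v_*\ra^{a}A_{*,k}f\|_{L^2}\lesssim 2^{ka}\,2^{k(\gamma-1)}\|f\|_{L^2}
=2^{kb}\cdot 2^{kc}\|f\|_{L^2}\ \leadsto\ 2^{kb}\|\la v-v_*\ra^{c}f\|_{L^2}
\]
breaks at the last arrow: the input $f$ is \emph{not} localized to the annulus $\{|v-v_*|\sim 2^k\}$, so $2^{kc}\|f\|_{L^2}\le C\|\la v-v_*\ra^{c}f\|_{L^2}$ is simply false in general. Your proposed rescue via ``Schwartz tails'' and $[A_{*,k},\la v-v_*\ra^{c}]$ runs into two problems: (i) the remainder $r_N$ from the oscillatory integral does \emph{not} have compact $v$-support on the annulus, so the output-side localization is already only approximate; (ii) since $a_{*,k}\in S^0_{0,0}$, $\xi$-derivatives of the symbol do not gain decay, so the commutator expansion does not terminate in any obvious way.

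The paper bypasses all of this with a single observation that builds the $\la v-v_*\ra$-weight directly into the symbol estimate. On the support of $\phi_k(\cdot-v_*)$ one has $\la v+z-v_*\ra\sim 2^k$, hence for \emph{any} real $b$,
\[
|\Phi_{*,k,(\alpha)}(v+z)|\ \le\ C\,\la v-v_*+z\ra^{\gamma-|\alpha|+b}\,2^{-kb}
\ \le\ C\,\la v-v_*\ra^{\gamma-|\alpha|+b}\,2^{-kb}\,\la z\ra^{|\gamma-|\alpha|+b|},
\]
the second step being Peetre's inequality. Plugging this into the oscillatory-integral formula \eqref{remainder0} (and the explicit terms of the expansion \eqref{2.2}), with the $\la z\ra$-power absorbed by choosing $m$ large, one obtains for every $b$ a symbol bound of the form
\[
|D_v^\beta\partial_\xi^{\beta'}a_{*,k}(v,\xi)|\ \lesssim\ 2^{-kb}\,\la v-v_*\ra^{\gamma-1+b}.
\]
This is the whole point: the weight $\la v-v_*\ra^{\gamma-1+b}$ sits on the \emph{symbol}, uniformly in $v$, with no appeal to support or pseudo-locality. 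Choosing $b$ to match the relation $a+\gamma-1=b+c$ and running the composition formula for $\la v-v_*\ra^{a}A_{*,k}\la v-v_*\ra^{-c}$ (each $v$-derivative of $\la v-v_*\ra^{-c}$ only improves by $\la v-v_*\ra^{-1}$), Calder\'on--Vaillancourt gives \eqref{com-imp-1} directly. You should replace your support/commutator scheme with this single estimate.
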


\begin{proof}
It suffices to note in the similar formula as in \eqref{remainder0} that for any real $b$ we have
\begin{align*}
  |\Phi_{*,k,(\alpha)}(v+z)| 
\leq 
   C \la v-v_* +z\ra^{\gamma-|\alpha| + b}2^{-kb}
\leq 
  C \la v-v_*\ra^{\gamma-|\alpha| + b}2^{-kb} \la z\ra^{|\gamma-|\alpha| + b|},
\end{align*}
where $C$ is independent of $R$. The power of $\la z \ra$ can be handled by the factor $\la z \ra^{2m}$
by taking a sufficiently large $m$. 
\end{proof}

\begin{cor}\label{cor-5-7} For any $k \in \RR$ there exists an $L^2$-bounded operator $A(v, D_v)$ whose symbol belongs
to $S^{-1}_{0,0}$ uniformly with respect to $\eta, \delta, \Eps, T-t$
such that 
\begin{align*}
&\la v \ra^k [\CalM, \la v \ra^{-k}] = [\la v \ra^k , \CalM] \la v \ra^{-k} = A(v, D_v) \CalM, \\
& 
\mbox{with}
\enskip \|\la v \ra ( 1+ |D_v|^2 + |T-t|^2 |\eta|^2)^{1/2}  A(v,D_v) g\|_{L^2_v} \le C \|g\|_{L^2_v} \enskip \mbox{for} \enskip g \in \mathcal{S}(\RR^3)\,, 
\end{align*}
where $C >0$ is independent of $\eta, \delta, \Eps, T-t$.

\end{cor}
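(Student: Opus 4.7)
The plan is to repeat the pseudo-differential calculus argument from the proof of Lemma \ref{Lemma 2.1}, with the smooth multiplier $\Phi_{\ast}(v) = \Phi_{\bar R}(v - v_{\ast})$ replaced by the weight $\la v \ra^{-k}$. The crucial observation is that $|\partial_v^\alpha \la v \ra^{-k}| \le C_{k,\alpha} \la v \ra^{-k-|\alpha|}$, so that $\la v \ra^{k}\, \partial_v^\alpha \la v \ra^{-k}$ defines an $S^0_{0,0}$ symbol in $v$ with an additional decay factor $\la v \ra^{-|\alpha|}$ for $|\alpha|\ge 1$.

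First I would apply Theorem 3.1 of Kumano-go to expand, for any fixed $N\in\NN$,
\begin{align*}
\CalM(D_v) \la v \ra^{-k} = \la v \ra^{-k} \CalM(D_v) + \sum_{1 \le |\alpha| < N} \tfrac{1}{\alpha!} \vpran{D_v^\alpha \la v \ra^{-k}} \CalM^{(\alpha)}(D_v) + r_N(v, D_v),
\end{align*}
where $r_N(v,\xi)$ is the oscillatory integral appearing in the proof of Lemma \ref{Lemma 2.1}. Multiplying by $\la v \ra^k$ and rewriting $\CalM^{(\alpha)}(D_v) = (\CalM^{(\alpha)}/\CalM)(D_v)\cdot \CalM(D_v)$ immediately produces the factorization $[\la v \ra^k,\CalM]\la v \ra^{-k} = A(v, D_v)\CalM$ with an explicit leading symbol for $A$ up to a remainder coming from $\la v \ra^k r_N(v, D_v)\CalM(D_v)^{-1}$.

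For the leading $|\alpha|=1$ term, Lemma \ref{lem:CalM} yields $|\CalM^{(\alpha)}(\xi)/\CalM(\xi)| \le C(\la \xi \ra + (T-t)|\eta|)^{-1}$ together with analogous estimates for its $\xi$-derivatives, while $\la v \ra^k D_v^\alpha \la v \ra^{-k}$ and all of its $v$-derivatives are uniformly bounded by $C\la v \ra^{-1}$. Since $(\la \xi \ra+(T-t)|\eta|)^{-1} \lesssim (1+|\xi|^2 + (T-t)^2|\eta|^2)^{-1/2}$, the symbol contributed by this term lies in $\la v \ra^{-1}(1+|\xi|^2 + (T-t)^2|\eta|^2)^{-1/2}\, S^0_{0,0}$ uniformly in $\eta, \delta, \Eps, T-t$. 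The higher-order terms $|\alpha|\ge 2$ are strictly better since they produce $\la v \ra^{-|\alpha|}$ combined with at least one factor $(\la \xi \ra+(T-t)|\eta|)^{-1}$ (which is valid for all $s\in(0,1)$ by Lemma \ref{lem:CalM}).

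The remainder $\la v \ra^k r_N(v,D_v) \circ \CalM(D_v)^{-1}$ for $N$ sufficiently large is handled exactly as in the proof of Lemma \ref{Lemma 2.1}: split the oscillatory integral in \eqref{remainder0} into $|\zeta| \le \la \xi \ra/2$ and $|\zeta| \ge \la \xi \ra/2$, use the estimate $|D_v^\alpha \la v + z \ra^{-k}| \le C_{k,\alpha} \la v \ra^{-k}\la z \ra^{|k|}$, and invoke Lemma \ref{lem:CalM} on the derivatives $\CalM^{(\alpha)}(\xi+\tau\zeta)$, with the $\la z \ra^{-2m}$ factor (for $m$ large) absorbing the polynomial growth in $z$. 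Composing the resulting $A(v, D_v)$ with $\la v \ra(1+|D_v|^2 + (T-t)^2|\eta|^2)^{1/2}$ yields an operator whose full symbol belongs to $S^0_{0,0}$ uniformly in $\eta, \delta, \Eps, T-t$, and the Calder\'on--Vaillancourt theorem then delivers the desired $L^2$ bound. The main delicacy is simply bookkeeping: verifying that no constant secretly depends on these parameters, which reduces precisely to the uniformity built into Lemma \ref{lem:CalM}.
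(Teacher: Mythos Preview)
Your proposal is correct and is essentially the same approach as the paper's: the paper's entire proof reads ``The proof is almost the same as the one of Lemma~\ref{Lemma 2.1} with $\Phi_*$ replaced by $\la v \ra^{-k}$. We thus omit the details.'' You have filled in precisely those details, tracking in addition the $\la v\ra^{-1}$ gain from the weight derivative and the $(\la\xi\ra+(T-t)|\eta|)^{-1}$ gain from Lemma~\ref{lem:CalM}, which together justify the sharper bound stated in the corollary.
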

\begin{proof}
The proof is almost the same as the one of Lemma \ref{Lemma 2.1} with $\Phi_*$ replaced by $\la v \ra^{-k}$. We thus omit the details.
\end{proof}

\begin{prop}\label{reg-commute} 
{Let $0<s<1$ and $0< \gamma \le1$.  For any $a \in \RR$
we have 
\begin{align}\label{desired-cut-part}
&\notag \left | \Big( \CalM Q_{\bar R}(f,g) - Q_{\bar R}(f, \CalM g), h \Big)\right| \\
&\lesssim 
\delta^{s'/(2s)} \|f\|^{1/2}_{L^1_{2|a| +\gamma+2}} \|h\|_{H^{s'+\varepsilon'}_{-a+\gamma/2}} \Big\{ \|f\|_{L^1_{2|a|+\gamma+2} }\|\mathcal{M}g\|^2_{L^2_{a+ \gamma/2}} 
\\
&\quad \notag + 
\int_{\RR^6\times \Ss^2} b(\cdot)| f_*| \la v_*\ra^{2|a|+ \gamma+2 }\Big| (\la \cdot \ra^{a+\gamma/2} \mathcal{M}g)(v')  
- (\la \cdot \ra^{a+ \gamma/2} \mathcal{M}g)(v) \Big|^2d \sigma dvdv_* \Big\}^{1/2}\\
&\quad \notag + R^{\max \{s-1, \gamma/2-1\}}  \Big\{
\|f\|_{L^1_{2|a|+\gamma+2}}\| \mathcal{M} g \|^2_{L^2_{a+\gamma/2}}\\
&\notag 
\qquad \qquad +\int_{\RR^6\times \Ss^2} b(\cdot)| f_*| \la v_*\ra^{2|a|+\gamma+2} |\big (\la \cdot \ra^{a+\gamma/2}\mathcal{M}g\big)(v')- 
\big( \la \cdot \ra^{a+\gamma/2} \mathcal{M}g\big) (v)|^2 d\sigma dv dv_* \Big \}^{1/2}\\
&\notag \qquad \times 
\Big\{ \|f\|_{L^1_{2|a|+\gamma+2}}\|h\|^2_{L^2_{-a+\gamma/2}} 
\\
&\notag \qquad \qquad + 
\int_{\RR^6\times \Ss^2} b(\cdot) |f_*| \la v_*\ra^{2|a|+ \gamma+2 }\Big| (\la \cdot \ra^{-a+\gamma/2} h )(v')  
- (\la \cdot \ra^{-a+\gamma/2} h )(v) \Big|^2d \sigma dvdv_* \Big\}^{1/2} \,,
\end{align}
where $0 < s' < s$ is arbitrary, and $\varepsilon'$ is non-negative and can be chosen as zero if $s \ne 1/2$. }

\end{prop}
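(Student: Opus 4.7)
The plan is to adapt the pseudo-differential commutator technique from Lemma \ref{Lemma 2.1} and Corollaries \ref{cor-234}--\ref{cor-5-7} to the bilinear setting of $Q_{\bar R}$, exploiting the fact that the kinetic factor $\Phi_{\bar R}(v-v_*)$ is smooth with $|\del_z^\alpha \Phi_{\bar R}(z)| \lesssim \la z\ra^{\gamma-|\alpha|}$ (cf.\ \eqref{bound:deriv-Phi-bar-R}). First I would conjugate by $\la v\ra^{a+\gamma/2}$ on the $g$-side and $\la v\ra^{-a+\gamma/2}$ on the $h$-side via Corollary \ref{cor-5-7}, so that the weight $\la v\ra^a$ is absorbed into the test functions up to an $L^2$-bounded order-$(-1)$ symbol that can be controlled by $\la D_v\ra^{-1}\CalM g$ and collected into the right-hand side of \eqref{desired-cut-part}.

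Next I would insert a dyadic decomposition $\Phi_{\bar R}(v-v_*) = \sum_{k\ge k_0} \Phi_{*,k}(v)$ with $\Phi_{*,k} = \Phi_{\bar R}\,\phi_{*,k}$ localized to $\la v-v_*\ra \sim 2^k$ and $k_0 \sim \log_2 R$. For each band, Corollary \ref{cor-234} gives the key commutator identity $[\CalM(D_v), \Phi_{*,k}(v)] = A_{*,k}(v,D_v)\,\CalM(D_v)\,\la D_v\ra^{-1}$, where $2^k\la v-v_*\ra^{-\gamma}A_{*,k}$ is uniformly $L^2$-bounded with \eqref{com-imp-1} available as a weighted version. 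I would then decompose $(\CalM Q_{\bar R}(f,g) - Q_{\bar R}(f,\CalM g), h)$ into a piece where $\CalM$ crosses the $v$-dependence of $\Phi_{*,k}$ (treated via the above identity, Cauchy--Schwarz in $(v_*,\sigma)$ against $|f_*|$ with the singular angular kernel $b(\cos\theta)\sim\theta^{-1-2s}$) and a piece involving only the pre/post-collision map, which I would handle by a Bobylev-type analysis in the spirit of Proposition \ref{singular-part} but applied to the smooth tail $\hat\Phi_{\bar R}$.

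The first term on the right-hand side of \eqref{desired-cut-part} with prefactor $\delta^{s'/(2s)}$ would come from interpolating the gain $\la D_v\ra^{-1}\CalM$ against the Sobolev index $s'+\varepsilon'$ of $h$: inspecting \eqref{def:CalM} and Lemma \ref{lem:CalM} gives $\la D_v\ra^{-1}\CalM \lesssim \delta^{s'/(2s)}\la D_v\ra^{s'-1}$ on the relevant region of $\xi$, and the complementary factor on $\CalM g$ then lives in the non-cutoff coercivity-type quadratic-difference norm on the right-hand side (reached by Cauchy--Schwarz in $\sigma$ exactly as in the proof of Proposition \ref{pro-B2}). The second term with prefactor $R^{\max\{s-1,\gamma/2-1\}}$ would emerge from summing the geometric series in $k\ge k_0$, since each dyadic band produces a weight $2^{-k(1-s)}$ or $2^{k(\gamma-1)}$ (depending on which of the bounds \eqref{com-imp-1} is used to absorb the $\la v-v_*\ra^\gamma$ growth of $\Phi_{\bar R}$), both of which are summable, with the tail starting at $k \sim \log_2 R$ yielding the stated power of $R$.

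The hard part will be twofold. First, the pseudo-differential calculus is inherently a linear statement about $\CalM$ and smooth multipliers in $v$, whereas the collision operator couples $v$ to $v'(v,v_*,\sigma)$ nonlinearly through the pre/post-collision map; I expect to need Bobylev's formula together with Taylor expansion on the sphere (as in \eqref{ineq:v-v-k} and the symmetry trick $\omega \to \tilde\omega$ from Proposition \ref{prop:trilinear-weight}) to reconcile the two structures and ensure that the remainders produced when shifting $\CalM$ across $\Phi_{*,k}$ are absorbable into the coercivity quadratic differences on the right-hand side. Second, obtaining the sharp exponent $\max\{s-1,\gamma/2-1\}$ rather than a softer power of $R$ requires a delicate balancing of the dyadic weights $2^{k(\gamma-1)}$ from \eqref{com-imp-1} against the $2^{-k(1-s)}$ gain from the $\la D_v\ra^{-1}\CalM$ factor, and the crossover between the two regimes $s<\gamma/2+1$ vs.\ $s>\gamma/2+1$ (which the exponent $\max\{s-1,\gamma/2-1\}$ records) emerges from this balance.
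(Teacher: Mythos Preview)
Your high-level plan (dyadic decomposition in $|v-v_*|$, Corollary~\ref{cor-234} for $[\CalM,\Phi_{*,k}]$, Bobylev, summation in $k$ to produce powers of $R$) matches the paper's architecture, but two points are genuinely off.

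First, the mechanism for the $\delta^{s'/(2s)}$ factor is misidentified. It does \emph{not} come from the commutator gain $\la D_v\ra^{-1}\CalM$; indeed your claimed bound $\la D_v\ra^{-1}\CalM\lesssim\delta^{s'/(2s)}\la D_v\ra^{s'-1}$ fails already at $\xi=0$. In the paper the commutator terms $\CalI\CalI_k,\CalI\CalI\CalI_k$ feed only the $R^{\max\{s-1,\gamma/2-1\}}$ piece, while the $\delta$-smallness comes from the \emph{main Bobylev term} $\CalI_k$: after dyadic localization one sets $G_{*,k}=\Phi_{*,k}g$, $h_{*,k}=\tilde\phi_{*,k}h$ and applies Bobylev so that the factor $\CalM(\xi)-\CalM(\xi^+)$ appears explicitly, and then uses
\[
\left|\frac{\CalM(\xi^+)-\CalM(\xi)}{\CalM(\xi)}\right|\lesssim \frac{\delta(T-t)(1+|\xi|+(T-t)|\eta|)^{2s-1}|\xi|\theta}{1+\delta(T-t)(1+|\xi|+(T-t)|\eta|)^{2s}}\le (\delta(T-t))^{s'/(2s)}|\xi|^{s'}\theta\,.
\]
Relatedly, you cannot run Bobylev ``applied to $\hat\Phi_{\bar R}$'' in the style of Proposition~\ref{singular-part}: $\Phi_{\bar R}$ grows like $|v|^\gamma$, so its Fourier transform is only a distribution and the decay bounds on $\hat\Phi_R$ used there are unavailable; Bobylev must be applied \emph{after} the dyadic cut, with $\hat G_{*,k},\hat h_{*,k}$ as the Fourier-side objects.

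Second, the paper's decomposition is three-way, $\CalI_k+\CalI\CalI_k-\CalI\CalI\CalI_k$, isolating both $[\tilde\phi_{*,k},\CalM]h$ and $[\Phi_{*,k},\CalM]g$. The hardest sub-term is $\CalI\CalI_k^{(3)}$ (and its analogue in $\CalI\CalI\CalI_k$), which carries no $\CalM(\xi)-\CalM(\xi^+)$ factor and no obvious smallness. Here the paper introduces the auxiliary point $w=v_*+\cos^2\tfrac{\theta}{2}(v-v_*)$ and splits $\CalI\CalI_k^{(3)}=S^k+M_0^k+R_1^k+R_2^k$; the key identity is the self-cancellation $M_0^k=-\CalI\CalI_k^{(3)}+R_3^k$, so that $2\,\CalI\CalI_k^{(3)}=S^k+R_1^k+R_2^k+R_3^k$, and each remainder carries a factor $2^{-k(1-s)}$ or $2^{k(\gamma/2-1)}$, whence $R^{\max\{s-1,\gamma/2-1\}}$ after summing $2^k\ge R$. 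This $w$-midpoint/self-cancellation device is absent from your outline and is what actually closes the estimate for these terms; the Taylor expansion~\eqref{ineq:v-v-k} and the $\omega\to\tilde\omega$ symmetry from Proposition~\ref{prop:trilinear-weight} play no role here.
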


\begin{proof}
For the proof 
we introduce the Littlewood-Paley
decomposition in $\R^3_v$ as follows:
\begin{equation*} 
\sum_{k=0}^{\infty} \phi_k(v) =1\,, \enskip \enskip  \phi_k(v) = \phi(2^{-k} v) \enskip
\mbox{for} \enskip k \geq 1 \enskip \mbox{with} \enskip 0 \leq
\phi_0, \phi \in C_0^{\infty}(\R^3),
\end{equation*}
and
\begin{equation*}
\text{supp}~\phi_0 \subset \{|v|<2\}, \qquad\quad \text{supp}~\phi
\subset \{ 1 <|v|< 3\}.
\end{equation*}
Take also $\tilde \phi_0$ and $\tilde \phi \in C_0^\infty$ such that
\begin{align*}
\tilde \phi_0 = 1 \ \ \text{on} \ \  \{|v|\leq 2\},& \qquad \qquad
\quad \text{supp}~\tilde{\phi}_0 \subset \{|v|<3\},
\\
\tilde \phi = 1 \ \ \text{on} \ \  \{  1/2 \leq |v|\leq 3\},& \qquad
\quad \text{supp}~\tilde{\phi} \subset \{1/3 <|v|<4\}.
\end{align*}
Furthermore, we assume that all these functions are radial. It
follows from the equivalence $|v'-v_*| \leq |v-v_*| \leq \sqrt{2}
|v'-v_*|$ that
\begin{equation}\label{B0}
\tilde \phi_k(v'-v_*) \phi_k(v-v_*) = \phi_k(v-v_*)=\tilde
\phi_k(v-v_*) \phi_k(v-v_*),\enskip k \geq 0.
\end{equation}
Write 
\begin{align} \label{def:phi-ast-k}
   \phi_{*,k}(v) = \phi_k(v-v_*), 
\qquad 
   \tilde \phi_{*,k}(v)= \tilde \phi(v-v_*) \,,
\qquad 
   \Phi_{*,k}(v) = \Phi_*(v)\phi_{*,k}(v) \,.
\end{align}
By the definition of $\Phi_\ast = \Phi_{\overline{R}}(v-v_*)$, 
we have 
\begin{align}\label{B1}
\Phi_*(v) = \sum_{2^k \geq R}^\infty \Phi_*(v)\phi_{*,k}(v).
\end{align}
If we set $\Phi_{*,k}(v):=\Phi_*(v) \phi_{*,k}(v)$, then for any real number $r$ we have
\begin{align}\label{B2}
\left|D_v^{\alpha} \Phi_{*,k}(v)\right| 
\leq C \la v-v_* \ra^{\gamma-|\alpha| +r} 2^{-kr} \tilde \phi_{*,k}(v),
\end{align}
where $C$ is independent of $R$.
It follows from \eqref{B0} and \eqref{B1} that
\begin{align*}
 \right.
\end{equation}
Thanks to \eqref{energy-important}, we have
$\displaystyle 
 \int_0^T \norm{ J_1^{\Phi_0}(W^l
 F(t))}_{L^1(\mathbb{T}^3_x)} dt <\infty\,,
$
and hence, if $F_{\pm} = \pm \max\{ \pm F, 0\}$ then we have 
\[
\int_0^T \norm{ J_1^{\Phi_0}(W^l 
 F_+(t))}_{L^1(\mathbb{T}^3_x)} dt + 
\int_0^T \norm{ J_1^{\Phi_0}(W^l
F_-(t))}_{L^1(\mathbb{T}^3_x)} dt
<\infty,
\]
because for $\tilde F = W^l F$ 
\[
J_1^{\Phi_0}(\tilde F)= J_1^{\Phi_0}( \tilde F_+)
+ J_1^{\Phi_0}(\tilde F_{-}) - 2\iiint b \mu_* \Big( \tilde F'_+ \tilde F_- +  
\tilde F_+ \tilde F'_{-} \Big) dvdv_*d\sigma,
\] 
and the third term is non-negative. 
Take the convex function $\beta (s) = \frac 1 2 (s^- )^2= \frac 1 2
s\,(s^- )  $ with $s^-=\min\{s, 0\}$, and 
notice that
\begin{align*}
\beta_s (F) &: =\left(\frac{d}{ds}\,\,\beta
\right)(F) 
={F_{ -}} \,\in
L^2([0,T]\times \mathbb{T}_x^3 ;H^s_{2+\gamma/2}(\RR_v^3)).
\end{align*}
Let $m$ be sufficiently large but  $W^m \la v \ra^s \le W^l$.
Multiply the first equation of \eqref{4.4.3}
by $\beta_s (F)W^{2m}$ 
$
= F_{-}W^{2m}$ and integrate over
$[0,t] \times \mathbb{T}^3_x \times \RR^3_v$, ($t \in (0,T]$). 
Then, in view of $\beta(F(0)) = F_{0, -}^2/2 =0$ and 
$$
\displaystyle \int_0^t \int_{\mathbb{T}^3_x \times \RR_v^3}{  W^{2m}\,  v\,\cdot\, \nabla_x \enskip (  \beta
(F(\tau))  }dxdv d\tau =0, $$
we have
\begin{align*}
\int_{\mathbb{T}^3_x \times \RR_v^3} \beta ( F(t)) W^{2m}dxdv
&=\int_0^t \left(\int_{\mathbb{T}^3_x \times \RR_v^3}
Q(G(\tau),\, F(\tau) )\,\, \beta_s(F(\tau)) W^{2m} \,dxdv\right) d\tau \,,
\end{align*}
where the right hand side is well defined because 
\[
G \in L^\infty([0,T]\times \mathbb{T}^3_x; L^2_{10}(\RR^3_v)), 
\qquad
W^l F, \, W^l F_{\pm} \in 
\in L^2([0,T]\times \mathbb{T}_x^3 ;H^s_{\gamma/2}(\RR_v^3))\,.
\]
The integrand  on the right hand side is equal to
\begin{align*}
\int_{\mathbb{T}^3_x \times \RR_v^3}   Q(G, F_{-} ) F_{-} W^{2m} \,dxdv
+ \int_{\mathbb{T}^3_x \times \RR_v^3\times \RR^3_{v_*} \times \mathbb{S}^2}    B  G'_* ( F_+)' F_{-}W^{2m}
dvdv_* d\sigma dx .
\end{align*}
From the induction hypothesis, the second term is non-positive. Therefore it follows from 
Proposition \ref{prop:coercivity}  with $l=m$ that
\begin{align*}
\norm{W^m F_{-}(t)}^2_{L^2(dxdv)} &= \int_{\mathbb{T}^3_x \times \RR_v^3} \beta ( W^m F(t))dxdv\\
&\le \int_0^t \left(
\int_{\mathbb{T}^3_x \times \RR_v^3}   Q(G(\tau), F_{-}(\tau) ) F_{-}(\tau)W^{2m}dxdv\right) d\tau\\
&\le -c_0\big(1-C_l \norm{g}_{L^\infty([0,T];Y_l)}\big) \int_0^t
 \norm{W^m F_{-}(\tau)}^2_{L^2_{\gamma/2}(dxdv)} d\tau\\
&\qquad + C_l(1+ \norm{g}_{L^\infty([0,T];Y_l)}
\int_0^t \norm{W^m F_{-}(\tau)}^2_{L^2(dxdv)} d\tau,
\end{align*}
which implies that 
$F= \mu +f  \ge 0$ for $(t,x,v) \in [0,T]\times\mathbb{T}^3_x \times \RR^3_v$. 
\end{proof}

\subsection{Local solution for non-linear equation and its uniqueness}
\begin{thm}[Local Existence]
\label{local-existence}
There exist $\epsilon_0$, $\Eps_1$ and $T>0$ such that  if $ f_0 \in Y_l$ and 
\begin{equation*}
\norm{f_0}_{Y_l} \le \epsilon_0,
\end{equation*}
then the Cauchy problem \eqref{eq:perturbation}
admits a  unique solution 
\begin{align} \label{bound:f-Y-l}
&f \in  L^\infty([0,T];Y_l) \enskip \mbox{satisfying} \enskip \norm{f}_{L^\infty([0,T];Y_l)} \le \epsilon_1 \,,
\end{align}
and
\begin{align} \label{def:A-l}
  E_l(f)  \Denote \int_0^T\sum_{\alpha} \norm{W^{l - |\alpha|}\del^{\alpha}_x f(\tau)}^2_{L^2(\dx; H^s_{\gamma/2}(\dv))}d\tau < \epsilon_1^2
\,.
\end{align}
\end{thm}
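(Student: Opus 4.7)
The plan is to run a Picard-type iteration scheme built on the linear theory of Lemma~\ref{lemma-linear}. Set $f^{(0)}\equiv 0$ and for $n\geq 0$ define $f^{(n+1)}$ as the weak solution, guaranteed by Lemma~\ref{lemma-linear} with $g=f^{(n)}$, of
\begin{align*}
\partial_t f^{(n+1)} + v\cdot\nabla_x f^{(n+1)} - Q(\mu+f^{(n)},f^{(n+1)}) = Q(f^{(n)},\mu)\,,\qquad f^{(n+1)}|_{t=0}=f_0.
\end{align*}
Since the target equation \eqref{eq:perturbation} may be rewritten as $\partial_t f + v\cdot\nabla_x f - Q(\mu+f,f) = Q(f,\mu)$, a fixed point of this iteration is exactly a solution of \eqref{eq:perturbation}. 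The assumption $\mu+f_0\geq 0$ together with the non-negativity statement in Lemma~\ref{lemma-linear} guarantees inductively that $\mu+f^{(n)}\geq 0$ throughout.

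The first step is \textbf{uniform boundedness}. Setting $X_n\Denote \|f^{(n)}\|^2_{L^\infty([0,T];Y_l)}+E_l(f^{(n)})$, the energy bound \eqref{energy-important} gives $X_{n+1}\leq 2\|f_0\|^2_{Y_l}+2\delta_4 X_n$. By \eqref{bound:upper-delta}, choosing $l$ large enough renders $\delta_4<1/4$; then an induction with $X_0=0$ yields $X_n\leq 4\|f_0\|^2_{Y_l}\leq 4\epsilon_0^2\Denote \epsilon_1^2$ for all $n$. Shrinking $\epsilon_0$ so that $\epsilon_1<\tau_0$ and $T\leq T_0$ ensures the hypotheses of Lemma~\ref{lemma-linear} are met at every step, closing the induction.

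The second step is the \textbf{Cauchy property}. The difference $w^{(n)}\Denote f^{(n+1)}-f^{(n)}$ satisfies
\begin{align*}
\partial_t w^{(n)} + v\cdot\nabla_x w^{(n)} - Q(\mu+f^{(n)},w^{(n)}) = Q(w^{(n-1)},\mu) + Q(w^{(n-1)},f^{(n)})\,,\qquad w^{(n)}|_{t=0}=0.
\end{align*}
I would run the same energy estimate as in the proof of Lemma~\ref{lemma-linear} on each weighted derivative $\del_x^\alpha w^{(n)}$, with $|\alpha|\leq 2$. The contribution of $Q(w^{(n-1)},\mu)$ is handled verbatim by Proposition~\ref{prop:bound-Q-M} and produces a factor $\delta_4 E_l(w^{(n-1)})$ as in \eqref{bound:evolution-1-linear}. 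The new term $Q(w^{(n-1)},f^{(n)})$ is a genuine trilinear contribution; splitting off commutators with the weight as in Proposition~\ref{Prop:commute-deriv-x} and the trilinear bound of Proposition~\ref{prop:trilinear}, it is controlled by $C_l\|f^{(n)}\|_{Y_l}E_l(w^{(n-1)})^{1/2}E_l(w^{(n)})^{1/2}$, which, after absorption into the coercive dissipation on the left, is bounded by $C_l\|f^{(n)}\|^2_{Y_l}E_l(w^{(n-1)})$ up to a term that can be absorbed. Writing $Y_n\Denote \|w^{(n)}\|^2_{L^\infty([0,T];Y_l)}+E_l(w^{(n)})$, we thus obtain
\begin{align*}
Y_n \leq C_l\bigl(\delta_4 + \epsilon_1^2\bigr)\,Y_{n-1}.
\end{align*}
Choosing $l$ large so that $\delta_4$ is small and $\epsilon_0$ (hence $\epsilon_1$) small makes $\lambda\Denote C_l(\delta_4+\epsilon_1^2)<1$, and $\{f^{(n)}\}$ is Cauchy in the norm defined by $Y_n$.

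Finally, passing $n\to\infty$ yields a limit $f\in L^\infty([0,T];Y_l)$ obeying both \eqref{bound:f-Y-l} and \eqref{def:A-l} by weak-$*$ / weak lower semicontinuity, and $\mu+f\geq 0$ is inherited from the approximations. The limit solves \eqref{eq:perturbation} since every bilinear term converges in the sense of distributions thanks to the contraction in $Y_l$ and the trilinear bounds. Uniqueness is a one-step version of the same contraction: if $f_1,f_2$ are two solutions with the stated bounds, their difference $w=f_1-f_2$ satisfies the same linear equation as $w^{(n)}$ with $f^{(n)}$ replaced by $f_2$ and $w^{(n-1)}$ by $w$; the energy estimate gives $Y(w)\leq \lambda Y(w)$ with $\lambda<1$ and $w|_{t=0}=0$, forcing $w\equiv 0$. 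The main technical obstacle is the careful bookkeeping in the energy estimate for the difference — specifically, showing that the new trilinear contribution $Q(w^{(n-1)},f^{(n)})$ does not destroy the $\delta_4$-type smallness that drives the contraction; this requires re-examining the weighted commutator bounds of Section~3 to verify that a small factor proportional to $\|f^{(n)}\|_{Y_l}$ can always be extracted.
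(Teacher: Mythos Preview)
Your scheme --- Picard iteration on Lemma~\ref{lemma-linear}, uniform bounds from \eqref{energy-important}, contraction on the differences $w^{(n)}$ --- is exactly the paper's approach, and your uniform-boundedness step is correct. The gap is in the contraction step: you assert it closes in the full norm $Y_l$, and it does not.

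The obstruction is the term $Q(w^{(n-1)},f^{(n)})$. After commuting the weight $W^{l-|\alpha|}$ inside and applying Proposition~\ref{prop:trilinear} with $(m,\sigma)=(0,0)$, the middle factor carries the norm $\|W^{l-|\alpha|}\del_x^{\alpha_2}f^{(n)}\|_{H^s_{\gamma/2+2s}}$. The extra $2s$ moments on $f^{(n)}$ are \emph{not} controlled by $E_l(f^{(n)})$, which only supplies $H^s_{\gamma/2}$, and Proposition~\ref{prop:trilinear} does not let you shift this weight onto $w^{(n)}$ or $w^{(n-1)}$ (it requires $m\ge 0$). So the inequality $C_l\|f^{(n)}\|_{Y_l}\,E_l(w^{(n-1)})^{1/2}E_l(w^{(n)})^{1/2}$ that you want is not actually available at weight level $l$; your remark that ``a small factor proportional to $\|f^{(n)}\|_{Y_l}$ can always be extracted'' overlooks this loss.

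The paper's remedy is the standard one for quasilinear iterations: perform the contraction in a \emph{weaker} space $Y_{l'}$ with $l'=l-2$. Then the offending factor becomes $\|W^{l'-|\alpha|}\del_x^{\alpha_2}f^{(n)}\|_{H^s_{\gamma/2+2s}}$, which is dominated by $\|W^{l-|\alpha_2|}\del_x^{\alpha_2}f^{(n)}\|_{H^s_{\gamma/2}}$ and hence by $E_l(f^{(n)})^{1/2}$, since the gap $l-l'$ absorbs the extra $2s$ moments. The contraction then reads
\[
\|w^{(n)}\|^2_{L^\infty(0,T;Y_{l'})}+E_{l'}(w^{(n)}) \;\le\; \bigl(4\delta_4+C_l\epsilon_1^2\bigr)\bigl(\|w^{(n-1)}\|^2_{L^\infty(0,T;Y_{l'})}+E_{l'}(w^{(n-1)})\bigr),
\]
and convergence holds in $Y_{l'}$. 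The limit inherits the full $Y_l$ bound \eqref{bound:f-Y-l}--\eqref{def:A-l} by weak-$*$ lower semicontinuity from the uniform estimates on $f^{(n)}$; convergence of the collision terms in $\mathcal D'$ only needs the $Y_{l'}$ convergence plus the $Y_l$ uniform bound. Uniqueness is then the one-step version of this same $Y_{l'}$ contraction.
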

\begin{proof}
Consider the sequence of approximate solutions defined by $f^0 = 0$ and
\begin{align}\label{l-e-c-itere-}\begin{cases}
\partial_t f^{n+1} +v\cdot \nabla_{x}f^{n+1} - Q(\mu+f^n , f^{n+1}) = Q(f^n,\mu)
,\\
f^{n+1}|_{t=0}=f_0(x,v). 
\end{cases}
\end{align}
Use Lemma \ref{lemma-linear} with $f= f^{n+1}, g= f^n$ and choose $T, \delta$ sufficiently small. 
Then it follows from \eqref{energy-important} that
\begin{align} \label{bound:f-n}
\norm{f^n}_{L^\infty([0,T];Y_l)} \le \epsilon_1, \enskip 
E_l(f^n) \le \epsilon_1^2,
\end{align}
inductively, if $\epsilon_0$ and $\delta_4$ are taken such that 
\begin{align} \label{cond:size-small}
  2(\epsilon_0^2 + \delta_4 \epsilon_1^2)
\le \epsilon_1^2 \,.
\end{align}  
A sufficient condition for~\eqref{cond:size-small} to hold is by choosing \begin{align*}
   \delta_4 < \frac{1}{2} \,,
\qquad
   \Eps_1 \geq 2 \Eps_0 \,.
\end{align*}
It remains to prove the convergence of the sequence $\{f^n\}$. Setting $w^n 
=f^{n+1}- f^n$, from \eqref{l-e-c-itere-} we have 
\begin{align*}
\partial_t w^{n} +v\cdot \nabla_{x}w^{n} - Q(\mu+f^n , w^n) 
= Q(w^{n-1},\mu + f^n),
\end{align*}
with $w^n |_{t=0} =0$. Repeating the estimates leading to~\eqref{energy-important}, we have
\begin{align} \label{ineq:w-n}
& \quad \,
   \norm{w^n}^2_{L^\infty(0, T; Y_{l'})} + E_{l'}(w^n) \nn
\\
& \leq
   2 \delta_4 E_{l'}(w^{n-1})
   + 2 \sum_\alpha \int_0^T \int_{\T^3} \int_{\R^3}
       \del_x^\alpha Q(w^{n-1}, f^n) \vpran{\del_x^\alpha w^n}
       W^{2(l' - |\alpha|)} \dv\dx\dt \,,
\end{align}
where $E_{l'}$ is defined in~\eqref{def:A-l}. We claim that if we choose 
\begin{align*}
   l' = l -2 \,, 
\qquad
   l \geq 11 + \gamma \,,
\end{align*} 
then for each $|\alpha| \leq 2$, the last term of the right hand side of the above inequality is bounded by
\begin{align} \label{bound:smaller-weight}
& \quad \,
    \abs{\int_0^T \int_{\T^3} \int_{\R^3}
       \del_x^\alpha Q(w^{n-1}, f^n) \vpran{\del_x^\alpha w^n}
       W^{2(l' - |\alpha|)} \dv\dx\dt }  \nn
\\
& \leq
    C_l \vpran{\norm{w^{n-1}}_{L^\infty(0, T; Y_{l'})} + E^{1/2}_{l'}(w^{n-1})}
    \vpran{\norm{f^n}_{L^\infty(0, T); Y_l} + E^{1/2}_{l}(f^n)} E^{1/2}_{l'}(w^n) \,.   
\end{align}
The proof is similar to  the one for Proposition~\ref{prop:bound-Q-M}. Indeed, for any $\alpha_1 + \alpha_2 = \alpha$, 
\begin{align*}
& \quad \,
  \int_{\R^3}
       Q(\del_x^{\alpha_1} w^{n-1}, \del_x^{\alpha_2} f^n) \vpran{\del_x^\alpha w^n}
       W^{2(l' - |\alpha|)} \dv
\\
&= \IntRRS b(\cos\theta) |v - v_\ast|^\gamma
      \vpran{\del_x^{\alpha_1} w^{n-1}(v^\ast)} 
      \vpran{\del_x^{\alpha_2} f^n(v)} 
\\
& \hspace{2.5cm}
   \times
      \vpran{\vpran{\del_x^\alpha w^n (v')}
       W^{2(l' - |\alpha|)}(v') - \vpran{\del_x^\alpha w^n(v)}
       W^{2(l' - |\alpha|)}(v)} \dbmu
\\
& = \IntRRS b(\cos\theta) |v - v_\ast|^\gamma
       \vpran{\del_x^{\alpha_1} w^{n-1}(v^\ast)} 
      \vpran{\del_x^{\alpha_2} f^n(v) W^{l'-|\alpha|}(v)}
\\
& \hspace{2.5cm}
   \times
       \vpran{\vpran{\del_x^\alpha w^n (v')}
       W^{l' - |\alpha|}(v') - \vpran{\del_x^\alpha w^n(v)}
       W^{l' - |\alpha|}(v)} \dbmu
\\
& \quad \, 
   + \IntRRS b(\cos\theta) |v - v_\ast|^\gamma
      \vpran{\del_x^{\alpha_1} w^{n-1}(v^\ast)} 
      \vpran{\del_x^{\alpha_2} f^n(v)}
      \vpran{\vpran{\del_x^\alpha w^n (v')}
       W^{l' - |\alpha|}(v')}
\\
& \hspace{2.5cm}
   \times
     \vpran{W^{l' - |\alpha|}(v') - W^{l' - |\alpha|}(v)} \dbmu        
\\
& \Denote T_8 + T_9 \,.
\end{align*}
Applying the trilinear estimate in Proposition~\ref{prop:trilinear} to $T_8$ gives
\begin{align} \label{bound:T-8}
  \abs{T_8} \dx\dt
&\leq 
   C_{l'} \norm{\del_x^{\alpha_1} w^{n-1}}_{L^1_{\gamma + 2s} \cap L^2}
     \norm{\del_x^{\alpha_2} f^n(v) W^{l'-|\alpha|}}_{H^{s}_{\gamma/2 + 2s}}
     \norm{\del_x^\alpha w^n(v)
       W^{l' - |\alpha|}}_{H^{s}_{\gamma/2}} \nn
\\
& \leq 
   C_{l'} \norm{W^{l' - |\alpha_1|}\del_x^{\alpha_1} w^{n-1}}_{L^2}
     \norm{W^{l - |\alpha_2|}\del_x^{\alpha_2} f^n(v)}_{H^{s}_{\gamma/2}}
     \norm{\del_x^\alpha w^n(v)
       W^{l' - |\alpha|}}_{H^{s}_{\gamma/2}} \,,
\end{align}
if we choose $l'$ such that
\begin{align*}
   \frac{\gamma}{2} + 2s + 4
\leq l'
\leq l - 2s \,,
\qquad
   l \geq 8 + \gamma/2 \,.
\end{align*}
By Proposition~\ref{prop:trilinear-weight}, we bound $T_9$ as
\begin{align}
   \abs{T_9}
&\leq
    C_0
    \norm{\del_x^{\alpha_2} f^n}_{L^1_\gamma}
    \norm{W^{l'-|\alpha|} \del_x^{\alpha_1} w^{n-1}}_{L^2_{\gamma/2}} 
   \norm{W^{l'-|\alpha|} \del_x^\alpha w^n}_{L^2_{\gamma/2}} \nn
\\
& \quad \,
   + C_{l'} \norm{\del_x^{\alpha_2} f^n}_{L^1_{1+\gamma}}
    \norm{W^{l' - |\alpha|} \del_x^{\alpha_1} w^{n-1}}_{L^2} 
   \norm{W^{l'-|\alpha|} \del_x^\alpha w^n}_{L^2} \nn
\\
& \quad \,
   + C_k \norm{\del_x^{\alpha_1} w^{n-1}}_{L^1_{4+\gamma}} \norm{W^{l' - |\alpha|} \del_x^{\alpha_2} f^n}_{L^2} \norm{W^{l'-|\alpha|} \del_x^\alpha w^n}_{L^2} 
\\
& \quad \,
  +   C_{l'} \norm{\del_x^{\alpha_1} w^{n-1}}_{L^1_\gamma}
  \norm{W^{l' - |\alpha|} \del_x^{\alpha_2} f^n}_{L^2_{\gamma/2}}
  \norm{W^{l' - |\alpha|} \del_x^\alpha w^n}_{L^2_{\gamma/2}} \nn
\\
& \quad \,
  + C_{l'} \norm{\del_x^{\alpha_1} w^{n-1}}_{L^1_{3+\gamma+2s} \cap L^2}
  \norm{W^{l' - |\alpha|} \del_x^{\alpha_2} f^n}_{H^{s'}_{\gamma'/2}}
  \norm{W^{l' - |\alpha|} \del_x^\alpha w^n}_{L^2_{\gamma/2}} \nn \,.
\end{align}
Integrating in $t, x$ and using H\"{o}lder's inequality, we have
\begin{align*}
  \int_0^T \int_{\T^3} |T_8| \dx\dt
\leq
  C_{l'} \norm{w^{n-1}}_{L^\infty([0, T]; Y_{l'})}
  E^{1/2}_{l}(f^n) E^{1/2}_{l'}(w^n)  \,.
 \end{align*}
Similarly,  we have the bound for $T_9$ as
\begin{align*}
  \int_0^T \int_{\T^3} |T_9| \dx\dt
\leq
   C_{l'} \vpran{\norm{f^n}_{L^\infty([0, T]; Y_l)}
   + E^{1/2}_l(f^n)} E^{1/2}_{l'}(w^{n-1}) E^{1/2}_{l'}(w^{n}), 
\end{align*}
if we choose $l'$ such that $9 + \gamma \leq l'$. In summary, \eqref{bound:smaller-weight} holds if 
\begin{align*}
   9 + \gamma \leq l' \leq l - 2s \,.
\end{align*}
Applying~\eqref{bound:smaller-weight} and H\"{o}lder's inequality in~\eqref{ineq:w-n}, we have
\begin{align} \label{bound:iteration}
     \norm{w^n}^2_{L^\infty(0, T; Y_{l'})} + E_{l'}(w^n)
&\leq
    \vpran{4 \delta_4 + C_l \vpran{\norm{f^n}^2_{L^\infty([0, T]; Y_l)} + E_l(f^n)}}    
    \vpran{\norm{w^{n-1}}^2_{L^\infty(0, T; Y_{l'})} + E_{l'}(w^{n-1})} \nn
\\
& \leq 
    \vpran{4 \delta_4 + 2 C_l\Eps_1^2}    
    \vpran{\norm{w^{n-1}}^2_{L^\infty(0, T; Y_{l'})} + E_{l'}(w^{n-1})} \,.
\end{align}
Hence, if we choose $\delta, \Eps_1$ small enough such that 
\begin{align} \label{cond:delta-1}
    4\delta_4 + 2\Eps_1^2 < 1 \,,
\end{align}
then the series $\sum_{n=0}^\infty \vpran{\norm{w^{n-1}}^2_{L^\infty(0, T; Y_{l'})} + E_{l'}(w^{n-1})}$ converges. 
With the smallness condition~\eqref{cond:delta-1}, there exists a function $f \in L^\infty(0, T; Y_{l'})$ with $E_{l'}(f) < \infty$ such that
\begin{align} \label{converg:f}
    f^n \to f \quad \text{strongly in $L^\infty(0, T; Y_{l'})$} \,,
\qquad
   E_{l'} (f^n - f)  \to 0 \,. 
\end{align}
Moreover, by~\eqref{bound:f-n} we also have that $f \in L^\infty([0,T];Y_l)$ and 
\begin{align} \label{bound:solution-f}
   \norm{f}_{L^\infty([0,T];Y_l)} \le \epsilon_1, 
\qquad
    E_l(f) \le \epsilon_1^2 \,.
\end{align}
To complete the proof of the local existence of the solution to the nonlinear equation, we only need to show that 
\begin{align} \label{converg:Q}
    Q(\mu + f^n , f^{n+1}) \to Q(\mu + f, f) 
\qquad 
    \text{in $\CalD'([0, T] \times \T^3 \times \R^3)$.}
\end{align}
To this end, let $\phi \in C^\infty_c([0, T] \times \T^3 \times \R^3))$. Then by letting $(\sigma, m) = (-s, 0)$ in Proposition~\ref{prop:trilinear} and using the uniform bounds in ~\eqref{bound:f-n} and~\eqref{bound:solution-f}, we have
\begin{align*}
& \quad \,
   \abs{\int_{\T^3} \int_{\R^3} \vpran{Q(\mu+f^n, f^{n+1}) - Q(\mu+f, f)} \phi \dv\dx}
\\
&\leq
   \abs{\int_{\T^3} \int_{\R^3} Q(f^n - f, f^{n+1}) \, \phi \dv\dx}
  +    \abs{\int_{\T^3} \int_{\R^3} Q(\mu+f, f^{n+1} - f) \, \phi \dv\dx}  
\\
& \leq 
  C_\phi \vpran{\int_{\T^3} \norm{f^n-f}_{L^1_{\gamma+2s} \cap L^2} \norm{f^{n+1}}_{L^2_{\gamma/2 + 2s}} \dx
  + \int_{\T^3} \norm{\mu+f}_{L^1_{\gamma+2s} \cap L^2} \norm{f^{n+1} - f}_{L^2_{\gamma/2 + 2s}}}
\\
& \leq
   C_\phi \vpran{\norm{f^n - f}_{Y_{l'}} + \norm{f^{n+1} - f}_{Y_{l'}}}
   \vpran{\norm{f^n}_{Y_l} + \norm{f}_{Y_l}}
\to 0 
\quad
\text{as $n \to \infty$} \,,
\end{align*}
as long as $l' \geq 2 + \gamma + 2s$. 
Hence~\eqref{converg:Q} holds. The other terms in equation~\eqref{l-e-c-itere-} are all linear, therefore they all converge to the corresponding terms in $f$ in the sense of distribution. We thereby complete the proof of the existence of a weak solution $f$ to~\eqref{eq:perturbation} with the desired bounds in~\eqref{bound:f-Y-l} and~\eqref{def:A-l}. Given the bounds for $f$, the uniqueness follows from the estimate in~\eqref{bound:iteration}. 
\end{proof}

\begin{rmk}
By the definition of $\delta_4$ in the proof of Theorem~\ref{local-existence}, one sufficient condition for $\delta_4 < 1/8$ is
\begin{align*}
   \frac{8}{\gamma_1} 2^{-\frac{l - 5 - \gamma/2}{2}}
< 1/8 \,,
\end{align*}
which gives $l \geq 33$.
\end{rmk}

\appendix


\section{Two lemmas}

In the first part of the  appendix, we give two lemmas that have been used and can be useful for future study.

\begin{lem}[Ukai estimate]\label{lem:Ukai}
For any $\alpha>0$, there exists a constant $c_\alpha >0$ such that
\begin{equation}\label{4.1}
\int^t_0|\xi-s\eta|^\alpha ds \geq c_\alpha (t|\xi|^\alpha +
t^{\alpha+1}|\eta|^\alpha).
\end{equation}
\end{lem}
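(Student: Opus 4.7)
The plan is to remove the $t$-dependence by a scaling substitution, reducing the bound to a single pointwise inequality on $\RR^3\times\RR^3$, and then to prove the latter by an elementary case analysis exploiting the orthogonal decomposition of $\xi$ relative to $\eta$.

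First I would substitute $s = t\tau$ and set $\zeta = t\eta$, so that the integral becomes $t\int_0^1|\xi - \tau\zeta|^\alpha\,d\tau$ while the right-hand side becomes $t(|\xi|^\alpha + |\zeta|^\alpha)$. The claim therefore reduces to
\begin{equation*}
\int_0^1|\xi - \tau\zeta|^\alpha\,d\tau \geq c_\alpha\bigl(|\xi|^\alpha + |\zeta|^\alpha\bigr) \qquad \text{for all } \xi,\zeta \in \RR^3,
\end{equation*}
with no $t$ remaining. Next I would split into two regimes. In the regime $|\xi| \geq 2|\zeta|$, the triangle inequality yields $|\xi - \tau\zeta| \geq |\xi|/2$ for every $\tau\in[0,1]$, and since $|\zeta|^\alpha \leq 2^{-\alpha}|\xi|^\alpha$ here, the desired bound is immediate. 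In the complementary regime $|\xi| < 2|\zeta|$, I would decompose $\xi = \lambda\zeta + \xi^\perp$ with $\xi^\perp \perp \zeta$, so that $|\xi - \tau\zeta|^2 \geq (\lambda - \tau)^2|\zeta|^2$; the problem then reduces to the scalar statement that $\int_0^1|\lambda - \tau|^\alpha\,d\tau$ is bounded below by a positive constant uniformly in $\lambda\in\RR$, which is clear because this scalar function is continuous, strictly positive, and tends to $+\infty$ as $|\lambda|\to\infty$. The companion bound on $|\xi|^\alpha$ in this regime follows from $|\xi|^\alpha \leq 2^\alpha|\zeta|^\alpha$.

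The main point to notice is that the perpendicular component $\xi^\perp$ is effectively discarded in the second regime, and its contribution to $|\xi|^\alpha$ is recovered only through the crude bound $|\xi| \leq 2|\zeta|$; this is why the two-regime split is needed, with the small-$|\zeta|$ regime controlling $|\xi|^\alpha$ directly and the large-$|\zeta|$ regime controlling $|\zeta|^\alpha$ first and $|\xi|^\alpha$ as a byproduct. As an alternative to the explicit case analysis, I could observe that after the scaling reduction both sides are continuous and positively $\alpha$-homogeneous in $(\xi,\zeta)$, so the ratio is continuous on the unit sphere $|\xi|^2 + |\zeta|^2 = 1$ and vanishes nowhere (the integrand cannot be identically zero on $[0,1]$ unless $\xi=\zeta=0$); compactness would then yield the uniform lower bound $c_\alpha > 0$ without any case splitting.
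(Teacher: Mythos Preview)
Your proof is correct and follows essentially the same approach as the paper: the identical scaling reduction $s=t\tau$, $\zeta=t\eta$, followed by a two-regime case split and a one-dimensional reduction. The only differences are cosmetic---the paper uses the reverse triangle inequality $|\xi-\tau\zeta|\geq\bigl||\xi|-\tau|\zeta|\bigr|$ rather than orthogonal projection to reach the scalar integral, and then computes $\int_0^1|\theta-\tau|^\alpha\,d\tau=\frac{\theta^{\alpha+1}+(1-\theta)^{\alpha+1}}{\alpha+1}$ explicitly to obtain the concrete constant $c_\alpha=\frac{1}{2^{\alpha+1}(\alpha+1)}$, whereas you invoke continuity and compactness.
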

\noindent{\bf Remark :} If $\alpha=2$, 
estimate follows from a  direct calculation.
The following simple proof in general case is due to Seiji Ukai.

\begin{proof}  Setting $s = t \tau$ and {  $\tilde \eta = t
\eta$}, we see that the estimate is equivalent to
$$
\int^1_0|\xi- \tau \tilde \eta|^\alpha d\tau \geq c_\alpha
(|\xi| ^\alpha+ |\tilde
\eta|^\alpha ).
$$
Since this is trivial when  $\tilde \eta =0$, we may assume
$\tilde \eta \ne 0$.
If $|\xi| < |\tilde \eta|$ then
\begin{eqnarray*}
&&\int^1_0|\xi- \tau \tilde \eta|^\alpha d\tau \geq |\tilde
\eta|^\alpha
\int^{1}_{0} \left| \tau - \frac{|\xi|}{|\tilde \eta|} \right|^\alpha d \tau \\
&=& |\tilde \eta|^\alpha \left \{ \int_{0}^{|\xi|/|\tilde \eta|}
\Big( \frac{|\xi|}{|\tilde \eta|}- \tau \Big)^\alpha d \tau +
\int_{|\xi|/|\tilde \eta|}^1 \Big( \tau - \frac{|\xi|}{|\tilde
\eta|} \Big)^\alpha d \tau \right\}
\\
&\geq&  \frac{|\tilde \eta|^\alpha}{\alpha+1}  \min_{0\leq \theta
\leq 1} ( \theta^{\alpha+1} + (1-\theta )^{\alpha+1})
= \frac{|\tilde \eta|^\alpha}{2^\alpha (\alpha+1)}\\
& \geq& \frac{1}{2^{\alpha+1}(\alpha+1)}(|\xi| ^\alpha+ |\tilde
\eta|^\alpha ).
\end{eqnarray*}
If $|\xi| \geq |\tilde \eta|$ then
\begin{eqnarray*}
\int^1_0|\xi- \tau \tilde \eta|^\alpha d\tau &\geq& |\xi|^\alpha
\int^{1}_0\Big( 1 - \tau \frac{|\tilde \eta|}{|\xi |}\Big)^\alpha
d\tau \geq
 |\xi|^\alpha
\int^{1}_0\big( 1 -  \tau\big)^\alpha d\tau \\
&=& \frac{|\xi|^\alpha}{\alpha+1} \geq \frac{1}{2(\alpha+1)}(|\xi|
^\alpha+ |\tilde \eta|^\alpha ).
\end{eqnarray*}
Hence we obtain (\ref{4.1}).
\end{proof}

\begin{cor}\label{appendix-cor-A}
For any $\alpha >0$, we have 
\begin{equation}\label{A-c}
\int^t_0\langle \xi-s\eta\rangle^\alpha ds \sim t \big( 1+ |\xi|^2 + t^2 |\eta|^2\big )^{\alpha/2}\, .
\end{equation}
\end{cor}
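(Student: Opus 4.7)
The plan is to derive \eqref{A-c} directly from Lemma \ref{lem:Ukai} together with elementary comparisons between $\langle w \rangle = (1+|w|^2)^{1/2}$ and $|w|$. Since $\langle w \rangle^{\alpha} \sim 1 + |w|^{\alpha}$ for any fixed $\alpha>0$, and since for nonnegative $a,b,c$ we have $(a+b+c)^{\alpha/2} \sim a^{\alpha/2}+b^{\alpha/2}+c^{\alpha/2}$, the target quantity $t(1+|\xi|^2+t^2|\eta|^2)^{\alpha/2}$ is comparable to
\[
   t + t|\xi|^{\alpha} + t^{\alpha+1}|\eta|^{\alpha}.
\]
Thus it suffices to prove that $\int_0^t \langle \xi - s\eta\rangle^{\alpha}\,ds$ is bounded above and below by a constant multiple of this expression.

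For the upper bound, I would use the pointwise estimate $\langle \xi - s\eta\rangle^{\alpha} \le C_\alpha\bigl(1 + |\xi|^{\alpha} + s^{\alpha}|\eta|^{\alpha}\bigr)$ and integrate in $s\in[0,t]$, obtaining
\[
   \int_0^t \langle \xi - s\eta\rangle^{\alpha}\,ds \le C_\alpha\!\left(t + t|\xi|^{\alpha} + \tfrac{t^{\alpha+1}}{\alpha+1}|\eta|^{\alpha}\right),
\]
which is the desired upper bound. For the lower bound, I would combine two trivial minorations: on the one hand $\langle \xi - s\eta\rangle \ge 1$ gives $\int_0^t \langle \xi - s\eta\rangle^{\alpha}\,ds \ge t$, and on the other hand $\langle \xi - s\eta\rangle \ge |\xi - s\eta|$ together with Lemma \ref{lem:Ukai} gives
\[
   \int_0^t \langle \xi - s\eta\rangle^{\alpha}\,ds \;\ge\; \int_0^t |\xi - s\eta|^{\alpha}\,ds \;\ge\; c_\alpha\bigl(t|\xi|^{\alpha} + t^{\alpha+1}|\eta|^{\alpha}\bigr).
\]
Averaging these two lower bounds (or simply adding them and dividing by $2$) yields $\int_0^t \langle \xi - s\eta\rangle^{\alpha}\,ds \gtrsim t + t|\xi|^{\alpha} + t^{\alpha+1}|\eta|^{\alpha}$, matching the upper bound up to constants.

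There is really no obstacle here: the corollary is a packaged restatement of Lemma \ref{lem:Ukai} with the inhomogeneous weight $\langle\cdot\rangle$ replacing $|\cdot|$, and the only item to check is the equivalence $(1+A+B)^{\alpha/2} \sim 1 + A^{\alpha/2} + B^{\alpha/2}$ used to rewrite $t(1+|\xi|^2+t^2|\eta|^2)^{\alpha/2}$ in the additive form above. That equivalence is standard and holds with constants depending only on $\alpha$, so the proof will amount to writing out the two short chains of inequalities described and then invoking the scalar comparison to convert between the $|\cdot|^{\alpha}$ and $\langle\cdot\rangle^{\alpha}$ forms.
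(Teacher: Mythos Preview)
Your proposal is correct and is exactly the intended derivation: the paper states Corollary~\ref{appendix-cor-A} without proof as an immediate consequence of Lemma~\ref{lem:Ukai}, and your argument (upper bound by the trivial pointwise estimate and integration, lower bound by combining $\langle\cdot\rangle\ge 1$ with $\langle\cdot\rangle\ge |\cdot|$ and Lemma~\ref{lem:Ukai}) is precisely how one unpacks that implication.
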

\begin{lem} \label{lem:integral-bound-negative}
For any $0<\beta<1$, there exists a constant $C_\beta >0$ such that
\begin{equation}\label{4.1}
\int^t_0\la \xi-s\eta \ra^{-\beta} ds \leq C_\beta \frac{t }{\big(1+ |\xi|^2 + t^2|\eta|^2\big)^{\beta/2}}.
\end{equation}
\end{lem}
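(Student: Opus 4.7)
\medskip

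\noindent\textit{Proof sketch.}
The plan is to first remove the $t$-dependence by rescaling, then to reduce the integral to a one–dimensional problem by decomposing $\xi$ along and perpendicular to $\eta$, and finally to carry out a two-case analysis on the location of the resulting interval of integration. If $\eta=0$ or $t=0$ the inequality is trivial, so assume otherwise. Setting $s=t\tau$ and $\tilde\eta=t\eta$, the inequality is equivalent to
\begin{equation*}
  \int_0^1 \la \xi - \tau\tilde\eta\ra^{-\beta}\,d\tau
  \;\le\; C_\beta\,\bigl(1+|\xi|^2+|\tilde\eta|^2\bigr)^{-\beta/2}.
\end{equation*}

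Write $\hat{\tilde\eta}=\tilde\eta/|\tilde\eta|$ and decompose $\xi=\xi_\|\hat{\tilde\eta}+\xi_\perp$ with $\xi_\perp\perp\tilde\eta$, and set $A^2:=1+|\xi_\perp|^2$, $T:=|\tilde\eta|$. Then $\la \xi-\tau\tilde\eta\ra^2=A^2+(\xi_\|-\tau T)^2$ and the change of variables $u=\xi_\|-\tau T$ turns the left-hand side into
\begin{equation*}
  \frac{1}{T}\int_{c}^{d} (A^2+u^2)^{-\beta/2}\,du,
  \qquad c=\xi_\|-T,\quad d=\xi_\|,
\end{equation*}
while the right-hand side equals $C_\beta(A^2+\xi_\|^2+T^2)^{-\beta/2}$, so it suffices to prove
\begin{equation}\label{eq:target-1d}
  \int_c^d (A^2+u^2)^{-\beta/2}\,du \;\le\; C_\beta\, T\,(A^2+\xi_\|^2+T^2)^{-\beta/2}.
\end{equation}

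The crucial auxiliary estimate, valid for all $\beta\in(0,1)$ and $M,A>0$, is
\begin{equation*}
  J(M,A):=\int_0^M(A^2+u^2)^{-\beta/2}\,du \;\le\; C_\beta\, M\,(A^2+M^2)^{-\beta/2},
\end{equation*}
which follows by distinguishing $M\le A$ (bound the integrand by $A^{-\beta}$) and $M>A$ (bound by $u^{-\beta}$, noting $\beta<1$ is what keeps this integrable near $0$); in either case the right–hand side dominates.

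The proof of \eqref{eq:target-1d} now splits into two cases. If $|\xi_\||\ge 2T$, then $|u|\ge|\xi_\||/2$ for every $u\in[c,d]$, so the integrand is bounded by $(A^2+\xi_\|^2/4)^{-\beta/2}$ and the integral is at most $T(A^2+\xi_\|^2/4)^{-\beta/2}$; the elementary inequality $A^2+\xi_\|^2+T^2\le 5(A^2+\xi_\|^2/4)$ (using $T^2\le\xi_\|^2/4$) then yields \eqref{eq:target-1d}. If instead $|\xi_\||\le 2T$, then $[c,d]\subset[-3T,3T]$ and the auxiliary estimate gives
\begin{equation*}
  \int_c^d (A^2+u^2)^{-\beta/2}\,du \;\le\; 2J(3T,A)\;\le\; C_\beta\, T\,(A^2+T^2)^{-\beta/2},
\end{equation*}
while $A^2+\xi_\|^2+T^2\le A^2+5T^2\le 5(A^2+T^2)$ finishes the case. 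Multiplying through by $t$ yields the stated estimate. The only mildly delicate point is the intermediate regime $|\xi_\||\sim T$, where the integrand may have no useful pointwise lower bound on $[c,d]$; the cure is exactly the second case above, which exploits $\xi_\|^2+T^2\asymp T^2$ to absorb the missing $\xi_\|^2$ into $T^2$.
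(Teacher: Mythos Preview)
Your proof is correct and follows essentially the same approach as the paper: rescale $s=t\tau$ to remove $t$, reduce to a one-dimensional integral, and then carry out a case analysis on the relative sizes of the parameters. The only cosmetic difference is in the reduction step---the paper uses the reverse triangle inequality $|\xi-\tau\tilde\eta|\ge\bigl||\xi|-\tau|\tilde\eta|\bigr|$ to obtain a 1D problem in $(a,b)=(|\xi|,|\tilde\eta|)$ and splits into three cases ($b\le a/2$, $a/2<b\le 2a$, $b>2a$), whereas you use the exact orthogonal decomposition of $\xi$ along $\tilde\eta$ and a two-case split ($|\xi_\||\gtrless 2T$) supported by your auxiliary bound on $J(M,A)$.
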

\begin{proof}
Setting $s = t \tau$ and {  $\tilde \eta = t
\eta$} as in the preceding proof, we see that the estimate is equivalent to
$$
\int^1_0 \frac{d\tau}{\la \xi- \tau \tilde \eta \ra^\beta}  \leq C_\beta
\frac{1 }{\big(1+ |\xi|^2 + |\tilde \eta|^2\big)^{\beta/2}}.
$$
Note $1+|\xi-\tau \tilde \eta|^2 \geq 1 + \left( |\xi| - |\tau \tilde \eta|\right)^2$ and put $a= |\xi|, b =|\tilde \eta|$. Then, it suffices to show
\[
\int_0^1 \frac{d\tau}{\big(1+ (a-b\tau)^2\big)^{\beta/2}} \le C_\beta (1+a^2+ b^2)^{-\beta/2},
\]
when $\max (a , b) >1$. If $b \le a/2$ then this holds with $C_\beta = 5^{\beta}$
because $(a-b\tau)^2 \ge a^2/4 \ge (a^2+b^2)/5$.
 If $a/2 < b \le 2a$ then by the change of variable
$u = a-b\tau$, we have
\begin{align*}
&\int_0^1 \frac{d\tau}{\big(1+ (a-b\tau)^2\big)^{\beta/2}}
= \frac{1}{b} \int_{a-b}^a \frac{du}{(1+u^2)^{\beta/2}}\\
&\le\frac{1}{b} \int_{-a}^a \frac{du}{(1+u^2)^{\beta/2}} \le \frac{2}{b}
 \int_{0}^a \frac{du}{u^{\beta}} = \frac{2a^{1-\beta}}{(1-\beta)b} \le \frac{4}{(1-\beta)a^\beta}\,.
\end{align*}
If $b >2a$, then 
\begin{align*}
&\int_0^1 \frac{d\tau}{\big(1+ (a-b\tau)^2\big)^{\beta/2}}
= \frac{1}{b} \int_{a-b}^a \frac{du}{(1+u^2)^{\beta/2}}\\
&\le \frac{2}{b}
 \int_{0}^{b-a} \frac{du}{(1+u^2)^{\beta/2}} \le \frac{2}{b}\int_{0}^b \frac{du}{u^{\beta}}
=\frac{2}{(1-\beta)b^\beta} \,. \qedhere
\end{align*}
\end{proof}


\section{Some estimates on $Q$}

The second part of the appendix is about some estimates on the nonlinear collision operator $Q$.

\begin{lemma}\label{lemma-2.17-JFA-2011}
Let $b$ satisfy $b(\cos\theta) \sin\theta \sim \frac{1}{\theta^{1 + 2s}} $ with $0<s<1$.
Then there exists a constant $C>0$ such that
\begin{align}\label{upper-1}
\iiint b F_* (g'-g)^2 d\sigma dv dv_* \le C \|F\|_{L^1_{2s} }
\Big(J_1^{\Phi_0}(g) + \|g\|^2_{L^2} \Big).
\end{align}
If $F \in L^1$ satisfies $F \ge 0$ and  there exist constants
$D_0, E_0 >0$ such that
\[
\|F\|_{L^1}  \ge D_0 \enskip \mbox{and}\enskip \|F\|_{L^1_2} + \|F\|_{L \log L} \le E_0,
\]
then there exists a $C_F >0$ depending only on $D_0$ and $E_0$ such that 
\begin{align}\label{another-1}
J_1^{\Phi_0}(g) \le  C_{F} \Big( 
\iiint b F_* (g'-g)^2 d\sigma dv dv_* 
+ \|g\|_{L^2}^2\Big).
\end{align}
\end{lemma}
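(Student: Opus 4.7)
The plan is to follow the strategy of \cite[Lemma 2.17]{AMUXY2012JFA}, using Bobylev's identity to reduce both estimates to pointwise comparisons of Fourier symbols. Recall that for any nonnegative $H$ with sufficient decay,
$$\iiint b(\cos\theta) H_* (g'-g)^2\, d\sigma\, dv\, dv_* = 2\int_{\RR^3}\! d\xi\int_{\Ss^2}\! d\sigma\, b\Big(\tfrac{\xi}{|\xi|}\cdot\sigma\Big)\Big\{\widehat H(0)|\widehat g(\xi)|^2 - \mathrm{Re}\big[\widehat H(\xi^-)\widehat g(\xi^+)\overline{\widehat g(\xi)}\big]\Big\},$$
where $\xi^\pm=(\xi\pm|\xi|\sigma)/2$. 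This identity, applied with $H=\mu$, yields $J_1^{\Phi_0}(g)$, and applied with $H=F$ yields the quantity on the left of \eqref{upper-1}. Both inequalities then come from comparing $\widehat F(\xi^-)$ with $\widehat\mu(\xi^-)$ on the Fourier side.

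For the upper bound \eqref{upper-1}, I would split $F=\mu+(F-\mu)$, so that
$$\iiint b F_*(g'-g)^2\, d\sigma\, dv\, dv_* = J_1^{\Phi_0}(g) + \iiint b(F-\mu)_*(g'-g)^2\, d\sigma\, dv\, dv_*.$$
The second term is estimated through Bobylev's identity together with the key Fourier bound
$$\big|\widehat{F-\mu}(0) - \widehat{F-\mu}(\xi^-)\big| \le C\,\|F-\mu\|_{L^1_{2s}}\min\{1,|\xi^-|^{2s}\},$$
obtained by a first-order Taylor expansion when $|\xi^-|\le 1$ (using $\|F-\mu\|_{L^1_1}\le \|F-\mu\|_{L^1_{2s}}$ since $2s\le 2$) and the trivial bound otherwise. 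Since $|\xi^-|=|\xi|\sin(\theta/2)$ and $b(\cos\theta)\sin\theta\sim\theta^{-1-2s}$, the angular singularity is tamed exactly by the $|\xi^-|^{2s}$ factor, leaving a pointwise symbol controlled by $C\,\|F-\mu\|_{L^1_{2s}}|\xi|^{2s}$. Undoing Bobylev gives a bound by $C\,\|F-\mu\|_{L^1_{2s}}\|g\|^2_{H^s}$, and the standard coercivity $\|g\|^2_{H^s}\lesssim J_1^{\Phi_0}(g)+\|g\|^2_{L^2}$ closes \eqref{upper-1} since $\|F-\mu\|_{L^1_{2s}}\lesssim \|F\|_{L^1_{2s}}+1$.

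For the lower bound \eqref{another-1}, the strategy follows \cite{ADVW2000}: under the hypotheses $\|F\|_{L^1}\ge D_0$, $\|F\|_{L^1_2}+\|F\|_{L\log L}\le E_0$, a quantitative Fourier lower bound of the form
$$\widehat F(0) - \mathrm{Re}\,\widehat F(\xi^-) \;\ge\; c_F\big(\widehat\mu(0) - \mathrm{Re}\,\widehat\mu(\xi^-)\big) \;-\; C_F\,\mathbf{1}_{\{|\xi^-|\ge R_F\}}$$
holds with constants $c_F,C_F,R_F>0$ depending only on $D_0,E_0$: the mass bound prevents $\widehat F(0)$ from vanishing, the $L^1_2$-bound prevents spreading to infinity, and the entropy bound prevents concentration, together forcing uniform equicontinuity of $\widehat F$ near the origin. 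Cauchy--Schwarz on the cross term in Bobylev's identity and reinsertion of the inequality yields
$$J_1^{\Phi_0}(g) \;\le\; \tfrac{2}{c_F}\iiint b F_*(g'-g)^2\,d\sigma\, dv\, dv_* + C_F'\,\|g\|^2_{L^2},$$
where the $\|g\|^2_{L^2}$ term absorbs the contribution from the bounded-tail indicator by Plancherel.

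The main obstacle is the quantitative Fourier lower bound used for \eqref{another-1}: turning the qualitative non-concentration / non-vanishing input $(D_0,E_0)$ into an explicit inequality of symbols, uniform in $\xi^-$, is precisely the content of the Alexandre--Desvillettes--Villani--Wennberg argument and is the only genuinely nontrivial ingredient; once it is available, both \eqref{upper-1} and \eqref{another-1} follow from routine Bobylev/cancellation manipulations.
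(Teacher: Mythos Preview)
Your upper-bound argument has a genuine gap for $s \in (1/2,1)$. The pointwise bound
\[
\big|\widehat{H}(0) - \widehat{H}(\xi^-)\big| \le C\,\|H\|_{L^1_{2s}}\min\{1, |\xi^-|^{2s}\}
\]
is false when $2s>1$: take $H$ concentrated near a fixed $v_0\neq 0$ and $|\xi^-|$ small, so the left side is $\sim |v_0||\xi^-|$ while the right side is $\sim |v_0|^{2s}|\xi^-|^{2s}$, and $|\xi^-|^{1-2s}\to\infty$. Your justification also inverts the embedding: $\|H\|_{L^1_1}\le\|H\|_{L^1_{2s}}$ needs $2s\ge 1$, and even then first-order Taylor only yields $|\xi^-|$, which for $|\xi^-|\le 1$ is \emph{weaker} than $|\xi^-|^{2s}$. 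For $s\le 1/2$ the bound does hold via $|1-e^{i\theta}|\le C_{2s}|\theta|^{2s}$, so as written your argument covers only the mild singularity.

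The paper's route is different: using the Bobylev representation with $|\widehat g(\xi)-\widehat g(\xi^+)|^2$, one forms the linear combination $c_0\,\mathcal{C}_0(F,g)-\|F\|_{L^1}\,J_1^{\Phi_0}(g)$ so that the leading terms cancel and only cross terms $(\widehat H(0)-\widehat H(\xi^-))\widehat g(\xi^+)\overline{\widehat g(\xi)}$ remain. Writing $\widehat g(\xi^+)\overline{\widehat g(\xi)}=|\widehat g(\xi)|^2+(\widehat g(\xi^+)-\widehat g(\xi))\overline{\widehat g(\xi)}$, the first piece pairs with $\mathrm{Re}\,(\widehat F(0)-\widehat F(\xi^-))=\int F(v)(1-\cos(v\cdot\xi^-))\,dv$, and $1-\cos$ already decays quadratically; the second piece is handled by Cauchy--Schwarz, which produces $|\widehat F(0)-\widehat F(\xi^-)|^2$ \emph{squared}, and $|1-e^{-iv\cdot\xi^-}|^2\le\min\{4,(v\cdot\xi^-)^2\}$ gives enough decay so that $\int b(\cos\theta)\min\{4,(|v||\xi|\theta)^2\}\,d\theta\lesssim(|v||\xi|)^{2s}$ for all $s<1$. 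The Cauchy--Schwarz partner $\iint b\,|\widehat g(\xi^+)-\widehat g(\xi)|^2$ is itself bounded by $J_1^{\Phi_0}(g)+\|g\|_{H^s}^2$ via the same representation. This yields the two-sided estimate $|c_0\mathcal{C}_0(F,g)-\|F\|_{L^1}J_1^{\Phi_0}(g)|\lesssim\|F\|_{L^1_{2s}}\|g\|_{H^s}(J_1^{\Phi_0}(g)+\|g\|_{H^s}^2)^{1/2}$, from which both \eqref{upper-1} and \eqref{another-1} follow after invoking the ADVW coercivity $\|g\|_{H^s}^2\lesssim\mathcal{C}_0(F,g)+\|g\|_{L^2}^2$.

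Your lower-bound sketch via a direct Fourier lower bound on $\widehat F$ is a legitimate alternative to the paper's approach, which instead reuses the same difference identity and invokes the ADVW coercivity as a black box; only the upper bound needs repair.
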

\begin{proof} For the proof of \eqref{upper-1} we may assume $F \ge 0$. 
It follows from \cite[Proposition 1]{ADVW2000} that 
\begin{align}\label{M-J-1}
J_1^{\Phi_0}(g) &=\iiint b M_*(g'-g)^2 d \sigma dv dv_* \notag \\
& = \frac{1}{(2\pi)^3} \iint b\Big(\frac{\xi}{|\xi|}\cdot \sigma \Big) 
\Big \{ \hat M(0) \left|\hat g(\xi) - \hat g(\xi^+)\right |^2 \notag \\
& \quad + 2 Re \Big(\big( \hat M(0) - \hat M(\xi^-) \big) \hat g(\xi^+) \overline{\hat g(\xi)} \Big) \Big\}d \xi d\sigma, 
\end{align}
and 
\begin{align}\label{F-J-1}
\mathcal{C}_0(F,g) &\Denote  \iiint b F_*(g'-g)^2 d \sigma dv dv_* \notag \\
& = \frac{1}{(2\pi)^3} \iint b\Big(\frac{\xi}{|\xi|}\cdot \sigma \Big) 
\Big \{ \hat F(0) \left|\hat g(\xi) - \hat g(\xi^+)\right |^2 \notag \\
& \quad + 2 Re \Big(\big( \hat F(0) - \hat F(\xi^-) \big) \hat g(\xi^+) \overline{\hat g(\xi)} \Big) \Big\}d \xi d\sigma.
\end{align}
Since $\hat F(0) = \|F\|_{L^1}$ and $\hat M(0) = c_0 >0$, we obtain
\begin{align*}
c_0 \mathcal{C}_0(F,g) - \|F\|_{L^1} J_1^{\Phi_0}(g) 
&=  - \frac{2}{(2\pi)^3}\|F\|_{L^1} \iint b\Big(\frac{\xi}{|\xi|}\cdot \sigma \Big) 
 Re \Big(\big( \hat M(0) - \hat M(\xi^-) \big) \hat g(\xi^+) \overline{\hat g(\xi)} \Big)d \xi d\sigma\\
&\quad  + \frac{2c_0}{(2\pi)^3}\iint b\Big(\frac{\xi}{|\xi|}\cdot \sigma \Big) 
 Re \Big(\big( \hat F(0) - \hat F(\xi^-) \big) \hat g(\xi^+) \overline{\hat g(\xi)} \Big)d \xi d\sigma\\
&\Denote   D_1 + D_2 \,. 
\end{align*}
Write
\begin{align*}
  D_2 & = \frac{2c_0}{(2\pi)^3}\Big\{ \int \left| \hat g(\xi)\right|^2  
\Big( \int b\Big(\frac{\xi}{|\xi|}\cdot \sigma \Big) 
 Re \Big(\big( \hat F(0) - \hat F(\xi^-) \big) d\sigma \Big) d \xi \\
& \quad + 
\iint b\Big(\frac{\xi}{|\xi|}\cdot \sigma \Big) 
 Re \Big(\big( \hat F(0) - \hat F(\xi^-) \big) \big( \hat g(\xi^+)- \hat g(\xi) \big)  \overline{\hat g(\xi)} \Big)d \xi d\sigma
\Big\}\\
&\Denote  D_{2,1} +  D_{2,2}. 
\end{align*}
By  the Cauchy-Schwarz inequality, we have 
\begin{align*}
| D_{2,2}|^2 &\lesssim \iint b\Big(\frac{\xi}{|\xi|}\cdot \sigma \Big) \left| \hat F(0) - \hat F(\xi^-) \right|^2 |\hat g(\xi)|^2 d\xi d\sigma
\iint b\Big(\frac{\xi}{|\xi|}\cdot \sigma \Big) \left|\hat g(\xi^+)- \hat g(\xi)\right|^2 d\xi d\sigma \\
&\Denote D_{2,2}^{(1)} \times D_{2,2}^{(2)} \,. 
\end{align*}
Since
\[
|\widehat{F}
(0) - \widehat{F} (\xi^-)|
\leq \int F(v)|1- e^{-iv\cdot \xi^-}|dv,
\]
we have
\begin{align*}
D_{2,2}^{(1)} &\leq \frac{1}{2} \iiint |\widehat { g} (\xi) |^2
F(v) F(w) 
\Big( \int b
\left( \frac{\xi}{|\xi|}\cdot \sigma \right)( |1- e^{-iv\cdot \xi^-}|^2 +
|1- e^{-iw\cdot \xi^-}|^2) d\sigma \Big) dv dw d\xi\\
&\leq
C\| g\|^2_{H^s}\|F\|_{L^1} \|F\|_{L^1_{2s}},
\end{align*}
because
\begin{align*}
\int b
\left( \frac{\xi}{|\xi|}\cdot \sigma \right)|1- e^{-iv\cdot \xi^-}|^2 d\sigma
&\leq C\Big ( \int_0^{(\la v\ra \la \xi \ra)^{-1}}
\theta^{-1-2s} (|v ||\xi|)^2 \theta^2
d\theta +
\int_{(\la v \ra \la \xi \ra)^{-1}}^{\pi/2}
\theta^{-1-2s}
d\theta \Big)\\
&\leq C \la v \ra^{2s} \la \xi \ra^{2s}\,.
\end{align*}
Then we have 
$
|A_{2,1}| \leq C \|g\|_{H^s}^2 \|F\|_{L^1_{2s}}$
because
\begin{align*}
\int b
\left( \frac{\xi}{|\xi|}\cdot \sigma \right) Re\, \Big(\widehat{F}
(0) - \widehat{F} (\xi^-) \Big)  d\sigma
&= \int F(v)  \Big(\int b
\left( \frac{\xi}{|\xi|}\cdot \sigma \right)\big (1- \cos (v\cdot \xi^-)\big)d\sigma
\Big)dv\\
& \leq C \la \xi \ra^{2s}\int F(v) \la v \ra^{2s}  dv \,.
\end{align*}
Since $\widehat {M}(\xi)$
is real valued,
it follows that
\begin{align}\label{later-use}
Re\, \Big(\widehat {M} (0)
- \widehat {M} (\xi^-) \Big) \widehat { g} (\xi^+ )
\overline{\widehat { g} } (\xi )
&= \Big (\int
\big (1- \cos (v\cdot \xi^-)\big)
M(v) dv\Big)\,
Re\,\widehat { g} (\xi^+ )
\overline{\widehat { g} } (\xi )\notag \\
&\lesssim \min\{\la \xi \ra^2 \theta^2, 1\}|\widehat { g} (\xi^+ )
\overline{\widehat { g} } (\xi )|.
\end{align}
Therefore, by using Cauchy-Schwarz inequality and the change of variables
$\xi \rightarrow \xi^+$, 
we obtain
$$
|D_1| \leq C \|F\|_{L^1} \| g\|_{H^s}^2.$$
Furthermore, it follows from \eqref{M-J-1} that
\begin{align*}
D_{2,2}^{(2)} = \iint b \Big(\frac{\xi}{|\xi|}\cdot \sigma \Big)
 | \widehat { g} (\xi ) -
\widehat {g} (\xi^+) |^2 d\xi d \sigma 
\leq 
  C \Big( J_1^{\Phi_0}(g)   + \| g\|_{H^s}^2 \Big)
\,,
\end{align*}
which yields $|D_{2,2}| \leq C\|F\|^{1/2}_{L^1}\|F\|^{1/2}_{L^1_{2s}}
\| g\|_{H^s}\Big( J_1^{\Phi_0}(g)   + \| g\|_{H^s}^2 \Big)^{1/2} $.
Hence
\[
|D_{2}| \leq C\|F\|_{L^1_{2s}}
\| g \|_{H^s}\Big( J_1^{\Phi_0}(g)   + \| g\|_{H^s}^2 \Big)^{1/2}  \,.
\]
Finally, we have 
\begin{align*}
\left|c_0 \mathcal{C}_0(F,g) - \|F\|_{L^1} J_1^{\Phi_0}(g) \right|
\leq C \|F\|_{L^1_{2s}}
\| g \|_{H^s}\Big( J_1^{\Phi_0}(g)   + \| g\|_{H^s}^2 \Big)^{1/2} \le 
\frac{1}{2} \|F\|_{L^1} J_1^{\Phi_0}(g) + C_F\| g\|_{H^s}^2\,,
\end{align*}
which completes the proof of the lemma because it follows from
\cite[Proposition 1 and 2]{ADVW2000} (see also the proof of \cite[Proposition 2.1]{AMUXY-Kyoto-2012}) that 
\begin{align}\label{coer-origin}
\| g\|_{H^s}^2 &\le   C_F \Big( \mathcal{C}_0(F,g)  + \|g\|^2_{L^2}\Big). \notag \qedhere
\end{align}
\end{proof}
The following lemma is essentially the same as \cite[Lemma 3.2]{AMUXY2012JFA}.

\begin{lemma}\label{upper-F-g-g}
Let $0\le \gamma \le 2$ and $0<s<1$. Then for $0 \le F \in L^1_{2s+\gamma}$ we have
\begin{align*}
\iiint B F_* (g'-g)^2d\sigma  dv_* dv\lesssim \|F\|_{L^1_{\max\{2, 2s+\gamma\}}}
\Big(  J_1^{\Phi_0}( \langle v \rangle^{\gamma/2} g) + \|g\|^2_{L^2_{\gamma/2}}\Big).
\end{align*}
\end{lemma}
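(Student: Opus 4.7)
The plan is to reduce the weighted estimate to the unweighted version \eqref{upper-1} of Lemma \ref{lemma-2.17-JFA-2011} via the substitution $G(v) = \langle v\rangle^{\gamma/2}g(v)$, at the price of a controllable commutator error. The algebraic identity I would start from is
\begin{equation*}
g' - g \;=\; \langle v\rangle^{-\gamma/2}(G'-G) \;+\; \bigl(\langle v'\rangle^{-\gamma/2} - \langle v\rangle^{-\gamma/2}\bigr)G',
\end{equation*}
so that $(g'-g)^2 \le 2\langle v\rangle^{-\gamma}(G'-G)^2 + 2(\langle v'\rangle^{-\gamma/2} - \langle v\rangle^{-\gamma/2})^2(G')^2$. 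The first piece is the ``main term'' and the second is the ``commutator error''.

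For the main term, the elementary inequality $\langle v\rangle+\langle v_*\rangle \le 2\langle v\rangle\langle v_*\rangle$ (valid because $\langle v\rangle,\langle v_*\rangle\ge 1$) together with $|v-v_*|\le \langle v\rangle+\langle v_*\rangle$ gives $|v-v_*|^\gamma\langle v\rangle^{-\gamma}\le 2^\gamma\langle v_*\rangle^\gamma$. Plugging this in and applying \eqref{upper-1} of Lemma \ref{lemma-2.17-JFA-2011} with $F$ replaced by $\langle\cdot\rangle^\gamma F$ bounds the main term by $C\|F\|_{L^1_{\gamma+2s}}\bigl(J_1^{\Phi_0}(G)+\|G\|^2_{L^2}\bigr)$, which is already of the required form.

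For the commutator error, I would rewrite $(\langle v'\rangle^{-\gamma/2}-\langle v\rangle^{-\gamma/2})G' = \langle v\rangle^{-\gamma/2}(\langle v\rangle^{\gamma/2}-\langle v'\rangle^{\gamma/2})\langle v'\rangle^{-\gamma/2}G' \cdot \langle v\rangle^{\gamma/2}/\langle v\rangle^{\gamma/2}$, and exploit the two complementary bounds that hold precisely because $0\le\gamma\le 2$: the Lipschitz bound $|\langle v\rangle^{\gamma/2}-\langle v'\rangle^{\gamma/2}|\le |v-v'|=|v-v_*|\sin(\theta/2)$ (from $|\nabla\langle\cdot\rangle^{\gamma/2}|\le 1$) and the H\"older bound $|\langle v\rangle^{\gamma/2}-\langle v'\rangle^{\gamma/2}|\le |v-v'|^{\gamma/2}$ (from subadditivity of $x^{\gamma/2}$ for $\gamma/2\le 1$). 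Combined with the geometric comparison $|v-v_*|\le (1-1/\sqrt 2)^{-1}|v'-v_*|$ on $\theta\in[0,\pi/2]$, the kinetic prefactor is absorbed as $\langle v_*\rangle^{\text{(something)}}$, and the regular change of variables of Proposition \ref{prop:change-variable}(a) converts the resulting $(g')^2$ back into $g^2$ at the cost of a $\cos^{-(3+\gamma)}(\theta/2)$ factor, yielding $\|G\|^2_{L^2}=\|g\|^2_{L^2_{\gamma/2}}$.

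The main obstacle is producing exactly the weight $\max\{2,\gamma+2s\}$ on $F$. Using the Lipschitz bound throughout yields an angular factor $b(\cos\theta)\sin^2(\theta/2)$, which is integrable for all $s<1$ but forces an extra $|v-v_*|^2$ that costs $\|F\|_{L^1_{\gamma+2}}$, sharp only when $\gamma+2s\ge 2$. Using the H\"older bound throughout yields $b(\cos\theta)\sin^\gamma(\theta/2)$, integrable only when $\gamma>2s$, and produces $\|F\|_{L^1_{2\gamma}}$, sharp only when $\gamma\le 2s$. The standard non-cutoff angular splitting --- performing the $\sigma$-integration separately on $\{|v-v_*|\sin(\theta/2)\le 1\}$ (where Lipschitz is optimal) and its complement (where H\"older is optimal) --- interpolates between the two, recovering precisely the exponent $\max\{2,\gamma+2s\}$. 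This is the same mechanism as in \cite[Lemma 3.2]{AMUXY2012JFA}, to which the proof reduces after the reduction above.
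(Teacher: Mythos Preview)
Your main-term argument is fine and is a compact variant of the paper's: the paper splits $|v-v_*|^\gamma\lesssim\langle v'\rangle^\gamma+\langle v_*\rangle^\gamma$ and applies \eqref{upper-1} to each piece, whereas you merge these by absorbing $|v-v_*|^\gamma\langle v\rangle^{-\gamma}\le 2^\gamma\langle v_*\rangle^\gamma$ in one step. Both produce the weight $\gamma+2s$ on $F$ from this part.

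The commutator estimate, however, does not close with the weight $\max\{2,\gamma+2s\}$. The obstruction is not angular integrability but the factor $|v-v_*|^\gamma$ that survives into the error term, because you decompose $(g'-g)$ \emph{before} touching the kinetic weight. Concretely, Lipschitz gives $(\langle v'\rangle^{-\gamma/2}-\langle v\rangle^{-\gamma/2})^2\le C|v-v'|^2$, and after the regular change of variables you must control $\int|v-v_*|^{\gamma+2}F_*\,G^2\,\rd v\,\rd v_*$, which splits into $\|F\|_{L^1}\|g\|^2_{L^2_{\gamma+1}}+\|F\|_{L^1_{\gamma+2}}\|g\|^2_{L^2_{\gamma/2}}$: the first term has the wrong $g$-weight and the second the wrong $F$-weight, for every $\gamma>0$ and $s<1$. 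Your angular splitting at $|v-v'|=1$ improves the exponent on $|v-v_*|$ from $\gamma+2$ towards $\gamma+\max\{2s,\gamma\}$, but the same dichotomy persists (one piece lands on $\langle v\rangle$, forcing $\|g\|_{L^2_{\max\{s,\gamma/2\}+\gamma/2}}$; the other on $\langle v_*\rangle$, forcing $\|F\|_{L^1_{\max\{\gamma+2s,2\gamma\}}}$), and no choice recovers both targets simultaneously across the full range $0\le\gamma\le 2$, $0<s<1$.

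The paper sidesteps this by decomposing in the opposite order: it first replaces $|v-v_*|^\gamma\lesssim\langle v'\rangle^\gamma+\langle v_*\rangle^\gamma$, and only then writes $\langle v'\rangle^{\gamma/2}(g'-g)=(G'-G)+(\langle v\rangle^{\gamma/2}-\langle v'\rangle^{\gamma/2})g$, so that the error lands on the \emph{unprimed} $g$ and carries no relative-velocity factor at all. The resulting commutator $A^{(3)}=\iiint bF_*(\langle v\rangle^{\gamma/2}-\langle v'\rangle^{\gamma/2})^2g^2$ is handled by a single mean-value estimate $|\langle v\rangle^{\gamma/2}-\langle v'\rangle^{\gamma/2}|\lesssim\langle v\rangle^{\gamma/2}\langle v_*\rangle\,\theta$, yielding exactly $\|F\|_{L^1_2}\|g\|^2_{L^2_{\gamma/2}}$; no change of variables, no H\"older bound, and no angular splitting are needed.
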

Since the estimate 
\[
-2 \big (Q(F,g), g \big ) = \iiint B F_* (g'-g)^2d\sigma  dv_* dv + \iiint B F_* (g^2 -{g'}^2)d\sigma  dv_* dv,
\]
holds and the cancellation lemma in \cite[Lemma 1]{ADVW2000} shows that the second term on the right hand side
is bounded above from 
$C \|F\|_{L^1_{\gamma}} \|g\|^2_{L^2_{\gamma/2}}$, we have the following;
\begin{corollary}\label{est-Q(Fg)g)}
\begin{align*}
- \big (Q(F,g), g \big )_{L^2} \lesssim \|F\|_{L^1_{\max\{2, 2s+\gamma\}}}
\Big(  J_1^{\Phi_0}( \langle v \rangle^{\gamma/2} g) + \|g\|^2_{L^2_{\gamma/2}}\Big).
\end{align*}
\end{corollary}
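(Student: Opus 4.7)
The plan is to decompose the quadratic form $(Q(F,g),g)_{L^2}$ into a symmetric ``dissipation-like'' piece plus a pure multiplication-type piece via the standard pre-post collision symmetrization, and then to bound each of these pieces using tools already established in the paper. The first piece will directly inherit the bound from Lemma~\ref{upper-F-g-g}, while the second piece is handled by the cancellation lemma of \cite{ADVW2000}.

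More precisely, I will begin from the weak formulation
\begin{align*}
   \big(Q(F,g),g\big)_{L^2}
&= \iiint B\,F_\ast \bigl(g' g - g^2\bigr)\, d\sigma\, dv_\ast\, dv,
\end{align*}
and then rewrite $g'g - g^2$ using the elementary identity $2(g'g - g^2) = -(g'-g)^2 + (g')^2 - g^2$. This yields the decomposition
\begin{align*}
   -2\big(Q(F,g),g\big)_{L^2}
&= \underbrace{\iiint B\,F_\ast(g'-g)^2\, d\sigma\, dv_\ast\, dv}_{=:\mathrm{I}}
  + \underbrace{\iiint B\,F_\ast\bigl(g^2 - (g')^2\bigr)\, d\sigma\, dv_\ast\, dv}_{=:\mathrm{II}},
\end{align*}
which is exactly the identity already highlighted in the paragraph preceding the statement.

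For the term $\mathrm{I}$, the bound is immediate from Lemma~\ref{upper-F-g-g}: it gives $\mathrm{I} \lesssim \|F\|_{L^1_{\max\{2,2s+\gamma\}}}\bigl(J_1^{\Phi_0}(\vint{v}^{\gamma/2}g) + \|g\|^2_{L^2_{\gamma/2}}\bigr)$. For the term $\mathrm{II}$, I invoke the cancellation lemma \cite[Lemma 1]{ADVW2000}, according to which the $\sigma$-integral of $B\,F_\ast (g^2 - (g')^2)$ against the test function $g^2$ produces a convolution-type multiplier bounded pointwise by $C\,F\ast_{v_\ast}|v-v_\ast|^\gamma$, so that
\begin{align*}
   |\mathrm{II}| \;\leq\; C\,\|F\|_{L^1_\gamma}\,\|g\|^2_{L^2_{\gamma/2}}.
\end{align*}
Since $\|F\|_{L^1_\gamma} \leq \|F\|_{L^1_{\max\{2,2s+\gamma\}}}$, combining the two estimates and dividing by $2$ yields the desired upper bound.

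There is no real obstacle here; the main point is simply to correctly identify the symmetrization identity and to recognize that $\mathrm{II}$ is precisely the object treated by the cancellation lemma. The hardest conceptual step was already done earlier: namely, Lemma~\ref{upper-F-g-g}, which controls the symmetric piece $\mathrm{I}$ by the $J_1^{\Phi_0}$-functional at the weighted level $\vint{v}^{\gamma/2}g$. Given that, the corollary follows essentially by bookkeeping.
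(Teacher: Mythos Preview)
Your proposal is correct and follows exactly the same approach as the paper: the identity $-2(Q(F,g),g) = \iiint B F_\ast (g'-g)^2 + \iiint B F_\ast (g^2 - (g')^2)$, Lemma~\ref{upper-F-g-g} for the first term, and the cancellation lemma of \cite{ADVW2000} for the second. There is nothing to add.
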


\begin{proof}[Proof of Lemma \ref{upper-F-g-g}]
Since $|v-v_*|^\gamma \lesssim \la v' \ra^\gamma + \la v_* \ra^\gamma$, we have
\begin{align*}
\iiint b(\cos \theta) |v-v_*|^\gamma F_* (g' - g)^2 d\sigma dv dv_* 
& \lesssim
 \iiint b(\cos \theta) F_*\Big(\la v' \ra^{\gamma/2}g'-
\la v \ra^{\gamma/2} g\,\Big)^2
d\sigma dvdv_* \\
& 
+ \iiint b(\cos \theta) \Big(\la v_* \ra^{\gamma} F_*\Big) (g' - g)^2 d\sigma dv dv_*\\
& 
+ \iiint b(\cos \theta)F_*
\Big(
\la v \ra^{\gamma/2} -\la v' \ra^{\gamma/2}\Big)^2 |g|^2
d\sigma dvdv_*
\\
& 
= A^{(1)} + A^{(2)} + A^{(3)}\,.
\end{align*}
By the mean value theorem we have, for a suitable $v'_\tau =
v + \tau (v'-v)$ with $0 < \tau <1$,
\begin{align*}
\Big|\la v \ra^{\gamma/2} -\la v' \ra^{\gamma/2}\Big|
&\leq C_\gamma \la
v'_\tau  \ra^{(\gamma/2 -1)}  |v-v_*|\theta\,\\
&\leq  \sqrt 2  C_\gamma  \la
v'_\tau  -v_* \ra^{(\gamma/2 -1)} \la v_*\ra^{|\gamma/2-1|} \la v'_\tau -v_*\ra \theta\,\\
&\leq  \sqrt 2  C_\gamma \la
v  -v_* \ra^{\gamma/2}\la v_*\ra^{|\gamma/2-1|} \theta 
\le  \sqrt 2  C_\gamma \la v\ra^{\gamma/2} \la v_* \ra^{\max\{1, \gamma -1\}} \theta.
\end{align*}
Therefore, 
we have
\begin{align*}
A^{(3)} &\lesssim \|F\|_{L^1_{2}} \|g \|^2 _{L^2_{\gamma/2}}.
\end{align*}
It follows from 
\eqref{upper-1}
that
\begin{align*}
A^{(1)} \lesssim  \|F\|_{L^1_{2s}}
\Big(  J_1^{\Phi_0}( \langle v \rangle^{\gamma/2} g) + \|g\|^2_{L^2_{\gamma/2}}\Big),
\qquad
A^{(2)} \lesssim  \|F\|_{L^1_{2s+ \gamma }}
\Big(  J_1^{\Phi_0}( g) + \|g\|^2_{L^2}\Big)\,.
\end{align*}
Note that  
\begin{equation}\label{remember-obvious}
J_1^{\Phi_0}(g) \le 2 J_1^{\Phi_0}(\la v\ra^{\gamma/2}g) + C \|g\|_{L^2_{\gamma/2}}
\end{equation}
holds 
because 
\[ (g'-g)^2 \le \la v' \ra^{\gamma}  (g'-g)^2  \le 2 \big( \la v' \ra^{\gamma/2} g' - \la v \ra^{\gamma/2} g\big)^2 + 2
\big(  \la v' \ra^{\gamma/2} - \la v \ra^{\gamma/2}\big)^2 |g|^2.
\]
Then the lemma follows immediately. 
\end{proof}

By  \eqref{another-1} we have the following  opposite bound corresponding to  Lemma \ref{upper-F-g-g}.
\begin{lemma}\label{coer-J-1}
If $F \in L^1$ satisfies $F \ge 0$ and  there exist constants
$D_0, E_0 >0$ such that
\[
\|F\|_{L^1}  \ge D_0 \enskip \mbox{and}\enskip \|F\|_{L^1_{\max\{2, 2s+\gamma\} }} + \|F\|_{L \log L} \le E_0,
\]
then there exists a $C_F >0$ depending only on $D_0$ and $E_0$ such that 
\begin{align}\label{another-2}
J_1^{\Phi_0}(\la v \ra^{\gamma/2} g) &\le  C_{F} \Big( 
\iiint B F_* (g'-g)^2 d\sigma dv dv_* 
+ \|g\|_{L^2_{\gamma/2}}^2\Big),\\
 &\le  C'_{F} \Big( -\big( Q(F,g), g\big)
+ \|g\|_{L^2_{\gamma/2}}^2\Big). \label{another-corollary}
\end{align}
\end{lemma}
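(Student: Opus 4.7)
The plan is to prove \eqref{another-corollary} first, and then to deduce \eqref{another-2} from it via the cancellation lemma. This strategy bypasses a direct (and delicate) comparison between the collisional quadratic form $\iiint b F_*(h'-h)^2 d\sigma dv_* dv$ associated with $h = \vint{v}^{\gamma/2} g$ and the target weighted form $\iiint B F_*(g'-g)^2 d\sigma dv_* dv$: any attempt to control the former by the latter via a decomposition of $(h'-h)^2$ analogous to that in the proof of Lemma \ref{upper-F-g-g} produces a term of the type $\iiint b \vint{v_*}^\gamma F_*(g'-g)^2$ which, after applying \eqref{upper-1} to it, reintroduces $J_1^{\Phi_0}(\vint{v}^{\gamma/2} g)$ on the right-hand side with a non-small constant, making the argument circular.

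The first step is the Sobolev upper bound
\begin{align*}
J_1^{\Phi_0}(\vint{v}^{\gamma/2}g) \leq C \|\vint{v}^{\gamma/2}g\|^2_{H^s} \leq C\|g\|^2_{H^s_{\gamma/2}}.
\end{align*}
The first inequality follows from the Bobylev identity \eqref{M-J-1} with $M = \mu$: setting $h = \vint{v}^{\gamma/2}g$, the principal Fourier-side term $\iint b(\cos\theta)|\hat h(\xi) - \hat h(\xi^+)|^2 d\sigma d\xi$ is bounded by $C\|h\|^2_{H^s}$ using $|\xi - \xi^+| = |\xi|\sin(\theta/2)$ together with a standard estimate integrating the angular singularity $b(\cos\theta)\,d\sigma$ against the fractional Sobolev gain, while the remainder containing $\hat\mu(0)-\hat\mu(\xi^-)$ is controlled by $C\|h\|^2_{H^s}$ via $|\hat\mu(0)-\hat\mu(\xi^-)| \lesssim \min\{1,\, |\xi|\sin(\theta/2)\}$ (from the smoothness of $\mu$), Cauchy-Schwarz, and a change of variable $\xi \to \xi^+$. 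The second inequality is Proposition \ref{prop:equivalence}. Since the hypothesis of Lemma \ref{coer-J-1} implies $\|F\|_{L^1_2} + \|F\|_{L\log L} \leq E_0$, Proposition \ref{prop:coercivity-1} applies and gives
\begin{align*}
\|g\|^2_{H^s_{\gamma/2}} \leq C_F \vpran{-\langle Q(F,g), g\rangle + \|g\|^2_{L^2_{\gamma/2}}},
\end{align*}
and chaining the two displays yields \eqref{another-corollary}.

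To pass from \eqref{another-corollary} to \eqref{another-2}, I would use the symmetry identity
\begin{align*}
-2\langle Q(F,g), g\rangle = \iiint B F_*(g'-g)^2 d\sigma dv_* dv - \iiint B F_*\vpran{(g')^2 - g^2} d\sigma dv_* dv,
\end{align*}
together with the cancellation lemma \cite[Lemma 1]{ADVW2000}, which bounds the second integral on the right by $C\|F\|_{L^1_\gamma}\|g\|^2_{L^2_{\gamma/2}}$. Hence $-\langle Q(F,g),g\rangle \leq \tfrac12 \iiint BF_*(g'-g)^2 d\sigma dv_*dv + C\|g\|^2_{L^2_{\gamma/2}}$, and substitution into \eqref{another-corollary} produces \eqref{another-2}. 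The main obstacle will be the Sobolev upper bound $J_1^{\Phi_0}(h) \leq C\|h\|^2_{H^s}$: although classical in the non-cutoff Boltzmann literature, its rigorous verification requires careful treatment of the angular singularity near $\theta=0$ and of the cancellations already encoded in the Bobylev representation, of a similar flavor to the Fourier estimates carried out in Section 5.
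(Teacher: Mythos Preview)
Your argument has a genuine gap at Step~1: the inequality
\[
J_1^{\Phi_0}(h)\;\leq\; C\,\|h\|_{H^s}^2
\]
is \emph{false}. This is precisely the point made in the paper's introduction, where it is stated that the upper bound for $J_1^{\Phi_\gamma}(f)$ in Sobolev norms is $\|f\|_{H^s_{\gamma/2+s}}^2$, not $\|f\|_{H^s_{\gamma/2}}^2$. For $\gamma=0$ this says $J_1^{\Phi_0}(h)\lesssim\|h\|_{H^s_s}^2$ but not $\lesssim\|h\|_{H^s}^2$. The extra weight is unavoidable: take $h$ smooth on unit scale, supported near $|v|=R$ with $\|h\|_{L^2}=1$. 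Then $\|h\|_{H^s}\sim 1$, but with $|v_\ast|\lesssim 1$ (from $\mu_\ast$) one has $|v'-v|\sim R\theta$, so $(h'-h)^2$ is of order one once $\theta\gtrsim 1/R$, and $\int_{1/R}^{\pi/2}b(\cos\theta)\,d\sigma\sim R^{2s}$. Hence $J_1^{\Phi_0}(h)\sim R^{2s}\gg 1$. Your Fourier-side heuristic breaks down for the same reason: $|\xi-\xi^+|=|\xi|\sin(\theta/2)$ scales with $|\xi|$, and controlling $|\hat h(\xi)-\hat h(\xi^+)|$ by a first-order Taylor expansion brings in $\nabla_\xi\hat h=\widehat{vh}$, i.e.\ a velocity weight, not a derivative. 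With the correct upper bound $J_1^{\Phi_0}(\la v\ra^{\gamma/2}g)\lesssim\|g\|_{H^s_{\gamma/2+s}}^2$, Proposition~\ref{prop:coercivity-1} only yields $\|g\|_{H^s_{\gamma/2}}^2$, and the chain does not close.

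The paper avoids this by a physical-space localization. One fixes $R>1>r_0$ so that restricting $F$ to balls $B_j(R,r_0)$ (avoiding a small neighbourhood of each localization centre) preserves a uniform lower bound on the mass. Outside $B(4R)$ one uses a cutoff $\varphi_R$ and the fact that $|v-v_\ast|\sim\la v\ra$ there to pass from $\Phi_\gamma$ to $\Phi_0$; inside $B(4R)$ one covers by finitely many small balls $A_j$ and on each uses that $|v-v_\ast|$ is bounded above and below (the latter because $F$ has been restricted away from $v_j$). In both regimes one reduces $\mathcal C_\gamma(F,g)$ to a sum of terms $\mathcal C_0(F\chi_{B_j},\varphi_j\la v\ra^{\gamma/2}g)$ modulo $\|g\|_{L^2_{\gamma/2}}^2$, and then applies the unweighted coercivity \eqref{another-1} to each piece. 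This yields \eqref{another-2} first; \eqref{another-corollary} then follows from the cancellation lemma exactly as in your last paragraph (but in the opposite direction).
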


\begin{proof}
As in \cite{AMUXY-Kyoto-2012},  for $D_0, E_0 >0$ we put 
\[
\mathcal{U}(D_0, E_0) 
=
\{ F\in L^1_{\max\{2, 2s+\gamma\} }\cap L\log L; F\ge 0,
\|F\|_{L^1}  \ge D_0, \|F\|_{L^1_{\max\{2, 2s+\gamma\} }} + \|F\|_{L \log L} \le E_0\}.
\]
Set  $B(R) = \{v \in \RR^3\,;\, |v| \le R\}$ for $R >0$ and  $B_0(R,r) =
\{ v \in B(R)\,; \, |v-v_0| \ge r\}$ for  a $v_0 \in \RR^3$ and $r \ge 0$.
It follows {}from  the definition of ${\mathcal{U}}(D_0,E_0)$ that there
exist positive constants $R > 1 > r_0$ depending only on $D_0, E_0$ such that
\begin{equation}\label{uni-g}
\mbox{
$g \in {\mathcal{U}}(D_0,E_0)$\,\, \mbox{implies} \,\,$\chi_{B_0(R, r_0)} g \in {\mathcal{U}}(D_0/2,E_0)$
}\,,
\end{equation}
where $\chi_A$ denotes a characteristic function of the set $A \subset \RR^3$.
We denote
\[
\mathcal{C}_\gamma(F,g) = \iiint B F_* (g'-g)^2 d\sigma dv dv_*.
\]
Let $\varphi_R$ be a non-negative  smooth function not greater  than one, which is 1
for $| v| \ge 4R$ and $0$ for $| v| \le {2R}$.
In view of
\[
\frac{\la v \ra}{4} 
\le|v-v_*|\le 2 \la v \ra \enskip
\mbox{on supp $(\chi_{B(R)})_* \varphi_R$}\,,
\]
we have
\begin{align*}
& \quad 
4^{|\gamma|}|v-v_*|^\gamma F_*(g'-g)^2
\ge \big( F \chi_{B(R)}\big)_* \big(\la v \ra^{\gamma/2} \varphi_R\big)^2(g'-g)^2 \\
&\ge \big( F \chi_{B(R)}\big)_* \Big[\frac{1}{2} \Big(\big(\la v \ra^{\gamma/2} \varphi_R g \big)'-
\la v \ra^{\gamma/2} \varphi_R g \Big)^2
 -\Big( \big(\la v \ra^{\gamma/2} \varphi_R\big)'
 - \la v \ra^{\gamma/2} \varphi_R  \Big)^2 {g' }^2 \Big]\,.
\end{align*}
It follows from  the mean value theorem that for a suitable $v'_\tau =
v + \tau (v'-v)$ with $0 < \tau <1$,
\begin{align*}
\Big|\la v \ra^{\gamma/2} -\la v' \ra^{\gamma/2}\Big|
\lesssim  \la v' \ra^{\gamma/2} \la v_* \ra^{\max\{1, \gamma -1\}} \theta.
\end{align*}
Therefore,
we have
\begin{align}\label{coer-out}
\mathcal{C}_\gamma(F,\, g) \ge 2^{-1-2|\gamma|} \mathcal{C}_0 ( F \chi_{B(R)},\, \varphi_R \la v \ra^{\gamma/2}   g     )
- C_R \|F\|_{L^1} \|g\|^2_{L^2_{\gamma/2}},
\end{align}
for a positive constant $C_R \sim R^{\max\{2, 2(\gamma-1)\}}$.
For a set $B(4R)$ we take a finite covering
\[
B(4R) \subset \mathop{\cup}_{v_j \in B(4R)} A_j \,, \enskip A_j = \{ v \in \RR^3\,;\, |v-v_j| \le \frac{r_0}{4}\}\,,
j \in \{1,2, \cdots,  N\}, N =N(R, r_0).
\]
For each $A_j$ we choose a non-negative smooth function $ \varphi_{A_j}$ which is  $1$ on $A_j$
and $0$ on $\{|v-v_j|\ge r_0/2\}$.
Note that
\[
\frac{r_0}{2} 
\le|v-v_*|\le 6R  \enskip
\mbox{on supp $(\chi_{B_j(R,r_0)})_* \varphi_{A_j}$}\,.
\]
Then we  have
\begin{align*}
&|v-v_*|^\gamma F_*(g'-g)^2
\gtrsim r_0^{\gamma} \big( F \chi_{B_j(R,r_0)}\big)_* \varphi_{A_j}^2(g'-g)^2 \\
&\gtrsim
r_0^{\gamma}\big( F \chi_{B_j(R,r_0)}\big)_* 
\left [\frac{1}{2} \Big(\big(\la v \ra^{\gamma/2} \varphi_{A_j} g \big)'-
\la v \ra^{\gamma/2} \varphi_{A_j} g \Big)^2
 -\Big( \big(\la v \ra^{\gamma/2} \varphi_{A_j}\big)'
 - \la v \ra^{\gamma/2} \varphi_{A_j}  \Big)^2 {g' }^2 \right]\,.
\end{align*}
Since $\left|\big(\la v \ra^{\gamma/2} \varphi_{A_j}\big)'
 - \la v \ra^{\gamma/2} \varphi_{A_j} \right| \lesssim R^{\gamma/2
+\max\{1, \gamma -1\} } r_0^{-1} \theta$ if $|v_*|\le R$, we obtain
\begin{align}\label{coer-inner}
\mathcal{C}_\gamma(F,\, g)
&\gtrsim r_0^\gamma
 (\mathcal{C}_0 ( F \chi_{B_j(R,r_0)},\, \varphi_{A_j} \la v \ra^{\gamma/2}   g    )
 - C_{R,r_0}\|F\|_{L^1} \|g\|^2_{L^2}, 
\end{align}
for a positive constant $C_{R,r_0} \sim R^{\gamma
+2 \max\{1, \gamma -1\} } r_0^{\gamma-2}$.
Writing $\varphi_0$ and $\varphi_j , ( j \ge 1) $ instead of $\varphi_R$ and $\varphi_{A_j}$, and
summing up  \eqref{coer-out} and \eqref{coer-inner} we have
\begin{align*}
\mathcal{C}_\gamma(F,\, g)
&\gtrsim r_0^{\gamma} \sum_{j=0}^N \mathcal{C}_0 ( F \chi_{B_j(R,r_0)},\, \varphi_{j} \la v \ra^{\gamma/2}   g    )
- C'_{R, r_0}\|F\|_{L^1} \|g\|^2_{L^2_{\gamma/2}},
\end{align*}
where $\chi_{B_0(R,r_0)} = \chi_{B(R)}$.  Apply \eqref{another-1} to each $\mathcal{C}_0(\cdot, \cdot)$ term. Then
\begin{align*}
\sum_{j=0}^N J_1^{\Phi_0}(\varphi_j \la v \ra^{\gamma/2} g) \le r_0^{-\gamma}  \mathcal{C}_\gamma(F,\, g)
+ C_F \|g\|_{L^2_{\gamma/2}}^2.
\end{align*}
Since for $G = \la v \ra^{\gamma/2} g$ we have
\[
(G'-G)^2 \lesssim \Big(\sum_{j=0}^N {\varphi'_j}^2 \Big)(G'-G)^2
\le 2 \sum_{j=0}^N \Big((\varphi_j G)' - \varphi_j G\Big)^2 + 2 \sum_{j=0}^N
(\varphi'_j - \varphi_j)^2 G^2.
\]
Then we obtain \eqref{another-2} because 
\[
|\varphi'_j - \varphi_j| \le \left|(\nabla \varphi_j)(v'_\tau)\right||v-v_*|\sin \frac{\theta}{2}
\lesssim \left|(\nabla \varphi_j)(v'_\tau)\right||v'_\tau -v_*|\theta \lesssim 
R r_0^{-1} \la v_*\ra \theta. \qedhere
\]
\end{proof}

\begin{remark}\label{rem-equivalence}
Taking $F= M$ in Lemma \ref{upper-F-g-g} and  \ref{coer-J-1} we have the equivalence
\[
J_1^{\Phi_\gamma} (g) \sim J_1^{\Phi_0}(\la v \ra^{\gamma/2}g) , \enskip \mbox{\rm modulo} \enskip
\|g\|^2_{L^2_{\gamma/2}},
\]
which is a sharper version of the formula given above \cite{AMUXY2012JFA}, Lemma 2.17.
\end{remark}

\begin{lemma}\label{upper-F-g-h}
Let $0 \le \gamma \le 2 $ and $0<s<1$. Then we have 
\begin{align*}
\left|\Big(Q(F,g), h\Big)_{L^2}\right| \lesssim \left\{\begin{array}
{l}\displaystyle  \|F\|_{L^2_4} \big (J_1^{\Phi_0}(\langle  v \rangle^{\gamma/2} g) 
+ \|g\|^2_{L^2_{s+\gamma/2}}\big) ^{1/2}\big ( J_1^{\Phi_0}(\langle  v \rangle^{\gamma/2} h) + \|h\|^2_{L^2_{\gamma/2}}\big)^{1/2},\\\\
\displaystyle \|F\|_{L^2_4} \big(J_1^{\Phi_0}(\langle  v \rangle^{\gamma/2} g) 
+ \|g\|^2_{L^2_{\gamma/2}}\big) ^{1/2}\big(J_1^{\Phi_0}(\langle  v \rangle^{ \gamma/2} h )
   + \|h\|^2_{L^2_{s+ \gamma/2}}\big  )^{1/2}.
\end{array} \right. 
\end{align*}
\end{lemma}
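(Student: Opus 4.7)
My starting point would be the duality identity obtained from the pre-post collision change of variables,
\[
\bigl(Q(F,g),h\bigr)_{L^2} \;=\; \iiint B\, F_*\, g\, (h' - h)\, d\sigma\, dv\, dv_*,
\]
which is valid for general $F$ and makes the asymmetry between the two claimed bounds transparent: the two bounds correspond to two different ways of rewriting this integrand so that either $g$ or $h$ carries the quantity with the $s$-shifted weight, while the remaining factor produces a $J_1^{\Phi_0}$-type coercivity quantity.

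For the first inequality (with $s+\gamma/2$ on $g$), my plan is to apply Cauchy--Schwarz twice. First, in $v_*$ with the splitting $F_* = \langle v_*\rangle^4 F_* \cdot \langle v_*\rangle^{-4}$, to extract the factor $\|F\|_{L^2_4}$ and leave
\[
|(Q(F,g),h)_{L^2}| \;\le\; \|F\|_{L^2_4}\left(\int \langle v_*\rangle^{-8}\Big|\iint B\, g\, (h'-h)\, d\sigma\, dv\Big|^{2}\, dv_*\right)^{1/2}.
\]
Then, for each $v_*$, apply Cauchy--Schwarz in $(v,\sigma)$ against a weight $W(v,v_*,\sigma)$ designed so that $\iint B\, W^{-1}(h'-h)^{2}\, d\sigma\, dv$ is recognizable as a $J_1^{\Phi_\gamma}$-like quantity (which by Lemma~\ref{upper-F-g-g} and Remark~\ref{rem-equivalence} is controlled by $J_1^{\Phi_0}(\langle v\rangle^{\gamma/2} h) + \|h\|_{L^2_{\gamma/2}}^{2}$), while the complementary piece $\iint B\, W\, g^{2}\, d\sigma\, dv$ is controlled by $\|g\|_{L^2_{s+\gamma/2}}^{2}$ after absorbing the angular singularity $\theta^{-1-2s}$ into the $\langle v\rangle^{s}$-portion of the weight on $g$. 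The integral $\int \langle v_*\rangle^{-8}|v-v_*|^{2\gamma}\,dv_*\lesssim \langle v\rangle^{2\gamma}$ (proved by splitting $|v_*|\lessgtr 2|v|$) handles the $v_*$-integration cleanly.

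For the second inequality (with $s+\gamma/2$ on $h$), the roles of $g$ and $h$ should be swapped. I would first rewrite the duality identity via another pre-post collision change as
\[
\bigl(Q(F,g),h\bigr)_{L^2} \;=\; -\iiint B\, F_*\, (g'-g)\, h\, d\sigma\, dv\, dv_* \;+\; \iiint B\bigl(F_*'-F_*\bigr)\, g\, h\, d\sigma\, dv\, dv_*,
\]
where the last term is controlled directly by the cancellation lemma (see \cite{ADVW2000}) and contributes the lower-order $L^2_{\gamma/2}$ remainders. The leading term now has $(g'-g)$ against a plain $h$, and one repeats the two-step Cauchy--Schwarz argument with the roles of $g$ and $h$ exchanged; the weight $W$ is now tuned so that the $(g'-g)$-integral produces $J_1^{\Phi_0}(\langle v\rangle^{\gamma/2} g)+\|g\|_{L^2_{\gamma/2}}^2$ and the complementary piece absorbs the singularity into $\|h\|_{L^2_{s+\gamma/2}}^{2}$.

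The main obstacle I anticipate is balancing the two Cauchy--Schwarz weights. The weight $W$ must simultaneously make $\int b\, W\, d\sigma$ finite with at most a $\langle v_*\rangle^{2s}$ growth (so that the complementary $L^2$-bound on $g$ or $h$ really is at weight $s+\gamma/2$ and no higher), and produce an integrand on the $(h'-h)$-side that is dominated pointwise by $b\,|v-v_*|^\gamma\,\mu_*$ (up to lower-order terms) so that Lemma~\ref{upper-F-g-g} and Remark~\ref{rem-equivalence} apply. A natural candidate is $W(v,v_*,\sigma) \sim \theta^{2-\varepsilon}\langle v_*\rangle^{a}$ for appropriately small $\varepsilon>0$ and some $a=a(s,\gamma)$; verifying that both resulting integrals close at the prescribed weights is where the bulk of the bookkeeping would lie, particularly in confirming that the $L^2_4$-weight on $F$ suffices for all intermediate integrals in $v_*$.
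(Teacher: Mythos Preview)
Your plan has a genuine obstruction at the second Cauchy--Schwarz. With any angular weight $W\sim\theta^{\alpha}$, the split
\[
\Bigl|\iint B\,g\,(h'-h)\Bigr|^{2}\le\Bigl(\iint bW\,|v-v_*|^{\gamma}g^{2}\Bigr)\Bigl(\iint bW^{-1}\,|v-v_*|^{\gamma}(h'-h)^{2}\Bigr)
\]
forces one side to carry the kernel $b\theta^{\alpha}$ and the other $b\theta^{-\alpha}$. For the $g$-side to be finite you need $\alpha>2s$; but then the $(h'-h)^{2}$-side has angular singularity $\theta^{-2-2s-\alpha}$, strictly worse than $b$ itself, and hence cannot be dominated by $J_{1}^{\Phi_{\gamma}}$ (which uses exactly $b$) or equivalently by $J_{1}^{\Phi_{0}}(\langle v\rangle^{\gamma/2}h)+\|h\|_{L^{2}_{\gamma/2}}^{2}$. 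The two constraints $\alpha>2s$ and $\alpha<2-2s$ are compatible only for $s<1/2$, so the scheme collapses for strong singularity. Relatedly, the factor $\|g\|_{L^{2}_{s+\gamma/2}}$ cannot arise from $\int bW|v-v_*|^{\gamma}g^{2}$: the angular integral contributes only a constant, leaving $\langle v_*\rangle^{\gamma}\|g\|_{L^{2}_{\gamma/2}}^{2}$ with no $s$-shift at all.

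The paper circumvents this by never putting all of $g$ against all of $h'-h$. It first peels off the symmetric piece $S=\iiint B\,F_*\,(g-g')(h'-h)$, on which Cauchy--Schwarz is balanced (both factors carry exactly $b$, so Lemma~\ref{upper-F-g-g} applies to each). The residual, rewritten through the auxiliary point $w=v_*+\cos^{2}(\theta/2)(v-v_*)$, satisfies an algebraic identity $M_1=-A+R_4$ that halves the main term and reduces everything to remainders $R_1,\dots,R_4$. The crucial term $R_1$ is then handled by a Littlewood--Paley decomposition in $|v-v_*|$ together with Plancherel in $z=v-v_*$; the $s$-shifted weight $\|g\|_{L^{2}_{s+\gamma/2}}$ appears precisely from the dyadic factor $2^{s\ell}$ after splitting the $\theta$-integral at $\theta\sim 2^{-\ell/2}\langle\xi\rangle^{-1/2}$. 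None of this is recoverable from a single weighted Cauchy--Schwarz, and the $\|F\|_{L^{2}_{4}}$ enters only at the very end via $\|F\|_{L^{1}_{a}}\lesssim\|F\|_{L^{2}_{4}}$, not through an initial $v_*$-split.
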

\begin{proof}
We recall the decomposition of \eqref{def:Q-R-bar-R} such that
\begin{align*} 
  Q(F, g)
&= Q_R(F, g) + Q_{\bar R}(F, g) \nn
\\
& = \int_{\R^3} \int_{\Ss^2} b(\cos \theta) \Phi_{R} (F'_\ast g' - F_\ast g) \dsigma \dv_\ast
+ \int_{\R^3} \int_{\Ss^2} b(\cos \theta) \Phi_{\bar R} (F'_\ast g' - F_\ast g) \dsigma \dv_\ast \,. 
\end{align*}
It follows from \cite[Proposition 2.1]{AMUXY2011} (see also \cite[Prop.6.11]{MS2016}) 
that 
\begin{align*}
\left|\Big(Q_R(F,g), h\Big)_{L^2}\right| &\lesssim \|F\|_{L^2} \|g\|_{H^s} \|h\|_{H^s}\\
&\lesssim \|F\|_{L^2} \big(J_1^{\Phi_0}(g) 
+ \|g\|^2_{L^2}\big) ^{1/2}\big(J_1^{\Phi_0}( h )
   + \|h\|^2_{L^2}\big  )^{1/2}.
\end{align*}
In view of \eqref{remember-obvious}, it suffices to consider only 
$\Big(Q_{\bar R}(F,g), h\Big)_{L^2}$,  whose estimation is the almost same as in the proof of
\cite[Lemma 3.2]{AMUXY10-2013}. 
We write
\begin{align*}
A&= (Q_{\bar R}(F,g),h)
= \iiint  b(\cos \theta) \Phi_{\bar R} F_* g(v)(h(v')-h(v))dv dv_* d \sigma \\
&= \iiint  b(\cos \theta) \Phi_{\bar R} F_* (g(v) - g( v')) (h( v') -h(v) ) dv dv_* d \sigma\\
&\qquad + \iiint  b(\cos \theta) \Phi_{\bar R} F_* g( v') (h(v') -h(w) ) dv dv_* d \sigma\\
&\qquad + \iiint  b(\cos \theta) \Phi_{\bar R}F_* g(v) (h(w) -h(v) ) dv dv_* d \sigma\\
&\qquad + \iiint  b(\cos \theta) \Phi_{\bar R} F_* (g( v') - g(v))(h(w) -h(v) ) dv dv_* d \sigma\\
& \Denote S + M_0 + R_1 + R_2,
\end{align*}
where (see also Figure 1 in \cite{AMUXY10-2013} given below)
\begin{align*}
&w = v_* + \Big(\cos^2 \frac{\theta}{2}\Big) (v-v_*) = \frac{v'+v_*}{2} + \frac{|v'-v_*|}{2} \omega, \qquad
\cos \theta = \frac{v-v_*}{|v-v_*|} \cdot \sigma = \frac{v'-v_*}{|v'-v_*|} \cdot \omega.
\end{align*}

\centerline{
\includegraphics[width=8cm,clip]{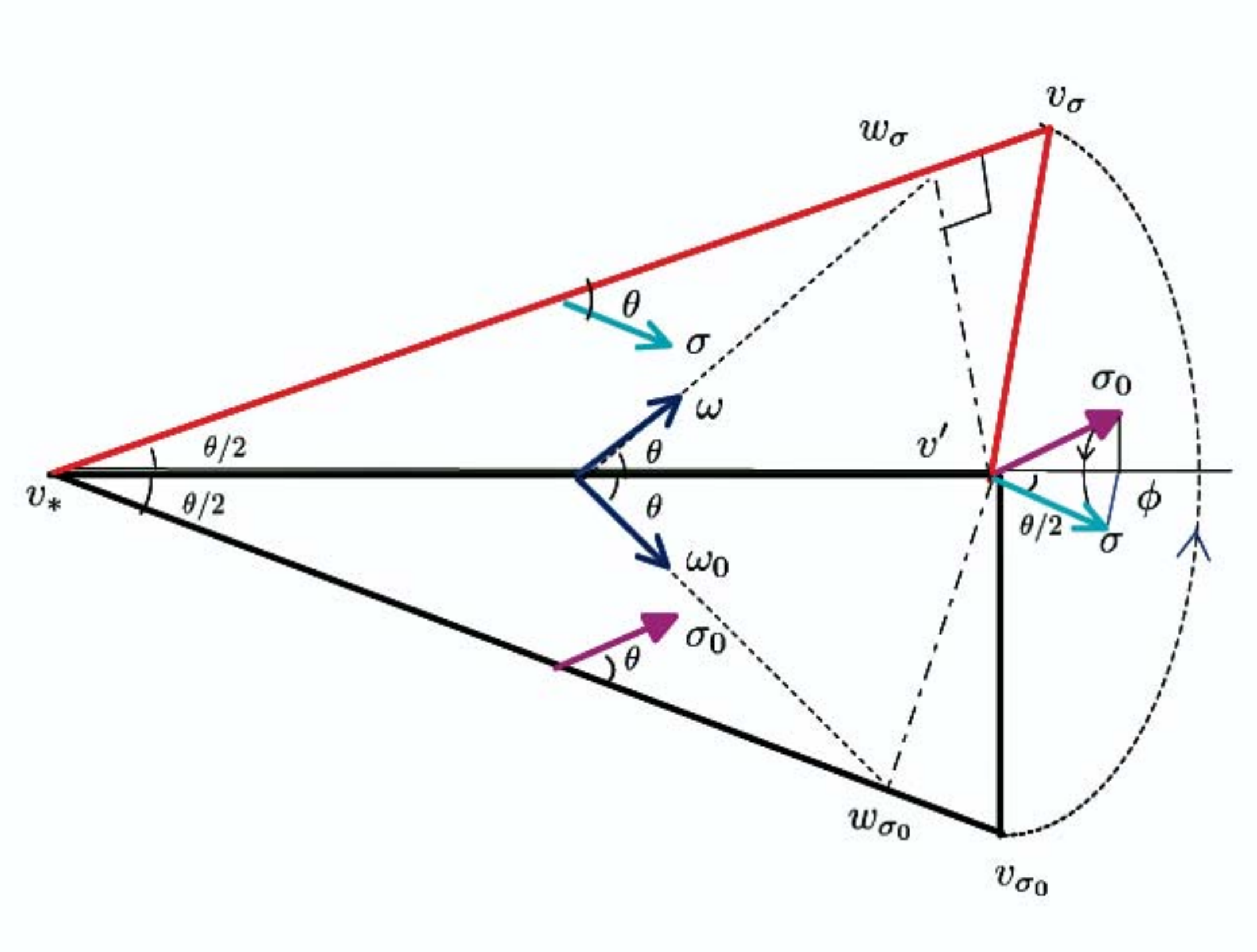}}
\centerline{Figure 1. $\sigma = (\theta/2, \phi), \sigma_0 = (\theta/2,0)$,
$d\sigma = \displaystyle \sin \frac{\theta}{2}d \left(\frac{\theta}{2}\right)d \phi$,
$d\omega = \sin \theta d \theta d \phi$}
\vspace{3mm}
{Using  the Cauchy-Schwarz} inequality, we have
\begin{align*}
|S|^2 &\le \iiint B F_*(g'-g)^2 d\sigma dv  dv_* \times \iiint B F_*(h'-h)^2d\sigma dv  dv_* \\
&\lesssim\|F\|^2_{L^1_{\max\{2, 2s+\gamma\}}}
\Big(  J_1^{\Phi_0}( \langle v \rangle^{\gamma/2} g) + \|g\|^2_{L^2_{\gamma/2}}\Big)
\Big(  J_1^{\Phi_0}( \langle v \rangle^{\gamma/2} h) + \|h\|^2_{L^2_{\gamma/2}}\Big)
\end{align*}
by means of Lemma \ref{upper-F-g-g}.

{For $M_0$, we write}
\begin{align*}
M_0 &= \int \Phi_{\bar R}(|v'-v_*|) b\Big(\frac{v-v_*}{|v-v_*|}\cdot \sigma\Big)
F_* g(v') (h( v') -h(w) ) dv dv_* d \sigma\\
& + \int \Big( \Phi_{\bar R}(|v-v_*|)- \Phi_{\bar R}(| v'-v_*|)\Big)
b(\frac{v-v_*}{|v-v_*|}\cdot \sigma)
F_* g( v') (h( v') -h(w) ) dv dv_* d \sigma\\
&= M_1 + R_3.
\end{align*}
Since
\[
 \Phi_{\bar R}(|v-v_*|)- \Phi_{\bar R}(| v'-v_*|)
= \Phi_{\bar R}(|v-v_*|)- \Phi_{\bar R}(|v-v_*|\cos (\theta/2)),\]
it is not difficult to see {that}
\[
|R_3| \lesssim \|F\|_{L^1_{\gamma}} \|g\|_{L^2_{\gamma/2}} \|h\|_{L^2_{\gamma/2}}\,.
\]

{For each fixed $(\sigma , v_*)$}, we perform the change of variables
$v \rightarrow v'$, as in \cite{ADVW2000}.  { Recall that}
\[
dv = \left|\frac{dv}{d v'}\right| d v'
= \frac{2^{3-1}}{\Big(\frac{ v'-v_*}{| v'-v_*|}\cdot \sigma \Big)^2} d v'\,,
\]
and we let  the inverse transformation {be} $v' \rightarrow v= v_\sigma(v', v_*)$.  Hence
\begin{align*}
M_1 &= \int_{\RR^3_{v_*}}  F(v_*)\Big( \int_{\mathbb{S}^2}\Big( \int_{\RR^3_v}
 \Phi_{\bar R}(| v'-v_*|)b\Big(\frac{ v-v_*}{| v-v_*|}\cdot \sigma \Big)\\
 &\quad \times g( v') \big(h(v') -h(v_* + \frac{1}{2}\Big(1+ \frac{ v-v_*}{| v-v_*|}\cdot \sigma \Big)(v-v_*)\big)dv\Big)
d\sigma \Big) dv_* \\
&= \int_{\RR^3_{v_*}}  F(v_*)\Big( \int_{\mathbb{S}^2\times \RR^3_{ v'}\cap
\{ \frac{ v'-v_*}{| v'-v_*|}\cdot \sigma \ge 1/\sqrt 2\}}
 \Phi_{\bar R}(| v'-v_*|)b(2 \Big(\frac{ v'-v_*}{| v'-v_*|}\cdot \sigma \Big)^2 -1)\\
 & \quad \times g(v') \big\{h(v') -h(
v_* + \Big(\frac{ v'-v_*}{| v'-v_*|}\cdot \sigma \Big)^2
(v_\sigma-v_*))\big\} 
\frac{2^{3-1}}{\Big(\frac{ v'-v_*}{| v'-v_*|}\cdot \sigma \Big)^2} d v'
d\sigma \Big) dv_*.
\end{align*}
Take polar {coordinates} $\sigma = (\vartheta, \phi) \in [0,\pi]\times [0,2 \pi]$ with {pole} $v' -v_*$.
Then  we have
\begin{align*}
& \quad 
\int_{\mathbb{S}^2 \cap \{ \frac{ v'-v_*}{| v'-v_*|}\cdot \sigma \ge 1/\sqrt 2\}}
\cdots -h(
v_* + \Big(\frac{ v'-v_*}{| v'-v_*|}\cdot \sigma \Big)^2
(v_\sigma-v_*))\big\}
\frac{2^{3-1}}{\Big(\frac{ v'-v_*}{| v'-v_*|}\cdot \sigma \Big)^2} d\sigma\\
&=\int_0^{2\pi} d \phi  \int_0^{\pi/4}\cdots
-h(
v_* + \cos^2 \vartheta
(v_{(\vartheta, \phi)} -v_*))\big\}
\frac{2^{3-1} \sin \vartheta d \vartheta}
{\cos^2 \vartheta}   \\
&= \int_{\mathbb{S}^2 \cap \{ \frac{ v'-v_*}{| v'-v_*|}\cdot \omega \ge 0\}}\cdots
-h(
\frac{v'+v_*}{2}+ \frac{|v'-v_*|}{2}\omega
)\big\}
\frac{d \omega}{\cos^3 (\theta/2)},
\end{align*}
because
$\vartheta = \theta/2$ and $\displaystyle \cos \theta =
\frac{ v' -v_*}{| v' -v_*|}\cdot \omega$ (see Figure 1).
Therefore, writing $v$ and $\sigma$ instead of $v'$ and $\omega$, we have
\begin{align*}
M_1 
= \int b \Phi_{\bar R} F_* g(v)(h(v) -h(v'))\frac{d \sigma }{\cos^3 (\theta/2)}dv dv_*
= - A + R_4,
\end{align*}
where
\[
R_4 = \int b \Phi_{\bar R} F_* g(v)(h(v) -h(v'))\Big(\frac{1}{\cos^3 (\theta/2)} -1 \Big)dv dv_*
d\sigma.
\]
It is easy to {check that}
\[
|R_4| \lesssim \|F\|_{L^1_{\gamma}} \|g\|_{L^2_{\gamma/2}} \|h\|_{L^2_{\gamma/2}}\,.
\]
Now  we concentrate {on} the term
\[
R_1 = \int b \Phi_{\bar R} F_* g(v) (h(w) -h(v) ) dv dv_* d \sigma,
\]
where
\[
w = v_* + \frac {1}{2}\Big(1+ \frac{(v-v_*)}{|v-v_*|}\cdot \sigma \Big)(v-v_*).
\]
Note that
\[
dw = (\cos ^2 (\theta/2))^3 dv, \enskip \enskip |w-v| = |v-v_*|\sin^2(\theta/2).
\]
Take the {dyadic} decomposition 
\[
B
= \sum_{\ell=1}^\infty   |v-v_*|^\gamma \varphi(2^{-\ell} |v-v_*|))  b( \cos \theta )  .
\]
We choose $\psi$ such that $\varphi \subset \subset \psi$.
Writing $\varphi_\ell(z) =
\varphi(2^{-\ell}z)$ and
$\psi_\ell(z) =  (2^{-\ell}|z|)^\gamma \psi(2^{-\ell}z)$, we obtain
by the change of variables $v \rightarrow v_* +z$, 
\begin{align*}
\tilde R_1(v_*)  &= \sum_{2^\ell \ge R} 2^{\gamma \ell}
\int_0^{\pi/2} b(\cos \theta)\sin \theta
\int_{\RR_z^3}
 \varphi_\ell(z)
\Big(h(v_*+ \cos^2 \frac{\theta}{2} z)- h(v_*+z) \Big) \psi_\ell(z) g(v_*+z) dz d\theta\\
&  \Denote  \sum_\ell 2^{\gamma \ell} J_\ell.
\end{align*}
{We} divide
\begin{align*}
J_\ell &=
\int_0^{\pi/2} b(\cos \theta)\sin \theta
\int_{\RR_z^3}
 \Big(\varphi_\ell(z)  - \varphi_\ell(\cos^2 \frac{\theta}{2}z) \Big)
h(v_*+ \cos^2 \frac{\theta}{2} z) \psi_\ell(z) g(v_*+z) dz d\theta\\
& \quad +
\int_0^{\pi/2} b(\cos \theta)\sin \theta
\int_{\RR_z^3}
 \Big(\varphi_\ell( \cos^2 \frac{\theta}{2}   z)
h(v_*+ \cos^2 \frac{\theta}{2} z)- \varphi_\ell(z)h(v_*+z) \Big)\\
&\qquad \qquad \qquad \qquad \qquad \qquad \qquad \times  \psi_\ell(z) g(v_*+z) dz d\theta\\
&\Denote  J_\ell^{(1)} + J_\ell^{(2)}.
\end{align*}
{Let} $H_\ell(z;v_*)= \varphi_\ell(z)h(v_*+z), G_\ell(z;v_*) =\psi_\ell(z) g(v_*+z)$ and
denote the Fourier transforms of $G_\ell, H_\ell$ with respect to $z$ by
$\hat G_\ell(\xi ;v_*), \hat H_\ell(\xi;v_*)$, respectively.
Then {Plancherel} formula gives
\begin{align*}
J_\ell^{(2)}& = \int_0^{\pi/2} b(\cos \theta)\sin \theta
\Big(\int_{\RR_\xi^3}
 \Big( \frac{1}{(\cos^2 \frac{\theta}{2})^3} \hat H_\ell(\frac{\xi}{
\cos^2 \frac{\theta}{2}};v_*)
- \hat H_\ell(\xi;v_*)\Big)\overline{\hat G_\ell(\xi;v_*)}d\xi\Big) d\theta\\
&=\int_0^{\pi/2} b(\cos \theta)\sin \theta\Big( \frac{1}{(\cos^2 \frac{\theta}{2})^3}-1\Big)
\Big(\hat H_\ell(\frac{\xi}{
\cos^2 \frac{\theta}{2}};v_*)\overline{\hat G_\ell(\xi;v_*)}d\xi\Big) d\theta\\
&\quad + \int_0^{\pi/2} b(\cos \theta)\sin \theta
\Big(\int_{\RR_\xi^3}
 \Big( \hat H_\ell(\frac{\xi}{
\cos^2 \frac{\theta}{2}};v_*)
- \hat H_\ell(\xi;v_*)\Big)\overline{\hat G_\ell(\xi;v_*)}d\xi\Big) d\theta\\
&= J_\ell^{(2,1)}+J_\ell^{(2,2)}\,.
\end{align*}
It is easy to see {that}
\begin{align}\label{first}
|J_\ell^{(2,1)}| \lesssim \|\varphi_\ell(\cdot -v_*) h\|_{L^2}\|\psi_\ell(\cdot -v_*) g\|_{L^2}\,.
\end{align}
{A similar} estimate {is also true} for $J_\ell^{(1)}$, by replacing $\varphi_\ell$ by $\tilde \varphi_\ell$
which {is} defined from a suitable $\tilde \varphi$ satisfying $\varphi \subset \subset \tilde \varphi$.
Write
\begin{align*}
J_\ell^{(2,2)}&=
\int_{\RR_\xi^3} \overline{\hat G_\ell(\xi;v_*)}\Big(
\int_0^{2^{-\ell/2}\la \xi\ra^{-{1/2}}}  b(\cos \theta)\sin \theta \tan^2 \frac{\theta}{2} 
\int_0^1 \xi \cdot
\nabla_{\xi}\hat H_\ell(\xi+ \tau \tan^2 \frac{\theta}{2}  \xi  ;v_*)
d \theta d\tau \Big)d\xi\\
&\quad+ \int_{\RR_\xi^3} \overline{\hat G_\ell(\xi;v_*)}
\Big(
\int_{2^{-\ell/2}\la \xi\ra^{-{1/2}}}^{\pi/2}  b(\cos \theta)\sin \theta
\Big( \hat H_\ell(\frac{\xi}{
\cos^2 \frac{\theta}{2}};v_*)
- \hat H_\ell(\xi;v_*)\Big) d\theta \Big) d\xi \\
& = A_\ell + B_\ell.
\end{align*}
By {Cauchy-Schwarz} inequality, we have
\begin{align*}
|A_\ell|^2&\lesssim \int_{\RR_\xi^3} \la \xi \ra^{1\mp s} |\hat G_\ell(\xi;v_*)|^2
\int_0^{2^{-\ell/2}\la \xi\ra^{-{1/2}}}  b(\cos \theta)\sin \theta \tan^2 \frac{\theta}{2}d \theta d\xi\\
&\quad \times
\int_{\RR_\xi^3} \la \xi \ra^{1 \pm s} |\nabla_\xi \hat H_\ell(\xi;v_*)|^2
\int_0^{2^{-\ell/2}\la \xi\ra^{-{1/2}}}  b(\cos \theta)\sin \theta \tan^2 \frac{\theta}{2}d \theta d\xi\,,
\end{align*}
where, in the second factor, we have used the change of variable
\[ \xi \rightarrow (1+ \tau \tan^2 \frac{\theta}{2} ) \xi \]
after exchanging $d\theta d \xi$ by $d\xi d\theta$.
Therefore, we get
\begin{align}\label{main-1}
|A_\ell|^2&\lesssim \left\{ \begin{array}{l}
\displaystyle \Big( \int_{\RR_\xi^3} 2^{2s \ell}|\hat G_\ell(\xi;v_*)|^2 d\xi \Big)
\Big(\int_{\RR_\xi^3} |\la \xi\ra^{s} \nabla_\xi (2^{-\ell}\hat H_\ell)(\xi;v_*)|^2 d\xi\Big), \\\\
\displaystyle \Big( \int_{\RR_\xi^3}|\la \xi\ra^s \hat G_\ell(\xi;v_*)|^2 d\xi \Big)
\Big(\int_{\RR_\xi^3}  2^{2s \ell}|\nabla_\xi (2^{-\ell}\hat H_\ell)(\xi;v_*)|^2 d\xi\Big).
\end{array} \right.
\end{align}
On the other hand,  we have
\begin{align}\label{main-2}
|B_\ell|^2&\lesssim  \left\{ \begin{array}{l}
\displaystyle \Big( \int_{\RR_\xi^3} 2^{2s \ell}|\hat G_\ell(\xi;v_*)|^2 d\xi \Big)
\displaystyle \Big(\int_{\RR_\xi^3} |\la \xi\ra^{s}\hat H_\ell(\xi;v_*)|^2 d\xi\Big),\\\\
\displaystyle \Big( \int_{\RR_\xi^3} |\la \xi\ra^{s}\hat G_\ell(\xi;v_*)|^2 d\xi \Big)
\displaystyle \Big(\int_{\RR_\xi^3} 2^{2s \ell}|\hat H_\ell(\xi;v_*)|^2 d\xi\Big),
\end{array} \right.
\end{align}
Summing up {the} above estimates of the first case
we obtain
\begin{align*}
\sum 2^{\gamma \ell} |J_\ell| 
&\lesssim \sum \la v_* \ra^{\gamma+s}\|\tilde \varphi_\ell \la v \ra^{s+ \gamma/2} g(v)\|_{L^2}
\|\tilde \varphi_\ell \la D \ra^{s} h(v)\|_{L^2}\\
&\lesssim \la v_* \ra^{\gamma+s}
\Big( \sum \|\tilde \varphi_\ell \la v \ra^{s +\gamma/2}g(v)\|^2_{L^2} \Big)^{1/2} 
\Big( \sum \|\tilde \varphi_\ell \la D \ra^{s} h(v)\|^2_{L^2} \Big)^{1/2}\\
& \lesssim  \la v_* \ra^{\gamma+s} \|g\|_{L^2_{s+\gamma/2}}\|h\|_{H_{\gamma/2}^{s}},
\end{align*}
because
\[
\sum (\tilde \varphi_\ell F,\tilde \varphi_\ell F)_{L^2} = ((\sum (\tilde \varphi_\ell)^2 F, F)_{L^2}
\lesssim \|F\|_{L^2}^2.
\]
Here it should be noted that the commutator
\[
\|[ \la D \ra^s, \varphi_\ell ] g\|_{L^2} \lesssim 2^{-\ell}\|g\|_{L^2}
\]
is harmless to {the above summation process}.
Since $\|h\|^2_{H^s_{\gamma/2}} \lesssim J_1^{\Phi_0}(\langle  v \rangle^{ \gamma/2} h)
+ \|h\|^2_{L^2_{\gamma/2}}$, 
we obtain
\[
|R_1| \lesssim \|F\|_{L^1_{\gamma+s} }\|g\|_{L^2_{s+\gamma/2}} 
\Big(J_1^{\Phi_0}(\langle  v \rangle^{ \gamma/2} h)
+ \|h\|^2_{L^2_{\gamma/2}}\Big)^{1/2}.
 \]
Another case for $R_1$ is now obvious. 

As for $R_2$, it follows from {Cauchy-Schwarz} inequality that
\[
|R_2|^2 \le  \Big(\int B F_* (g( v') - g(v))^2dv dv_* d \sigma\Big)
\Big(\int b \Phi_{\bar R} F_* (h(w) -h(v) )^2 dv dv_* d \sigma\Big).
\]
The first factor is estimated by using Lemma \ref{upper-F-g-g}.
Note that the second factor is estimated above from
\[
2\Big( \int b \Phi_{\bar R} F_* (h(w) -h(v') )^2 dv dv_* d \sigma
+ \int b \Phi_{\bar R} F_* (h(v') -h(v) )^2 dv dv_* d \sigma\Big).
\]
The first term can be handled in the same way as for $M_0$, regarding $(w,v')$  as $(v',v)$,
and it is estimated by 
$J_1^{\Phi_0}(\langle  v \rangle^{ \gamma/2} h)
+ \|h\|^2_{L^2_{\gamma/2}}$ up to  a constant factor. And this completes the proof of the lemma.
\end{proof}

%
%


\bigskip
\noindent {\bf Acknowledgements:}
The research of the first author was supported by the
Conselho Nacional de Desenvolvimento
Cientifico e Tecnologico. The research of the second author was supported by Grant-in-Aid for Scientific Research No. 17K05318. The research of the third author was supported by
NSERC Discovery Individual Grant R611626 . And the research of the fourth author was supported by General Research Fund of Hong Kong 11303614.

\bibliographystyle{amsxport}
\bibliography{kinetic}
\end{document}